\DeclareMathOperator{\divv}{div}
\renewcommand{\cases}[1]{\left\{ \begin{array}{rl} #1 \end{array} \right.}
\newcommand{\smfrac}[2]{{\textstyle \frac{#1}{#2}}}
\def\Xint#1{\mathchoice
{\XXint\displaystyle\textstyle{#1}}%
{\XXint\textstyle\scriptstyle{#1}}%
{\XXint\scriptstyle\scriptscriptstyle{#1}}%
{\XXint\scriptscriptstyle\scriptscriptstyle{#1}}%
\!\int}
\def\XXint#1#2#3{{\setbox0=\hbox{$#1{#2#3}{\int}$ }
\vcenter{\hbox{$#2#3$ }}\kern-.6\wd0}}
\def\mint{\Xint-}
\def\b{\big}
\def\B{\Big}
\def\bg{\bigg}
\def\sep{\,|\,}
\def\bsep{\,\b|\,}
\DeclareMathOperator*{\osc}{osc}
\def\R{\mathbb{R}}
\def\N{\mathbb{N}}
\def\Z{\mathbb{Z}}
\def\C{\mathbb{C}}
\def\WW{W}
\def\LL{L}
\def\dx{\,{\rm d}x}
\def\dr{\,{\rm d}r}
\def\dt{\,{\rm d}t}
\def\ds{\,{\rm d}s}
\def\dd{{\rm d}}
\def\pp{\partial}
\def\<{\langle}
\def\>{\rangle}
\def\mA{{\sf A}}
\def\mB{{\sf B}}
\def\mF{{\sf F}}
\def\mR{{\sf R}}
\def\mS{{\sf S}}
\def\bfg{{\bf g}}
\def\bfh{{\bf h}}
\def\bbA{\mathbb{A}}
\def\bbB{\mathbb{B}}
\def\bbC{\mathbb{C}}
\def\bbD{\mathbb{D}}
\newcommand{\Da}[1]{D_{\!#1}}
\newcommand{\Dc}[1]{\nabla_{#1}}
\def\D{\nabla}
\def\del{\delta}
\def\ddel{\delta^2}
\def\loc{{\rm loc}}
\def\eps{\varepsilon}
\renewcommand\a{{\rm a}}
\renewcommand\c{{\rm c}}
\newcommand\nn{{\rm nn}}
\newcommand\cn{{\rm cn}}
\newcommand{\supp}{{\rm supp}}
\newcommand{\weakto}{\rightharpoonup}
\def\L{{\Z^d}}
\def\Rg{\mathcal{R}}
\newcommand{\Lb}{{\Lambda^{\beta}}}
\newcommand{\Omb}{{\Omega^{\beta}}}
\def\Us{\mathscr{U}}
\def\Usc{\Us^{\rm c}}
\def\Ys{\mathscr{Y}}
\def\E{\mathscr{E}}
\def\Ea{\E^\a}
\def\Ec{\E^\c}
\def\Ech{\E^\c_h}
\def\Scb{\mS^\c}
\def\Pa{\mathscr{P}}
\def\Om{{\R^d}}
\def\Omh{{\Omega_h}}
\def\Omc{\Omega^\c}
\def\tily{\tilde y}
\def\tilz{\tilde z}
\def\tilu{\tilde u}
\def\tilv{\tilde v}
\def\bary{\bar y}
\def\barz{\bar z}
\def\barv{\bar v}
\def\barw{\bar w}
\def\baru{\bar u}
\def\Sa{\mS^\a}
\def\Sc{\mS^\c}
\def\zz{\bar{\zeta}}
\def\TT{\mathscr{T}}
\newcommand\Ts{\mathscr{T}}
\def\Th{\mathscr{T}_h}
\def\Nh{\mathscr{N}_h}
\def\ww{\omega}
\def\dW{\partial W}
\def\ddW{\partial^2 W}
\newcommand{\Y}{\mathscr{Y}}
\def\bl{\beta}
\def\Ebh{\mathscr{E}^\bl_h}
\newcommand{\T}{\mathcal{T}_h}
\newcommand{\ignore}[1]{}
\newcommand{\Real}{\mathbb{R}}
\definecolor{cocol}{rgb}{0.7, 0, 0}
\definecolor{bvcol}{rgb}{0.7,0,0.7}
\definecolor{ascol}{rgb}{0, 0.5, 0}
\begin{document}

\title[Analysis of Blended Atomistic/Continuum Coupling
Methods]{Analysis of Blended Atomistic/Continuum Hybrid Methods}

\author[X. Li]{Xingjie Helen Li}
\address{Xingjie Helen Li \\ The Division of Applied Mathematics at
  Brown University \\ 182 George St, Providence, RI, 02912 \\ USA}
\email{xingjie\_li@brown.edu}

\author[C. Ortner]{Christoph Ortner}
\address{Christoph Ortner\\ Mathematics Institute \\ Zeeman Building \\
  University of Warwick \\ Coventry CV4 7AL \\ UK}
\email{christoph.ortner@warwick.ac.uk}

\author[A. V. Shapeev]{\\Alexander V. Shapeev}
\address{Alexander V. Shapeev\\ School of Mathematics \\ University of Minnesota \\ 206 Church St. SE \\ Minneapolis, MN 55455, USA}
\email{alexander@shapeev.com}

\author[B. Van Koten]{Brian Van Koten}
\address{B. Van Koten\\ The University of Chicago \\ Department of Statistics \\
Eckhart Hall Room 108 \\ 5734 S University Ave \\ Chicago, IL 60637, USA}
\email{vankoten@galton.uchicago.edu}

\numberwithin{equation}{section}
\numberwithin{theorem}{section}

\date{\today}

\thanks{XHL was supported by an AMS-Simons Travel Grant. CO's work was
  supported by EPSRC grant EP/H003096, ERC Starting Grant 335120 and
  by the Leverhulme Trust through a Philip Leverhulme Prize.  AVS was
  supported by the AFOSR Award FA9550--12--1--0187.  }

\subjclass[2000]{65N12, 65N15, 70C20}

\keywords{atomistic models, coarse graining, atomistic-to-continuum
  coupling, quasicontinuum  method, blending}

\begin{abstract}
  We present a comprehensive error analysis of two prototypical
  atomistic-to-continuum coupling methods of blending type: the
  energy-based and the force-based quasicontinuum methods.

  Our results are valid in two and three dimensions, for finite range
  many-body interactions (e.g., EAM type), and in the presence of
  lattice defects (we consider point defects and dislocations). The
  two key ingredients in the analysis are (i) new force and energy
  consistency error estimates; and (ii) a new technique for proving
  energy norm stability of a/c couplings that requires only the
  assumption that the exact atomistic solution is a stable
  equilibrium.
\end{abstract}

\maketitle


\section{Introduction}
\label{sec:intro}
Atomistic-to-continuum coupling methods (a/c methods) are a class of
concurrent multi-scale schemes coupling molecular mechanics models of
atomistic processes with continuum mechanics models of long-ranged
elastic fields. A recent extensive overview and benchmark of a/c
schemes for material defect simulation is presented in
\cite{Miller:2008}. These schemes can, broadly, be categorised into
sharp-interface couplings and blending methods. Each of these
categories can further be divided into energy-based (conservative) and
force-based (non-conservative) a/c couplings. In the present paper we
develop a comprehensive error analysis of both energy-based and
force-based a/c couplings of blending type, which forms the
theoretical background for the optimised formulations in
\cite{2012-CMAME-optbqce, BQCFcomp}.

Precisely, we will consider (i) the B-QCE scheme formulated in
\cite{VkLusk2011:blended, 2012-CMAME-optbqce}, which is closely
related to methods proposed in \cite{XiBe:2004,
  bauman:applicationofArlequin,
  BadiaParksBochevGunzburgerLehoucq:2007}; and (ii) the B-QCF scheme
formulated in \cite{Lu.bqcf:2011, BQCF, BQCFcomp}, which is closely
related to methods proposed in
\cite{BadiaParksBochevGunzburgerLehoucq:2007,
  bauman:applicationofArlequin, xiao:bridgingdomain,
  fish:concurrentAtCcoupling, bridging,
  prudhomme:modelingerrorArlequin, seleson:bridgingmethods,
  XiBe:2004}. While our results are not be immediately applicable to
these related schemes \cite{XiBe:2004, bauman:applicationofArlequin,
  BadiaParksBochevGunzburgerLehoucq:2007, xiao:bridgingdomain,
  fish:concurrentAtCcoupling, bridging,
  prudhomme:modelingerrorArlequin, seleson:bridgingmethods}, we expect
that many of the techniques we develop can be employed to develop such
extensions.

In recent years a comprehensive numerical analysis theory of a/c
methods has begun to emerge, which is summarized in the review article
\cite{2013-atc.acta}. In one dimension, the foundations of this theory
are largely completed \cite{2013-atc.acta}. In two and three
dimensions only partial results exist to date: in
\cite{2012-MATHCOMP-qce.pair} sharp error bounds for an energy-based
coupling scheme are proven, in the presence of point defects. However,
the scheme itself is restricted to two dimensions and pair
interactions, and moreover, the analysis makes an assumptions on the
magnitude of the atomistic solution in order to establish stability of
the a/c scheme. In \cite{Lu.bqcf:2011} a sharp error estimate is
established, which is valid in two and three dimensions and for
general interatomic potentials; however, to establish stability of the
scheme it is assumed that the atomistic solution is globally smooth,
which therefore excludes the presence of lattice defects.

Our starting assumption is that the error analysis ought to be
performed in the energy-norm as this provides, to the best of our
knowledge, the only route at present to include crystal defects in the
analysis following \cite{2012-MATHCOMP-qce.pair, EhrOrtSha:defects,
  2013-atc.acta}.

Thus, there are two key difficulties in extending the one-dimensional
analysis in \cite{2013-atc.acta} (and references therein) to two and
three dimensions:
\begin{enumerate}
\item {\it Energy-norm consistency: } While consistency error
  estimates in $L^p$-type norms are readily obtained from elementary
  Taylor expansions, consistency error estimates in the negative
  energy norm are more difficult to obtain, since they require an
  analytically convenient ``weak form'' of the forces. The different
  interaction ranges of the continuum and atomistic models make this
  non-trivial as can, for example, be seen from the analysis in
  \cite{Or:2011a}, which develops such a ``weak form'' for energy-based
  sharp interface a/c couplings. In the present paper we draw from
  ideas in \cite{2012-ARMA-cb} to establish sharp consistency error
  estimates; see \S~\ref{sec:inter:bqce_stress} and
  \S~\ref{sec:int:bqcf_cons}.

\item {\it Stability: } A key observation in \cite{Lu.bqcf:2011} was
  that force-based blending (the B-QCF scheme) with a macroscopic
  blending width yields a ``universally stable'' a/c coupling in the
  terminology of \cite{OrtnerShapeevZhangV1}. However, stability is
  proven under conditions which, to our understanding, make it
  impossible to extend the analysis to situations with crystal
  defects, and the required blending width makes the scheme
  prohibitively expensive. In \cite{BQCF} it was then shown that the
  B-QCF scheme is also stable in a natural energy-norm, and that only
  a moderate blending width is required. However, this result required
  the assumption that a related B-QCE scheme is stable, which was
  still unknown.

  In the present work, we develop a new technique that allows us to
  prove stability of the B-QCE scheme; see
  \S~\ref{sec:int:stab_bqce}. After extending results from \cite{BQCF}
  and employing regularity estimates for the elastic fields generated
  by crystal defects \cite{EhrOrtSha:defects}, we are able to also
  conclude stability of the B-QCF scheme; see
  \S~\ref{sec:int:stab_bqcf}. Aside from technical conditions, our
  stability results only require the assumption that the atomistic
  equilibrium we are aiming to approximate is itself stable, but no
  assumptions on the magnitude or smoothness of the solution as in
  \cite{2012-MATHCOMP-qce.pair} or \cite{Lu.bqcf:2011} are required.
\end{enumerate}

The paper is structured as follows: In \S~\ref{sec:prelims} we
introduce a number of concepts that we require in order to formulate
the B-QCE and B-QCF schemes (\S~\ref{sec:defn_bqce} and
\S~\ref{sec:def_bqcf}), and to state the main results in
\S~\ref{sec:approx_error}. Our concluding remarks are also contained
in that section, in \S~\ref{sec:conclusion}.  In
\S~\ref{sec:intermediate} we present the key ideas and intermediate
results that are required to prove the main results. Finally, in
\S~\ref{sec:aux}--\S~\ref{sec:stab_prfs} we present the technical
details of the proofs.

\section{Prerequisites}
\label{sec:prelims}

\subsection{Generic notation}
\label{sec:prelim:notation}
Functions are normally maps from $\R^d \to \R^m$ or $\Z^d \to \R^m$
for some $d, m \in \{1, 2,3\}$. Vectors in $\R^d, \R^m$ or vectorial
functions are normally denoted by the symbols $y, z, u, v, w, f$. Lattice
sites, i.e. elements of $\Z^d$ are normally denoted by $\xi, \eta$,
while points in the continuous reference configuration are denoted by
$x \in \R^d$. We also identify $x$ with the identity map.

Matrices or matrix-valued functions are normally denoted by $\mA, \mB,
\mS, \mR$ and so forth. Tensors of fourth or higher rank are normally
denoted by $\bbA, \bbB, \bbC$, and so forth.

If a function $f : \R^d \to \R^m$ is (weakly) differentiable, then we
denote its jacobi matrix at $x$ by $\D f(x)$. If $f$ is scalar-valued,
then $\D^2 f(x)$ denotes the hessian matrix. In general, $\D^j f(x)$
denotes a tensor of order $m \times d \times \dots \times d$. Partial
derivatives with respect to some variable $s$ are denoted by
$\frac{\partial}{\partial s}$ or $\partial_s$. If $\alpha = (\alpha_1,
\dots, \alpha_d)$ is a multi-index, then $\partial_\alpha
= \partial_{x_{\alpha_1}} \cdots \partial_{x_{\alpha_d}}$.

Directional derivatives are denoted by $\D_\rho f := \D f \rho$, $\rho
\in \R^d$. If $\Rg \subset \R^d$ then we define a collection of
directional derivatives $\D_\Rg f(x) := (\D_\rho f(x))_{\rho \in
  \Rg}$.

Our use of tensor notation is intuitive and not crucial to follow the
main ideas. Nevertheless, for the sake of completeness we formally
define our notation.  The symbol $\otimes$ denotes the usual tensor
product: if $\bbA = (A_{i_1,\dots,i_r}) \in \R^{n_1\times\cdots\times
  n_r}$ and $\bbB = (B_{k_1,\dots,k_s}) \in \R^{m_1\times\cdots\times
  m_s}$, then $\bbA\otimes\bbB = (A_{i_1,\dots,i_r}B_{k_1,\dots k_s})
\in \R^{n_1\times \cdots \times n_r \times m_1 \times \cdots \times
  m_s}$. If $\bbA = (A_{i_1,\dots,i_{s+r}}) \in
\R^{m_1\times\cdots\times m_s \times n_1\times\cdots \times n_r}, \bbB
= (B_{j_1,\dots,j_r}) \in \R^{n_1\times\cdots\times n_r}$, then the
contraction operator is denoted by $(\bbA:\bbC)_{j_1,\dots,j_s} =
\sum_{i_1}^{n_1} \cdots \sum_{i_r}^{n_r}
A_{j_1,\dots,j_s,i_1,\dots,i_r} B_{i_1,\dots,i_r}$. In particular, if
$\bbA, \bbB$ have the same rank, then $\bbA:\bbB \in \R$ denotes the
euclidean inner product.

The symbol $\< \cdot, \cdot \>$ denotes an abstract duality pairing.
If $X, Y$ are normed linear spaces and $\mathscr{F} : X \to Y$ has
well-defined directional derivatives at a point $u \in X$, then we
denote the first of second derivatives, respectively, by
\begin{align*}
  \< \del \mathscr{F}(u), v \> &:= \lim_{t \to 0} t^{-1} \b(
  \mathscr{F}(u+tv) - \mathscr{F}(u)\b) \quad \text{and} \\
  \< \ddel \mathscr{F}(u) v, w \> &:= \lim_{t \to 0} t^{-1} \b\<
  \del\mathscr{F}(u+tw) - \del \mathscr{F}(u), v \b\>.
\end{align*}
Higher variations are defined recursively, e.g., $\< \del^3
\mathscr{F}(u) v_1, v_2, v_3 \> = \lim_{t \to 0} t^{-1} \< (\ddel
\mathscr{F}(u+tv_3) - \ddel \mathscr{F}(u)) v_1, v_2 \>$, whenever the
limit exists.

We use the standard definitions and notation $L^p, W^{k,p}, H^k$ for
Lebesgue and Sobolev spaces, and $\ell^p$ for sequence spaces on
$\Z^d$ or subsets thereof.

The closed ball with radius $r$ and center $x$ is denoted by
$B_r(x)$. Further, we set $B_r := B_r(0)$.

\subsection{Lattice functions and function spaces}
For $d \in \{2, 3\}, m \in \{1,2,3\}$, we denote the set of
vector-valued lattice functions by
\begin{displaymath}
  \Us := \Us(\Z^d)^m := \b\{ v : \Z^d \to \R^m \b\}.
\end{displaymath}
We interpret the lattice $\Z^d$ as the vertex set of a simplicial grid
$\TT$, as follows:
\begin{itemize}
\item in 2D, $\TT = \{ \xi + \hat{T}, \xi - \hat{T} \sep \xi \in \Z^2
  \}$ where $\hat{T} = {\rm conv}\{ 0, e_1, e_2\}$;
\item in 3D, $\TT = \{ \xi + \hat{T}_j \sep \xi \in \Z^3, j = 1,
  \dots, 6 \}$, where $\hat{T}_1, \dots, \hat{T}_6$ subdivide the cube
  $[0,1]^3$ as displayed in Figure \ref{fig:tet}.
\end{itemize}
\begin{figure}
  \begin{center}
    \includegraphics[height=4cm]{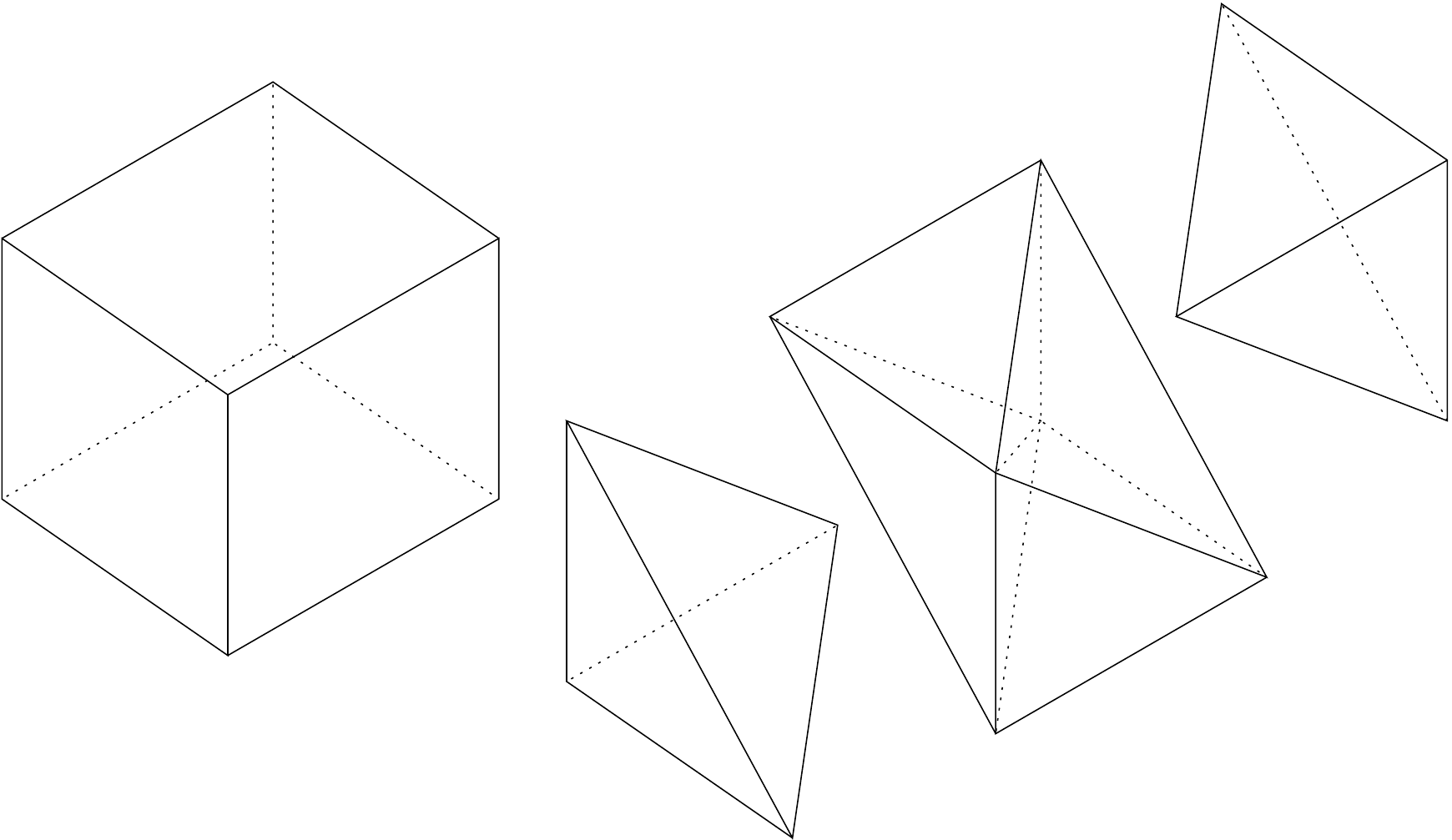}
    \caption{\label{fig:tet} Subdivision of the cube $[0,1]^3$ into 6
      tetrahedra $\hat{T}_1, \dots, \hat{T}_6$, so that the resulting
      partition $\TT$ is invariant under reflection about any lattice
      point $\xi \in \Z^3$.}
  \end{center}
\end{figure}

Let $\zz \in \WW^{1,\infty}(\R^d; \R)$ be the P1 nodal basis function
associated with the origin; that is $\zz$ is continuous and piecewise
affine with respect to $\TT$, $\zz(0) = 1$ and $\zz(\xi) = 0$
otherwise. We can then write the nodal interpolant as
\begin{equation}
  \label{eq:interp:S1_interp}
  \barv(x) := \sum_{\xi \in \Z^d} v(\xi) \zz(x - \xi), \qquad
  \text{for } v \in \Us.
\end{equation}
Clearly, $\barv \in W^{1,\infty}_{\rm loc}(\R^d)$ and $\barv(\xi) =
v(\xi)$ for all $\xi \in \L$.

Using the previous definition, we introduce the discrete homogeneous
Sobolev spaces
\begin{displaymath}
  \Us^{1,p} := \b\{ u \in \Us \bsep \D \baru \in \LL^p \b\}, \qquad
  \text{for } p \in [1,\infty],
\end{displaymath}
and the associated semi-norms $|u|_{\Us^{1,p}} := \| \D \baru
\|_{\LL^p}$.  This semi-norm fails to be a norm since it does not
penalize translations, but this issue will not enter our analysis.
For $p \in [1, \infty)$, the space of compact displacements,
\begin{displaymath}
  \Usc := \{u \in \Us \sep {\rm supp}(u) \mbox{ is compact}\}
\end{displaymath}
is dense in $\Us^{1,p}$ in the sense that, for each $u \in \Us^{1,p}$
there exists $u_n \in \Usc$ such that $\D \baru_n \to \D \baru$
strongly in $L^p$. \cite[Prop.~9]{OrtShap:Interp}.

\subsubsection{Smooth interpolant}
Since we will be primarily interested in approximation results, we
require some information about the {\em regularity} of lattice
functions.  Higher-order finite differences, a natural measure of
local smoothness of lattice functions, are cumbersome for our
analysis, hence we introduce a $C^{2,1}$-conforming multi-quintic
interpolant whose derivatives will provide equivalent information.  To
construct it we define the second-order nearest-neighbour finite
differences
\begin{align*}
  D_{i}^{\nn,0} u(\xi) &:= u(\xi), \\
  D_{i}^{\nn,1} u(\xi) &:= \smfrac12
    \b(u(\xi+e_i) - u(\xi-e_i)\b), \\
  D_{i}^{\nn,2} u(\xi) &:=   u(\xi+e_i) -
    2 u(\xi) + u(\xi-e_i),
\end{align*}
for $\xi \in \L, i \in \{1, \dots, d\}$.  For a multi-index
$\alpha\in\Z^d$, $|\alpha|_\infty\leq2, \alpha_i \geq 0,$ we define
\begin{displaymath}
  D_{\alpha}^\nn u(\xi) := D_{1}^{\nn,\alpha_1} \cdots D_{d}^{\nn,\alpha_d} u(\xi),
\end{displaymath}

The smooth interpolants are now defined through the following
lemma. Closely related and in some respects stronger results can be
found in \cite{Buffa_HPK_refinement2011, Shwab1998_spectral_book}, but
not of the specificity that we require (in particular not for $d =
3$).

\begin{lemma}
  \label{th:prelims:defn_smoothint}
  (a) For each $u \in \Us$ there exists a unique $\tilu \in
  C^{2,1}(\R^d;\R^m)$ such that $\tilu \in Q_5(\xi+(0, 1)^d)$ for all
  $\xi \in \L$ and
  $
    \partial_\alpha\tilu(\xi) = D_\alpha^\nn u(\xi)
  $
  for $\alpha\in\Z_+^d$, $|\alpha|_\infty \leq 2$, $\xi \in \L$.

  (b) Moreover, there exists a universal constant $C$ such that, for
  $p \in [1, \infty]$, $0 \leq j \leq 3$,
  \begin{equation}\label{eq:prelims:defn_smoothint}
    \| \nabla^j \tilu \|_{L^p(\xi+(0,1)^d)} \leq C \| D^j u
    \|_{\ell^p(\xi + \{-1,0,1,2\}^d)}.
  \end{equation}
  In particular, it follows that $\| \D \tilu \|_{L^p} \lesssim \| \D
  \baru \|_{L^p}$, where $D$ is the collection of first-order finite
  differences defined in \eqref{eq:defn_D}.
\end{lemma}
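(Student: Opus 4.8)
The plan is to realise $\tilu$ as a tensor product of one‑dimensional quintic Hermite interpolants and then to read off both the global regularity and the stability bound from that structure. The one–dimensional building block is classical: on $[\xi,\xi+1]$ a polynomial of degree $5$ has six coefficients, and prescribing value, first and second derivative at the two endpoints gives six conditions with nonsingular coefficient matrix, so there is a unique quintic Hermite interpolant depending linearly on the data. Write $\phi_{s,k}\in Q_5([0,1])$, $s\in\{0,1\}$, $k\in\{0,1,2\}$, for the cardinal basis characterised by $\partial^j\phi_{s,k}(t)=\delta_{st}\delta_{kj}$ for $t\in\{0,1\}$, $j\in\{0,1,2\}$; thus $\phi_{0,k}$ vanishes together with its first two derivatives at $t=1$, and $\phi_{1,k}$ likewise at $t=0$. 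For $u\in\Us$ one defines, on the cell $C_\xi:=\xi+[0,1]^d$,
\[
  \tilu(x):=\sum_{\eta\in\{0,1\}^d}\ \sum_{\alpha\in\{0,1,2\}^d}
    D_\alpha^\nn u(\xi+\eta)\,\prod_{i=1}^d\phi_{\eta_i,\alpha_i}(x_i-\xi_i),
  \qquad x\in C_\xi .
\]
This is a $Q_5$ polynomial on $C_\xi$; evaluating derivatives at the vertices and using $\partial^j\phi_{s,k}(t)=\delta_{st}\delta_{kj}$ gives $\partial_\alpha\tilu(\xi+\eta)=D_\alpha^\nn u(\xi+\eta)$ for every vertex and every $\alpha$ with $|\alpha|_\infty\le2$, and uniqueness on each cell holds because the $6^d$ conditions match $\dim Q_5$ and the map is the tensor product of the invertible one–dimensional interpolation maps.

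It remains to check that the cell–wise definitions agree to second order across interfaces. Fix a face $F=\{x_1=\xi_1+1\}\cap C_\xi$ shared by $C_\xi$ and $C_{\xi+e_1}$. Restricting $\partial_{x_1}^{\,\ell}\tilu|_{C_\xi}$ to $x_1=\xi_1+1$ and using $\partial^j\phi_{s,k}(1)=\delta_{s,1}\delta_{k,j}$, one finds for $\ell\in\{0,1,2\}$ that $\partial_{x_1}^{\,\ell}\tilu|_{C_\xi}$ restricted to $F$ equals $\sum_{\eta'}\sum_{\alpha'}D^\nn_{(\ell,\alpha')}u(\eta')\prod_{i\ge2}\phi_{\eta'_i,\alpha'_i}(x_i-\xi_i)$, a sum over vertices $\eta'\in F\cap\L$ and $\alpha'\in\{0,1,2\}^{d-1}$; the same computation from $C_{\xi+e_1}$, where $F$ is a ``$t=0$'' face, yields the identical expression, since the finite–difference data attached to the vertices of $F$ do not depend on the cell from which $F$ is approached. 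Hence $\tilu,\partial_{x_1}\tilu,\partial_{x_1}^2\tilu$ are continuous across $F$, and the remaining components of $\D\tilu,\D^2\tilu$ on $F$ are tangential derivatives of these and so continuous as well. Since every interface is of this form (up to relabelling coordinates), the pieces glue to a function in $C^2(\R^d;\R^m)$; being piecewise polynomial it is $C^{2,1}_{\rm loc}$, with the Lipschitz bound for $\D^2\tilu$ on bounded sets controlled by part (b) (and a global $C^{2,1}$ bound when $u$ has bounded third differences). This proves (a).

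For (b), by translation we may take $\xi=0$ and the reference cube $\hat C:=(0,1)^d$. There $\tilu$, hence each $\D^j\tilu$, is a linear function of the finite data $U:=(u(\eta))_{\eta\in\{-1,0,1,2\}^d}$, so $U\mapsto\D^j\tilu$ is a linear map into $L^\infty(\hat C)$ on a finite–dimensional space. The key point is a reproduction property: since $D_i^{\nn,1}$ and $D_i^{\nn,2}$ reproduce $\partial_i$ and $\partial_i^2$ on polynomials of degree $\le2$ and $\le3$ in $x_i$ respectively, for $j\le3$ every polynomial $q$ with $\deg q<j$ (so of per–variable degree $\le2$) satisfies $D_\alpha^\nn q=\partial_\alpha q$ for all $|\alpha|_\infty\le2$; by the uniqueness in (a) the interpolant of such data is $q$ itself, so $\D^j\tilu=0$. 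Thus $U\mapsto\D^j\tilu$ annihilates the kernel of the semi‑norm $U\mapsto\|D^ju\|_{\ell^\infty(\{-1,0,1,2\}^d)}$ (which, being the restrictions to the stencil of polynomials of degree $<j$, is contained in the space reproduced by $\tilu$), and since both live on a finite–dimensional space we get $\|\D^j\tilu\|_{L^\infty(\hat C)}\le C\|D^ju\|_{\ell^\infty(\{-1,0,1,2\}^d)}$ with $C$ universal ($d,j,m$ ranging over finite sets). Finally, as $\hat C$ has unit measure, $\|\cdot\|_{L^p(\hat C)}\le\|\cdot\|_{L^\infty(\hat C)}$ and $\|\cdot\|_{\ell^\infty}\le\|\cdot\|_{\ell^p}$, whence $\|\D^j\tilu\|_{L^p(\hat C)}\le C\|D^ju\|_{\ell^p}$, i.e.\ \eqref{eq:prelims:defn_smoothint}. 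Translating back, summing $p$‑th powers over all cells (each lattice point meets the stencil of at most $4^d$ cells), and using the standard equivalence $\|Du\|_{\ell^p(\L)}\sim\|\D\baru\|_{L^p}$ gives the final claim $\|\D\tilu\|_{L^p}\lesssim\|\D\baru\|_{L^p}$.

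The crux is the $C^2$ gluing in (a): one must verify that a tensor–product quintic Hermite interpolant built from \emph{nearest–neighbour finite–difference} data, rather than genuine derivative data, still matches to second order across every cell interface — and this is precisely what forces the choice of stencil ($|\alpha|_\infty\le2$) and the polynomial degree ($Q_5$). Once that is in place the remaining ingredients — polynomial reproduction, the finite–dimensional norm equivalence, and the $p$‑independence of the constant via the unit–cube normalisation — are routine.
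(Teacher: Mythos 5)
Your proposal is correct and follows essentially the same route as the paper's proof in \S\ref{sec:analysis_smoothint}: cell-wise $Q_5$ Hermite interpolation of the nearest-neighbour finite-difference data, $C^2$ matching across faces because the vertex data on a shared face is independent of the cell from which it is approached, and the stability bound via polynomial reproduction combined with a finite-dimensional seminorm-equivalence (kernel) argument. The only differences are cosmetic: you build the interpolant explicitly from tensor products of one-dimensional cardinal quintics and verify the face matching by direct restriction, where the paper establishes existence via the Bernstein basis and continuity via uniqueness of the induced $(d-1)$-dimensional Hermite problem on the face.
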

\begin{proof}
  The proof is given in \S~\ref{sec:analysis_smoothint}.
\end{proof}

\subsection{The atomistic model}
\label{sec: atomistic model}
We review an atomistic model from \cite{EhrOrtSha:defects} for a
defect in a homogeneous crystalline environment, which will form the
``exact problem'' that we will subsequently aim to approximate using
atomistic/continuum blending schemes.

We will consider atomistic models for two classes of crystallographic
defects: point defects and screw dislocations.


\subsubsection{Far-field boundary condition}
We fix domain and range dimensions $d \in \{2,3\}, m \in \{1,2,3\}$.
We call $\Z^d$ the \emph{reference configuration} and, with some abuse
of terminology, a map $y \in \Us$ a {\em deformed configuration} or
{\em deformation}. For example, if $d = m = 3$, then $y(\xi)$ is the
position of atom $\xi$.

We shall impose a {\em far-field boundary condition} $y(\xi) \sim
y_0(\xi)$ as $|\xi| \to \infty$, by specifying a {\em reference
  deformation} $y_0 : \R^d \to \R^m$ and admitting only deformations
from the space
\begin{displaymath}
  \Y := \b\{y \in \Us \bsep y = y_0 + u
  \mbox{ for some } u \in \Us^{1,2} \b\}.
\end{displaymath}
We explain how to choose $y_0$ to model various types of defects in
\S~\ref{sec:model:point} and \S~\ref{sec:model:screw} below. It will
later become important that $y_0$ is defined on all of $\R^d$.

For future reference, we extend the definition of the two lattice
interpolants as follows:
\begin{equation}
  \label{eq:prelims_defn_Iy}
  \bary := y_0 + \baru \quad \text{and} \quad
  \tily := y_0 + \tilu.
\end{equation}
(Strictly speaking, this represents a clash of notation.  However,
henceforth we will always apply the smooth interpolant to elements of
$\Ys$ or $\Us^{1,2}$ and therefore adopt the latest definition
\eqref{eq:prelims_defn_Iy}.)

\begin{remark}
  \label{rem:justification_ffbc}
  To justify how we impose the far-field boundary condition we note
  that, in all our model problems we will have that $y_0(\xi)$ scales
  linearly as $|\xi| \to \infty$, while $u \in \Us^{1,2}$ implies that
  $|u(\xi)| = o(|\xi|)$ \cite[Prop.~12]{OrtShap:Interp}. Thus, we have
  that $y(\xi) \sim y_0(\xi) + o(|y_0(\xi)|)$ as $|\xi| \to \infty$.

  The choice of the $\Us^{1,2}$ space for the relative displacements
  $u$ is due to the fact that these are precisely the ``finite-energy
  displacements''.
\end{remark}

\subsubsection{Energy difference functional}
\def\rcut{r_{\rm cut}}
We now define an energy (difference) functional on the space of
deformations.  First, we choose a finite {\em interaction range} $\Rg
\subset B_{\rcut} \cap \Z^d \setminus \{0\}$, where $\rcut > 0$ is a
{\em cut-off radius}, and we define the finite difference operator and
finite difference stencil
\begin{equation}
  \label{eq:defn_D}
  \begin{split}
    \Da{\rho} v(\xi) :=~& v(\xi+\rho) - v(\xi), \qquad \text{for } v \in
    \Us,  \quad \xi, \rho \in
    \Z^d, \quad \text{and} \\
    D v(\xi) :=~& \b( \Da{\rho} v(\xi) \b)_{\rho \in \Rg}, \qquad
    \text{for } \xi \in \Z^d.
  \end{split}
\end{equation}
We additionally make the technical assumption, without restriction of
generality, that $e_i\in\Rg$ for $i=1,\ldots,d$. Then, for $y \in \Y$,
we define an \emph{atomistic energy difference functional} of the form
\begin{equation}
  \label{def: energy difference functional}
  \Ea(y) := \sum_{\xi \in \Z^d} V(D y(\xi)) - V(D y_0(\xi)),
\end{equation}
where $V \in C^4( (\R^m)^\Rg)$ is a \emph{site potential}. If $y - y_0
\in \Usc$, then $\Ea(y)$ is well-defined, and we will show in Lemma
\ref{th:atmE_welldefined} (see also \S\ref{sec:model:point} and
\S\ref{sec:model:screw}) that, under natural conditions on $y_0$, $\E$
can be extended to $y \in \Ys$.

\def\domV{(\R^m)^\Rg}
\def\domEa{{\rm dom}(\Ea)}
\def\vsig{\varsigma}
\def\bfrho{{\bm \rho}}

We denote the partial derivatives of $V$ at a stencil $\bfg \in \domV$
by
\begin{align*}
  V_{,\rho}(\bfg) := \frac{\pp V(\bfg)}{\pp g_\rho} \in \R^m, \qquad
  V_{,\rho\vsig}(\bfg) := \frac{\pp^2 V(\bfg)}{\pp g_\rho \pp
    g_\vsig} \in \R^{m \times m},
\end{align*}
and so forth. For $\bfrho \in \Rg^j$ we also write
$V_{,\rho_1\cdots\rho_j} = V_{,\bfrho} \in \R^{m \times \cdots \times
  m}$. The first and second variations of $\E$, for test functions $v,
w \in \Usc$, and writing $V_{\xi,\bfrho} \equiv V_{,\bfrho}(Dy(\xi))$,
are given by
\begin{align*}
  \< \del\Ea(y), v \> &= \sum_{\xi \in \L} \< \del V(Dy(\xi)), Dv(\xi)
  \> = \sum_{\xi \in \L} \sum_{\rho \in \Rg} V_{\xi,\rho} \cdot
  D_\rho v(\xi), \quad \text{and} \\
  \< \ddel\Ea(y) v, w \> &= \sum_{\xi \in \L} \b\< \ddel V(Dy(\xi))
  Dv(\xi), Dw(\xi) \b\> = \sum_{\xi \in \L} \sum_{\rho,\vsig \in \Rg}
  D_\rho v(\xi) \cdot \b( V_{\xi,\rho\vsig} D_\vsig w(\xi) \b).
\end{align*}

We require throughout that $\Rg$ and $V$ are {\em point-symmetric}:
$-\Rg = \Rg$, and if $\bfg \in (\R^m)^\Rg$ and $\bfh =
(-g_{-\rho})_{\rho \in \Rg}$, then $V(\bfg) = V(\bfh)$. In particular,
this requirement implies that
\begin{equation}
  \label{eq:point_symmetry}
  V_{,-\bfrho}(\mF \Rg) = (-1)^{j} V_{,\bfrho}(\mF\Rg) \quad
  \text{for } \bfrho \in \Rg^j, j \geq 1, \quad \mF \in \R^{m \times d}.
\end{equation}

\begin{lemma}
  \label{th:atmE_welldefined}
  Suppose that $Dy_0 \in \ell^\infty(\L; (\R^m)^\Rg)$ and $\del\Ea(y_0)
  \in (\Us^{1,2})^*$, that is, $\< \del\Ea(y_0), v \> \leq c \| \D
  \barv \|_{L^2}$ for all $v \in \Usc$, then there exists a unique
  continuous and translation invariant extension of $u \mapsto \Ea(y_0
  +u), u \in \Usc$ to $u \in \Us^{1,2}$. The extended functional is
  four times continuously Fr\'echet differentiable in $\Us^{1,2}$.
\end{lemma}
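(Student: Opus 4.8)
The plan is to construct the extension by density and continuity, using $\Usc$ as a dense subspace of $\Us^{1,2}$ (as recalled after the definition of $\Usc$). First I would fix $u \in \Usc$ and write, using the fundamental theorem of calculus along the segment $t \mapsto y_0 + tu$,
\begin{equation*}
  \Ea(y_0+u) = \int_0^1 \< \del\Ea(y_0 + tu), u \> \dt
  = \int_0^1 \sum_{\xi \in \L} \sum_{\rho \in \Rg} V_{,\rho}\b(D y_0(\xi) + t Du(\xi)\b) \cdot D_\rho u(\xi) \dt,
\end{equation*}
which is legitimate since $u$ has compact support so only finitely many terms are nonzero. The key estimate is then to bound $\< \del\Ea(y_0+tu), u \>$ uniformly in $t \in [0,1]$ by $C\,\|\D\baru\|_{L^2}$ with $C$ independent of $u$. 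Writing $V_{,\rho}(Dy_0(\xi) + tDu(\xi)) = V_{,\rho}(Dy_0(\xi)) + \int_0^t \sum_{\vsig\in\Rg} V_{,\rho\vsig}(Dy_0(\xi) + sDu(\xi))\,D_\vsig u(\xi)\,{\rm d}s$, the first piece is controlled by the hypothesis $\del\Ea(y_0) \in (\Us^{1,2})^*$, while the second piece is bounded by $\|\ddel V\|_{L^\infty}$ times $\sum_\xi |Du(\xi)|^2 \lesssim \|\D\baru\|_{L^2}^2$; here one uses $Dy_0 \in \ell^\infty$ so that $Dy_0(\xi) + sDu(\xi)$ stays in a bounded set only \emph{after} one localizes — which is the subtle point addressed below. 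This shows $|\Ea(y_0+u)| \le C(1 + \|\D\baru\|_{L^2})\|\D\baru\|_{L^2}$, and more importantly that $u \mapsto \Ea(y_0+u)$ is locally Lipschitz on $\Usc$ in the $\Us^{1,2}$ semi-norm, hence extends uniquely to a locally Lipschitz, translation-invariant functional on $\Us^{1,2}$.

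For the $C^4$ regularity I would not differentiate the extension directly but instead exhibit explicit candidate formulas for $\del\Ea, \ddel\Ea, \del^3\Ea, \del^4\Ea$ as multilinear forms on $\Us^{1,2}$, obtained by passing to the limit in the expressions already written down for test functions in $\Usc$. Concretely, $\< \ddel\Ea(y_0+u)v,w\> = \sum_\xi \sum_{\rho,\vsig} D_\rho v(\xi)\cdot(V_{,\rho\vsig}(Dy_0(\xi)+Du(\xi)) D_\vsig w(\xi))$ is absolutely convergent for $u,v,w \in \Us^{1,2}$ by Cauchy--Schwarz and $\|\ddel V\|_\infty < \infty$, defining a bounded bilinear form; similarly $\del^3\Ea$ and $\del^4\Ea$ are bounded as maps into the relevant spaces of multilinear forms because $V \in C^4$ with bounded third and fourth derivatives on the relevant bounded set. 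Then one checks that these forms are indeed the Fréchet derivatives of the extension: the difference quotients converge, with the error controlled by the (uniform) continuity of $D^j V$ on bounded sets together with the density argument. This is a standard bootstrap: Lipschitz continuity of $\del^j\Ea$ as a function of $u$, plus the correct formal derivative, gives $C^{j}$, for $j = 1, \dots, 4$.

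The main obstacle is the interplay between the \emph{global} nature of $Dy_0 \in \ell^\infty$ and the \emph{local} nature of the $L^2$ bounds on $Du$: $V$ is only $C^4$ on all of $(\R^m)^\Rg$ with no global bounds on its derivatives assumed, so one cannot simply say $\|D^jV(Dy_0+Du)\|_\infty \le C$. The resolution — and the step I expect to require the most care — is to split $\L$ into the region where $|Du(\xi)|$ is small (say $\le 1$), where $Dy_0(\xi)+tDu(\xi)$ ranges over the bounded set $\{|g| \le \|Dy_0\|_\infty + 1\}$ on which $D^jV$ is genuinely bounded, and the exceptional region $\{|Du(\xi)| > 1\}$, which is finite for $u \in \Usc$ but must be handled for general $u \in \Us^{1,2}$ via the density sequence $u_n \in \Usc$ with $\D\baru_n \to \D\baru$ in $L^2$. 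On that exceptional set one uses that $\sum_\xi |Du(\xi)|^2 < \infty$ forces $|\{|Du(\xi)|>1\}|$ finite and $|Du|$ summable there in a way that still yields convergence of the energy sums, now exploiting the polynomial growth of $D^jV$ on the relevant rays via $V \in C^4$ and Taylor expansion around $Dy_0(\xi)$. Once this localization is set up, the density/continuity argument and the multilinear-form computations are routine, and uniqueness of the extension follows from continuity together with density of $\Usc$.
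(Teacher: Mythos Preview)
Your overall strategy is correct and is essentially the standard one used in the references the paper cites (the paper itself gives no proof, only pointing to \cite{EhrOrtSha:defectsV1} and \cite{2012-ARMA-cb}): represent $\Ea(y_0+u)$ via the fundamental theorem of calculus, split off $\del\Ea(y_0)$ using the hypothesis, bound the remainder by $\|Du\|_{\ell^2}^2$, conclude local Lipschitz continuity on $\Usc$ with respect to the $\Us^{1,2}$ semi-norm, and extend by density; then verify the explicit multilinear expressions for $\del^j\Ea$ are the Fr\'echet derivatives.

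However, the paragraph you flag as ``the main obstacle'' is not an obstacle at all, and your proposed resolution via an exceptional-set decomposition is unnecessary. On a lattice one has the trivial embedding $\ell^2(\Z^d) \hookrightarrow \ell^\infty(\Z^d)$ with $\|\cdot\|_{\ell^\infty} \le \|\cdot\|_{\ell^2}$, and since $\|Du\|_{\ell^2} \lesssim \|\D\baru\|_{L^2}$ (each $D_\rho u(\xi)$ is a line integral of $\D\baru$ over a segment of length $|\rho| \le r_{\rm cut}$ through finitely many unit cells), it follows that $\|Du\|_{\ell^\infty} \lesssim \|\D\baru\|_{L^2}$. Hence for $u$ in any $\Us^{1,2}$-ball of radius $R$, the stencils $Dy_0(\xi) + t\,Du(\xi)$ lie in the fixed compact set $\{|g_\rho| \le \|Dy_0\|_{\ell^\infty} + CR\}$, on which $V$ and its first four derivatives are uniformly bounded because $V \in C^4$. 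This gives the required uniform bounds on $V_{,\bfrho}$ directly, with no splitting into small/large $|Du|$ regions and no need to invoke polynomial growth along rays. Once you replace your last paragraph with this one-line observation, the rest of your argument goes through cleanly.
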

\begin{proof}
  This result is a simplified variant of
  \cite[Thm. 2.3]{EhrOrtSha:defectsV1} or
  \cite[Thm. 2.8]{2012-ARMA-cb}.
\end{proof}

We now specify further details of the atomistic model for two
interesting situations: point defects and screw dislocations.

\subsubsection{Model for point defects}
\label{sec:model:point}
Strictly speaking, point defects occur only in 3D models, however we
also admit 2D toy models. Moreover, some combinations of topological
defects such as infinite vacancy-type dislocation loops or dislocation
dipoles with small separation distance may occasionally also be
treated as point defects, at least from an analytical perspective.

Thus, we admit $d \in \{2,3\}, m \in \{1,2,3\}$. We choose a
\emph{macroscopic strain} $\mA \in \R^{m \times d}$, non-singular, and
the far-field boundary condition $y_0(x) := \mA x$. (The matrix $\mA$
encodes the lattice structure, say $\mB \Z^d$, as well as an applied
macroscopic deformation $x \mapsto \mF x$; in this case $\mA = \mF
\mB$.)

Some point defects, such as Frenkel pairs, dislocation dipoles, can be
modeled as local (but not global) minimisers of $\E$. Other types of
point defects, such as vacancies, interstitials and impurities, can be
modeled (to some extent) by adding an external {\em defect potential}
$\Pa \in C^4(\Ys)$ to the total energy (see
\cite{EhrOrtSha:defectsV1}). We shall assume throughout that
\begin{enumerate}
\item[(A.P1)] $\Pa$ is localised: there exists $R_\Pa >0$ so that $\Pa$
  depends only on $(y(\xi); |\xi| \leq R_\Pa)$.
\item[(A.P2)] $\Pa$ is translation invariant: $\Pa(y) = \Pa(y+c)$, where
  $c(\xi) = c \in \R$.
\end{enumerate}
The total energy for point defects is then given by
\begin{displaymath}
  y \mapsto \Ea(y) + \Pa(y).
\end{displaymath}

\begin{remark}
  For slightly more complex defect geometries, such as multiple
  interstitials, it is convenient to augment the reference
  configuration, $\Z^d$, by a finite number of points. Conceptually,
  our analysis is easy to extend to such cases, but we keep our
  simplifying assumptions for the sake of a convenient notation. We
  refer to \cite{EhrOrtSha:defectsV1} for details of the ideas
  required to carry out this extension.
\end{remark}

\subsubsection{Model for screw dislocations}
\label{sec:model:screw}
\def\ulin{u^{\rm lin}}
Consider a straight screw dislocation in a Bravais lattice $\mB \Z^3$,
with Burgers vector $b \in \mB \Z^3$.  By rotating and dilating $\mB
\Z^3$, we may assume without loss of generality that $b = |b| e_3$ and
that $e_3$ is the shortest vector belonging to $\mB \Z^3$ which is
parallel to $b$. We assume, without loss of generality, that $|b| =
1$, i.e., $b = e_3$. In \cite{HudsonOrtner:disloc,
  EhrOrtSha:defectsV1} it is shown that a straight screw dislocation
can be modeled by an energy of the form \eqref{def: energy difference
  functional} with $m=3$ and $d=2$ and a reference deformation $y_0$
given by a linearised elasticity model. We briefly summarize the
construction:

We seek a reference deformation of the form $y_0(x) = \mA x +
\ulin(x)$, where $\mA \in \R^{3 \times 2}$, full rank. The matrix
$\mA$ incorporates the underlying lattice structure and any applied
macroscopic in- and anti-plane deformation, while $\ulin$ is the
displacement map according to linearised Cauchy--Born elasticity: Let
$W: \Real^{3 \times 2} \rightarrow \Real \cup \{\infty\}$ be the
\emph{Cauchy--Born strain energy density} defined by $W(\mF) =
V(\mF\Rg)$ (see \S~\ref{sec:prelims:cb} for more details), and let
$\bbC := \partial^2 W(\mA) \in \R^{3 \times 2 \times 3 \times 2}$ be
the corresponding linearised elasticity tensor. Then we require that
$\ulin \in C^\infty(\R^2 \setminus \Gamma; \R^3)$, where $\Gamma := \{
(x_1, 0) \sep x_1 \geq 0 \}$ is the ``glide plane'', and solves
\begin{equation}
  \label{eq:disl:eqn_ulin}
  \sum_{j = 1}^3 \sum_{\alpha,\beta = 1}^2
  \bbC_{i\alpha}^{j\beta} \partial_{x_\alpha} \partial_{x_\beta}
  \ulin_j(x) = 0 \quad \mbox{ for all } x \in \Real^2 \setminus \Gamma.
\end{equation}
In addition $\ulin$ must have Burgers vector $b$; that is, we require
\begin{equation}
  \label{eq:disl:bvec_ulin}
  y_0(x_1, 0-) - y_0(x_1, 0+) = \ulin(x_1, 0-) - \ulin(x_1, 0+) = b
  \qquad \text{for all } x_1 > 0,
\end{equation}
or in other words, $\int_C \nabla \ulin \cdot \dx = b$ for any closed
path $C$ winding once around $0$ in $\Real^2$.

In \cite[Sec. 12-3]{HirthLothe} and in
\cite[Sec. 2.4]{EhrOrtSha:defectsV1} it is shown that, if the
deformation $\mA x$ is strongly stable, i.e., there exists $c_0 > 0$
such that
\begin{equation}
  \label{eq:disl:stab_latt}
  \< \ddel \Ea(\mA x) v, v \> \geq c_0 \| \D \barv \|_{L^2}^2 \qquad
  \forall v \in \Usc,
\end{equation}
then a solution $\ulin \in C^\infty(\R^2 \setminus \{0\}; \R^3)$
satisfying \eqref{eq:disl:eqn_ulin} and \eqref{eq:disl:bvec_ulin}
exists, and moreover, that $\D \ulin \in C^\infty(\R^2 \setminus
\{0\}; \R^{3 \times 2})$ with
\begin{equation}
  \label{eq:regularity_ulin}
  |\D^j \ulin(x)| \leq C_j |x|^{-j} \qquad \text{ for }
  j \geq 1.
\end{equation}

In addition to the assumptions on $V$ made in
\S~\ref{th:atmE_welldefined} we require invariance under lattice slip
by a Burgers vector:
\begin{enumerate}
\item[(A.Vper) \hspace{-7mm}] \hspace{5mm} $V$ is periodic in the
  direction of $b$; that is, if $\bfg, \bfh \in (\R^3)^\Rg$ and
  $g_\rho - h_\rho \in b\Z$ for all $\rho \in \Rg$, then $V(\bfg) =
  V(\bfh)$.
\end{enumerate}

\begin{remark}
  1. Our assumptions on $y_0$ and $V$ are compatible with projecting
  a full 3D model; see \cite[Sec. 2.4]{EhrOrtSha:defectsV1} for the
  details.

  2. One may also formulate an anti-plane model. In this case, we set
  $m = 1$, $W : \R^d \to \R$ and $\ulin$ now solves a scaler elliptic
  equation; again see \cite{EhrOrtSha:defectsV1} for the details.
\end{remark}

\subsubsection{The atomistic variational problem}
\label{sec:atomistic_problem}
\def\ya{y^\a}
\def\barya{\bary^\a}
\def\tilya{\tily^\a}
\def\ua{u^\a}
\def\barua{\baru^\a}
\def\tilua{\tilu^\a}
\def\ca{\gamma^\a}
Throughout the remainder of the paper we assume that all assumptions
stated in \S~\ref{th:atmE_welldefined} hold. Moreover, we make one of
the following two sets of standing assumptions:
\begin{itemize}
\item[(pPt)] {\it Point defect problem: } $y_0 = \mA x$ for some $\mA$
  such that lattice stability \eqref{eq:disl:stab_latt} holds, and
  assumptions (A.P1), (A.P2) are satisfied.
\item[(pDs)] {\it Screw dislocation problem: } $y_0$ is given by
  \eqref{eq:disl:eqn_ulin}, \eqref{eq:disl:bvec_ulin} where $\mA$ is
  such that lattice stability \eqref{eq:disl:stab_latt} holds, and in
  addition assumption (A.Vper) is satisfied. We set $\Pa \equiv 0$.
\end{itemize}
Unless an argument applies equally to both cases (usually this is the
case), or it is clear from the context which of the two problems we
are considering, then we will always specify which set of assumptions
are are employing.

In either case, we seek to compute
\begin{equation}
  \label{eq:min_atm_exact}
  \ya \in \arg\min \b\{ \Ea(y) + \Pa(y) \bsep y \in \Ys \b\},
\end{equation}
in the sense of local minimality with respect to the metric ${\rm
  dist}(y, z) = \| \D \bary - \D\barz\|_{L^2}$.

As usual, we shall require stronger assumptions on the solution than
mere local minimality. Namely, we assume that $y^\a$ is a {\em
  strongly stable equilibrium}, by which we mean that there exists
$\ca > 0$ such that
\begin{equation}
  \label{eq:strong_stab_eq}
  \begin{split}
    \b\< \del \Ea(\ya) + \del\Pa(\ya), v \b\> &= 0 \qquad \forall v \in
    \Usc, \qquad \text{and} \\
    \b\< [\ddel\Ea(\ya) + \ddel\Pa(\ya)] v, v \b\> &\geq \ca \| \D \barv
    \|_{L^2}^2 \qquad \forall v \in \Usc.
  \end{split}
\end{equation}

The existence of a strongly stable equilibrium is a property of the
lattice and the interatomic potential (possibly even of the physical
material). Except in some special circumstances (e.g., when the
perturbation $\Pa$ is ``small'') it is difficult to establish under
the generic assumptions we are making.

However, given the existence of a strongly stable equilibrium, we can
estimate its {\em regularity} away from the defect core.

\begin{lemma}
  \label{th:regularity}
  Let either (pPt) or (pDs) be satisfied and let $\ya = y_0 + \ua$,
  $\ua \in \Us^{1,2}$, be a strongly stable equilibrium. Then, there
  exists $c > 0$ such that, for $j = 1, 2, 3$, and for a.e. $x$, $|x|
  \geq 2$,
  \begin{equation}
    \label{eq:regularity_ua}
    \b|\D^j \tilu^\a(x) \b| \leq \cases{
      c |x|^{1-d-j}, & \text{ case (pPt), } \\
      c |x|^{-j-1} \log |x|, & \text{ case (pDs), $d = 2$.}
    }
  \end{equation}
\end{lemma}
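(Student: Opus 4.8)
The plan is to establish these decay rates by combining the equilibrium equations \eqref{eq:strong_stab_eq} with the far-field regularity of the reference configuration and a bootstrap/Schauder-type argument. The starting point is that $\ua$ solves the nonlinear equilibrium equation, which away from the defect core (i.e. beyond $R_\Pa$ in case (pPt), and beyond the dislocation core in case (pDs)) reduces to a pure atomistic equilibrium $\langle\del\Ea(y_0+\ua),v\rangle = 0$ for test functions $v$ supported away from the core. I would linearise this equation around the homogeneous state: writing the equation as a discrete divergence-form elliptic system with coefficients that depend on $D\ua$, and using that $\ua \in \Us^{1,2}$ already implies $D\ua(\xi)\to 0$ (hence the linearised operator is a small perturbation of the constant-coefficient operator $\ddel\Ea(y_0)$ far from the core, which is stable by \eqref{eq:disl:stab_latt}).

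The core of the argument is then a decay estimate for solutions of the linearised discrete elliptic system. Here I would invoke (or reprove) a discrete Schauder / Cacciopoli-type iteration: on annuli $A_R = \{R \le |x| \le 2R\}$ one controls $\|D^j \tilu^\a\|_{\ell^2(A_R)}$ by the ``data'' — the residual generated by the inhomogeneity of the reference deformation $y_0$ (zero in case (pPt) since $y_0 = \mA x$ exactly, and controlled by \eqref{eq:regularity_ulin}, i.e. $|\D^j\ulin|\lesssim |x|^{-j}$, in case (pDs)) — plus a lower-order contribution from the neighbouring annuli. Iterating this annular estimate inward-to-outward, together with the fact that $\ua$ has finite energy, propagates algebraic decay; the finite-energy constraint $\|D\tilu^\a\|_{\ell^2} < \infty$ is what pins down the rate $|x|^{1-d-j}$ for $j=1$ in the point-defect case (this is the standard decay rate for a discrete elliptic system with a compactly supported/localised source in dimension $d$, matching the decay of the lattice Green's function and its derivatives, $|\D^j G| \lesssim |x|^{2-d-j}$), and the logarithmic correction in the dislocation case comes from the slow $|x|^{-1}$ decay of the reference strain $\D\ulin$ feeding into the right-hand side. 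The passage from the finite-difference decay of $D\ua$ to the decay of the smooth-interpolant derivatives $\D^j\tilu^\a$ is handled by Lemma~\ref{th:prelims:defn_smoothint}, which gives local equivalence of $\|\D^j\tilu^\a\|_{L^p(\xi+(0,1)^d)}$ and $\|D^j u\|_{\ell^p}$ on neighbouring cells.

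The main obstacle I expect is making the discrete elliptic regularity/decay estimate rigorous in this generality: many-body finite-range interactions in two and three dimensions, with the linearised operator only stable (and only asymptotically constant-coefficient), and with the operator's coefficients merely $C^2$ in $D\ua$. One cannot simply quote classical Schauder theory; one needs the discrete analogue, controlling discrete Green's function decay for the constant-coefficient operator $\ddel\Ea(y_0)$ and then a perturbation argument to absorb the variable-coefficient and nonlinear remainder terms (bootstrapping: once $|D\ua(\xi)| \lesssim |\xi|^{-\gamma}$ is known for some $\gamma$, the remainder becomes a better-decaying source, improving $\gamma$ up to the Green's-function-limited rate). In case (pDs) there is the additional subtlety that the problem lives on the slit plane $\R^2\setminus\Gamma$ with the Burgers-vector jump condition \eqref{eq:disl:bvec_ulin}, so the decay estimate must be localised to annuli that avoid the core but the multivaluedness of $\ulin$ must be correctly accounted for in the residual. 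Fortunately this is precisely the type of estimate established in \cite{EhrOrtSha:defects, EhrOrtSha:defectsV1}, and my expectation is that the proof in \S~\ref{sec:aux} (or wherever this is deferred) reduces the statement to the regularity results proven there, supplemented by Lemma~\ref{th:prelims:defn_smoothint} to transfer from finite differences to the smooth interpolant.
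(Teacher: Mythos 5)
Your proposal lands exactly where the paper does: the printed proof of Lemma~\ref{th:regularity} is a one-line citation of \cite[Thm.~3.1]{EhrOrtSha:defectsV1}, which is precisely the reduction you arrive at in your final paragraph (together with Lemma~\ref{th:prelims:defn_smoothint} to pass from finite differences to the smooth interpolant). Your preceding sketch of the discrete Green's-function decay, bootstrap, and the logarithmic correction from the $|x|^{-1}$ decay of $\D\ulin$ is a reasonable account of what that cited theorem's proof involves, but the paper itself supplies none of it.
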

\begin{proof}
  The proof is a straightforward corollary of
  \cite[Thm. 3.1]{EhrOrtSha:defectsV1}.
\end{proof}

\subsection{The Cauchy--Born model}
\label{sec:prelims:cb}
The final concept we need to introduce before formulating a/c coupling
schemes is the Cauchy--Born model. The idea, briefly, is that if $y$
varies slowly then $D_\rho y(\ell) \approx \D_\rho \tily(\ell)$ and
hence $V(Dy(\ell)) \approx W(\D \tily(\ell))$, where the map $W :
\R^{m \times d} \to \R \cup \{+\infty\}$, $W(\mF) := V(\mF \Rg)$, is
called the {\em Cauchy--Born strain energy function}. In the absence
of defects, it is therefore reasonable to approximate the sum of site
energies with an integral over the energy density,
\begin{equation}
  \label{eq:cb_first}
  y \mapsto \int_{\R^d} \B( W(\D y) - W(\D y_0) \B) \dx.
\end{equation}
This model has been analyzed in considerable detail, e.g., in
\cite{BLBL:arma2002,E:2007a,2013:MakrSuli,2012-ARMA-cb}. Subject to
suitable technical conditions the results in these references
demonstrate that, if $y^\a$ is a ``sufficiently smooth'' stable
equilibrium of $\Ea$, then there exists a stable equilibrium $y^\c$ of
\eqref{eq:cb_first} such that
\begin{displaymath}
  \| \D y^\c - \D \barya \b\|_{L^2} \lesssim \| \D^3 \tily^\a \|_{L^2} +
  \| \D^2 \tily^\a \|_{L^4}^2.
\end{displaymath}
That is, the Cauchy--Born model is {\em second-order accurate.}

\section{Main Results}

\subsection{Formulation of the B-QCE and B-QCF methods}
We wish to approximate the atomistic model using a hybrid
atomistic/continuum description. The approximation is achieved in
three steps: 1. We replace the infinite domain with the finite
computational domain $\Omh$. 2. In those parts of $\Omh$ where the
Cauchy--Born approximation has sufficient accuracy we replace the
atomistic model with the Cauchy--Born model. 3. We restrict
deformations to a coarse-grained finite element space.

The key ingredient in this process is the coupling between the
atomistic and continuum models, which we achieve using a blending
formulation.

\subsubsection{Coarse-grained function spaces}
\label{sec:results:coarsening}
\def\Ri{R^{\rm i}}
\def\Ro{R^{\rm o}}
Let $\Omh$ be a polygonal (if $d = 2$) or polyhedral (if $d = 3$)
domain in $\R^d$. Let $\Ri > 0$ be maximal and $\Ro > 0$ be minimal
such that $B_{\Ri} \subset \Omh \subset B_{\Ro}$.

Let $\Th$ be a regular partition of $\Omh$ into closed triangles or
tetrahedra. For $T \in \Th$, let $h_T := {\rm diam}(T)$ and $r_T$ the
diameter of the largest ball contained in $T$. For $x \in \Omh$, let
$h(x) := \max_{T \in \Th, x \in T} h_T$. The associated space of P1
finite element functions is denoted by ${\rm P1}(\Th)$.  If $\Nh$
denotes the set of finite element nodes, then the nodal interpolant of
a function $v : \Nh \to \R^k$ is the unique function $I_h v \in {\rm
  P1}(\Th)$ such that $I_h v = v$ on $\Nh$.

For a function $v : \bigcup_{T \in \Th} {\rm int}(T) \to \R^k, k \in
\N$ let $Q_h v \in {\rm P0}(\Th)$ denote the piecewise constant
mid-point interpolant, $Q_h v(x) := v(x_T)$ for $x \in T\in \Th$,
where $x_T := \mint_T x \dx$.

Exploiting the structure $y = y_0 + u, u \in \Us^{1,2}$ of admissible
deformations, we define the coarse-grained displacement and
deformation spaces, respectively, by
\begin{align*}
  \Us_h &:= \b\{ u_h \in C(\R^d; \R^m) \bsep u_h|_{\Omh} \in {\rm
    P1}(\T),  u_h|_{\R^d \setminus \Omh} = 0 \b\} \quad \text{and} \\
  \Ys_h &:= \b\{ y_h = y_0 + u_h \bsep u_h \in \Us_h \b\}.
\end{align*}

\subsubsection{The B-QCE method}
\label{sec:defn_bqce}
Let $\beta \in C^{2,1}(\R^d)$ be a {\em blending function} then the
B-QCE energy difference functional is defined by
\def\Ebh{\mathscr{E}^{\bl}_h}
\def\domEbh{{\rm dom}(\Ebh)}
\begin{equation}
  \label{eq:def_Ebh}
\begin{split}
  \Ebh(y_h) &:= \sum_{\xi \in \L} \b(1-\beta(\xi)\b) \B(V\b(D
  y_h(\xi)\b) - V\b(Dy_0(\xi)\b)\B) \\
  & \qquad \qquad + \int_\Omh Q_h\B[ \beta \cdot \b(W(\D y_h) - W(\D y_0)\b) \B] \dx,
  \qquad \text{for } y_h \in \Ys_h.
\end{split}
\end{equation}
We assume that $1-\beta$ has compact support, hence the lattice sum is
finite, while the integral is taken over a finite domain; thus $\Ebh$
is well-defined. The application of the mid-point quadrature rule to
evaluate the integral makes \eqref{eq:def_Ebh} fully computable.

\def\bqce{{\rm bqce}}
In the B-QCE method we approximate the atomistic variational problem
\eqref{eq:min_atm_exact} with
\begin{equation}
  \label{eq:min_bqce}
  y_h^\bqce \in \arg\min \b\{ \Ebh(y_h) + \Pa(y_h) \bsep y_h \in \Ys_h \b\}.
\end{equation}

The B-QCE method, as we formulated it, was introduced for
one-dimensional lattices in~\cite{VkLusk2011:blended}, and was later
extended to two and three-dimensions in~\cite{2012-CMAME-optbqce} in a
formulation which differs only marginally from the one given in
\eqref{eq:def_Ebh}: in \cite{2012-CMAME-optbqce} the operator $Q_h$
defined a trapezoidal rule instead of a midpoint rule. As a matter of
fact, all of our results can be adapted to this case.

B-QCE shares many features with the bridging domain
method~\cite{XiBe:2004}, the Arlequin
method~\cite{bauman:applicationofArlequin}, and the AtC
coupling~\cite{BadiaParksBochevGunzburgerLehoucq:2007}.  The bridging
domain method and the Arlequin method differ from B-QCE primarily in
that they couple the atomistic and continuum degrees of freedom weakly
using Lagrange multipliers.  The AtC coupling is a very general
formulation which includes B-QCE and many other methods as special
cases.

\subsubsection{The B-QCF method}
\label{sec:def_bqcf}
While the B-QCE method blends atomistic and continuum energies the
B-QCF method blends atomistic and continuum forces. We first define
the Cauchy--Born finite element functional
\begin{equation}
  \label{eq:defn_cb_Yh}
  \Ech(y_h) := \int_{\Omh} Q_h \b[ W(\D y_h) - W(\D y_0) \b] \dx,
  \qquad \text{ for } y_h \in \Ys_h.
\end{equation}

\def\Fbh{\mathscr{F}^{\beta}_h}
\def\domFbh{{\rm dom}(\Fbh)}
Assume again that $\beta \in C^{2,1}(\R^2)$ is a blending function,
then the B-QCF operator is the nonlinear map $\Fbh : \Ys_h \to
\Us_h^*$, defined by
\begin{equation}
  \label{eq:defn_Fbh}
  \b\< \Fbh(y_h), v_h \b\> := \b\< \del\Ea(y_h), (1-\beta) v_h \b\> +
  \b\< \del\Ech(y_h), I_h[\beta v_h] \b\>,
\end{equation}
where $(1-\beta) v_h$ and $\beta v_h$ are defined in terms of
pointwise multiplication. $\Fbh$ is well-defined since $y_h$ and $v_h$
are defined as functions on all of $\R^d$ and $v_h$ has compact
support.

In the B-QCF method we approximate the atomistic variational problem
\eqref{eq:min_atm_exact} with the variational nonlinear system
\def\bqcf{{\rm bqcf}}
\begin{equation}
  \label{eq:eqn_bqcf}
  \b\< \Fbh\b(y_h^\bqcf\b) + \del\Pa\b(y_h^\bqcf\b), v_h \b\> = 0 \qquad
  \forall v_h \in \Us_h.
\end{equation}

\begin{remark}
  Suppose we define a blended a/c force via
  \begin{displaymath}
    F_\nu(y_h) := (1-\beta(\nu)) \frac{\partial \Ea(y_h)}{\partial y_h(\nu)}
    + \beta(\nu) \frac{\partial \Ech(y_h)}{\partial y_h(\nu)}
    \qquad \text{for } \nu \in \Nh \setminus \partial\Omh, \quad y_h \in \Ys_h,
  \end{displaymath}
  then $- \sum_{\nu \in \Nh \setminus \partial\Omh} F_\nu(y_h)
  v_h(\nu) = \< \Fbh(y_h), v_h \>$.  Thus, the nonlinear system
  $F_\nu(y_h^\bqcf) + \partial_{y_h(\nu)} \Pa(y_h^\bqcf) = 0$, $\nu
  \in \Nh \setminus \partial\Omh$, is equivalent to the variational
  form \eqref{eq:eqn_bqcf}.
\end{remark}

\bigskip

The B-QCF method \eqref{eq:eqn_bqcf} is essentially the same method as
those proposed in \cite{Lu.bqcf:2011, BQCFcomp}. It also has many
parallels with methods formulated in
\cite{BadiaParksBochevGunzburgerLehoucq:2007,bauman:applicationofArlequin,xiao:bridgingdomain,fish:concurrentAtCcoupling,bridging,prudhomme:modelingerrorArlequin,seleson:bridgingmethods,XiBe:2004}.

Both in \cite{Lu.bqcf:2011} and \cite{BQCFcomp} the main motivation of
force-blending was that stability of the scheme can be proben, while
the stability of sharp-interface force-based a/c couplings is entirely
open at this point \cite{doblusort:qcf.stab,qcf.stab,DobShapOrt:2011,
  LuMing12}

\subsection{Approximation Error Estimates}
\label{sec:approx_error}
\def\La{\Lambda^\a}
\def\Omo{\Omega^{\rm ext}}
\def\Oma{\Omega^\a}
\def\Rb{R^\beta}
\def\Ra{R^\a}
To formulate our approximation results, and for the subsequent
analysis, we require additional assumptions on the computational
domain and the mesh. See Figure \ref{fig:radii} for a visualisation of
the following definitions.

\begin{figure}
  \begin{center}
    \includegraphics[width=8cm]{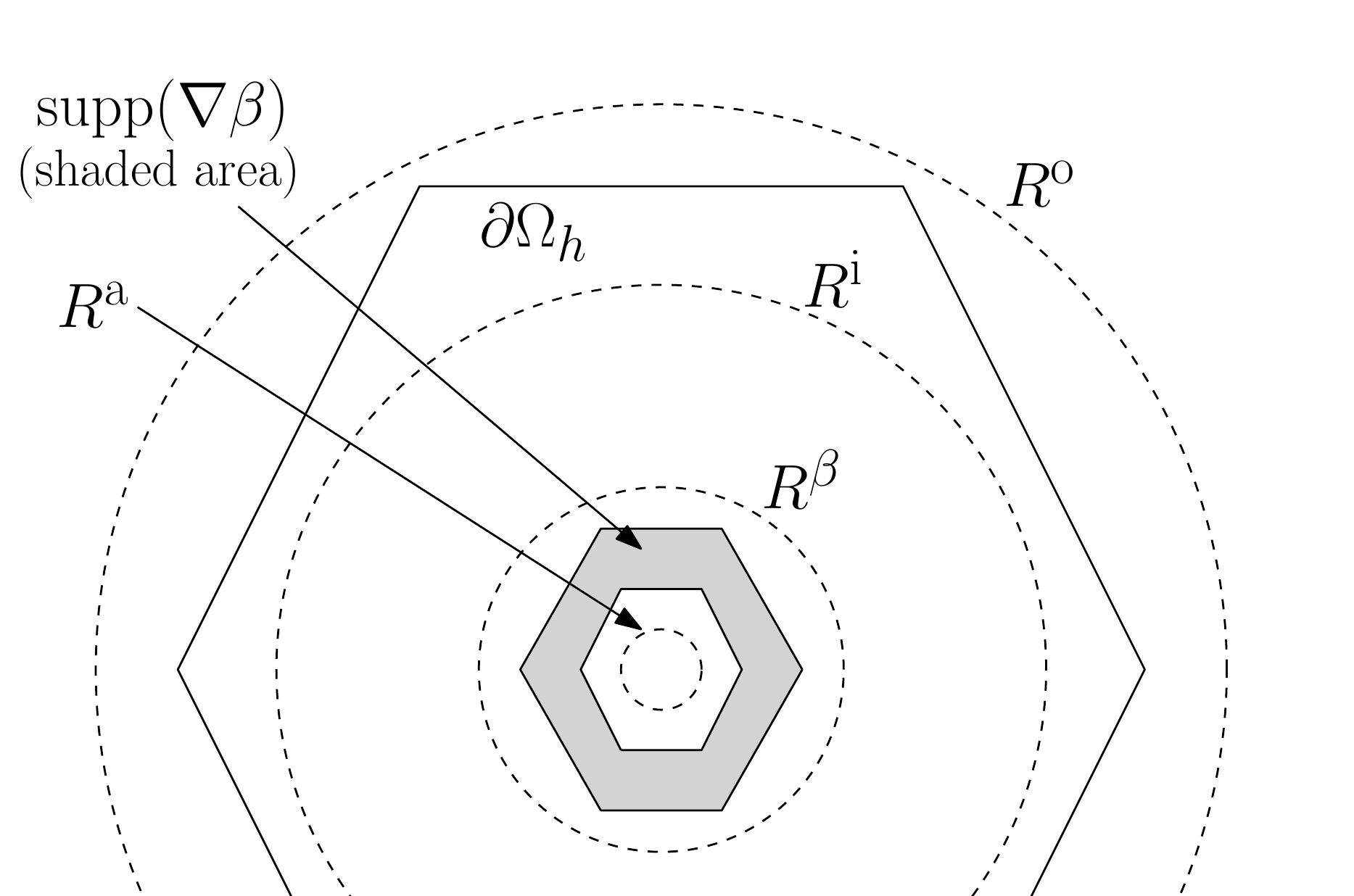}
    \caption{\label{fig:radii} Visualisation of the definitions and
      assumptions made in \S~\ref{sec:approx_error}.}
  \end{center}
\end{figure}

In addition to the radii $R_\Pa, \Ri, \Ro$ defined in
\S~\ref{sec:model:point} and in \S~\ref{sec:results:coarsening}, we
define $\Ra$ to be the largest and $R^\beta$ to be the smallest
numbers satisfying
\begin{displaymath}
  \supp(\beta) \supset B_{\Ra + 2 \rcut + \sqrt{d}}
  \qquad\text{and}\qquad
  \supp(1-\beta) \subset B_{R^\beta - 2\rcut-\sqrt{d}}.
\end{displaymath}
We specify {\em atomistic, blending, continuum and exterior regions}
\begin{align*}
   \Oma &:= {\rm supp}(1-\beta) +
B_{2\rcut+\sqrt{d}} \subset B_{\Rb}, \\
\Omb &:= {\rm supp}(\D\beta) + B_{2\rcut+\sqrt{d}} \subset B_{\Rb}
 \setminus B_{\Ra},  \\
 \Omc &:= {\rm supp}(\beta) \cap \Omh + B_{2\rcut+\sqrt{d}} \subset
 B_{\Ro} \setminus B_{\Ra}, \quad \text{and} \\
 \Omo &:= \R^2 \setminus B_{\Ri / 2}.
\end{align*}

Further, we
define {\em discrete atomistic and blending regions}
\begin{displaymath}
  \La := \L \cap ({\rm
  supp}(1-\beta)+\Rg) \quad \text{and} \quad
\Lb := \{\xi
\in \L : D\beta(\xi) \ne 0 \}.
\end{displaymath}
The fact that the various regions overlap is simply for the sake of
convenience of the analysis and notation.

We assume throughout that there exist fixed constants $C_{\T}$,
$C^\beta_1, C^\beta_2$ such that the following conditions are
satisfied:
\begin{align}
  \label{eq:blend_stab:radii}
  & R_\Pa \leq \Ra \leq R^\beta \qquad \text{and} \qquad
  R^\beta \leq C^\beta_1 \Ra;
  \\  &
  \label{eq:blend_stab:beta_bound}
  \beta\in C^{2,1}, \qquad 0\leq \beta \leq 1 \qquad \text{and} \qquad
  \|\nabla^j\beta\|_{L^\infty} \leq C^\beta_2 (\Ra)^{-j}, j = 1, 2,
  3; \\
  &\label{eq:blend_stab:tri} \text{$\Th$ is fully refined in $\Oma$}
  \qquad \text{and} \qquad \max_{T \in \Th} h_T / r_T \leq C_{\T}.
\end{align}
By \eqref{eq:blend_stab:tri} we mean that, if $T \in \T$ with $T \cap
\Oma \neq \emptyset$, then $T \in \TT$; as well as vice-versa.

In addition, only for $d = 2$ and only for the B-QCF method, we assume
that there are constants $C_{\Omega}, m_\Omega \geq 1$ such that
\begin{equation}
  \label{eq:bqcf_domain_bound}
  \Ro \leq C_\Omega (\Ra)^{m_\Omega}.
\end{equation}

The two main approximation parameters to define both the B-QCE and
B-QCF methods are the blending function $\beta$ and the finite element
mesh $\T$ (and through it, the computational domain $\Omh$). The
regions $\Oma, \Omb, \Omc, \Omo, \La, \Lb$ and the radii, $\Ra, \Rb,
\Ri, \Ro$ are derivative parameters. The constants $C_{\T}, C_j^\beta,
C_\Omega, m_\Omega$ in assumptions \eqref{eq:blend_stab:radii},
\eqref{eq:blend_stab:beta_bound}, \eqref{eq:bqcf_domain_bound} and
\eqref{eq:blend_stab:tri} are understood to be uniform in {\em all
  choices} of ($\beta$, $\T$) that may occur in our analysis.

Throughout the remainder of the paper, we will write ``$A=O(B)$'' or
``$|A|\lesssim B$'' if there exists a constant $C$ such that $|A| \leq
C B$, where $C$ is independent of the approximation parameters
$(\beta, \Th)$, but may depend on the constants $C_{\T}, C_j^\beta,
C_\Omega, m_\Omega$, or on any specified functions involved in the
estimate. (In particular, $C$ may depend on a solution $y^\a$ and on
derivatives $V_{,\bfrho}(\bfg)$ for $\bfg$ in some specified range,
cf. \S~\ref{sec:int:V_bounds}, but never on a test function.)


\subsubsection{Error estimates in terms of solution regularity}
\def\errba{{\bf E}^{\rm apx}}
\def\errcb{{\bf E}^{\rm cb}}
\def\errbqce{{\bf E}^{\rm int}}
\def\errff{{\bf E}^{\rm ff}}
For $y = y_0 + u \in \Ys$ we define the
\begin{align}
  \notag \text{best-approximation error } && \errba(y) &:= \| \D \baru
  \|_{L^2(\Omo)} + \| h \D^2
  \tilu \|_{L^2(\Omc)} + \|h^2 \D^3 \tilu \|_{L^2(\Omc)} \\
  \notag
  && & \qquad + \| h^2 \D^3 y_0 \|_{L^2(\Omc)} + \| h (\D y_0 - \mA)
  \otimes \D^2 y_0 \|_{L^2(\Omc)},  \\
  \label{eq:defn_errorterms}
  \text{Cauchy--Born model error } && \errcb(y) &:= \| \D^3 \tily
  \|_{L^2(\Omc)} + \| \D^2 \tily \|_{L^{4}(\Omc)}^2, \\
  \notag \text{and coupling error } && \errbqce(y) &:= \| \D^2 \beta
  \|_{L^2} + \| \D \beta \|_{L^\infty} \|\D^2 \tily \|_{L^2(\Omb)}.
\end{align}

The first term in $\errba$ measures the finite element coarsening
error (including the quadrature error), while the second term in
$\errba$ measures the error induced by reducing the problem to a
bounded domain.

\def\Ctr{\gamma_{\rm tr}}

\begin{theorem}
  \label{th:error}
  Let $\ya = y_0 + \ua \in \Ys$ be a strongly stable solution to
  \eqref{eq:min_atm_exact}.
  Then, there exist constants $\epsilon, \Ra_0, C > 0$, which are
  independent of $\Th$ and $\beta$,
  such that, if $\Ra \geq \Ra_0$, then there exist strongly stable
  solutions $y_h^\bqce$ to \eqref{eq:min_bqce} and $y_h^\bqcf$ to
  \eqref{eq:eqn_bqcf} satisfying
  \begin{align}
    \label{eq:bqce_general_errest}
    \| \D \barya - \D y_h^\bqce \|_{L^2} &\leq C \b( \errba(\ya) +
    \errcb(\ya) + \errbqce(\ya) \b), \qquad \text{and} \\
    \label{eq:bqcf_general_errest}
    \| \D \barya - \D y_h^\bqcf \|_{L^2} &\leq C \Ctr  \b( \errba(\ya) +
    \errcb(\ya) \b),
  \end{align}
  where $\Ctr = \sqrt{1+\log(\Ra)}$ if $d = 2$ and $\Ctr = 1$ if $d =
  3$. 
\end{theorem}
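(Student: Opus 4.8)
The plan is to establish the two error estimates by the standard consistency--stability paradigm for Galerkin-type approximations, but carried out in the energy norm $\|\D\cdot\|_{L^2}$. The structure of the argument is dictated by the split, already anticipated in \S\ref{sec:intermediate}, of the total error into a \emph{best-approximation} part, a \emph{modelling} (Cauchy--Born) part, and a \emph{coupling} part. For the B-QCE estimate \eqref{eq:bqce_general_errest} I would first introduce a good trial function $T_h\ya\in\Ys_h$ built from $\ya$ by applying the nodal interpolant $I_h$ on $\Omh$ and truncating to zero outside; the error in replacing $\ya$ by $T_h\ya$ in the energy norm is precisely controlled by $\errba(\ya)$ via Lemma~\ref{th:prelims:defn_smoothint}, interpolation error estimates on $\Th$, and the decay estimates of Lemma~\ref{th:regularity}. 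Next I would bound the \emph{consistency error} $\|\del\Ebh(T_h\ya)+\del\Pa(T_h\ya)\|_{(\Us_h)^*}$: this is where the ``weak form'' of the forces developed in \S\ref{sec:inter:bqce_stress} enters. The consistency error splits into (i) the Cauchy--Born modelling error in the continuum region, bounded by $\errcb(\ya)$; (ii) the finite-element coarsening and domain-truncation errors, bounded by $\errba(\ya)$; and (iii) the genuinely new \emph{blending} contribution supported in $\Omb$, which after the stress-based reformulation is controlled by $\|\D^2\beta\|_{L^2}+\|\D\beta\|_{L^\infty}\|\D^2\tily\|_{L^2(\Omb)}=\errbqce(\ya)$. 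Finally, invoking the stability of the B-QCE Hessian at $\ya$ (the new technique of \S\ref{sec:int:stab_bqce}, valid once $\Ra\geq\Ra_0$ so the blending width is large enough and $\del^2\Ebh(\ya)\geq c\|\D\cdot\|_{L^2}^2$ on $\Us_h$) together with a quantitative inverse-function / Lipschitz-Hessian argument, I obtain a strongly stable solution $y_h^\bqce$ with $\|\D T_h\ya-\D y_h^\bqce\|_{L^2}\lesssim$ consistency error, and the triangle inequality yields \eqref{eq:bqce_general_errest}.

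For the B-QCF estimate \eqref{eq:bqcf_general_errest} the scheme is non-conservative, so there is no energy to minimise and one works directly with the nonlinear operator $\Fbh+\del\Pa$. The key structural point, following \cite{BQCF}, is that $\Fbh(y_h)=\del\Ebh(y_h)+(\text{a perturbation supported in }\Omb)$ — more precisely the B-QCF and B-QCE operators differ only through the mismatch between pointwise multiplication by $\beta$ and the interpolated multiplication $I_h[\beta\,\cdot]$, and through the continuum-vs-atomistic evaluation in the blending layer. Consequently the consistency error of $\Fbh$ is the B-QCE consistency error \emph{plus} this blending perturbation; the gain is that the B-QCF consistency error has \emph{no} $\errbqce$ term, because the ghost forces that $\errbqce$ controls are absent in the force-based formulation — hence the right-hand side is only $\errba(\ya)+\errcb(\ya)$. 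Stability of the B-QCF linearised operator is then deduced from stability of the B-QCE Hessian (just proved) by the perturbation estimates of \cite{BQCF}, combined with the elastic-field regularity \eqref{eq:regularity_ua} to bound the perturbation in the blending layer; in $d=2$ this last step loses a logarithmic factor, which is exactly the origin of $\Ctr=\sqrt{1+\log\Ra}$ (and is why the extra domain assumption \eqref{eq:bqcf_domain_bound} is imposed), whereas in $d=3$ the decay is fast enough that $\Ctr=1$. A quantitative implicit-function-theorem argument for the nonlinear system \eqref{eq:eqn_bqcf} then produces the strongly stable solution $y_h^\bqcf$ and the bound \eqref{eq:bqcf_general_errest}.

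The main obstacle, and the technical heart of the paper, is the stability step for B-QCE in \S\ref{sec:int:stab_bqce}: proving $\del^2\Ebh(\ya)\geq c\|\D\cdot\|_{L^2}^2$ on $\Us_h$ \emph{using only} the assumed atomistic stability \eqref{eq:strong_stab_eq}, with no smallness or global-smoothness hypothesis on $\ua$. The difficulty is that $\del^2\Ebh$ is a genuine convex combination of the atomistic Hessian and the Cauchy--Born Hessian weighted by $\beta$, so one cannot simply compare it termwise with $\del^2\Ea(\ya)$; the blending layer $\Omb$ is where positivity could fail. I expect the argument to proceed by writing $\del^2\Ebh(\ya)=\del^2\Ea(\ya)+[\text{blending defect}]$, estimating the defect in operator norm by $\|\D^2\tily\|$-type quantities on $\Omb$ (small because $\Ra$ is large and the defect decay \eqref{eq:regularity_ua} applies there), and absorbing it using a Korn-type inequality together with the stability of the \emph{reference} lattice \eqref{eq:disl:stab_latt} to control the Cauchy--Born part near the core. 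A secondary obstacle is keeping all constants uniform over the admissible families $(\beta,\Th)$, which requires the scaling assumptions \eqref{eq:blend_stab:radii}--\eqref{eq:blend_stab:tri} to be used carefully throughout the consistency estimates.
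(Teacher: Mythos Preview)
Your high-level framework (quasi-best approximation $\to$ consistency $\to$ stability $\to$ inverse function theorem) matches the paper's, as does the identification of the three error contributions for B-QCE. However, two of your technical explanations are off in ways that matter.

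\textbf{The origin of $\Ctr$.} You attribute the factor $\Ctr=\sqrt{1+\log\Ra}$ (for $d=2$) to the B-QCF \emph{stability} argument. In the paper it arises instead in the B-QCF \emph{consistency} estimate. The dual test function for B-QCF is $\Pi_h'' v_h = (1-\beta)v_h + w^*$ with $w = \zz\ast I_h[\beta v_h]$, and this choice reduces the consistency error to one measured against $\|\D I_h[\beta v_h]\|_{L^2}$ rather than $\|\D v_h\|_{L^2}$; see \eqref{eq:int:bqcf_cons_firststep}. The passage from the former to the latter (Lemma~\ref{th:int:est_Dbv_Dv}) requires bounding $\|v_h\|_{L^2(\Omb)}$ by $\|\D v_h\|_{L^2(\Omh)}$, which is a \emph{trace inequality} on the annulus $\Omb$: in 2D the sharp constant grows like $\sqrt{\log(\Ro/\Ra)}$, and this is exactly $\Ctr$. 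The B-QCF stability result (Lemma~\ref{th:int:stab_bqcf_lem1}) does involve a similar log, but there it merely shows $|\mu_h^\beta-\gamma_h^\beta|\to 0$, so stability is inherited from B-QCE with no loss in the final constant.

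\textbf{The B-QCE stability mechanism.} Your proposed route---write $\ddel\Ebh(\ya)=\ddel\Ea(\ya)+[\text{blending defect}]$ and bound the defect in operator norm using the smallness of $\D^2\tily$ on $\Omb$---does not work. The defect $\ddel\Ebh(\ya)-\ddel\Ea(\ya)$ is, roughly, $\beta\cdot(\text{CB Hessian}-\text{atomistic Hessian})$, and even at the homogeneous state $\mA x$ this is \emph{not} small in operator norm: for test functions oscillating at the lattice scale, finite differences and continuous gradients differ at $O(1)$. The paper's argument is entirely different. It proceeds by contradiction and compactness: given a sequence of parameters with $\Ra_n\to\infty$ and test functions $v_n$ with $\|\D v_n\|_{L^2}=1$, one extracts a weak limit and splits $v_n$ into a strongly convergent ``defect-core'' piece (handled by atomistic stability), and a remainder weakly converging to zero. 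The crucial new trick (Lemma~\ref{th:int:stab_bqce_lem2}) then rescales the remainder to the ``interfacial scale'' $x\mapsto x/\Ra$ and uses that $\sqrt{\beta}$ and $\sqrt{1-\beta}$ are Lipschitz to split the B-QCE form into pure-atomistic and pure-continuum pieces acting on $\sqrt{1-\beta}\,z_n$ and $\sqrt{\beta}\,z_n$ respectively, with cross terms vanishing by compactness. No operator-norm smallness is ever invoked; the argument is inherently non-quantitative (no rate in $\Ra$) precisely because it relies on weak convergence.
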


\begin{proposition}
  \label{th:bqce-energy_error}
  Under the conditions of Theorem \ref{th:error}, we have
  \begin{align}
    \label{eq:bqce-energy_error}
    \b| \Ea(y^\a) - \Ebh(y_h^\bqce) \b| &\leq C\B\{
    {\errba(y^\a)}^2 + {\errcb}(y^\a)^2 + {\errbqce(y^\a)}^2 \\
    \notag
    & \qquad +
    \| \D^2\beta \|_{L^2} \| \D\tilu^\a
    \|_{L^2(\Omc)} + \| \D\beta \|_{L^2} \| \D^2\tilu^\a \|_{L^2(\Omc)} \\
    \notag
    & \qquad \qquad \qquad
    + \errba(y^\a) \b( \| \D\tilu^\a \|_{L^2(\Omc)} + \| \D \ulin
    \|_{L^4(\Omc)}^2 \b) + H^{\rm ots} \B\},
  \end{align}
  where $C$ is independent of $\Th$ and $\beta$ and $H^{\rm ots}$ are
  ``higher order terms'' (cf. \S~\ref{sec:errest_cost}),
  \begin{align*}
   H^{\rm ots} &:=
   \| \D^3 \tilu^\a \|_{L^1(\Omc)} + \| \D^2\tilu^\a
    \|_{L^2(\Omc)}^2   \\
    \notag
    & \qquad + \B( \| h^2 \D^3 y_0 \|_{L^2(\Omc)} + \|h \D^2 y_0
    \|_{L^4(\Omc)}^2 \B) \B( \errba(y^\a) + \| \D \tilu^\a \|_{L^2(\Omc)} \B) \\
    \notag
    & \qquad + \| \D^2\tilu^\a \|_{L^2(\Omc)} \| \D^2 y_0 \|_{L^2(\Omc)}
    + \| \D\beta\|_{L^\infty} \| \D\tilu^\a \|_{L^2(\Omc)} \| \D^2 y_0 \|_{L^2(\Omc)}.
  \end{align*}
\end{proposition}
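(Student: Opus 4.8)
The plan is the standard three-step energy-comparison argument; the new work is entirely the bookkeeping of the regularity-weighted remainders. First I fix a quasi-best approximation $\hat y = y_0 + \hat u \in \Ys_h$ of $\ya$, taking $\hat u$ to be the P1 nodal interpolant of $\ua$ on $\T$ modified by a cut-off near $\partial\Omh$ so that $\hat u = 0$ outside $\Omh$. By Lemma~\ref{th:prelims:defn_smoothint}, standard finite-element interpolation and the cut-off estimate, $\|\D\barya - \D\hat y\|_{L^2} \lesssim \errba(\ya)$, and, because $R_\Pa \leq \Ra$ and $\T$ is fully refined in $\Oma$, the lattice function $\ya - \hat y = \ua - \hat u$ vanishes at every site of $\L \cap B_{R_\Pa}$.

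Now write
\[
  \Ea(\ya) - \Ebh(y_h^\bqce)
  = \big[\Ea(\ya) - \Ea(\hat y)\big] + \big[\Ea(\hat y) - \Ebh(\hat y)\big] + \big[\Ebh(\hat y) - \Ebh(y_h^\bqce)\big]
  =: \mathrm{T}_1 + \mathrm{T}_2 + \mathrm{T}_3.
\]
For $\mathrm{T}_1$ I Taylor-expand $\Ea$ (four times Fréchet differentiable on $\Ys$, Lemma~\ref{th:atmE_welldefined}) about $\ya$. The quadratic remainder is $O(\errba(\ya)^2)$ by Lemma~\ref{th:prelims:defn_smoothint} and the uniform second-variation bounds of \S~\ref{sec:int:V_bounds} along the segment; the linear term $\<\del\Ea(\ya),\ya-\hat y\>$ equals $-\<\del\Pa(\ya),\ya-\hat y\>$ by the equilibrium in \eqref{eq:strong_stab_eq} (extended from $\Usc$ to $\Us^{1,2}$ by density), and it vanishes because $\del\Pa(\ya)$ reads only the values of its argument at $\L \cap B_{R_\Pa}$, where $\ya-\hat y$ is zero. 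Hence $|\mathrm{T}_1| \lesssim \errba(\ya)^2$. For $\mathrm{T}_3$ I Taylor-expand $\Ebh$ about $y_h^\bqce$ and use \eqref{eq:min_bqce}: since $\hat y - y_h^\bqce \in \Us_h$, the linear term is $-\<\del\Pa(y_h^\bqce),\hat y - y_h^\bqce\>$, which by the same locality of $\Pa$, the identity $\hat y = \ya$ on $\L \cap B_{R_\Pa}$, and the smoothness of $\Pa$ equals $-\<\del\Pa(\ya),\ya - y_h^\bqce\> + O(\|\D\barya - \D y_h^\bqce\|_{L^2}^2)$, while the quadratic remainder is $O(\|\D(\hat y - y_h^\bqce)\|_{L^2}^2)$. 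By Theorem~\ref{th:error} and the triangle inequality, $\|\D(\hat y - y_h^\bqce)\|_{L^2}$ and $\|\D\barya - \D y_h^\bqce\|_{L^2}$ are $\lesssim \errba(\ya) + \errcb(\ya) + \errbqce(\ya)$, so these remainders are controlled by the squared terms of the claim; there remains the defect term $-\<\del\Pa(\ya),\ya - y_h^\bqce\>$, treated at the end.

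The heart of the proof is $\mathrm{T}_2$, the \emph{energy} consistency error of B-QCE at $\hat y$. Since $\Ea(y_0) = \Ebh(y_0) = 0$ and $y_0 + s\hat u \in \Ys_h$ for $s \in [0,1]$,
\[
  \mathrm{T}_2 = \int_0^1 \big\langle (\del\Ea - \del\Ebh)(y_0 + s\hat u),\, \hat u \big\rangle \, \ds,
\]
so the integrand is the first-variation (force) consistency error, which I control via the stress/``weak form'' representation of \S~\ref{sec:inter:bqce_stress}: write $(\del\Ea - \del\Ebh)(y) = -\divv\,\sigma^\beta[y]$ with a consistency stress $\sigma^\beta[y]$ supported in $\Omb \cup \Omc$ that splits into (i) a Cauchy--Born modelling stress, bounded (Taylor expansion of $V$ against $W$, Lemma~\ref{th:prelims:defn_smoothint}) by $\|\D^2\tily\|_{L^2(\Omc)}$, $\|\D^2\tily\|_{L^4(\Omc)}^2$ and the $y_0$-terms $\|h^2\D^3 y_0\|_{L^2(\Omc)}$, $\|(\D y_0 - \mA)\otimes\D^2 y_0\|_{L^2(\Omc)}$; (ii) a blending stress with prefactors $\D\beta$ and $\D^2\beta$; and (iii) a midpoint-quadrature stress. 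Pairing $\sigma^\beta[y_0 + s\hat u]$ against $\D\hat u$, using $\|\D\hat u\|_{L^2(\Omb\cup\Omc)} \lesssim \|\D\tilu^\a\|_{L^2(\Omc)} + \errba(\ya)$ (Lemma~\ref{th:prelims:defn_smoothint}) and the decay estimates of Lemma~\ref{th:regularity} to re-organise the resulting products — so that genuinely quadratic contributions fall under $\errcb(\ya)^2$, $\errbqce(\ya)^2$ — produces exactly the listed terms $\|\D^2\beta\|_{L^2}\|\D\tilu^\a\|_{L^2(\Omc)}$, $\|\D\beta\|_{L^2}\|\D^2\tilu^\a\|_{L^2(\Omc)}$, $\errba(\ya)\big(\|\D\tilu^\a\|_{L^2(\Omc)} + \|\D\ulin\|_{L^4(\Omc)}^2\big)$ and $H^{\rm ots}$.

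Finally, the leftover defect pairing $\<\del\Pa(\ya),\ya - y_h^\bqce\> = -\<\del\Ea(\ya),\ya - y_h^\bqce\>$: since $\del\Pa(\ya)$ is a fixed, localised, self-equilibrated functional — it annihilates constants by (A.P2) — it may be written as the divergence of a stress compactly supported in a fixed ball; combined with the fact that $\ya - y_h^\bqce$ solves a homogeneous discrete elastic equilibrium equation throughout the fully-refined, purely-atomistic region surrounding the defect, the defect-regularity estimates of \cite{EhrOrtSha:defects} yield decay of $\ya - y_h^\bqce$ into the core and hence a bound $\lesssim \Ra^{-1}(\errba(\ya) + \errcb(\ya) + \errbqce(\ya))$ for this pairing, which is absorbed into the listed terms; when $\Pa \equiv 0$ (the screw-dislocation case) it is simply absent. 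The main obstacle is $\mathrm{T}_2$: constructing $\sigma^\beta$ with sharp, correctly localised bounds, since the mismatched interaction ranges of $V$ and $W$, the gradients of $\beta$ in the blending region, and the midpoint quadrature all interact — this is precisely the analysis of \S~\ref{sec:inter:bqce_stress}. A secondary point is verifying the uniform second-variation bounds used in $\mathrm{T}_1$ and $\mathrm{T}_3$, which hold once $\Ra \geq \Ra_0$.
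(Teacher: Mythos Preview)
Your decomposition into $\mathrm{T}_1+\mathrm{T}_2+\mathrm{T}_3$ is reasonable, but the core step --- your treatment of $\mathrm{T}_2 = \Ea(\hat y)-\Ebh(\hat y)$ --- has a real gap. You write $\mathrm{T}_2 = \int_0^1 \langle(\del\Ea-\del\Ebh)(y_0+s\hat u),\hat u\rangle\,ds$ and then claim a stress representation ``$(\del\Ea-\del\Ebh)(y)=-\divv\sigma^\beta[y]$'' against which you pair $\D\hat u$. But the atomistic first variation admits the stress form \eqref{eq:stress:delEa_v2} only for \emph{convolved} test functions $v^*=\zz\ast\barv$; this is precisely why the force-consistency proof needs the dual operator $\Pi_h'$ (Lemma~\ref{th:inter:Pih'-lemma}). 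Your $\hat u$ is not of that form, so there is no $\sigma^\beta$ to pair against $\D\hat u$ without redoing all of \S\ref{sec:bqce_coarse_err_est}. Even granting a repair, the force route would yield products like $\|\D\beta\|_{L^\infty}\|\D^2\tily\|_{L^2(\Omb)}\cdot\|\D\tilu\|_{L^2(\Omc)}$ and $\|\D^3\tily\|_{L^2(\Omc)}\cdot\|\D\tilu\|_{L^2(\Omc)}$, not the specific terms $\|\D\beta\|_{L^2}\|\D^2\tilu\|_{L^2(\Omc)}$ and $\|\D^3\tilu\|_{L^1(\Omc)}$ claimed in the proposition; those arise only from a direct energy comparison.

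The paper does not go through the force at all here. It inserts an intermediate energy $\tilde\E=\sum(1-\beta)V'(D\ya)+\int[Q_h\beta]W'(\D\tilya)$ and splits $\Ea(\ya)-\Ebh(y_h^\bqce)$ as $[\Ea(\ya)-\tilde\E]+[\tilde\E-\Ebh(\Pi_h\ya)]+[\Ebh(\Pi_h\ya)-\Ebh(y_h^\bqce)]$. The first bracket is $\sum\beta(\xi)[V'(D\ya(\xi))-W'(\D\tilya(\xi))]$ plus a quadrature comparison $\int(Q_h-I_1)[\beta W'(\D\tilya)]$: the former is handled by a direct second-order site-energy expansion (Lemma~\ref{th:potential_expansion}), producing $\|\D^3\tilu\|_{L^1}$, $\|\D^2\tilu\|_{L^2}^2$ and the mixed $\tilu/y_0$ terms; the latter produces $\|\D^2\beta\|_{L^2}\|\D\tilu\|_{L^2}$ and $\|\D\beta\|_{L^2}\|\D^2\tilu\|_{L^2}$. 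The second bracket is a pure Cauchy--Born approximation/quadrature error, where subtracting the divergence-free stress $\dW(\mA)+\bbC{:}\D\ulin$ yields the $\errba\cdot(\|\D\tilu\|_{L^2}+\|\D\ulin\|_{L^4}^2)$ term. Finally, your leftover defect pairing $\langle\del\Pa(\ya),\ya-y_h^\bqce\rangle$ and its proposed treatment by ``decay into the core'' is not needed in the paper's route and your justification is shaky: $\ya-y_h^\bqce$ does not satisfy a homogeneous equation in the atomistic region, only a linearised one with a quadratic source.
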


\begin{remark}
  The B-QCF error estimate seemingly has no $\beta$-dependence, but
  this is only due to the strong assumptions we made on $\beta$ in
  \eqref{eq:blend_stab:beta_bound}. Only under these assumptions are
  we able to state Theorem \ref{th:error}. However, it can be
  expected, that the result is also valid under more specialized, but
  otherwise much milder assumptions on $\beta$. In such a case, our
  intermediate results in \S~\ref{sec:intermediate} and
  \S~\ref{sec:bqcf_cons_prf} can be employed to understand the precise
  $\beta$-dependence of the error.
\end{remark}

\subsubsection{Error estimates in terms of computational cost}
\label{sec:errest_cost}
\def\dof{{\rm DOF}}
Following \cite{2012-MATHCOMP-qce.pair} we now convert the error
estimates \eqref{eq:bqce_general_errest},
\eqref{eq:bqcf_general_errest} and \eqref{eq:bqce-energy_error} into
convergence rates in terms of the number of degrees of freedom
\begin{displaymath}
  \dof := \# \Th.
\end{displaymath}
The quantity $\dof$ is directly related (but not necessarily
proportional) to the computational cost of solving the associated
problems \eqref{eq:min_bqce} and \eqref{eq:eqn_bqcf}.  The estimates
in terms of $\dof$ form the basis for the optimised implementations of
the B-QCE and B-QCF methods presented, respectively, in
\cite{2012-CMAME-optbqce, BQCFcomp}.

We introduce additional restrictions on $\Th$ and $\Omh$,
\begin{align}
  \label{eq:assumpt_meshgrad}
  |h(x)| \lesssim \max\b\{1,\smfrac{|x|}{\Rb}\b\},
  \quad \Ro \lesssim \Ri,
  \quad \text{and} \quad
  \dof
  &\lesssim (\Rb)^d \log \b(\smfrac{\Ro}{\Rb}\b)  \\
  \notag
  &\lesssim (\Ra)^d \log \Ra.
\end{align}
The second bound in \eqref{eq:assumpt_meshgrad} is a mild assumption
on the shape regularity of $\Omh$, while the last bound in
\eqref{eq:assumpt_meshgrad} is a corollary of the first one, upon
additionally requiring that \eqref{eq:bqcf_domain_bound} holds for
both B-QCE and B-QCF.

Then, using the regularity estimates \eqref{eq:regularity_ua} and
\eqref{eq:regularity_ulin} it is straightforward to prove that
\[
\begin{array}{rrl}
  \text{ Case (pPt): } &
  \errcb(y^\a) &\lesssim (\Ra)^{-d/2-2},  \\[1mm]
  & \errbqce(y^\a) &\lesssim
  (\Ra)^{d/2-2}, \\[1mm]
  & \errba(y^\a) & \lesssim (\Ra)^{-d/2-1} + (\Ri)^{-d/2},
  \\[3mm]
  \text{ Case (pDis): } & \errcb(y^\a)&\lesssim (\Ra)^{-2}, \\[1mm]
  &\errbqce(y^\a)&\lesssim (\Ra)^{-1},
  \\[1mm]
  & \errba(y^\a) &
  \lesssim (\Ra)^{-2} \log(\Ra) +
  (\Ri)^{-1}.
\end{array}
\]
(Here we used the estimate $\int_R^\infty r^t \log^s r \dr \leq
R^{t+1} \log^s(R)$ for $t < -1, s \in \N, R \geq 2$.)  We note that
the dominant term, $\errbqce(y^\a) \lesssim (\Ra)^{d/2-2}$ originates
entirely from the ``blended ghost force error'' term $\| \D^2 \beta
\|_{L^2}$.

Next, we note that in the B-QCE case any choice $\Ri \gg \Ra$ balances
the far-field contribution, $\| \D \baru^\a \|_{L^2(\Omo)} \lesssim
(\Ri)^{-d/2}$ with the ``blended ghost force error'' $\| \D^2 \beta
\|_{L^2} \lesssim (\Ra)^{d/2-2}$.

For the B-QCF case we balance the far-field error $\| \D \baru^\a
\|_{L^2(\Omo)} \lesssim (\Ri)^{-d/2}$ with the finite element
coarsening error.  Ignoring log-factors, we observe that the radius
$\Ri$ ought to be balanced against the interpolation error component
$\|h \D^2 \tilu^\a \|_{L^2(\Omc)} \lesssim (\Ra)^{-d/2-1}$, which
yields $\Ri \approx (\Ra)^{d/2+1}$ both in the (pPt) and (pDis) cases.
Hence, we obtain
\begin{align*}
  \text{Case (pPt):} \hspace{1cm}\errba(y^\a) &\lesssim (\Ra)^{-d/2-1}, \\[1mm]
  \text{Case (pDis):} \hspace{1cm} \errba(y^\a) &\lesssim
  (\Ra)^{-2} \log(\Ra).
\end{align*}

We summarise the foregoing computations in the following theorem,
using also the fact that, under the conditions of the theorem, $\Ra
\lesssim (\dof)^{1/d}$, $(\Ra)^{-1} \lesssim (\dof)^{-1/d} (\log
\dof)^{1/d}$ and $\Ctr \lesssim (\log\dof)^{1/2}$. The estimate for
the energy error can be immediately obtained from analogous
computations.

\begin{theorem}
  \label{th:errest_complexity}
  In addition to the assumptions of Theorem \ref{th:error} suppose
  that \eqref{eq:assumpt_meshgrad} holds and that $\Ri \geq c_\Omega
  (\Ra)^s$ for a constant $c_\Omega > 0$ independent of $(\beta,
  \Th)$, where $s > 1$ for the B-QCE method and $s \geq d/2+1$ for the
  B-QCF method. Then, there exists a constant $C$, independent of
  $(\beta, \Th)$, such that
  \begin{align*}
     \text{\hspace{0.8cm} for the B-QCE method, for both Cases (pPt) and (pDis), }
     \hspace{-5.5cm}  & \\
    \| \D \bary^\a - \D y_h^\bqce \|_{L^2} &\leq C
    (\dof)^{d/4-1} (\log\dof)^{-d/4+1},  \\[1mm]
    \b| \Ea(y^\a) - \Ebh(y_h^\bqce) \b| &\leq C (\dof)^{d/2-2}
    (\log\dof)^{-d/2+2}; \\[3mm]
    \text{and for the B-QCF method,} \hspace{0.1cm} & \\
    \| \D \bary^\a - \D y_h^\bqcf \|_{L^2} &\leq C \cases{
      (\dof)^{-d/4-1}  (\log\dof)^{2}, & \text{case (pPt)},  \\[1mm]
    (\dof)^{-1} (\log\dof)^{5/2}, & \text{case (pDis)}.}
  \end{align*}
\end{theorem}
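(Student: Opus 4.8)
The plan is to take Theorem~\ref{th:error} as a black box and perform a change of variables from the intrinsic approximation parameters $(\Ra, \Ri, \Th, \beta)$ to the single complexity parameter $\dof = \#\Th$, using the regularity estimates in Lemma~\ref{th:regularity} and \eqref{eq:regularity_ulin} to make the error quantities $\errba, \errcb, \errbqce$ explicit. First I would insert the decay rates \eqref{eq:regularity_ua} into the definitions \eqref{eq:defn_errorterms}, integrating over the continuum/blending/exterior annuli; the key computation is the elementary bound $\int_R^\infty r^t\log^s r\,\dr \lesssim R^{t+1}\log^s R$ for $t<-1$, which converts each $L^p$ norm of a derivative power into a power of $\Ra$ (for terms supported in $\Omc$ or $\Omb$) or of $\Ri$ (for the far-field term $\|\D\baru^\a\|_{L^2(\Omo)}$). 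This yields exactly the table of estimates displayed before the theorem statement: $\errcb(y^\a)\lesssim(\Ra)^{-d/2-2}$ in case (pPt) and $(\Ra)^{-2}$ in case (pDis), and similarly for the other two quantities, with the observation that the blending contribution $\|\D^2\beta\|_{L^2}\lesssim(\Ra)^{d/2-2}$ is the dominant term in the B-QCE coupling error.

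Next I would carry out the balancing of the radii. For B-QCE the only two competing terms are $\|\D\baru^\a\|_{L^2(\Omo)}\lesssim(\Ri)^{-d/2}$ and $\|\D^2\beta\|_{L^2}\lesssim(\Ra)^{d/2-2}$; taking $\Ri\gtrsim(\Ra)^s$ with any $s>1$ makes the far-field term negligible against the blended ghost-force term, so the error is $\lesssim(\Ra)^{d/2-2}$. For B-QCF there is no $\|\D^2\beta\|$ term, so the dominant continuum-region contribution is the interpolation error $\|h\D^2\tilu^\a\|_{L^2(\Omc)}\lesssim(\Ra)^{-d/2-1}$ (after using the mesh-grading assumption \eqref{eq:assumpt_meshgrad} to control $h$); balancing this against $(\Ri)^{-d/2}$ forces $\Ri\approx(\Ra)^{d/2+1}$, i.e.\ $s\geq d/2+1$, and gives $\errba(y^\a)\lesssim(\Ra)^{-d/2-1}$ in (pPt) and $(\Ra)^{-2}\log\Ra$ in (pDis). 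Then I would invoke the DOF count in \eqref{eq:assumpt_meshgrad}, namely $\dof\lesssim(\Ra)^d\log\Ra$, which upon inversion yields $\Ra\lesssim(\dof)^{1/d}$ and $(\Ra)^{-1}\lesssim(\dof)^{-1/d}(\log\dof)^{1/d}$, together with the elementary fact $\Ctr=\sqrt{1+\log\Ra}\lesssim(\log\dof)^{1/2}$.

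Finally, substituting these relations into \eqref{eq:bqce_general_errest}, \eqref{eq:bqcf_general_errest} and \eqref{eq:bqce-energy_error} produces the stated rates: $(\dof)^{d/4-1}(\log\dof)^{-d/4+1}$ for the B-QCE energy-norm error, its square for the B-QCE energy error, and $\Ctr(\Ra)^{-d/2-1}$ respectively $\Ctr(\Ra)^{-2}\log\Ra$ for the B-QCF cases, which translate into $(\dof)^{-d/4-1}(\log\dof)^2$ in (pPt) and $(\dof)^{-1}(\log\dof)^{5/2}$ in (pDis). I expect no serious obstacle here—the proof is essentially bookkeeping—but the step requiring the most care is tracking the logarithmic factors through all three transformations (decay integral $\to$ radius balancing $\to$ DOF inversion), since a mismatch in the power of $\log\dof$ is the most likely place for the exponents in the final display to be off; one also must verify that the mesh-grading hypothesis \eqref{eq:assumpt_meshgrad} is precisely what is needed to bound $\|h\D^2\tilu^\a\|_{L^2(\Omc)}$ and $\|h^2\D^3\tilu^\a\|_{L^2(\Omc)}$ by the claimed powers of $\Ra$, rather than something weaker.
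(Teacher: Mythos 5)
Your proposal reproduces the paper's own argument essentially verbatim: the paper proves this theorem by exactly the bookkeeping you describe — inserting the decay rates \eqref{eq:regularity_ua} and \eqref{eq:regularity_ulin} into \eqref{eq:defn_errorterms} via the integral bound $\int_R^\infty r^t\log^s r\,\dr \lesssim R^{t+1}\log^s R$, balancing $\Ri$ against $\Ra$ in the two methods as you indicate, and then converting to $\dof$ through $\Ra \lesssim (\dof)^{1/d}$, $(\Ra)^{-1}\lesssim (\dof)^{-1/d}(\log\dof)^{1/d}$ and $\Ctr\lesssim(\log\dof)^{1/2}$. You also correctly identify the only delicate points (tracking the logarithms through the three substitutions and verifying that \eqref{eq:assumpt_meshgrad} yields the claimed bounds on the $h$-weighted interpolation terms), so the proposal matches the paper's proof.
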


\begin{remark}
  1. The construction of $\Th$ satisfying \eqref{eq:assumpt_meshgrad}
  is standard and can be found, e.g., in
  \cite{2012-MATHCOMP-qce.pair}.

  2. To construct $\beta$, we could, for example, choose $\Rb =
  C^\beta_1 \Ra$ for a given $\Ra$ and then choose $\beta$ in the form
  of a radial spline satisfying the conditions
  \eqref{eq:blend_stab:beta_bound}.  For complicated a/c interface
  geometries one could solve a bi-Laplace equation in a precomputation
  step (see \cite{2012-CMAME-optbqce}).

  3. Finally, we could allow for a stronger mesh coarseing, $h(x)
  \approx (|x|/\Rb)^\alpha$ and thereby drop the $\log$ factor in
  $\dof$ for a suitable choice of $\alpha>1$, which would slightly
  improve the estimates. In order to preserve mesh regularity
  \eqref{eq:blend_stab:tri}, one would need to impose that $h(x)
  \lesssim |x|$. Note that this does not violate any of our foregoing
  assumptions for suitable choices of $\alpha$; see \cite{olson:2013b}
  for further discussion.
\end{remark}

\subsection{Conclusion}
\label{sec:conclusion}
We have established the first error analysis of a/c coupling schemes
that is ``complete'' in the sense that it covers general interatomic
potentials, accomodates atomistic solutions containing defects, and
requires no assumption on the atomistic solution beyond its
stability.

While our results are restricted to two specific a/c coupling schemes,
we anticipate that the techniques we have developed allow extensions
to a much wider range of blending type a/c couplings. We emphasize,
however, that most of our techniques are specialised for blending type
schemes. In particular, the technique of Lemma
\ref{th:int:stab_bqce_lem2}, which is the main new technical
ingredient to prove stability of B-QCE and B-QCF, is unlikely to
generalise to sharp-interface couplings. To that end the ideas present
in \cite{LuMing12} and \cite{OrtnerShapeevZhangV1} are more promising
starting points.

We remark on a seemingly immediate extension which, surprisingly,
seems not straightforward: The main assumption among those formulated
in \S~\ref{sec:approx_error} is that the finite element mesh is fully
refined in the blending region. This is highly convenient from the
perspective of both analysis and implementation, but it is likely
that, in practice, a coarse mesh in the blending region would yield a
more efficient scheme; see, e.g., \cite{XiBe:2004}, where this is in
fact a crucial ingredient. Most of our results do not require this
restriction, but there are several steps (in particular in
\S~\ref{sec:bqce_coarse_err_est}) which appear to be more difficult
without it.

\section{Key Intermediate Results}
\label{sec:intermediate}
\def\G{\mathscr{G}}
The purpose of this section is to give a detailed overview of the main
steps and ideas employed in the proof of the main results, and to
state some key intermediate results that are of independent interest.

\subsection{Framework}
\label{sec:int:framework}
We adopt the analytical framework of \cite{2013-atc.acta}, which is
analogous to that of finite element methods for (regular) nonlinear
PDE, employing quasi-best approximation, consistency and
stability.

Briefly, let $\G_h = \del\Ebh + \del\Pa$ for the B-QCE scheme or $\G_h
= \Fbh + \del\Pa$ for the B-QCF scheme. Let $\Pi_h : \Us \to \Us_h$ be
a suitable ``quasi-best approximation operator'' (we define it in
\S~\ref{sec:res:bestapprox}), then we shall require that $\G_h$ is
{\em consistent},
\begin{equation}
  \label{eq:int:consistency}
  \b\< \G_h(\Pi_h \ya), v_h \b\> \leq \eta \|\D v_h \|_{L^2} \qquad
  \forall v_h \in \Us_h,
\end{equation}
for some ``small'' consistency error $\eta$ that depends on $\ya, \Th$
and $\beta$; and {\em stable},
\begin{equation}
  \label{eq:int:stability}
  \b\< \del\G_h(\Pi_h \ya) v_h, v_h \b\> \geq c_0 \|\D v_h \|_{L^2}^2
  \qquad \forall v_h \in \Us_h.
\end{equation}
We then empoy the Inverse Function Theorem to prove that, if $\eta /
c_0$ is sufficiently small (adding some technical assumptions), then
there exists $w_h \in \Us_h$ such that $\| \D w_h \|_{L^2} \leq 2 \eta
/ c_0$ and $\G_h(\Pi_h \ya + w_h) = 0$.

The condition that $\eta / c_0$ is sufficiently small corresponds to
the assumption that $\Ra$ is sufficiently large in Theorem
\ref{th:error}.

Thus, we have constructed a B-QC solution $y_h^{\rm bqc} := \Pi_h \ya
+ w_h$ satisfying
\begin{equation}
  \label{eq:int:abstract_errest}
  \| \D \barya - \D y_h^{\rm bqc} \|_{L^2} \leq 2 \frac{\eta}{c_0} +
  \| \D \barya - \D \Pi_h \ya \|_{L^2}.
\end{equation}
The second term on the right-hand side is the quasi-best approximation
error.

In the present section we shall make this generic outline concrete. We
shall present the key ideas in our analysis but postpone the technical
aspects of the proofs to later sections.

\subsection{Further Preliminaries}
Here we introduce additional ingredients that we require to motivate
and state the key intermediate results.

\subsubsection{Expansion of discrete strain}
\label{sec:int:strain}
Let $y \in \Ys$ be a deformation. Much of our analysis depends on
Taylor expansions of finite differences within the a neighbourhood
\begin{equation}
  \label{eq:defn_nux}
  \nu_x := B_{2\rcut+\sqrt{d}}(x)
\end{equation}
of some $x \in \R^d$, containing all those lattice points $\xi$ for
which $\Sa(y; x)$ depends on $D_\rho y(\xi), \rho \in \Rg$ ($\Sa$ is
the atomistic stress defined in \S~\ref{sec:int:stress_at_cb}) and an
additional $\sqrt{d}$ buffer, which we require in view of the
``convolution trick'' \eqref{eq:stress:5}.

\begin{lemma}
  \label{th:inter:strain_lemma}
  Let $z \in C^{2,1}(\nu_x)$ and $|x-\xi| \leq \rcut+\sqrt{d}, \rho
  \in \Rg$, then
  \begin{align}
    \label{eq:inter:strain_exp_low}
    \b| D_\rho z(\xi) - \D_\rho z(x) \b| & \leq C \| \D^2 z
    \|_{L^\infty(\nu_x)}, \\
    \label{eq:inter:strain_expansion}
    \b| D_\rho z(\xi) - \b[ \D_\rho z(x) + \D_\rho \D_{\xi-x} z(x) + \smfrac12
    \D_{\rho}^2 z(x) \b] \b| &\leq C \| \D^3 z \|_{L^\infty(\nu_x)},
  \end{align}
  where $C$ is a generic constant.
\end{lemma}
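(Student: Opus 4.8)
The statement is essentially a quantitative Taylor expansion of a finite difference $D_\rho z(\xi) = z(\xi+\rho) - z(\xi)$ around the point $x$, and the natural route is to write the finite difference as an integral of the derivative along a segment and then Taylor-expand the integrand. Concretely, the plan is to use the fundamental theorem of calculus along the segment from $\xi$ to $\xi+\rho$, namely
\[
  D_\rho z(\xi) = \int_0^1 \D_\rho z(\xi + t\rho) \dt,
\]
which is valid since $z \in C^{2,1}(\nu_x)$ and the segment $[\xi, \xi+\rho]$ lies in $\nu_x$ when $|x - \xi| \leq \rcut + \sqrt{d}$ and $\rho \in \Rg \subset B_{\rcut}$ (this is precisely why the buffer of size $\rcut + \sqrt d$ is built into $\nu_x$; one should check the inclusion $[\xi,\xi+\rho] \subset \nu_x$ at the outset).

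For the low-order estimate \eqref{eq:inter:strain_exp_low}, I would write $\D_\rho z(\xi + t\rho) - \D_\rho z(x) = \int$ (along the segment from $x$ to $\xi + t\rho$) of $\D^2 z$ contracted against $\rho$ and the segment direction; bounding the integrand by $|\rho|\,\|\D^2 z\|_{L^\infty(\nu_x)}$ and the segment length by $\lesssim \rcut + \sqrt d$, then integrating in $t$ over $[0,1]$, gives the claim with $C$ depending only on $\rcut$ and $d$ (absorbed into the generic constant). For the second-order expansion \eqref{eq:inter:strain_expansion}, the idea is the same but carried one order further: expand $\D_\rho z(\xi + t\rho)$ to second order about $x$ with integral remainder, so that
\[
  \D_\rho z(\xi + t\rho) = \D_\rho z(x) + \D_\rho \D_{\xi + t\rho - x} z(x)
  + \tfrac12 \D_\rho \D^2_{\xi + t\rho - x} z(x) + R(t),
\]
with $|R(t)| \lesssim |\rho|\,|\xi + t\rho - x|^2 \, \|\D^3 z\|_{L^\infty(\nu_x)} \lesssim \|\D^3 z\|_{L^\infty(\nu_x)}$ uniformly in $t \in [0,1]$ (using $|\rho| \leq \rcut$ and $|\xi + t\rho - x| \leq 2\rcut + \sqrt d$). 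Then integrate over $t \in [0,1]$: the constant term is unchanged, the linear term yields $\D_\rho \D_{\xi - x + \frac12\rho} z(x)$, and one must reconcile this with the target $\D_\rho z(x) + \D_\rho \D_{\xi-x} z(x) + \frac12 \D^2_\rho z(x)$. The only point requiring a little care is that $\int_0^1 \D_\rho \D_{\xi + t\rho - x} z(x) \dt = \D_\rho \D_{\xi-x} z(x) + \tfrac12 \D^2_\rho z(x)$ exactly, by linearity of $\D_\rho \D_{(\cdot)} z(x)$ in the subscript direction; and the quadratic term $\tfrac12 \int_0^1 \D_\rho \D^2_{\xi + t\rho - x} z(x)\dt$ is itself $O(\|\D^3 z\|_{L^\infty})$ — wait, it is a third derivative of $z$, so it has the right order and can simply be thrown into the remainder. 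Thus the bracketed expression in \eqref{eq:inter:strain_expansion} matches the first two genuine terms of the expansion, and everything else is bounded by $C\|\D^3 z\|_{L^\infty(\nu_x)}$.

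The main obstacle is purely bookkeeping: matching the integral-remainder Taylor expansion against the \emph{specific} polynomial $\D_\rho z(x) + \D_\rho \D_{\xi-x} z(x) + \frac12 \D^2_\rho z(x)$ that appears in the statement, i.e.\ making sure that the ``base point'' of the first-order term is $\xi$ rather than $\xi + \frac12\rho$, and correctly collecting the $\frac12 \D^2_\rho z(x)$ term that emerges from integrating the linear piece over $t$. There is no analytical difficulty beyond $C^{2,1}$ regularity (so that $\D^2 z$ is Lipschitz and $\D^3 z \in L^\infty$ makes sense a.e.), and no issue of the segment leaving the domain thanks to the definition of $\nu_x$; hence I expect the write-up to be short, with the only genuine content being the algebraic identity for the directional derivatives.
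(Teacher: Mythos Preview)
Your proposal is correct and is precisely a fleshed-out version of what the paper does: the paper's proof consists of the single sentence ``The results are obtained by straightforward Taylor expansions about $x$,'' and your integral representation $D_\rho z(\xi) = \int_0^1 \D_\rho z(\xi + t\rho)\,\mathrm{d}t$ followed by a first-order Taylor expansion of the integrand about $x$ is one natural way to carry this out. The only cosmetic point is that your inclusion of the quadratic term $\tfrac12 \D_\rho \D^2_{\xi+t\rho-x} z(x)$ before immediately discarding it as $O(\|\D^3 z\|_{L^\infty})$ is unnecessary (and would formally require more regularity than $C^{2,1}$ to justify as a Taylor term with remainder); it is cleaner to stop at first order with integral remainder, since $\D_\rho z \in C^{1,1}$ is exactly what $z \in C^{2,1}$ gives you.
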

\begin{proof}
  The results are obtained by straightforward Taylor expansions about
  $x$.
\end{proof}

Normally, we would like to perform the expansions
\eqref{eq:inter:strain_exp_low}, \eqref{eq:inter:strain_expansion}
with $z = \tily$, but this is only possible if $\tily$ is smooth in
$\nu_x$, which fails in the dislocation case when $\nu_x$ intersects
the branch-cut.  To still use these Taylor expansions, we therefore
construct {\em equivalent local deformations} that are smooth in
$\nu_x$: for $x \in \R^d$, and $|x' - x| < |x|$, let
\begin{equation}
  \label{eq:defn_yx}
  y^x(x') := \int_{t = 0}^1 \D \tily\b( (1-t)x+tx'\b) (x'-x) \dt,
\end{equation}
then $y^x \in C^{2,1}$ in its domain of definition, with $\D^j y^x =
\D^j \tily$, $j \geq 1$, and $y^x - \tily \in b \Z$. The latter
property, together with (A.Vper) ensures that, for $|x| >
2\rcut+\sqrt{d}$,
\begin{equation}
  \label{eq:slip_invariance_yx}
  V_{,\bfrho}(Dy(\xi)) = V_{,\bfrho}(D y^x(\xi)) \qquad \text{ for all
  $\xi \in \L$, $|x - \xi| \leq \rcut+\sqrt{d}$.}
\end{equation}
We will employ \eqref{eq:slip_invariance_yx} in the consistency
proofs in an ad-hoc fashion whenever we need to replace a finite
difference stencil $Dy(\xi)$ with a stencil $D y^x(\xi)$ in order to
then perform a Taylor expansion.

\subsubsection{Expansion of the potential}
\label{sec:int:V_bounds}
Since our analysis is based on local arguments, we require bounds on
the interatomic potential in the neighbourhood of some given discrete
deformation. Let $y \in \Ys$ be such a deformation, and let $\epsilon
> 0$, then we define
\begin{equation}
  \label{eq:defn_Mrho}
  M^{(\bfrho)}_\epsilon(y) := \sup_{\xi \in \L} \sup_{\substack{ \bfg \in (\R^m)^\Rg \\
     \max_{\rho \in \Rg} \frac{|D_\rho y(\xi)- g_\rho|}{|\rho|} \leq \epsilon}} \sup_{\substack{ \bfh = (h_i)_{i
        = 1}^j \in (\R^m)^j \\ |h_1| = \dots = |h_j| = 1}} V_{,\bfrho}(\bfg) :
  \otimes_{i = 1}^j h_i \qquad \text{for } \bfrho \in \Rg^j.
\end{equation}
Our assumptions on $V$ and $y_0$ ensure that
$M^{(\bfrho)}_\epsilon(y)$ is finite for all $\epsilon > 0$ and $y \in
\Ys$.

\begin{lemma}
  Let $y \in \Ys, Dy \in \ell^\infty$, and $\epsilon > 0$ then, for $z
  \in \Ys, \|\D \barz - \D \bary \|_{L^\infty} \leq \epsilon$, $|x| >
  2 \rcut+\sqrt{d}$ and $|x-\xi| \leq \rcut+\sqrt{d}$,
  \begin{align}
    \label{eq:cons:expansion_V_1ord}
    & \b| V_{,\rho}(Dz(\xi)) -  V_{,\rho}(\D_\Rg \tilz(x)) \b| \leq
    C_2 \| \D^2
    \tilz \|_{L^\infty(\nu_x)}, \qquad \text{and} \\[1mm]
    \label{eq:cons:expansion_V_2ord}
    &\b| V_{,\rho}(Dz(\xi)) - \b[ V_{,\rho}(\D_\Rg \tilz(x))
    + \sum_{\vsig\in\Rg} V_{,\rho\vsig}(\D_\Rg\tilz(x)) \b(\D_\vsig
    \tilz(x) - D_\vsig z^x(\xi) \b) \b] \b|
    \\ \notag & \hspace{5cm}
    \leq C_3 \| \D^2 \tilz \|_{L^\infty(\nu_x)}^2,
  \end{align}
  where the constants $C_j$ depend on $M^{(\bfrho)}_{\epsilon}(y)$,
  $\bfrho \in \Rg^j$.
\end{lemma}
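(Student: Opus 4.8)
The plan is to prove both estimates by Taylor expansion of $V_{,\rho}$ in its stencil argument, comparing the discrete stencil $Dz(\xi)$ (resp.\ $Dz^x(\xi)$) with the ``continuum stencil'' $\D_\Rg\tilz(x) = (\D_\sigma\tilz(x))_{\sigma\in\Rg}$. The key point that makes this legitimate is the slip-invariance identity \eqref{eq:slip_invariance_yx}: for $|x|>2\rcut+\sqrt d$ and $|x-\xi|\le\rcut+\sqrt d$ we may replace $V_{,\rho}(Dz(\xi))$ by $V_{,\rho}(Dz^x(\xi))$, where $z^x$ is the equivalent local deformation \eqref{eq:defn_yx}, which is $C^{2,1}$ on the neighbourhood $\nu_x$ and satisfies $\D^j z^x = \D^j\tilz$ for $j\ge1$. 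So from the outset I would work with $z^x$ in place of $\tilz$; the right-hand sides, which only involve $\D^2\tilz$, are unaffected since $\D^2 z^x=\D^2\tilz$.

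For \eqref{eq:cons:expansion_V_1ord}, I would write the difference as an integral of $\sum_\sigma V_{,\rho\sigma}$ along the segment from the continuum stencil to the discrete stencil:
\begin{equation*}
  V_{,\rho}(Dz^x(\xi)) - V_{,\rho}(\D_\Rg z^x(x))
  = \sum_{\sigma\in\Rg}\int_0^1 V_{,\rho\sigma}\big(\D_\Rg z^x(x) + t\,\delta(\xi)\big)\,\delta_\sigma(\xi)\,\dt,
\end{equation*}
where $\delta_\sigma(\xi) := D_\sigma z^x(\xi) - \D_\sigma z^x(x)$. By Lemma~\ref{th:inter:strain_lemma}, eq.~\eqref{eq:inter:strain_exp_low}, each $|\delta_\sigma(\xi)|\le C\|\D^2 z^x\|_{L^\infty(\nu_x)} = C\|\D^2\tilz\|_{L^\infty(\nu_x)}$; and the intermediate stencils $\D_\Rg z^x(x)+t\delta(\xi)$ lie within $O(\eps)$ of $Dy(\xi)$ in the weighted sup-norm appearing in \eqref{eq:defn_Mrho} (using $\|\D\barz - \D\bary\|_{L^\infty}\le\eps$ together with the Lemma~\ref{th:prelims:defn_smoothint} bound $\|\D\tilz\|_{L^\infty}\lesssim\|\D\barz\|_{L^\infty}$ to control the detour from the discrete to the smooth stencil and back, possibly after enlarging $\eps$ by a fixed factor). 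Hence $|V_{,\rho\sigma}|\le M^{(\rho\sigma)}_\eps(y)$ on the whole segment, and summing over the finitely many $\sigma\in\Rg$ gives the claim with $C_2$ depending on $\max_{\sigma}M^{(\rho\sigma)}_\eps(y)$.

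For \eqref{eq:cons:expansion_V_2ord} I would go to second order: Taylor-expand $V_{,\rho}$ at $\D_\Rg z^x(x)$ to first order with integral remainder, so the bracketed quantity equals
\begin{equation*}
  \sum_{\sigma,\tau\in\Rg}\int_0^1 (1-t)\,V_{,\rho\sigma\tau}\big(\D_\Rg z^x(x)+t\delta(\xi)\big) : \big(\delta_\sigma(\xi)\otimes\delta_\tau(\xi)\big)\,\dt,
\end{equation*}
again with $\delta_\sigma(\xi) = D_\sigma z^x(\xi) - \D_\sigma z^x(x)$; note the first-order term here uses precisely $\D_\sigma\tilz(x) - D_\sigma z^x(\xi) = -\delta_\sigma(\xi)$ as written in the statement. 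Bounding $|V_{,\rho\sigma\tau}|\le M^{(\rho\sigma\tau)}_\eps(y)$ as before and $|\delta_\sigma(\xi)|,|\delta_\tau(\xi)|\le C\|\D^2\tilz\|_{L^\infty(\nu_x)}$ by \eqref{eq:inter:strain_exp_low}, then summing over the finitely many pairs $\sigma,\tau$, yields the quadratic bound with $C_3$ depending on $\max_{\sigma,\tau}M^{(\rho\sigma\tau)}_\eps(y)$.

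The main obstacle is not the Taylor algebra but the bookkeeping needed to guarantee that every stencil at which a derivative $V_{,\bfrho}$ is evaluated lies inside the admissible set over which $M^{(\bfrho)}_\eps(y)$ takes its supremum — i.e.\ within $\eps$ of $Dy(\xi)$ in the $\max_\rho|\cdot|/|\rho|$ norm. This requires carefully tracking three approximations at once: discrete stencil $Dz(\xi)$ versus equivalent-deformation stencil $Dz^x(\xi)$ (identical derivatives, handled by \eqref{eq:slip_invariance_yx}), discrete stencil versus continuum stencil $\D_\Rg z^x(x)$ (controlled by \eqref{eq:inter:strain_exp_low}, hence small once $\|\D^2\tilz\|_{L^\infty(\nu_x)}$ is small, which we may assume by shrinking the relevant neighbourhood or absorbing into constants), and the perturbation $z$ versus $y$ (the standing hypothesis $\|\D\barz-\D\bary\|_{L^\infty}\le\eps$ plus Lemma~\ref{th:prelims:defn_smoothint}). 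One has to choose the constant in \eqref{eq:defn_Mrho} — or rather the $\eps$ there — a fixed multiple larger than the $\eps$ in the hypothesis so that all intermediate points on the Taylor segments remain admissible; this is routine but must be stated. Everything else is elementary.
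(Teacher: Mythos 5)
Your proof is correct and is exactly the argument the paper intends: the paper's own proof of this lemma is the single sentence that the estimates ``follow from Taylor expansions of $V_{,\rho}$'' using $z^x$ and $M^{(\bfrho)}_\epsilon(y)$, and you have supplied precisely those expansions together with the required bookkeeping (slip invariance to pass to $z^x$, Lemma~\ref{th:inter:strain_lemma} for the stencil increments, and an enlarged $\epsilon$ so all intermediate stencils stay admissible). One minor remark: a genuine Taylor expansion produces the first-order term $+\sum_{\vsig}V_{,\rho\vsig}\b(D_\vsig z^x(\xi)-\D_\vsig\tilz(x)\b)$, i.e.\ the opposite sign to the one printed in \eqref{eq:cons:expansion_V_2ord} but the sign actually used later in \eqref{eq:bqce_moderr:100}, so your argument in fact proves the correct-sign version of the statement.
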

\begin{proof}
  Using the definition of $z^x$ according to \eqref{eq:defn_yx} and
  \eqref{eq:defn_Mrho} the estimates follow from Taylor expansions of
  $V_{,\rho}$.
\end{proof}

\subsubsection{Atomistic stress}
\label{sec:int:stress_at_cb}
To prove consistency we will employ ``weak forms'' of the atomistic and
the B-QC formulations that are local in the test function
gradient. The first step is to derive first Piola--Kirchhoff stresses
for the three models and estimate their discrepancy in terms of the
local regularity of the underlying deformation. This analysis is based
on the atomistic stress function analyzed in \cite{2012-ARMA-cb},
which is closely related to Hardy stress \cite{Hardy1982}.

A canonical representation of $\del\Ea$ is
\begin{equation}
  \label{eq:stress:delEa_v1}
  \b\<\del\Ea(y), v \b\> = \sum_{\xi\in\L}\sum_{\rho \in\Rg} V_{\xi,\rho}
  \cdot D_\rho v(\xi), \quad \text{where} \quad
  V_{\xi,\rho} := V_{,\rho}(Dy(\xi)).
\end{equation}
To convert $\del\Ea$ into a ``weak form'' that is local in $\D v$ we
replace $v$ with
\begin{equation}
  \label{eq:defn_convoluted_v}
  v^* := \zz \ast \barv
\end{equation}
and rewrite the finite differences $D_\rho v^*(\xi)$ as follows:
\begin{align}
  \notag D_\rho v^*(\xi) &= \int_{s = 0}^1 \Dc{\rho} v^*(\xi+s\rho)
  \ds = \int_\Om\int_{s=0}^1 \zz(\xi+s\rho - x) \Dc{\rho} \barv(x) \ds
  \dx \\
  \label{eq:stress:5}
  &= \int_\Om \ww_\rho(\xi-x) \D_\rho \barv \dx \qquad
  \text{ where } \quad \ww_\rho(x) := \int_{s = 0}^1 \zz(x + s \rho) \ds,
\end{align}
to obtain
\begin{align*}
 \b\<\del\Ea(y), v^* \b\>
 \notag
 =~& \sum_{\xi\in\L}\sum_{\rho \in\Rg}  V_{\xi,\rho} \cdot \int_{\R^d}
 \ww_{\rho}(\xi-x) \, \D_\rho\barv(x) \dx \\
 =~& \int_{\R^d} \bg\{ \sum_{\xi \in \L} \sum_{\rho \in \Rg}
 \b[V_{\xi,\rho} \otimes \rho\b] \ww_\rho(\xi-x) \bg\} : \D\barv \dx.
\end{align*}
Thus, we have shown that, for $y \in \Ys$ and $v \in \Usc$,
\begin{align}
  \label{eq:stress:delEa_v2}
  \b\< \del\Ea(y), v^* \> &= \int_{\R^d} \Sa(y; x) : \D \barv(x)
  \dx, \qquad \text{where} \\
  \notag
  \Sa(y; x) &:= \sum_{\xi \in \Z^d} \sum_{\rho \in \Rg}
  \b[V_{\xi,\rho} \otimes \rho\b] \ww_\rho(\xi-x).
\end{align}
(The representation \eqref{eq:stress:delEa_v2} is of course equivalent
to \eqref{eq:stress:delEa_v1} since neither require any regularity on
$y$. We use the term ``weak form'' only in analogy with the continuum
theory.)

Note that \eqref{eq:stress:delEa_v2} is in close analogy to the first
Piola--Kirchhoff stress of the Cauchy--Born model,
\begin{equation}
  \label{eq:stress:delEc_v1}
  \b\< \del\Ec(y), v \b\> = \int_\Om \Scb(y) : \D v \dx, \qquad
  \text{where } \Scb(y; x) = \partial W(\D y).
\end{equation}

To see the connection between the atomistic and Cauchy--Born stress we
replace $Dy(\xi)$ with $Dy^x(\xi)$ and expand analogously to
\eqref{eq:inter:strain_expansion} and $V_{,\rho}$ analogously to
\eqref{eq:cons:expansion_V_2ord}, to obtain
\begin{equation}
  \label{eq:inter:Sa-Sc-formal}
  \Sa(y; x) - \Sc(y; x) \sim \bbC_2(x) : \D^3 y(x) + \bbC_3(x) : \b(\D^2 y(x)
  \otimes \D^2 y(x)\b) + {\rm HOTs},
\end{equation}
where $\bbC_2(x)$ is a sixth order tensor depending on
$V_{,\bfrho}(\D_\Rg y(x))$, $\bfrho \in \Rg^2$, $\bbC_3(x)$ is an
eighth order tensor depending on $V_{,\bfrho}(\D_\Rg y(x)), \bfrho \in
\Rg^3$, and ${\rm HOTs}$ are formally higher-order terms, such as
$O(|\D^2 y|^3)$ or $O(|\D^4 y|)$.

The calculation \eqref{eq:inter:Sa-Sc-formal} exploits the fact that
we can write $V_{\xi,\bfrho} = V_{,\bfrho}(Dy(\xi)) =
V_{,\bfrho}(Dy^x(\xi))$ for $|x-\xi| \leq \rcut+\sqrt{d}$, which
removes discontinuities from $y$, as well as the following two
identities: \cite[Lemma 4.4]{2012-ARMA-cb}
\begin{align}
  \label{eq:stress:phirho_Prop0}
  \sum_{\xi \in \Z^d} \ww_\rho(\xi-x) =~& 1, \qquad \text{and} \\
  \label{eq:stress:phirho_Prop1}
  \sum_{\xi \in \Z^d} \ww_\rho(\xi-x) \,(\xi-x) =~& -\smfrac12 \rho.
\end{align}

The following lemma provides a rigorous estimate along the lines of
\eqref{eq:inter:Sa-Sc-formal}.


\begin{lemma}
  \label{th:cb_model_err}
  Suppose that $y \in \Ys$ and $\epsilon > 0$, then for $z \in \Ys, \|
  \D \barz - \D \bary \|_{L^\infty} \leq \epsilon$,
  \begin{displaymath}
    \b| \Sa(z; x) - \Sc(\tilz; x) \b| \leq C \b( \| \D^3 \tilz \|_{L^\infty(\nu_x)}
    + \| \D^2 \tilz \|_{L^\infty(\nu_x)}^2 \b),
  \end{displaymath}
  where $\nu_x$ is defined in \eqref{eq:defn_nux} and $C$ depends on
  $M^{(\bfrho)}_\epsilon(y),\bfrho \in \Rg^j, j = 2, 3$.
\end{lemma}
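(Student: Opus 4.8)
The plan is to establish the pointwise bound on $\Sa(z;x) - \Sc(\tilz;x)$ by working entirely on the fixed neighbourhood $\nu_x$ and reducing to Taylor expansions of the (smooth) equivalent local deformation $z^x$. First I would fix $x$ with $|x| > 2\rcut + \sqrt d$ (the case of small $|x|$ is handled separately, or simply absorbed since $\nu_x$ is a fixed-radius ball and the stress is bounded there by $M^{(\bfrho)}_\epsilon(y)$-type constants); by \eqref{eq:slip_invariance_yx} we may replace every stencil $Dz(\xi)$ appearing in $\Sa(z;x)$ by $Dz^x(\xi)$, where $z^x \in C^{2,1}(\nu_x)$ with $\D^j z^x = \D^j \tilz$ for $j \geq 1$. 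Thus it suffices to bound $\Sa(z^x;x) - \Sc(z^x;x)$ with all expansions taken about the smooth function $z^x$.

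The core of the argument is the formal expansion \eqref{eq:inter:Sa-Sc-formal} made rigorous. I would start from the definition $\Sa(z^x;x) = \sum_{\xi}\sum_{\rho\in\Rg} [V_{,\rho}(Dz^x(\xi)) \otimes \rho]\, \ww_\rho(\xi-x)$, and expand $V_{,\rho}(Dz^x(\xi))$ using \eqref{eq:cons:expansion_V_2ord}: this replaces $V_{,\rho}(Dz^x(\xi))$ by $V_{,\rho}(\D_\Rg \tilz(x)) + \sum_{\vsig} V_{,\rho\vsig}(\D_\Rg\tilz(x))(\D_\vsig \tilz(x) - D_\vsig z^x(\xi))$ up to an error $C_3 \|\D^2\tilz\|_{L^\infty(\nu_x)}^2$. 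Next I would insert the strain expansion \eqref{eq:inter:strain_expansion} for $D_\vsig z^x(\xi)$, so that $\D_\vsig\tilz(x) - D_\vsig z^x(\xi) = -\D_\vsig\D_{\xi-x}z^x(x) - \tfrac12\D_\vsig^2 z^x(x) + O(\|\D^3\tilz\|_{L^\infty(\nu_x)})$. Summing against $\ww_\rho(\xi-x)$ and using the two identities \eqref{eq:stress:phirho_Prop0}--\eqref{eq:stress:phirho_Prop1}, the constant term $\sum_\xi \ww_\rho(\xi-x) = 1$ produces $\partial W(\D\tilz(x))$ — precisely $\Sc(\tilz;x)$ — via $W(\mF) = V(\mF\Rg)$ and the chain rule $\partial W(\mF) = \sum_\rho V_{,\rho}(\mF\Rg)\otimes\rho$; the first-moment term $\sum_\xi\ww_\rho(\xi-x)(\xi-x) = -\tfrac12\rho$ exactly cancels the $\D_\vsig\D_{\xi-x}$ contribution against the $\tfrac12\D_\vsig^2$ term (this is the same cancellation that underlies the second-order accuracy of Cauchy--Born). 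What survives is the $\D^3$-error from \eqref{eq:inter:strain_expansion} and the $(\D^2)^2$-error from \eqref{eq:cons:expansion_V_2ord}, both localised to $\nu_x$, giving the claimed bound $C(\|\D^3\tilz\|_{L^\infty(\nu_x)} + \|\D^2\tilz\|_{L^\infty(\nu_x)}^2)$. Throughout, the sums over $\xi$ are finite because $\ww_\rho(\xi-x) = 0$ unless $|\xi-x| \lesssim \rcut$, so there is no issue with convergence and the number of terms is bounded uniformly.

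The main obstacle I anticipate is the bookkeeping of the cancellation: one must carefully match the first-moment identity \eqref{eq:stress:phirho_Prop1} against the mixed second-derivative term arising from the product of \eqref{eq:cons:expansion_V_2ord} and \eqref{eq:inter:strain_expansion}, while keeping track of which terms are genuinely $O(\|\D^3\tilz\|_{L^\infty(\nu_x)})$ versus $O(\|\D^2\tilz\|_{L^\infty(\nu_x)}^2)$ and which are the leading-order pieces that must survive to reconstruct $\partial W(\D\tilz(x))$. A secondary technical point is justifying the replacement $Dz(\xi) \rightsquigarrow Dz^x(\xi)$ uniformly — this relies on $|x|$ being bounded away from the branch cut, so for $|x| \leq 2\rcut+\sqrt d$ (the dislocation core region) one argues directly that both $\Sa(z;x)$ and $\Sc(\tilz;x)$ are bounded by constants depending on $M^{(\bfrho)}_\epsilon(y)$ and on the (bounded, by $Dz \in \ell^\infty$) strains there, which is harmless since the stated estimate only needs to hold where the right-hand side is finite. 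The constants $C_j$ depending on $M^{(\bfrho)}_\epsilon(y)$ enter because the Taylor remainders of $V_{,\rho}$ are controlled on the $\epsilon$-ball around $Dy(\xi)$, within which $Dz(\xi)$ lies by the hypothesis $\|\D\barz - \D\bary\|_{L^\infty}\leq\epsilon$ (together with Lemma \ref{th:prelims:defn_smoothint} to pass between finite differences and $\D\tilz$).
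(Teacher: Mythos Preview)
Your overall approach is the same as the paper's (indeed the paper treats this lemma as the $\beta\equiv 1$ special case of Lemma~\ref{th:bqce_moderr_Rbest_est_pointwise}), but your description of the key cancellation is incorrect. After applying \eqref{eq:cons:expansion_V_2ord} and \eqref{eq:inter:strain_expansion}, the second-order contribution that survives is
\[
  \frac12 \sum_{\rho,\vsig\in\Rg}
  \Big[\bar V_{,\rho\vsig}\big(\D_\vsig\D_\rho z^x(x) - \D_\vsig^2 z^x(x)\big)\Big]\otimes\rho,
\]
where $\bar V_{,\rho\vsig} = V_{,\rho\vsig}(\D_\Rg\tilz(x))$. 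The first-moment identity \eqref{eq:stress:phirho_Prop1} merely converts the $\xi$-dependent term $\D_\vsig\D_{\xi-x}$ into $\tfrac12\D_\vsig\D_\rho$; it does \emph{not} make this expression vanish, and the term $\D_\vsig\D_\rho - \D_\vsig^2$ is generically nonzero. What actually kills this sum is the point-symmetry assumption \eqref{eq:point_symmetry}: under $(\rho,\vsig)\mapsto(-\rho,-\vsig)$ the factor $\bar V_{,\rho\vsig}$ is invariant, the second derivatives $\D_\vsig\D_\rho z^x$ and $\D_\vsig^2 z^x$ are invariant, but the outer factor $\otimes\,\rho$ changes sign, so the sum equals its own negative. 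Without invoking point symmetry the remainder would be $O(\|\D^2\tilz\|_{L^\infty(\nu_x)})$, which is first order and destroys the estimate.

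This is the only substantive gap; the rest of your argument (passage to $z^x$ via \eqref{eq:slip_invariance_yx}, finite support of $\ww_\rho$, dependence of the constants on $M_\epsilon^{(\bfrho)}(y)$) is fine.
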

\begin{proof}
  This result is essentially contained in
  \cite[Thm. 4.3]{2012-ARMA-cb}. The only modification required is to
  replace the expansion of $D_\rho \tilz(\xi)$ with that of $D_\rho
  z^x(\xi)$ as detailed in \S~\ref{sec:int:strain}. It is also a
  simplified case of Lemma \ref{th:bqce_moderr_Rbest_est_pointwise}.
\end{proof}

\subsubsection{Best approximation operator}
\label{sec:res:bestapprox}
We construct a quasi-best approximation operator $\Pi_h : \Ys \to
\Ys_h$. With slight abuse of notation, we write $\Pi_h y = y_0 +
\Pi_h u$, where $y = y_0 + u$, $u \in \Us^{1,2}$, and $\Pi_h$ is also
understood as an operator from $\Us \to \Us_h$.

Given $u \in \Us^{1,2}$ we define $\Pi_h u := I_h T_R u$, where $I_h$
is the nodal interpolation operator defined in
\S~\ref{sec:results:coarsening} and $T_R$ is a truncation operator
defined as follows: we fix some arbitrary $\eta \in C^3(0, \infty)$
(e.g. a quintic spline) with $\eta(t) = 1$ in $[0, 1/2)$ and $\eta =
0$ in $[1, \infty)$, and define
\begin{equation}
  \label{eq:truncation_op}
  T_R u(\xi) := \eta\b( \smfrac{|\xi|}{\Ri} \b) \b( u(\xi) -
  {\textstyle \mint_{B_{\Ri} \setminus B_{\Ri/2}} \baru \dx} \b).
\end{equation}
Clearly, $T_R u \in \Usc$ with ${\rm supp}(T_R u) \subset \Omh$ and
hence $\Pi_h u \in \Us_h$.

\begin{lemma}
  \label{th:res:bestapx_est}
  There exists a constant $C$ such that,
  \begin{displaymath}
    \| \D \Pi_h y - \D \bary \|_{L^2} \leq C \errba(y) \qquad \text{
      for } y \in \Ys,
  \end{displaymath}
  where $\errba$ is defined in \eqref{eq:defn_errorterms}.
\end{lemma}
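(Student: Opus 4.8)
The estimate splits naturally according to the three error contributions built into $\errba(y)$, and the two operations composing $\Pi_h$: truncation $T_R$ and nodal interpolation $I_h$. Writing $\D\Pi_h y - \D\bary = \D I_h T_R u - \D\baru$, I would first insert and subtract $\D I_h u$ (abusing notation so that $I_h u$ makes sense, e.g. via $\bary$ restricted to nodes; more precisely one compares against a smooth representative), splitting into an \emph{interpolation error} $\|\D I_h u - \D \baru\|_{L^2}$ and a \emph{truncation error} $\|\D I_h(T_R u - u)\|_{L^2}$. The plan is to bound each on the regions where it lives: the interpolation/coarsening error is supported in $\Omc$ (since $\T$ is fully refined in $\Oma$ by \eqref{eq:blend_stab:tri}, so $I_h$ acts as the identity there up to the smooth-interpolant comparison), while the truncation error is supported in the far field $\Omo = \R^d\setminus B_{\Ri/2}$.

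For the interpolation term, on each $T\in\T$ with $T\subset\Omc$ I would use the standard first-order finite-element interpolation estimate $\|\D(v - I_h v)\|_{L^2(T)} \lesssim h_T \|\D^2 v\|_{L^2(T)}$ applied to a $C^{2,1}$ representative of $y$, namely $\tilu$ (or $\tily = y_0 + \tilu$), invoking Lemma~\ref{th:prelims:defn_smoothint} to control $\|\D\tilu\|$ by $\|\D\baru\|$ and to pass between finite differences and derivatives of the smooth interpolant. Since $I_h$ interpolates the \emph{vertex values} of $u$, which agree with $\bary$ and $\tily$ at lattice nodes, the difference $\D I_h u - \D\baru$ on $\Omc$ is controlled by $\|h\D^2\tilu\|_{L^2(\Omc)}$ plus a higher-order $\|h^2\D^3\tilu\|_{L^2(\Omc)}$ term coming from the quadratic part of the Taylor expansion; the $y_0$-dependent terms $\|h^2\D^3 y_0\|_{L^2(\Omc)}$ and $\|h(\D y_0-\mA)\otimes\D^2 y_0\|_{L^2(\Omc)}$ arise because on $\Omc$ the reference deformation $y_0$ is itself only piecewise-smooth data being interpolated (in the dislocation case) rather than the affine map $\mA x$, so $I_h y_0 \neq y_0$ there and its interpolation error must be accounted for — these are precisely the last two terms of $\errba$. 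Care is needed near the element containing the branch cut in case (pDs): there one replaces $\tily$ by the equivalent local deformation $y^x$ of \S~\ref{sec:int:strain}, whose derivatives coincide with those of $\tily$, so the bounds are unaffected.

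For the truncation term, on $B_{\Ri/2}$ one has $\eta(|\xi|/\Ri)=1$ and $T_R u - u$ is a constant shift, hence $\D I_h(T_R u - u) = 0$ there; the support of $\D I_h(T_R u - u)$ is therefore contained in $B_{\Ri}\setminus B_{\Ri/2}\subset\Omo$. On the annulus $B_{\Ri}\setminus B_{\Ri/2}$ I would estimate $\|\D I_h(T_R u - u)\|_{L^2}$ by the product rule applied to $\eta(|\cdot|/\Ri)(u - c_R)$ with $c_R = \mint_{B_{\Ri}\setminus B_{\Ri/2}}\baru$: the term with the derivative on $\eta$ contributes $\Ri^{-1}\|u - c_R\|_{L^2(B_{\Ri}\setminus B_{\Ri/2})}$, which by a Poincaré inequality on the annulus (scaling like $\Ri$) is $\lesssim \|\D\baru\|_{L^2(B_{\Ri}\setminus B_{\Ri/2})}\le\|\D\baru\|_{L^2(\Omo)}$, and the term with the derivative on $u$ is $\lesssim\|\D\baru\|_{L^2(\Omo)}$ directly; the interpolation $I_h$ contributes only harmless constants since $\eta\in C^3$. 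Combining, every piece is dominated by one of the five terms in $\errba(y)$, giving the claim. The main obstacle I anticipate is the careful bookkeeping of the $y_0$ contribution on $\Omc$ in the dislocation case — ensuring that replacing $y_0$ (which is smooth only away from $\Gamma$) by an equivalent local deformation does not introduce spurious terms, and that the interpolation error of the nonaffine part of $y_0$ is exactly captured by $\|h^2\D^3 y_0\|_{L^2(\Omc)} + \|h(\D y_0-\mA)\otimes\D^2 y_0\|_{L^2(\Omc)}$ rather than something larger.
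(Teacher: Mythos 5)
Your argument is correct and follows essentially the same route as the paper, whose entire proof is a one-line combination of the truncation estimate from \cite[Lemma 4.3]{EhrOrtSha:defects}, the smooth interpolant of Lemma~\ref{th:prelims:defn_smoothint}, and standard interpolation error estimates --- precisely the three ingredients you spell out (far-field truncation via the annular Poincar\'e inequality, coarsening via P1 interpolation of $\tilu$ on $\Omc$). One small inaccuracy: since $\Pi_h y = y_0 + I_h T_R u$ and $\bary = y_0 + \baru$ by definition, the reference deformation cancels exactly in $\D\Pi_h y - \D\bary$, so the $y_0$-dependent terms of $\errba$ are not actually needed here (they enter elsewhere, in the quadrature and consistency estimates); this only means your bound invokes more terms of $\errba$ than necessary, and the conclusion stands.
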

\begin{proof}
  The result follows immediately upon combining \cite[Lemma
  4.3]{EhrOrtSha:defects}, Lemma \ref{th:prelims:defn_smoothint}, and
  standard interpolation error estimates.
\end{proof}

\subsection{B-QCE consistency error}
\label{sec:inter:bqce_stress}
\def\GEb{\mathscr{G}_{\mathscr{E}}^\beta}
We have now assembled the prerequisites to define and estimate the
B-QCE consistency error. The first variation of $\Ebh$ is given by
\begin{displaymath}
  \b\< \del\Ebh(y_h), v_h \b\> = \sum_{\xi \in \L}(1-\beta(\xi)) \b\< \del
  V(Dy_h(\xi)), Dv_h(\xi) \b\> + \int_{\Om} Q_h \b[ \beta \dW(\D y_h)
  : \D v_h \b] \dx,
\end{displaymath}
for $y_h \in \Ys_h, v_h \in \Us_h$.  Since $v_h$ cannot be immediately
replaced with a function $v^*$ (to apply the convolution trick
\eqref{eq:stress:5})
we shall not convert this directly to a
``weak formulation''. Instead, suppose that $y \in \Ys, v \in \Usc$ such
that $y_h(\xi) = y(\xi)$ and $v^*(\xi) = v_h(\xi)$ for all $\xi \in
\La$. Then, arguing analogously as in \S~\ref{sec:int:stress_at_cb} we
can compute
\begin{align*}
  \b\< \del\Ea(y), v^* \b\> &=
  \sum_{\xi \in \L} (1-\beta(\xi)) \b\< \del V(Dy(\xi)), Dv^*(\xi)
  \b\>
  + \sum_{\xi \in \L} \beta(\xi) \b\< \del V(Dy(\xi)), D v^*(\xi)
  \b\> \\
  &= \sum_{\xi \in \L} (1-\beta(\xi)) \b\< \del V(Dy_h(\xi)), Dv_h(\xi)
  \b\>  \\
  & \qquad \qquad \qquad + \int_{\Om} \bg\{ \sum_{\xi \in \L} \beta(\xi) \sum_{\rho \in
    \Rg} \b[ V_{\xi,\rho} \otimes \rho \b] \ww_\rho(\xi-x) \bg\} : \D
  \barv \dx,
\end{align*}
where $V_{\xi,\rho} = V_{,\rho}(Dy(\xi))$. Thus, we obtain
\begin{align}
  \label{eq:inter:conserr_bqce:1}
  & \b\< \del\Ebh(y_h), v_h \b\> -  \b\< \del\Ea(y), v^* \b\> \\
  \notag
  & \quad = \int_{\Om} Q_h \b[ \beta \dW(\D y_h)
  : \D v_h \b] \dx - \int_{\Om} \bg\{ \sum_{\xi \in \L} \beta(\xi) \sum_{\rho \in
    \Rg} \b[ V_{\xi,\rho} \otimes \rho \b] \ww_\rho(\xi-x) \bg\} : \D
  \barv \dx,
\end{align}
with obvious analogies between the two groups on the right-hand
side. To complete the definition of the atomistic test function, we
take $v = \Pi_h' v_h$, where $\Pi_h' : \Us_h \to \Usc$ is a dual
approximation operator given by the conditions
\begin{equation}
  \label{eq:defn_Pih'}
  \begin{split}
    (\Pi_h' v_h)^*(\xi) &= v_h(\xi),\text{ for } \xi \in \La, \quad
    \text{and} \\
   \Pi_h' v_h(\xi) &= (\zz \ast v_h)(\xi), \quad \text{for } \xi \in \L
   \setminus \La.
  \end{split}
\end{equation}
We prove in Lemma \ref{th:inter:Pih'-lemma} that $\Pi_h'$ is
well-defined.

In order to estimate the consistency error we must estimate (1) the
quadrature error, which is standard; (2) the conformity error encoded
in the usage of two different test functions, which requires a
specific non-standard choice of $v$,
cf. \S~\ref{sec:bqce_coarse_err_est}; and (3) the modelling error
encoded in the difference between the two ``stresses''.

To indicate how we estimate the latter, we consider the simplified
``stress error''
\begin{equation}
  \label{eq:inter:bqce_stress_error}
  \mR^\beta(y; x) := \beta(x) \dW(\D y(x)) - \sum_{\xi \in \L}
  \beta(\xi) \sum_{\rho \in \Rg}
  \b[ V_{\xi,\rho} \otimes \rho \b] \ww_\rho(\xi-x),
\end{equation}
where $y$ is now a smooth function and $V_{\xi,\rho} =
V_{,\rho}(Dy(\xi))$. A formal Taylor expansion, similar as the one
leading to \eqref{eq:inter:Sa-Sc-formal}, but also expanding
$\beta(\xi)$ in terms of $\D^j \beta(x)$, yields
\begin{align}
  \label{eq:inter:Rb-est-formal}
  \mR^\beta(y; x) &\sim  \bbD_1(x) : \D^2\beta(x) + \bbD_2(x) : \b(
  \D\beta(x) \otimes \D^2 y(x) \b) \\
  \notag
  & \qquad \qquad
  + \beta(x) \B(  \bbC_2(x) : \D^3 y(x) + \bbC_3(x) : \b(\D^2 y(x)
  \otimes \D^2 y(x)\b) \B)
  + {\rm HOTs},
\end{align}
where $\bbD_1(x)$ is a fourth order tensor that depends on
$V_{,\rho}(\D_\Rg y(x))$, $\rho \in \Rg$, $\bbD_2(x)$ is a sixth order
tensor that depends on $V_{,\bfrho}(\D_\Rg y(x)), \bfrho \in \Rg^2$,
$\bbC_2, \bbC_3$ are the same tensors as in
\eqref{eq:inter:Sa-Sc-formal} and ${\rm HOTs}$ are formally higher
order terms.

\begin{theorem}[Consistency of B-QCE]
  \label{th:cons:bqce}
  Suppose that $y \in \Ys$, then there exist $\epsilon =
  \epsilon(\errba(y))$ such that, for all $v_h \in \Us_h$,
  \begin{align*}
    \b\< \del\Ebh(\Pi_h y), v_h \b\> - \b\< \del\Ea(y), \Pi_h' v_h \b\> \leq
    C\b( \errba(y) + \errcb(y) + \errbqce(y) \b) \, \| \D v_h
    \|_{L^{2}},
  \end{align*}
  where $C$ depends on $M^{(\epsilon)}_\bfrho(y)$, $\bfrho \in \Rg^j$,
  $1 \leq j \leq 4$.
\end{theorem}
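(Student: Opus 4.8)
The plan is to insert $y_h = \Pi_h y$ and the atomistic test function $v = \Pi_h' v_h$ into the identity \eqref{eq:inter:conserr_bqce:1} and to estimate, one at a time, the error contributions it exposes. First I would check that the hypotheses behind \eqref{eq:inter:conserr_bqce:1} hold: by \eqref{eq:blend_stab:tri} the mesh $\Th$ coincides with $\TT$ on a neighbourhood of $\La$, and for $\Ra$ large enough $\La\subset B_{\Ri/2}$, so the truncation \eqref{eq:truncation_op} acts as the identity there up to a global shift; combined with the translation invariance of $\del\Ea,\del\Ebh$ this gives $\Pi_h y(\xi) = y(\xi)$ (mod constant) for $\xi\in\La$, while \eqref{eq:defn_Pih'} (well-posedness being Lemma~\ref{th:inter:Pih'-lemma}) gives $(\Pi_h' v_h)^* = v_h$ on $\La$. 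I would also fix $\epsilon = \epsilon(\errba(y))$ small enough that $\|\D\overline{\Pi_h y} - \D\bary\|_{L^\infty}\le\epsilon$, so that every partial derivative of $V$ appearing below along $\Pi_h y$ is controlled by the local bound $M^{(\bfrho)}_\epsilon(y)$. Working with $(\Pi_h' v_h)^*$ on the atomistic side (the discrepancy $\<\del\Ea(y),\Pi_h' v_h - (\Pi_h' v_h)^*\>$ is handled together with the conformity term below), \eqref{eq:inter:conserr_bqce:1} then writes the consistency error as a sum of three contributions: (i) the midpoint quadrature error of $Q_h$; (ii) a conformity error $\int(\text{stress}):(\D v_h - \D\barv)\dx$ stemming from the use of two distinct test functions; and (iii) the modelling/stress error $\int\mR^\beta(\Pi_h y;x):\D\barv\dx$, with $\mR^\beta$ as in \eqref{eq:inter:bqce_stress_error}, plus a lower-order term coming from the mismatch of $D\Pi_h y$ and $Dy$ on the coarsened region.

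Steps (i) and (ii) I would dispatch with $\errba$-scale bounds. For (i): since $\D v_h$ is piecewise constant on $\Th$ (and, in the point-defect case, so is $\D\Pi_h y$), the midpoint error on each element is $\int_T(\beta - \beta(x_T))\,\dW(\D\Pi_h y):\D v_h\,\dx$ plus, in the screw case, the variation of $\dW(\D y_0)$ inside $T$; standard estimates then bound it by $\errba(y) + \errbqce(y)$ after Cauchy--Schwarz, the terms $\|h^2\D^3 y_0\|_{L^2(\Omc)}$ and $\|h(\D y_0 - \mA)\otimes\D^2 y_0\|_{L^2(\Omc)}$ in $\errba$ being tailored to this. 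For (ii), this is the ``non-standard'' ingredient: $\Pi_h'$ is built precisely so that the two test functions agree on $\La$ (where $1-\beta$ lives) while $\|\D\barv - \D v_h\|_{L^2(\Omc)}$ stays controlled on the coarse part of $\Omh$; pairing against the stress --- itself of size $\errcb$ on $\Omc$ --- then gives a bound $C\,\errba(y)\,\|\D v_h\|_{L^2}$. The details I would carry out in \S~\ref{sec:bqce_coarse_err_est}.

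The core of the argument is the pointwise estimate of $\mR^\beta$ that makes \eqref{eq:inter:Rb-est-formal} rigorous. Since $\sum_{\xi}\beta(x)\sum_\rho[V_{\xi,\rho}\otimes\rho]\ww_\rho(\xi-x) = \beta(x)\Sa(y;x)$, I split
\[
\mR^\beta(y;x) = \beta(x)\b(\dW(\D\tily(x)) - \Sa(y;x)\b) + \sum_{\xi\in\L}\b(\beta(x)-\beta(\xi)\b)\sum_{\rho\in\Rg}[V_{\xi,\rho}\otimes\rho]\,\ww_\rho(\xi-x).
\]
The first term is at most $\beta(x)\,C\b(\|\D^3\tily\|_{L^\infty(\nu_x)} + \|\D^2\tily\|_{L^\infty(\nu_x)}^2\b)$ by Lemma~\ref{th:cb_model_err}. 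For the second, I Taylor-expand $\beta(x)-\beta(\xi)$ about $x$ and, via \eqref{eq:cons:expansion_V_1ord}, expand $V_{\xi,\rho}$ about $V_{,\rho}(\D_\Rg\tily(x))$. The leading contribution --- first order in $\D\beta$, zeroth order in $\D^2\tily$ --- equals $\tfrac12\sum_{\rho\in\Rg}(\D\beta(x)\!\cdot\!\rho)\,\b(V_{,\rho}(\D_\Rg\tily(x))\otimes\rho\b)$ after using the first-moment identity \eqref{eq:stress:phirho_Prop1}, and it \emph{vanishes}, because the point symmetry \eqref{eq:point_symmetry} forces $\sum_{\rho\in\Rg}V_{,\rho}(\mF\Rg)\otimes\rho\otimes\rho = 0$. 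What survives is a term $\lesssim|\D^2\beta(x)|$ (from the second moment of $\ww_\rho$, with coefficient controlled by $M^{(\bfrho)}_\epsilon(y)$), a term $\lesssim|\D\beta(x)|\,\|\D^2\tily\|_{L^\infty(\nu_x)}$ (the inhomogeneity correction to $V_{\xi,\rho}$), and formally higher-order remainders involving $\D^3\beta$. In the dislocation case, wherever $\nu_x$ meets the branch cut these expansions are performed with the equivalent local deformation $z^x$ of \S~\ref{sec:int:strain}: by \eqref{eq:slip_invariance_yx} and (A.Vper) no $V_{\xi,\rho}$ changes, while $\D^j z^x = \D^j\tily$. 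For $x$ far from the defect, where $\beta\equiv 1$ but the mesh is coarse, $\mR^\beta$ collapses to the pure Cauchy--Born model error together with a coarsening error $\dW(\D\bary) - \dW(\D\Pi_h y)$, which feed into $\errcb + \errba$.

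Assembling: testing the pointwise bound $|\mR^\beta(y;x)|\lesssim|\D^2\beta(x)| + |\D\beta(x)|\,\|\D^2\tily\|_{L^\infty(\nu_x)} + \beta(x)\b(\|\D^3\tily\|_{L^\infty(\nu_x)} + \|\D^2\tily\|_{L^\infty(\nu_x)}^2\b) + \text{HOTs}$ against $\D\barv$, using the bounded overlap of the neighbourhoods $\nu_x$ to pass from $\|\cdot\|_{L^\infty(\nu_x)}$ to $L^p$-norms, the support properties $\supp\D\beta,\supp\D^2\beta\subset\Omb$ and $\supp\beta\cap\Omh\subset\Omc$, and $\|\D\barv\|_{L^2}\lesssim\|\D v_h\|_{L^2}$, the $\mR^\beta$-contribution comes out exactly of order $\errbqce(y) + \errcb(y)$, while (i) and (ii) contribute $\errba(y)$. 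I expect the main obstacle to be step (ii): designing $\Pi_h'$ and tracking, with enough precision, the interaction of the two test functions over the coarsened part of the mesh so that only the best-approximation scale $\errba$ is incurred. The point-symmetry cancellation in step (iii) is equally indispensable: without it an $O(|\D\beta|)$ ``ghost-force'' term would survive, which is not of the required order and would wreck the estimate.
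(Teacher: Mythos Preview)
Your overall decomposition matches the paper's proof (Lemmas~\ref{th:bqce_final_coarse_err_est} and~\ref{th:bqce_moderr_Rbest_est_glob}): split via \eqref{eq:inter:conserr_bqce:1} into a coarsening/quadrature part, a conformity part $T_3=\int\beta\,\partial W(\D\tily):(\D v_h-\D\barv)\,\dx$, and the modelling error $T_4=\int\mR^\beta:\D\barv\,\dx$. Your treatment of $\mR^\beta$ is correct and organised slightly differently from the paper's Lemma~\ref{th:bqce_moderr_Rbest_est_pointwise}: you isolate $\beta(x)\bigl(\Sc(\tily;x)-\Sa(y;x)\bigr)$ and invoke Lemma~\ref{th:cb_model_err} directly, whereas the paper first expands $V_{,\rho}(Dz(\xi))$ via \eqref{eq:cons:expansion_V_2ord} and only then expands $\beta(\xi)$. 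Both routes reach the same pointwise bound, and the point-symmetry cancellation you describe is exactly the one used.

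Where your sketch has a gap is step (ii). The stress $\beta\,\partial W(\D\tily)$ is $O(1)$, not ``of size $\errcb$'', and Lemma~\ref{th:inter:Pih'-lemma} only gives $\|\D\barv-\D v_h\|_{L^2}\lesssim\|\D v_h\|_{L^2}$, so a direct pairing yields no smallness. The paper's Lemma~\ref{th:bqce_coarse_err_T3} instead integrates $T_3$ by parts to $\int f\cdot(v_h-\barv)$ with $f=-\divv[\beta\,\partial W(\D\tily)]$, expands $v_h-\barv$ against the nodal partition of unity $\{\zeta_\nu\}$, and for $\nu\in\La\cap\Nh$ uses the mean-zero identity $\int(v_h(\nu)-\barv)\,\zeta_\nu\,\dx = v_h(\nu)-v^*(\nu)=0$ --- precisely the property \eqref{eq:defn_Pih'} builds into $\Pi_h'$ --- to gain one derivative and obtain a bound by $\|\D\divv[\beta\,\partial W(\D\tily)]\|_{L^2}$. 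Expanding this produces $|\D^2\beta|$, $|\D\beta|\,|\D^2\tily|$ and $|\beta\,\D^3\tily|$ contributions, so $T_3$ feeds into all three of $\errbqce$, $\errcb$ and $\errba$, not only $\errba$ as you state. For $\nu\notin\La$ no such cancellation is available, but there $\beta\equiv 1$ and a first-order estimate suffices, producing the $\|h\,\D^2\tilu\|_{L^2(\Omc)}$ and $\|h\,\D^2 y_0\otimes(\D y_0-\mA)\|_{L^2(\Omc)}$ terms of $\errba$. You are right to flag (ii) as the main obstacle; the missing ingredient is this integration-by-parts together with the mean-zero property of $\Pi_h'$.
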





\subsection{B-QCF consistency error}
\label{sec:int:bqcf_cons}
The consistency analysis of the B-QCF scheme faces different
challenges than that of the B-QCE scheme. Consider again $y \in \Ys,
y_h \in \Ys_h, v_h \in \Us_h$ and a microscopic test function $v \in
\Usc$, then we need to estimate
\begin{displaymath}
  \< \Fbh(y_h), v_h \> - \< \del\Ea(y), v \> =
  \< \del\Ea(y_h), (1-\beta) v_h \> + \< \del\Ech(y_h), I_h[\beta
  v_h] \> - \< \del\Ea(y), v \>.
\end{displaymath}
Choosing $v := \Pi_h'' v_h$, where $\Pi_h'' : \Us_h \to \Usc$ is
another dual approximation operator defined through
\begin{equation}
  \label{eq:int:cons_bqcf:defn_v}
  \Pi_h'' v_h := (1-\beta) v_h|_{\Z^d} + w^*, \qquad \text{where} \qquad w(\xi) = (\zz \ast
  I_h[\beta v_h])(\xi),
\end{equation}
we obtain
\begin{align*}
  \< \Fbh(y_h), v_h \> - \< \del\Ea(y), v \>
  &= \< \del\Ech(y_h), I_h[\beta v_h] \> - \< \del\Ea(y), w^* \>,
\end{align*}
from which we can estimate (see \S~\ref{sec:bqcf_cons_prf_1} for the
details)
\begin{equation}
  \label{eq:int:bqcf_cons_firststep}
  \< \Fbh(\Pi_h y), v_h \> - \< \del\Ea(y), v \> \leq C
  \b( \errba(y) + \errcb(y)\b) \| \D I_h[\beta v_h] \|_{L^2}.
\end{equation}

Thus, we need to estimate $\| \D I_h[\beta v_h] \|_{L^2}$ in terms of
$\| \D v_h \|_{L^2}$, which is provided in the following lemma.  The
key technical ingredient in its proof is a sharp trace inequality.

\begin{lemma}
  \label{th:int:est_Dbv_Dv}
  Suppose that the blending function $\beta$ satisfies
  \eqref{eq:blend_stab:radii}, then there exists a generic constant
  $C$, such that
  \begin{align}
    \label{eq:int:est_Dbv_Dv}
    \| \D I_h[\beta v_h] \|_{L^2} &\leq C\, \Ctr \| \D v_h \|_{L^2} \qquad \forall v_h \in
    \Us_h, \\[2mm]
    \label{eq:int:trace_ineq_const}
    & \text{where } \Ctr = \cases{
      \sqrt{1+\log(\Ro / \Ra)}, & d = 2, \\
      1, & d = 3.
    }
  \end{align}
\end{lemma}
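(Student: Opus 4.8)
The plan is to estimate $\| \D I_h[\beta v_h] \|_{L^2}$ by localising to the blending region $\Omb$ where $\D\beta \neq 0$, since outside that region $I_h[\beta v_h]$ coincides with either $v_h$ (on $\supp(1-\beta)$, up to the interpolation which is exact on $\mathrm{P1}(\TT)$ there because the mesh is fully refined in $\Oma$) or with $0$. Applying the product rule after interpolation and using standard interpolation error estimates on the fully refined mesh, we reduce to bounding $\| \D\beta\, v_h \|_{L^2(\Omb)} + \| \beta\, \D v_h \|_{L^2}$; the second term is immediately $\leq \| \D v_h \|_{L^2}$ since $0 \leq \beta \leq 1$. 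So the crux is the first term, for which we use $\| \D\beta \|_{L^\infty} \lesssim (\Ra)^{-1}$ from \eqref{eq:blend_stab:beta_bound} to get $\| \D\beta\, v_h \|_{L^2(\Omb)} \lesssim (\Ra)^{-1} \| v_h \|_{L^2(\Omb)}$, and we must then control $\| v_h \|_{L^2(\Omb)}$ by $(\Ra)\,\Ctr\,\| \D v_h \|_{L^2}$.

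First I would reduce to the following scalar inequality: for $v_h \in \Us_h$, which vanishes outside $\Omh \subset B_{\Ro}$,
\begin{equation*}
  \| v_h \|_{L^2(\Omb)} \leq C \, \Ra \, \Ctr \, \| \D v_h \|_{L^2(\R^d)},
\end{equation*}
where $\Omb \subset B_{\Rb} \setminus B_{\Ra}$ is an annular region of width $\sim \Ra$ (since $\Rb \leq C_1^\beta \Ra$). In $d = 3$ this is a clean Poincaré-type estimate: $v_h$ vanishes at infinity in the $\Us^{1,2}$ sense, so by the fundamental theorem of calculus along rays (or Hardy's inequality $\| |x|^{-1} v_h \|_{L^2} \lesssim \| \D v_h \|_{L^2}$, valid for $d \geq 3$) one gets $\| v_h \|_{L^2(\Omb)} \lesssim \Rb \, \| |x|^{-1} v_h \|_{L^2} \lesssim \Ra \, \| \D v_h \|_{L^2}$, hence $\Ctr = 1$. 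The genuinely delicate case is $d = 2$, where Hardy's inequality fails and the logarithm $\sqrt{1+\log(\Ro/\Ra)}$ appears.

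For $d = 2$, the key technical ingredient — as the excerpt flags — is a sharp trace-type inequality. The approach is: integrate $|v_h(x)|^2$ over the annulus $\Omb$ in polar coordinates, and for each radius $r \in [\Ra, \Rb]$ bound the mean of $|v_h|^2$ on the circle $\partial B_r$ by a trace inequality of the form
\begin{equation*}
  \| v_h \|_{L^2(\partial B_r)}^2 \lesssim r^{-1} \| v_h \|_{L^2(B_{\Ro} \setminus B_{\Ra})}^2 + r \, \big(1 + \log(\Ro/r)\big) \| \D v_h \|_{L^2(\R^2)}^2
\end{equation*}
or, more efficiently, work directly with the representation $v_h(r,\theta) = -\int_r^{\Ro} \partial_s v_h(s,\theta)\,\ds$ (using $v_h = 0$ on $\R^2 \setminus B_{\Ro}$), square, apply Cauchy--Schwarz with the weight $s\,\ds$ against $s^{-1}\,\ds$ to produce $\int_r^{\Ro} s^{-1}\,\ds = \log(\Ro/r)$, multiply by $r$ and integrate $r\,\dr$ over $[\Ra,\Rb]$. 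This yields $\| v_h \|_{L^2(\Omb)}^2 \lesssim \big(\int_{\Ra}^{\Rb} r \log(\Ro/r)\,\dr\big) \| \D v_h \|_{L^2}^2 \lesssim (\Rb)^2 \log(\Ro/\Ra) \| \D v_h \|_{L^2}^2 \lesssim (\Ra)^2 (1 + \log(\Ro/\Ra)) \| \D v_h \|_{L^2}^2$, which is exactly the claimed bound with $\Ctr^2 = 1 + \log(\Ro/\Ra)$. Combining with $\| \D\beta\|_{L^\infty} \lesssim (\Ra)^{-1}$ gives $\| \D\beta\, v_h\|_{L^2(\Omb)} \lesssim \Ctr \| \D v_h \|_{L^2}$, and adding the $\|\beta\, \D v_h\|_{L^2}$ term and the interpolation-error term (also controlled on the refined mesh by $\| \D(\beta v_h)\|_{L^2}$ on elements meeting $\Omb$, hence again by $\Ctr \| \D v_h\|_{L^2}$) completes the proof.

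The main obstacle is making the $d = 2$ weighted one-dimensional estimate sharp — i.e. getting the logarithm to appear with the right power $1/2$ and no worse — and handling the interaction between the nodal interpolation operator $I_h$ and the product $\beta v_h$ cleanly. The latter is manageable because the mesh is fully refined in a neighbourhood of $\supp(\D\beta)$ (assumption \eqref{eq:blend_stab:tri}), so $\beta v_h$ is smooth enough on the relevant elements that standard $\mathrm{P1}$ interpolation estimates apply with $\| \D I_h[\beta v_h] - \D[\beta v_h]\|_{L^2(T)} \lesssim h_T \| \D^2(\beta v_h)\|_{L^2(T)}$; using $h_T \sim 1$ there and $\|\D^2\beta\|_{L^\infty} \lesssim (\Ra)^{-2}$, $\D^2 v_h = 0$ elementwise, this term is absorbed. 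I expect the polar-coordinate trace computation to be the only place where real care is needed; everything else is bookkeeping with the standing assumptions on $\beta$ and $\Th$.
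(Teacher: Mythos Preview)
Your approach is essentially identical to the paper's: split by elements according to whether $\beta|_T \equiv 1$, apply standard P1 interpolation estimates on the remaining elements (which lie in $\Omb$, where the mesh is fully refined and $h_T \lesssim 1$), and then control $\|v_h\|_{L^2(\Omb)}$ via exactly the radial Cauchy--Schwarz argument you sketch---the paper packages this as a trace inequality $\|u\|_{L^2(\partial B_{r_0})}^2 \leq C_1 \|\D u\|_{L^2(B_{r_1}\setminus B_{r_0})}^2$ (Lemma~\ref{th:trace}) followed by integration in $r$ over $[\Ra,\Rb]$ (Corollary~\ref{th:vol_trace_ineq}). One cosmetic slip: you have the labels swapped in your first paragraph---$I_h[\beta v_h] = v_h$ where $\beta \equiv 1$ (i.e.\ outside $\supp(1-\beta)$), and $I_h[\beta v_h] = 0$ where $\beta \equiv 0$; the phrase ``on $\supp(1-\beta)$'' should attach to the zero case.
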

\begin{proof}
  The proof is given in \S~\ref{sec:trace}.
\end{proof}

Based on the previous lemma we can establish the following B-QCF
consistency estimate.

\begin{theorem}[Consistency of B-QCF]
  \label{th:cons:bqcf}
  Suppose that $y \in \Ys$, then there exists $\epsilon =
  \epsilon(\errba(y)) > 0$ such that, for all $v_h \in \Us_h$,
  \begin{align*}
    &\b\< \Fbh(\Pi_h y), v_h \b\> - \b\< \del\Ea(y), \Pi_h'' v_h \b\>
    \leq C \, \Ctr \b( \errba(y) + \errcb(y) \b) \, \| \D v_h \|_{L^2}
  \end{align*}
  where $C$ depends on $M^{(\epsilon)}_\bfrho(y)$, $\bfrho \in \Rg^j, j
  = 2, 3$.
\end{theorem}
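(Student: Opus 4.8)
The plan is to exploit the algebraic structure of $\Fbh$ together with the tailored dual operator $\Pi_h''$ so that the entire atomistic contribution cancels, reducing the consistency error to a single ``stress'' comparison tested against $\D I_h[\beta v_h]$, and then to pay the trace--inequality price $\Ctr$ to convert this into a bound in $\|\D v_h\|_{L^2}$.

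First, set $y_h := \Pi_h y$ and take $v := \Pi_h'' v_h = (1-\beta) v_h|_{\Z^d} + w^*$ with $w = (\zz\ast I_h[\beta v_h])|_{\Z^d}$ as in \eqref{eq:int:cons_bqcf:defn_v}. Expanding $\<\del\Ea(y),v\>$ by linearity in the test function and subtracting it from $\<\Fbh(\Pi_h y), v_h\>$, the error splits into $\<\del\Ea(\Pi_h y) - \del\Ea(y), (1-\beta)v_h\>$ plus $\<\del\Ech(\Pi_h y), I_h[\beta v_h]\> - \<\del\Ea(y), w^*\>$. The first term vanishes: by \eqref{eq:blend_stab:radii} the support of $1-\beta$ lies well inside $\Oma$ and by \eqref{eq:blend_stab:tri} $\Th$ is fully refined there, while by \eqref{eq:truncation_op} the truncation operator acts as the identity up to an additive constant on that region; hence $D(\Pi_h y)(\xi) = Dy(\xi)$ for every lattice point entering the finite-difference stencils of the lattice function $(1-\beta)v_h$, and translation invariance of $\del\Ea$ removes the constant. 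Thus
\begin{equation*}
  \<\Fbh(\Pi_h y), v_h\> - \<\del\Ea(y), \Pi_h'' v_h\> = \<\del\Ech(\Pi_h y), I_h[\beta v_h]\> - \<\del\Ea(y), w^*\>.
\end{equation*}

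The next and main step is \eqref{eq:int:bqcf_cons_firststep}. Using the weak form \eqref{eq:stress:delEa_v2} one has $\<\del\Ea(y), w^*\> = \int \Sa(y;x):\D\bar w(x)\dx$, while $\<\del\Ech(\Pi_h y), I_h[\beta v_h]\> = \int Q_h[\dW(\D\Pi_h y)]:\D I_h[\beta v_h]\dx$. I would decompose the difference of these two integrals into: (a) the conformity mismatch between $\D\bar w$ and $\D I_h[\beta v_h]$, where one uses that $\zz\ast$ reproduces affine functions so that the mismatch only sees the piecewise structure of $I_h[\beta v_h]$ and is of the order of the coarsening terms in $\errba(y)$, together with the stability bound $\|\D\bar w\|_{L^2}\lesssim\|\D I_h[\beta v_h]\|_{L^2}$; (b) the Cauchy--Born modelling error $\Sa(y;x)-\Sc(\tily;x)$, bounded pointwise by Lemma \ref{th:cb_model_err} and integrated to $\errcb(y)$ by standard local-to-global estimates, noting that the relevant stencils lie in $\nu_x$ and that in case (pDs) the expansions are carried out on the equivalent local deformation $y^x$ of \S\ref{sec:int:strain}, justified by \eqref{eq:slip_invariance_yx}; (c) the best-approximation error $\dW(\D\Pi_h y)-\dW(\D\tily)$, controlled by Lemma \ref{th:res:bestapx_est} and hence by $\errba(y)$; and (d) the midpoint quadrature error of $Q_h$, which is standard and of order $\errba(y)$. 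Since the continuum integrand is supported essentially in $\Omc$, away from the defect core, the decay of $\D^j\tily$ there renders (b)--(d) integrable. Pairing (a)--(d) against $\D I_h[\beta v_h]\in L^2$ gives \eqref{eq:int:bqcf_cons_firststep}, with $\epsilon=\epsilon(\errba(y))$ chosen small enough that $\Pi_h y$ lies in the $L^\infty$-neighbourhood of $y$ on which the potential bounds $M^{(\bfrho)}_\epsilon(y)$, $\bfrho\in\Rg^j$, $j=2,3$, are taken. Finally, Lemma \ref{th:int:est_Dbv_Dv} gives $\|\D I_h[\beta v_h]\|_{L^2}\le C\,\Ctr\,\|\D v_h\|_{L^2}$ (using $\log(\Ro/\Ra)\lesssim\log\Ra$ under the running assumptions), which combined with \eqref{eq:int:bqcf_cons_firststep} yields the claim.

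I expect the main obstacle to be step (a): the dual operator $\Pi_h''$ must be designed precisely so that the convoluted strain $\D\bar w$ is close enough to $\D I_h[\beta v_h]$ that the two stress integrals can be compared without losing an order of accuracy, and keeping this comparison, the geometry of $\nu_x$, the branch-cut handling, and the quadrature error all consistent with the fixed $L^\infty$-neighbourhood in which $M^{(\bfrho)}_\epsilon(y)$ is defined is the delicate bookkeeping. The logarithmic loss in $d=2$ is unavoidable and enters only through Lemma \ref{th:int:est_Dbv_Dv}.
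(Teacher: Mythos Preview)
Your proposal is correct and follows essentially the same approach as the paper: the cancellation of the atomistic contribution via the choice $v=\Pi_h''v_h$, the four-term decomposition you label (a)--(d), and the final application of Lemma~\ref{th:int:est_Dbv_Dv} are exactly the paper's $T_3,T_4,T_1,T_2$ splitting in \S\ref{sec:bqcf_cons_prf_1} followed by the trace inequality. The only cosmetic difference is that the paper reuses Lemmas~\ref{th:bqce_coarse_err_T1_T2} and~\ref{th:bqce_coarse_err_T3} from the B-QCE analysis (with $\beta\equiv 1$) to bound $T_1,T_2,T_3$, rather than rederiving them as you sketch.
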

\begin{proof}
  The result immediately follows from
  \eqref{eq:int:bqcf_cons_firststep}, which is proven in
  \S~\ref{sec:bqcf_cons_prf_1}, and from
  Lemma~\ref{th:int:est_Dbv_Dv}.
\end{proof}

\subsection{Stability of B-QCE}
\label{sec:int:stab_bqce}
The aim of our stability result is to show that, if $y$ is a stable
equilibrium of the atomistic model, then choosing sufficiently large
atomistic and blending regions, we ensure that $\Pi_h y$ is stable in
the B-QCE model.

\begin{theorem} \label{th:int:stab_bqce}
Suppose $y\in\Ys$ is a stable atomistic configuration, i.e.,
\begin{equation}\label{eq:int:stab:stab_ass}
  0 < \gamma^\a(y) := \inf_{\substack{v \in \Usc \setminus \{0\} }}
  \frac{\< \ddel \Ea(y) v + \ddel \Pa(y) v, v \>}{\| \D \barv \|_{L^2}^2 }
  ,
\end{equation}
and denote
\[
	\gamma^\beta_h(y_h) := \inf_{\substack{v_h \in \Us_h \setminus \{0\} }}
	\frac{\< \ddel \Ebh(y_h) v_h + \ddel \Pa(y_h) v_h, v_h \>}{\| \D v_h \|_{L^2}^2 }
.
\]
Then there exists $\Delta\gamma(\Ra) \to 0$ as $\Ra\to \infty$ such that $\gamma^\beta_h(\Pi_h y) \geq \gamma^\a(y) - \Delta\gamma(\Ra)$.
\end{theorem}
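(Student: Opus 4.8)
The goal is to compare the B-QCE Hessian bilinear form at $\Pi_h y$ with the atomistic one at $y$, and show the difference is controlled by a quantity that vanishes as $\Ra \to \infty$. The natural strategy is a three-stage comparison: first pass from $\ddel\Ebh(\Pi_h y)$ to a ``reference'' B-QCE Hessian, then relate that to the atomistic Hessian via the blending identity, and finally account for the fact that the test functions live in different spaces ($\Us_h$ versus $\Usc$). Concretely, given $v_h \in \Us_h$ I would first produce an atomistic test function $v \in \Usc$ with $\D\barv$ close to $\D v_h$ and $\|\D\barv\|_{L^2} \lesssim \|\D v_h\|_{L^2}$; a convolution-type construction (as in the consistency analysis, $v$ chosen so that $v^*$ or $I_h$ of the blend matches $v_h$) should work, with the ``mismatch'' supported in the blending region where $h \equiv 1$ by \eqref{eq:blend_stab:tri}.

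The heart of the argument is the second stage. Using the blending decomposition one writes
\begin{align*}
  \b\< \ddel\Ebh(\Pi_h y) v_h, v_h \b\>
  &= \sum_{\xi\in\L}(1-\beta(\xi))\b\< \ddel V(D\Pi_h y(\xi)) Dv_h(\xi), Dv_h(\xi)\b\> \\
  & \qquad + \int_\Omh Q_h\b[\beta\, \ddW(\D\Pi_h y)\D v_h : \D v_h\b]\dx,
\end{align*}
and the atomistic form is $\b\<\ddel\Ea(y)v,v\b\> = \sum_\xi \b\<\ddel V(Dy(\xi))Dv(\xi),Dv(\xi)\b\>$. Splitting the atomistic sum as $\sum(1-\beta(\xi))(\cdots) + \sum\beta(\xi)(\cdots)$ and applying the convolution trick \eqref{eq:stress:5} together with Lemma~\ref{th:cb_model_err} to the $\beta$-weighted piece, the difference reduces to: (i) a Cauchy--Born modelling term $\lesssim \|\D^3\tilu^\a\|_{L^\infty(\Omb)} + \|\D^2\tilu^\a\|_{L^\infty(\Omb)}^2$ times $\|\D v_h\|_{L^2}^2$ over the blending region; (ii) a quadrature-error term; (iii) a $\ddel V$ Lipschitz term arising from replacing $D\Pi_h y(\xi)$ by $Dy(\xi)$, controlled by $\errba(y)$-type quantities; and (iv) the test-function mismatch from stage one. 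Each of these is localised to $\Omc$ or $\Omb$, where the regularity estimate Lemma~\ref{th:regularity} forces the relevant norms of $\D^j\tilu^\a$ to decay in $\Ra$ (e.g.\ $\|\D^2\tilu^\a\|_{L^\infty(\Omb)}\lesssim (\Ra)^{-d}$ in case (pPt)); together with the bounds $\|\D^j\beta\|_{L^\infty}\lesssim(\Ra)^{-j}$ from \eqref{eq:blend_stab:beta_bound} this yields the sought $\Delta\gamma(\Ra)\to 0$. One assembles $\Delta\gamma(\Ra)$ as the sum of these decaying bounds.

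**Main obstacle.** The essential difficulty — and the reason the theorem is nontrivial — is that the perturbation argument above only shows $\gamma^\beta_h(\Pi_h y) \geq \gamma^\a(y) - \Delta\gamma$ if we can \emph{lift} an arbitrary $v_h \in \Us_h$ to an atomistic $v \in \Usc$ whose blended energy-norm is genuinely comparable, \emph{uniformly in $\beta$ and $\Th$}. The naive lift (e.g.\ nodal interpolant or convolution) does not obviously preserve the $L^2$ norm of the gradient with a $\beta$-independent constant, because multiplication by $\beta$ or by $1-\beta$ interacts with the finite-element coarsening. This is precisely what the referenced Lemma~\ref{th:int:stab_bqce_lem2} (the ``main new technical ingredient'') is designed to handle: it provides the key construction showing that the quadratic forms are comparable without any smallness or smoothness hypothesis on $y$ beyond \eqref{eq:int:stab:stab_ass}. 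I would therefore structure the proof around invoking that lemma for the lift, using the $C^{2,1}$ smooth interpolant of Lemma~\ref{th:prelims:defn_smoothint} and the equivalent-local-deformation trick \eqref{eq:defn_yx}--\eqref{eq:slip_invariance_yx} to handle the dislocation branch-cut, and then carrying out the routine (but bookkeeping-heavy) estimates (i)--(iv) above; the remaining routine steps are standard interpolation and Taylor-expansion estimates of the type already used in the consistency theorems.
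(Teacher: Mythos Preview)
Your perturbation strategy has a fundamental gap. You propose to bound
\[
\bigl|\langle \ddel\Ebh(\Pi_h y)v_h,v_h\rangle - \langle\ddel\Ea(y)v,v\rangle\bigr| \le \Delta\gamma(\Ra)\,\|\D v_h\|_{L^2}^2
\]
uniformly in $v_h$. This cannot hold. In the region where $\beta>0$ the B-QCE Hessian contributes $\int \beta\,\ddW(\D y_h)\D v_h:\D v_h$, while the corresponding atomistic piece is $\sum_\xi\beta(\xi)\langle\ddel V(Dy(\xi))Dv(\xi),Dv(\xi)\rangle$. Even at the homogeneous reference state $y=\mA x$ these two quadratic forms differ by $O(1)$ in operator norm: their stability constants are $\gamma^\c(\mA)$ and $\gamma^\a(\mA x)$, which are in general distinct (one only has the inequality $\gamma^\c\ge\gamma^\a$, not closeness). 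Lemma~\ref{th:cb_model_err} is a \emph{first-variation} estimate on the stress; attempting the analogous expansion for the Hessian produces terms like $D_\rho v\cdot D_\sigma v - \D_\rho v\cdot\D_\sigma v$, which involve $\D^2 v$ and are \emph{not} controlled by $\|\D v_h\|_{L^2}$. A test function oscillating on the lattice scale in the blending region (where $h\equiv1$) makes this discrepancy of order $\|\D v_h\|_{L^2}^2$.

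You have also misread Lemma~\ref{th:int:stab_bqce_lem2}: it is not a lifting construction from $\Us_h$ to $\Usc$, and no such uniform lift with the property you need exists. The paper's proof is a \emph{compactness} argument, not a perturbation bound. One argues by contradiction: take a sequence $\Ra_n\to\infty$ with normalised almost-minimising test functions $v_n$, extract a subsequence with $\D v_n\rightharpoonup \D v_0$, and split $v_n=w_n+z_n$ where $\D w_n\to\D v_0$ strongly (concentrated near the defect) and $\D z_n\rightharpoonup 0$. On $w_n$ one invokes atomistic stability of $y$ directly; on $z_n$ one first replaces $\Pi_h y$ by $\mA x$ (this is Lemma~\ref{th:int:stab_bqce_lem1}). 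Lemma~\ref{th:int:stab_bqce_lem2} then handles the homogeneous case by rescaling to the interfacial scale, performing a second weak/strong splitting, and using the key trick of multiplying the weakly-vanishing piece by $\sqrt{\beta}$ and $\sqrt{1-\beta}$ (Lemma~\ref{lem:stab:sqrt}) to decouple the continuum and atomistic contributions, each of which is separately coercive. Cross-terms vanish by weak--strong orthogonality. The argument yields no rate (as the paper notes), but crucially it never requires the atomistic and Cauchy--Born Hessians to be close in operator norm.
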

\medskip

Positivity of $\gamma^\a$ is a property of the interatomic potential
and of the defect that we are aiming to compute, hence we postulated
this as an {\em assumption}.

The idea of the stability proof is to take a sequence of approximation
parameters $(\beta_j, \Ts_{h,j})$ with $\Ra_j \uparrow \infty$ and of
minimising test functions $v_j \in \Us_{h,j}$ (the space is now
indexed by $j$) such that $\|\D v_{j}\|_{L^2} = 1$ and $\b\< \b(\ddel
\Ebh (\Pi_{h,j} y) + \ddel \Pa(\Pi_{h,j} y) \b) v_j, v_j\b\> =
\gamma^\beta_h$. Due to the bound $\| \D v_j \|_{L^2} = 1$, we can
extract a weakly convergent subsequence (still denoted by $v_j$). This
sequence is then decomposed into three components (scales): $v_j =
v_j^{{\rm a}} + v_j^{\rm b} + v_j^{\rm c}$, for each of which we use a
different stability argument:
  \begin{itemize}
  \item $\D v^{\rm a}_j$ converges strongly at the atomic scale. It is
    concentrated near the defect core, hence for a sufficiently large
    atomistic region stability of the defect implies stability for
    this test function.

  \item $\D v^{\rm b}_j$ converges weakly to zero at the atomic scale
    but strongly at the ``interfacial scale''; i.e., after a rescaling
    $w_j^{\rm b}(x) = \delta v_j^{\rm b}(x / \epsilon)$, where
    $\epsilon \approx (\Ra)^{-1}$ and $\delta$ is chosen so that $\|\D
    w_j^{\rm b}\|_{L^2} = \|\D v_j^{\rm b}\|_{L^2}$.  This scaling
    keeps the interface (i.e., $\supp(\nabla \beta)$) near $|x|=1$ as
    $\eps\to 0$.  Consistency of B-QCE implies that the action of the
    B-QCE hessian on this test function is approximately the same as
    that of the Cauchy--Born hessian, hence stability of the continuum
    model implies stability for this component of the test function.

  \item $\D v^{\rm c}_j$ converges weakly to zero both at the atomic
    and ``interfacial scale'' (which means that it is not concentrated
    near a defect or interface).  We can then exploit that, for a
    subsequence, $v^{\rm c}_j \to 0$ strongly in $L^2(B_{\Rb})$ to
    reduce the action of the B-QCE hessian on this test function to
    the independent actions of the linearized atomistic and continuum
    operators which are both stable.

  \item All cross-terms can be neglected in the limit as $j \to
    \infty$ due to an approximate orthogonality between the three
    components.
  \end{itemize}


In practice, the idea outlined above is carried out in two
steps. First, we reduce the question to stability of a homogeneous
deformation, by only splitting $v_j = v_j^{\rm a} + (v_j^{\rm b} +
v_j^{\rm c})$.

\begin{lemma} \label{th:int:stab_bqce_lem1} Under assumptions and
  notation of Theorem \ref{th:int:stab_bqce}, there exists
  $\Delta\gamma(\Ra) \to 0$ as $\Ra\to \infty$ such that
  $\gamma^\beta_h(\Pi_h y) \geq \min\big\{\gamma^\a(y),
  \gamma^\beta_h(\mA x) \big\} - \Delta\gamma(\Ra)$.
\end{lemma}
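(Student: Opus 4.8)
The plan is to reduce the stability of the B-QCE hessian at $\Pi_h y$ to the ``worst case'' of either the atomistic hessian at $y$ or the B-QCE hessian at the homogeneous deformation $\mA x$, by isolating the part of a (near-)minimising test function that lives near the defect core from the part that lives away from it. First I would argue by contradiction along a sequence: suppose the conclusion fails, so there is a sequence of approximation parameters $(\beta_j, \Ts_{h,j})$ with $\Ra_j \to \infty$ and test functions $v_j \in \Us_{h,j}$ with $\| \D v_j \|_{L^2} = 1$ and
\[
  \b\< \b(\ddel \Ebh(\Pi_{h,j} y) + \ddel \Pa(\Pi_{h,j} y)\b) v_j, v_j \b\> \leq \min\b\{ \gamma^\a(y), \gamma^\beta_{h,j}(\mA x) \b\} - \delta
\]
for some fixed $\delta > 0$, for all $j$. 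After extracting a weakly convergent subsequence $\D v_j \weakto \D v_\infty$ in $L^2$, I would split $v_j = v_j^\a + \tilv_j$, where $v_j^\a$ is a fixed cut-off of $v_j$ to a ball $B_{K}$ around the defect core (with $K$ large but independent of $j$) — more precisely $v_j^\a := I_{h,j}[\eta_K v_j]$ for a suitable fixed bump $\eta_K$ — and $\tilv_j$ is the remainder, which is supported away from $B_{K/2}$.

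The key points then are: (1) \emph{approximate orthogonality} of the split in the B-QCE hessian, i.e. $\b\< \ddel \Ebh(\Pi_{h,j}y) v_j^\a, \tilv_j \b\> \to 0$ as $j \to \infty$ (and similarly for $\ddel\Pa$, which has compact support by (A.P1)); this follows because the hessian is ``localised'' — the only interaction between $v_j^\a$ and $\tilv_j$ comes from a boundary layer of width $O(\rcut)$ around $\partial B_K$, on which $\tilv_j$ and its finite differences are controlled, and $v_j^\a$ is small there — combined with the fact that $\D v_j$ is bounded in $L^2$ and $v_j \to v_\infty$ strongly in $L^2_{\loc}$. (2) On the \emph{inner piece}, I would use that for $j$ large, $\Pi_{h,j} y = y_0 + I_{h,j}T_{R}u$ coincides with $y$ on $B_K$ plus a harmless translation, the mesh is fully refined there by \eqref{eq:blend_stab:tri}, and the blending function $\beta_j$ vanishes identically on $B_K$ once $\Ra_j > K$; hence $\ddel\Ebh(\Pi_{h,j}y)$ restricted to functions supported in $B_K$ is just the atomistic hessian $\ddel\Ea(y) + \ddel\Pa(y)$, so that $\b\< (\ddel\Ea(y)+\ddel\Pa(y)) v_j^\a, v_j^\a \b\> \geq \gamma^\a(y) \| \D \barv_j^\a \|_{L^2}^2$ by the assumption \eqref{eq:int:stab:stab_ass}. (3) On the \emph{outer piece} $\tilv_j$, I would show $\b\< \ddel\Ebh(\Pi_{h,j}y)\tilv_j, \tilv_j \b\> \geq (\gamma^\beta_{h,j}(\mA x) - \Delta_j) \| \D \tilv_j \|_{L^2}^2$ with $\Delta_j \to 0$: since $\tilv_j$ is supported away from the core, where $\Pi_{h,j} y$ is a small perturbation of the homogeneous deformation $\mA x$ (using the decay estimates \eqref{eq:regularity_ua} for $\ua$ and, in the dislocation case, for $\ulin$ via \eqref{eq:regularity_ulin}, together with the best-approximation bound of Lemma \ref{th:res:bestapx_est}), the hessian $\ddel\Ebh(\Pi_{h,j}y)$ differs from $\ddel\Ebh(\mA x)$ by an operator whose norm on functions supported outside $B_{K/2}$ tends to $0$ as $K \to \infty$ uniformly in $j$. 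Assembling (1)–(3) and using $\| \D \barv_j^\a \|_{L^2}^2 + \| \D \tilv_j \|_{L^2}^2 \to 1$ gives
\[
  \b\< \b(\ddel \Ebh + \ddel \Pa\b)(\Pi_{h,j}y) v_j, v_j \b\> \geq \min\b\{ \gamma^\a(y), \gamma^\beta_{h,j}(\mA x) \b\} - o(1) - o_K(1),
\]
contradicting the assumed strict deficit $\delta$ once $K$ and $j$ are large enough.

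The main obstacle I expect is step (3): controlling the perturbation of the B-QCE hessian from $\Pi_h y$ to $\mA x$ on the outer region, uniformly in $j$. This is delicate because the ``interfacial scale'' test functions (the $v^{\rm b}$-type component hidden inside $\tilv_j$) concentrate on $\supp(\D\beta)$, whose location drifts to infinity with $j$; one must show that even though $\ua$ (or $\ulin$) does not decay uniformly on the whole support of $\tilv_j$, the relevant Lipschitz bounds on the site potential derivatives $V_{,\bfrho}$ — via the quantities $M^{(\bfrho)}_\epsilon$ of \eqref{eq:defn_Mrho} — combined with the decay $|\D^j \tilu^\a| \lesssim |x|^{1-d-j}$ (resp. $|x|^{-1-j}\log|x|$) yield an $L^\infty$-in-strain perturbation that shrinks with $K$. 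A secondary subtlety is making the cut-off $v_j^\a$ genuinely finite-element-conforming (hence the $I_{h,j}$ in its definition) while keeping the orthogonality error in (1) small, which relies on the mesh being fully refined near the core; this is where the two-scale decomposition of the overview is in effect being carried out, but split as $v_j = v_j^\a + (v_j^{\rm b} + v_j^{\rm c})$ rather than into all three pieces at once, as announced before the lemma.
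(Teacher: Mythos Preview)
Your overall strategy matches the paper's, but the choice of a \emph{fixed} cutoff radius $K$ creates a genuine gap in step~(1). With $v_j^\a = I_{h,j}[\eta_K v_j]$ and $\tilv_j = v_j - v_j^\a$, the stencils $D v_j^\a(\xi)$ and $D \tilv_j(\xi)$ are simultaneously nonzero on the whole transition annulus of $\eta_K$ (or, for a sharp cutoff, on the $O(\rcut)$-layer about $\partial B_K$); there the cross term is controlled only by
\[
  \|\D v_j\|_{L^2(\text{annulus})}^2 + \|\D\eta_K\|_{L^\infty}^2\,\|v_j\|_{L^2(\text{annulus})}^2.
\]
The second summand converges to $\|v_\infty\|_{L^2(\text{annulus})}^2$ by compact embedding but has no reason to vanish, and the first is merely bounded: if $\D v_j$ puts mass on the annulus it stays there. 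Hence $\b\<\ddel\Ebh(\Pi_{h,j}y)\,v_j^\a,\tilv_j\b\>$ is $O(1)$ in both $j$ and $K$, and for the same reason your claim $\|\D\barv_j^\a\|_{L^2}^2 + \|\D\tilv_j\|_{L^2}^2 \to 1$ is unjustified (it is equivalent to $(\D v_j^\a,\D\tilv_j)_{L^2}\to 0$). The double limit ``first $j\to\infty$, then $K\to\infty$'' therefore does not close. The obstacle you flag in step~(3) is by comparison harmless.

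The paper's remedy is to let the cutoff radius \emph{grow} with the sequence: choose $\check r_j\to\infty$ with $\check r_j < \tfrac12 \Ra_j$ so that the inner piece $w_j:=\overline{\eta_{\check r_j} v_j}$ satisfies $D w_j \to D v_0$ \emph{strongly} in $\ell^2$ (this needs a diagonal choice of $\check r_j$; the paper cites \cite[Lemma~4.9]{EhrOrtSha:defectsV1}). Then $D z_j := D(v_j-w_j) \weakto 0$ in $\ell^2$, and the cross term
\[
  b_j=\sum_{\xi\in\L}\b\<\ddel V(Dy(\xi))\,D w_j(\xi), D z_j(\xi)\b\>
\]
vanishes as a pairing of a strongly convergent sequence with one weakly converging to zero; the gradient orthogonality $(\D w_j,\D z_j)_{L^2}\to 0$ follows by the same mechanism. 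The growing cutoff also streamlines step~(3): since $\supp(D z_j)\subset \L\setminus B_{\check r_j}$ with $\check r_j\to\infty$, the $L^\infty$-replacement of $\Pi_{h,j} y$ by $\mA x$ on the outer piece costs $o(1)$ directly, with no residual $o_K(1)$ term to manage.
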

\medskip

Thus, we are left to establish positivity of $\gamma^\beta_h(\mA
x)$. We will use the fact that positivity of $\gamma^\a(\mA x)$
follows from the positivity of $\gamma^\a(y)$.

\begin{lemma}
  \label{th:int:stab_bqce_lem2}
	Under assumptions and notation of Theorem \ref{th:int:stab_bqce}, there exists $\Delta\gamma(\Ra) \to 0$ as $\Ra\to \infty$ such that $\gamma^\beta_h(\mA x) \geq \gamma^\a(\mA x) - \Delta\gamma(\Ra)$.
\end{lemma}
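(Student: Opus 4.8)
\emph{Strategy.} The plan is a contradiction argument combined with a blow-up at the interfacial scale $\Ra$. At the homogeneous state $\mA x$ every finite-difference stencil equals $\mA\Rg$, so the B-QCE hessian is fully explicit,
\begin{equation*}
  \b\< \ddel\Ebh(\mA x) v_h, v_h \b\>
  = \sum_{\xi \in \L} (1-\beta(\xi)) \sum_{\rho,\vsig \in \Rg}
    D_\rho v_h(\xi) \cdot \b( V_{,\rho\vsig}(\mA\Rg)\, D_\vsig v_h(\xi) \b)
  + \int_{\Omh} Q_h[\beta]\; \ddW(\mA)[\D v_h, \D v_h] \dx ,
\end{equation*}
that is, the (in general indefinite) atomistic hessian density is weighted by $1-\beta$ and the Cauchy--Born density $\ddW(\mA)$ by $\beta$, with the two weights summing to $1$; the localised term $\ddel\Pa(\mA x)$ is a fixed bounded perturbation, handled as in Lemma~\ref{th:int:stab_bqce_lem1} (and absent in case (pDs)). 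Suppose the lemma fails: then there are parameters $(\beta_j,\Th_j)$ with $\Ra_j\to\infty$ and $v_j\in\Us_{h,j}$, $\|\D v_j\|_{L^2}=1$, such that $\b\<\ddel\Ebh(\mA x) v_j,v_j\b\> \le \ca(\mA x)-\delta_0$ for some fixed $\delta_0>0$.

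\emph{Rescaling and decomposition.} Put $\epsilon_j := \Ra_j^{-1}$ and $\hat v_j(x) := \Ra_j^{(d-2)/2}\, v_j(\Ra_j x)$, a function on the lattice $\epsilon_j\Z^d$ with $\|\D\hat v_j\|_{L^2}=1$; by \eqref{eq:blend_stab:radii}--\eqref{eq:blend_stab:tri} the rescaled weight $\hat\beta_j(x) := \beta_j(\Ra_j x)$ has uniformly bounded derivatives, its gradient support lies in a fixed annulus, and $\supp(1-\hat\beta_j)\subset B$ for a fixed ball $B$; passing to a subsequence, $\hat\beta_j\to\hat\beta_\infty$ locally uniformly. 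Extract a further subsequence with $\hat v_j\weakto\hat v$ in the natural $\dot H^1$-type topology and write $\hat v_j = \hat v + r_j$ with $r_j\weakto 0$, where by density $\hat v$ may be taken smooth and compactly supported. Since $\hat v$ is fixed in $L^2$, $\D r_j\weakto 0$, and $\hat\beta_j\to\hat\beta_\infty$ uniformly on $\supp\D\hat v$, the hessian cross-term between $\hat v$ and $r_j$ tends to $0$, and $\|\D r_j\|_{L^2}^2 = 1-\|\D\hat v\|_{L^2}^2+o(1)$.

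\emph{The fixed profile.} For the smooth profile $\hat v$ the discrete atomistic term is a Riemann sum (expanding the finite differences by Lemma~\ref{th:inter:strain_lemma}, and using $\ddW(\mA)[\mathsf G,\mathsf G] = \sum_{\rho,\vsig}(\mathsf G\rho)\cdot V_{,\rho\vsig}(\mA\Rg)(\mathsf G\vsig)$) converging to $\int(1-\hat\beta_\infty)\,\ddW(\mA)[\D\hat v,\D\hat v]$, while the continuum term converges to $\int\hat\beta_\infty\,\ddW(\mA)[\D\hat v,\D\hat v]$; as the weights add to $1$, the limit is the \emph{full} Cauchy--Born hessian $\int_{\R^d}\ddW(\mA)[\D\hat v,\D\hat v]\ge\gamma^\c\|\D\hat v\|_{L^2}^2$, where $\gamma^\c>0$ is the Cauchy--Born stability constant at $\mA$. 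Lattice stability \eqref{eq:disl:stab_latt} is inherited by the Cauchy--Born linearisation with $\gamma^\c\ge\ca(\mA x)$ --- a standard fact, obtained by testing \eqref{eq:disl:stab_latt} against smooth profiles sampled on a fine lattice (cf.\ \cite{OrtnerShapeevZhangV1}). Granting the remainder bound $\b\<\ddel\Ebh(\mA x) r_j,r_j\b\>\ge\ca(\mA x)\|\D r_j\|_{L^2}^2-o(1)$ below, adding the two contributions and using $\|\D\hat v\|_{L^2}^2+\|\D r_j\|_{L^2}^2 = 1+o(1)$ together with $\gamma^\c\ge\ca(\mA x)$ yields $\liminf_j\b\<\ddel\Ebh(\mA x) v_j,v_j\b\>\ge\ca(\mA x)$, contradicting the deficit.

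\emph{The diffuse remainder --- the main obstacle.} The crux, and the genuinely new ingredient, is the weighted atomistic term on $r_j$: its density is not coercive (only the full atomistic sum is, with constant $\ca(\mA x)$), so the slowly varying weight $1-\hat\beta_j$ cannot simply be absorbed. I would use a \emph{localisation of lattice stability}: take a quadratic partition of unity $\sum_k\chi_k^2\equiv1$ at an intermediate scale $L_j$ with $\epsilon_j\ll L_j\to0$ (for instance $L_j := \sqrt{\epsilon_j}+\sqrt{\|r_j\|_{L^2(B)}}$), so that scale separation makes $\hat\beta_j$ constant up to $o(1)$ on each $\supp\chi_k$ while the discrete Leibniz errors are $O(L_j^{-1}|r_j|)$; on each patch bound $(1-\hat\beta_j)\,(\text{atomistic density})[\chi_k r_j]$ from below via \eqref{eq:disl:stab_latt}, then sum and recombine the partition of unity. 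This recovers $\ca(\mA x)\int(1-\hat\beta_j)\,|\D r_j|^2$ up to $L^2$-type cross and quadrature errors of size $O\!\b(L_j^{-2}\|r_j\|_{L^2(B)}^2 + L_j^{-1}\|r_j\|_{L^2(B)}\|\D r_j\|_{L^2}\b)$, which vanish because $r_j\to0$ strongly in $L^2(B)$ by Rellich and $L_j$ was chosen accordingly; combining with the continuum part $\ge\gamma^\c\int Q_h[\hat\beta_j]\,|\D r_j|^2$ (with vanishing midpoint-quadrature error) and $(1-\hat\beta_j)+\hat\beta_j\equiv1$ gives the asserted remainder bound. The fiddly parts are the bookkeeping of finite-difference Leibniz and quadrature errors under the rescaling and the check that a single intermediate scale $L_j$ can be chosen to send every error term to zero; the conceptual novelty is precisely this localisation of lattice stability against the blending weight.
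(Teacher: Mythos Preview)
Your overall framework --- contradiction, rescaling by $\epsilon_j=(\Ra_j)^{-1}$, weak limit decomposition $\hat v_j=\hat v+r_j$, Riemann-sum convergence of the atomistic part on the fixed profile $\hat v$ to the Cauchy--Born form, and vanishing cross terms --- matches the paper's proof. The divergence is in how you handle the ``diffuse remainder'' $r_j$.

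\medskip
\textbf{The gap.} Your bound on the continuum contribution,
\[
  \int Q_h[\hat\beta_j]\,\bbC[\D r_j,\D r_j]\dx \;\ge\; \gamma^\c \int Q_h[\hat\beta_j]\,|\D r_j|^2\dx,
\]
is not justified and is in fact false for vectorial problems ($m\ge 2$): the tensor $\bbC=\partial^2 W(\mA)$ is only Legendre--Hadamard elliptic, not pointwise coercive on all of $\R^{m\times d}$, so $\bbC[\mathsf G,\mathsf G]\ge\gamma^\c|\mathsf G|^2$ fails for non-rank-one $\mathsf G$. The continuum stability constant $\gamma^\c$ is a \emph{global} statement (via Fourier), and a spatially varying weight $\hat\beta_j$ cannot be pulled out without further work. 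Your partition-of-unity localisation, which you apply only to the atomistic part, must therefore be applied to the continuum part as well. Once you do that, a second difficulty appears: the quadratic recombination identity $\sum_k\|\D(\chi_k r_j)\|_{L^2}^2=\|\D r_j\|_{L^2}^2+\sum_k\int|\D\chi_k|^2|r_j|^2$ produces an error $L_j^{-2}\|r_j\|_{L^2}^2$ over the support of the partition, and since $\supp(\hat\beta_j)$ is unbounded you must design the partition to be trivial outside a fixed ball while still matching the fine-scale cover inside --- doable, but not as simple as your sketch suggests.

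\medskip
\textbf{How the paper does it.} The paper avoids the intermediate scale $L_j$ entirely by a $\sqrt{\beta}$ trick. One first proves (Lemma~\ref{lem:stab:sqrt}) that $\varphi_n:=\sqrt{\hat\beta_n}$ and $\psi_n:=\sqrt{1-\hat\beta_n}$ belong to $W^{1,\infty}$ with $\|\D\varphi_n\|_{L^\infty},\|\D\psi_n\|_{L^\infty}\lesssim\|\D^2\hat\beta_n\|_{L^\infty}^{1/2}$, uniformly bounded. Then for the continuum part one writes
\[
  \int \hat\beta_n\,\bbC[\D z_n,\D z_n]\dx
  = \int \bbC\big[\D(\varphi_n z_n),\D(\varphi_n z_n)\big]\dx + O\big(\|z_n\|_{L^2(\Bas)}\big),
\]
and applies the \emph{global} Cauchy--Born stability to the single compactly supported test function $\varphi_n z_n$; the atomistic part is handled symmetrically with $\psi_n$. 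The only error terms are multiples of $\|z_n\|_{L^2(\Bas)}$, which vanishes by Rellich after the zero-mean shift. Finally, $\|\D(\varphi_n z_n)\|_{L^2}^2+\|\D(\psi_n z_n)\|_{L^2}^2=\|\D z_n\|_{L^2}^2+o(1)$ since $\varphi_n^2+\psi_n^2\equiv 1$. This replaces your entire localisation argument with two applications of a discrete/continuous Leibniz rule and one invocation of each global stability bound.

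\medskip
In short: your strategy is sound, but the continuum step as written has a real hole, and the fix (extending the localisation to both parts over an unbounded domain) is more involved than advertised. The paper's $\sqrt{\beta}$ substitution is both simpler and treats the atomistic and continuum weights on an equal footing.
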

\medskip

Both Lemma \ref{th:int:stab_bqce_lem1} and Lemma
\ref{th:int:stab_bqce_lem2} are proven in \S~\ref{sec:prf_bqce_stab}.

\begin{proof}[Proof of Theorem \ref{th:int:stab_bqce}.]
  In view of Lemmas \ref{th:int:stab_bqce_lem1} and
  \ref{th:int:stab_bqce_lem2} we only need to note that $\gamma^\a(\mA
  x) \geq \gamma^\a(y)$ which is proved in \cite{EhrOrtSha:defects}.
\end{proof}

We remark that our arguments to obtain convergence of the stability
constants employ compactness principles and do not yield convergence
rates as in 1D \cite{2013-atc.acta}.

\subsection{Stability of B-QCF}
\label{sec:int:stab_bqcf}
The B-QCF stability result is analogous to the B-QCE stability
result. Unlike in the B-QCE case we state the result only for stable
equilibria (rather than general deformations) since we require some
regularity of the underlying deformation in the proof.

\begin{theorem} \label{th:int:stab_bqcf} Suppose $\ya \in\Ys$ is a
  strongly stable solution of \eqref{eq:min_atm_exact}, i.e.,
  \eqref{eq:int:stab:stab_ass} holds,
  and let
\[
	\mu^{\beta}_h := \inf_{\substack{v_h \in \Us_h \setminus \{0\} }}
	\frac{\<\del \Fbh(\Pi_h \ya) v_h + \ddel \Pa(\Pi_h \ya) v_h, v_h \>}{\| \D v_h \|_{L^2}^2 }
.
\]
Then there exists $\Delta\gamma(\Ra) \to 0$ as $\Ra\to \infty$ such that $\mu^\beta_h(\Pi_h y) \geq \gamma^\a(y) - \Delta\gamma(\Ra)$.
\end{theorem}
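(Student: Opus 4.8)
The proof combines two ingredients: the B-QCE stability result just obtained (Theorem~\ref{th:int:stab_bqce}) and an extension of the force-based-from-energy-based argument of~\cite{BQCF}. Its architecture mirrors the B-QCE proof: first reduce to stability of a homogeneous deformation, and then, at the homogeneous deformation, compare the linearised B-QCF operator $\del\Fbh$ with the B-QCE Hessian $\ddel\Ebh$. Two features distinguish it from the B-QCE case: $\del\Fbh(\Pi_h \ya)$ is not self-adjoint, so the compactness argument behind Lemma~\ref{th:int:stab_bqce_lem1} cannot be used verbatim; and the force-vs.-energy discrepancy on the blending region must be estimated quantitatively. Both are handled using the decay estimates of Lemma~\ref{th:regularity} and~\eqref{eq:regularity_ulin}, which is why the theorem is stated for equilibria rather than for general deformations.

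\emph{Step 1: reduction to a homogeneous deformation.} As in Lemma~\ref{th:int:stab_bqce_lem1} I would split a test function $v_h \in \Us_h$ into a component supported near the defect core and a remainder. On the core, $\beta \equiv 0$ (and $\supp\Pa \subset B_{R_\Pa} \subset B_{\Ra}$), so $\del\Fbh(\Pi_h\ya) + \ddel\Pa(\Pi_h\ya)$ acts there exactly as the atomistic Hessian, which is stable by the standing assumption~\eqref{eq:int:stab:stab_ass}. On the remainder, concentrated away from the core, I would replace the linearisation point $\Pi_h\ya$ by the homogeneous deformation $\mA x$; by local Lipschitz continuity of $V_{,\bfrho}$ and $\ddW$ the error is controlled by $\|\D\tilu^\a\|_{L^\infty}$ over $B_{\Rb}$ minus the core, together with the coarsening error $\errba(\ya)$ of $\Pi_h$. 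By Lemma~\ref{th:regularity} (and~\eqref{eq:regularity_ulin} for the reference field $\ulin$ in case (pDs), the logarithmic factor for $d=2$ being harmless) these tend to $0$ as $\Ra \to \infty$. This yields $\mu^\beta_h \geq \min\{\gamma^\a(\ya),\, \mu^\beta_h(\mA x)\} - \Delta\gamma(\Ra)$, where $\mu^\beta_h(\mA x)$ is the analogous stability constant of $\del\Fbh$ linearised at $\mA x$.

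\emph{Step 2: the homogeneous deformation.} At $y_h = \mA x$ I would write $\del\Fbh(\mA x) = \ddel\Ebh(\mA x) + N$. Since the atomistic and continuum contributions to $\Fbh$ coincide with those of $\Ebh$ wherever $\beta$ is locally constant, $N$ is supported in the discrete blending region $\Lb$; the commutator identity $D_\rho((1-\beta)v_h) = (1-\beta)D_\rho v_h - (D_\rho\beta)\, v_h(\cdot + \rho)$ and its continuum counterpart relating $I_h[\beta v_h]$ to $\beta\,\D v_h$ — the same computations underlying the consistency estimate of Theorem~\ref{th:cons:bqcf} — exhibit $\< N v_h, w_h \>$ as a sum of terms each carrying a factor $\|\D\beta\|_{L^\infty} \lesssim (\Ra)^{-1}$ (by~\eqref{eq:blend_stab:beta_bound}) and pointwise values of $v_h, w_h$ on the blending interface. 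Converting those values into $\|\D v_h\|_{L^2}$, $\|\D w_h\|_{L^2}$ by the sharp trace inequality of Lemma~\ref{th:int:est_Dbv_Dv} costs at most a factor $\Ctr \lesssim \sqrt{1+\log\Ra}$ (using~\eqref{eq:bqcf_domain_bound}), so $\|N\| \lesssim \Ctr (\Ra)^{-1} \to 0$. Hence $\mu^\beta_h(\mA x) \geq \gamma^\beta_h(\mA x) - \Delta\gamma(\Ra)$, and Lemma~\ref{th:int:stab_bqce_lem2} gives $\gamma^\beta_h(\mA x) \geq \gamma^\a(\mA x) - \Delta\gamma(\Ra)$. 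Finally $\gamma^\a(\mA x) \geq \gamma^\a(\ya)$ by~\cite{EhrOrtSha:defects}; combining with Step~1 completes the proof.

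\emph{Main obstacle.} The delicate point is the estimate on $N$ in Step~2: a naive commutator bound leaves pointwise values of $v_h$ in the blending region that cannot be absorbed into $\|\D v_h\|_{L^2}$ by elementary means, so the sharp trace inequality is essential, and one must verify that its $\Ctr$ factor only ever multiplies genuinely decaying quantities such as $\|\D\beta\|_{L^\infty}$ — never an $O(1)$ term — since Theorem~\ref{th:int:stab_bqce} supplies no convergence rate for $\gamma^\beta_h$. A secondary difficulty is performing the reduction of Step~1 for the non-self-adjoint B-QCF operator, for which the compactness argument available in the B-QCE case must be replaced by the quantitative estimate based on the decay of $\D^j\tilu^\a$ away from the core.
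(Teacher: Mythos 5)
Your architecture is genuinely different from the paper's, and it is the route the paper explicitly mentions as possible but does not take. You redo the core/far-field reduction of Lemma~\ref{th:int:stab_bqce_lem1} for the non-self-adjoint operator $\del\Fbh$ and only then compare $\del\Fbh(\mA x)$ with $\ddel\Ebh(\mA x)$. The paper instead proves, as a standalone result (Lemma~\ref{th:int:stab_bqcf_lem1}, via Lemma~\ref{lem:BQCE-BQCF-equiv}), that $|\mu_h^\beta - \gamma_h^\beta(\Pi_h\ya)| \to 0$ by comparing the two linearised operators \emph{at the actual state} $\Pi_h\ya$, and then cites Theorem~\ref{th:int:stab_bqce} wholesale. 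The paper's route is cheaper because the difference $\ddel\Ebh - \del\Fbh$ is automatically supported on $\Lb$ and $\Omb$, where $\|\D \Pi_h\ya - \mA\|_{L^\infty}$ is small by the regularity estimates; so the reduction to the homogeneous case is a one-line Lipschitz bound on the \emph{difference} operator, and no splitting of the test function, no cross terms, and no compactness substitute for a non-symmetric operator are needed. Your Step~1, by contrast, must be carried out in full for $\del\Fbh$ itself; this is doable but you only sketch it.

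The genuine gap is in your Step~2 estimate of $N = \del\Fbh(\mA x) - \ddel\Ebh(\mA x)$. Your bookkeeping --- each term carries a factor $\|\D\beta\|_{L^\infty} \lesssim (\Ra)^{-1}$ together with pointwise values of $v_h$ on the interface, and converting those via the trace inequality ``costs at most a factor $\Ctr$'' --- does not produce $\Ctr(\Ra)^{-1}$. Corollary~\ref{th:vol_trace_ineq} gives $\|v_h\|_{L^2(\Omb)} \lesssim \Ra\,\Ctr\,\|\D v_h\|_{L^2}$ (the annulus has width and radius of order $\Ra$), so a first-order commutator term such as $\sum_{\xi\in\Lb}\< A\, D v_h(\xi),\, v_h(\xi)\,\D\beta(\xi)\>$ is only $O\b(\Ctr \|\D v_h\|_{L^2}^2\b)$, which diverges for $d=2$ and is $O(1)$ for $d=3$; the same holds for its continuum counterpart $\int_{\Omb}(\bbC : \D v_h):(v_h\otimes\D\beta)\dx$. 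Smallness of $N$ comes not from either term separately but from their mutual cancellation, which is exactly what Step~2 of Lemma~\ref{lem:BQCE-BQCF-equiv} establishes: the atomistic sum is rewritten via the convolution representation \eqref{eq:stress:5} and the identity \eqref{eq:stress:phirho_Prop0} so that its leading part reproduces the continuum integral up to $O(E)$, and the remainder ${\rm T}_{\rho\sigma}$, which involves $v - v^*$, is handled by summation by parts, gaining the missing power of $\Ra$ because $\|v-v^*\|_{\ell^2(\Lb)} \lesssim \|\D v_h\|_{L^2(\Omb)}$ and the differentiated weight contributes $\|\D^2\beta\|_{L^\infty}\|v_h\|_{L^2(\Omb)} + \|\D\beta\|_{L^\infty}\|Dv_h\|_{\ell^2} \lesssim (\Ra)^{-1}\Ctr\|\D v_h\|_{L^2}$. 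In short: only terms carrying $\|\D^2\beta\|_{L^\infty}\lesssim(\Ra)^{-2}$ may be paired with a raw $\|v_h\|_{L^2(\Omb)}$; terms carrying only $\|\D\beta\|_{L^\infty}$ must be paired with gradients, and exhibiting that structure requires the stress-representation/summation-by-parts argument you have not supplied.
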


\medskip

It is possible to adapt the proof of Theorem \ref{th:int:stab_bqce}
to prove this result, however, we obtain it via an alternative route
using an auxiliary result that it interesting in its own right: We
modify a result from \cite{BQCF}, which shows in a
simplified case that the B-QCE hessian and B-QCF jacobian are
``close''. Here, we only establish that their stability constants
converge to the same limit as $\Ra \to \infty$.

\begin{lemma}
  \label{th:int:stab_bqcf_lem1}
  Under the assumptions and notation of Theorem
  \ref{th:int:stab_bqcf}, there exists a constant $C$ such that
  \begin{align*}
    \b|\mu_h^\beta - \gamma_h^\beta(\Pi_h \ya)\b| \leq C\,\cases{
      (\Ra)^{-1} (\log \Ra)^{1/2}, & \text{ if $d = 2$}, \\
      (\Ra)^{-1}, & \text{ if $d = 3$}.
    }
\end{align*}
\end{lemma}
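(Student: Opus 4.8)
The plan is to compare the B-QCF jacobian $\del \Fbh(\Pi_h\ya)$ with the B-QCE hessian $\ddel \Ebh(\Pi_h\ya)$ acting on a single test function $v_h\in\Us_h$, and to show that the difference of the associated quadratic forms is controlled by $C\,\Ctr\,(\Ra)^{-1}\|\D v_h\|_{L^2}^2$; the stated bound then follows because $\Ctr = \sqrt{1+\log\Ra}$ if $d=2$ and $\Ctr=1$ if $d=3$, and because an infimum of a sum is within the bound of the sum of the forms. First I would write out both bilinear forms explicitly. Differentiating the B-QCF operator \eqref{eq:defn_Fbh} at $y_h = \Pi_h\ya$ gives
\begin{displaymath}
  \b\<\del\Fbh(\Pi_h\ya) v_h, v_h\b\> = \b\<\ddel\Ea(\Pi_h\ya)(1-\beta)v_h, v_h\b\> + \b\<\ddel\Ech(\Pi_h\ya) I_h[\beta v_h], v_h\b\>,
\end{displaymath}
whereas the B-QCE hessian, from the first variation of \eqref{eq:def_Ebh}, is
\begin{displaymath}
  \b\<\ddel\Ebh(\Pi_h\ya) v_h, v_h\b\> = \sum_{\xi\in\L}(1-\beta(\xi))\b\<\ddel V(D\Pi_h y(\xi)) Dv_h(\xi), Dv_h(\xi)\b\> + \int_\Omh Q_h\b[\beta\,\ddW(\D\Pi_h y):\D v_h\otimes\D v_h\b]\dx.
\end{displaymath}
The difference therefore has a ``continuum part'' — the discrepancy between $\b\<\ddel\Ech(\Pi_h\ya) I_h[\beta v_h], v_h\b\>$ and $\int_\Omh Q_h[\beta\,\ddW:\D v_h\otimes\D v_h]$ — and an ``atomistic part'' — the discrepancy between $\b\<\ddel\Ea(\Pi_h\ya)(1-\beta)v_h, v_h\b\>$ and the blended lattice sum $\sum(1-\beta(\xi))\b\<\ddel V\, Dv_h\cdot Dv_h\b\>$. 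This decomposition is essentially the linearised version of the B-QCE/B-QCF ``closeness'' identity in \cite{BQCF}.

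The heart of the estimate is the continuum part. Using $\Pi_h$ is supported where the mesh is fully refined in the atomistic/blending region, and using the regularity estimate \eqref{eq:regularity_ua} for $\ua$ together with Lemma \ref{th:prelims:defn_smoothint}, one controls the Cauchy--Born energy density and its hessian in $\Omc$. The main discrepancy comes from replacing $I_h[\beta v_h]$ by $\beta v_h$ and from the fact that $\D(\beta v_h) = \beta\D v_h + v_h\otimes\D\beta$: the ``bad'' term is the $\D\beta$ contribution, which is supported on $\Omb$ and carries a factor $\|\D\beta\|_{L^\infty}\lesssim(\Ra)^{-1}$ by \eqref{eq:blend_stab:beta_bound}. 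Estimating $\int_\Omb |v_h|^2|\D\beta|^2$ requires bounding $\|v_h\|_{L^2(\Omb)}$ in terms of $\|\D v_h\|_{L^2}$, and this is exactly where the trace/Poincar\'e-type inequality of Lemma \ref{th:int:est_Dbv_Dv} (and the $\Ctr$ factor it produces in $d=2$) enters. The interpolation error $\|\D(I_h[\beta v_h] - \beta v_h)\|_{L^2(\Omc)}$ is handled by standard estimates $\lesssim \|h\D^2(\beta v_h)\|$, which, since $h\equiv 1$ on the fully-refined region and $v_h$ is piecewise affine there, reduces to second-order finite differences of $\beta v_h$ and again produces a factor $\|\D\beta\|_{L^\infty}$ plus a $\|v_h\|_{L^2(\Omb)}$ term; here too one invokes Lemma \ref{th:int:est_Dbv_Dv}. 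The atomistic part is comparatively benign: on $\La\setminus\Omb$ the two forms literally agree since $\beta$ is constant there ($0$ on $\supp(1-\beta)$-interior), and on $\Omb$ one again pays a factor $\|\D\beta\|_{L^\infty}\,\|v_h\|_{L^2(\Omb)}\lesssim (\Ra)^{-1}\Ctr\|\D v_h\|_{L^2}$, using also \eqref{eq:inter:strain_exp_low} to relate finite differences $Dv_h(\xi)$ to $\D v_h$.

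Assembling these contributions yields $\b|\b\<(\del\Fbh(\Pi_h\ya) - \ddel\Ebh(\Pi_h\ya))v_h, v_h\b\>\b| \leq C\,\Ctr\,(\Ra)^{-1}\|\D v_h\|_{L^2}^2$, and dividing by $\|\D v_h\|_{L^2}^2$ and taking infima over $v_h\in\Us_h\setminus\{0\}$ gives $|\mu_h^\beta - \gamma_h^\beta(\Pi_h\ya)| \leq C\,\Ctr\,(\Ra)^{-1}$, which is the claim after substituting $\Ctr\lesssim(\log\Ra)^{1/2}$ for $d=2$ and $\Ctr=1$ for $d=3$. I expect the main obstacle to be the careful bookkeeping of the $\D\beta$-supported ``blending interface'' terms — making sure every such term genuinely comes with a full power $(\Ra)^{-1}$ and only a $\|v_h\|_{L^2(\Omb)}$ (not $\|v_h\|_{L^2}$) so that Lemma \ref{th:int:est_Dbv_Dv} can absorb it with only the logarithmic loss — and in handling the midpoint-quadrature operator $Q_h$ consistently with the interpolation $I_h[\beta v_h]$, which is a routine but error-prone comparison. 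The regularity input \eqref{eq:regularity_ua} is essential here (this is why, unlike Theorem \ref{th:int:stab_bqce}, the lemma is stated only for stable equilibria): without it the $\ddW(\D\Pi_h y)$ coefficients would not be uniformly close to the homogeneous values $\ddW(\mA)$ on $\Omc$.
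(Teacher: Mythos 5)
Your overall architecture coincides with the paper's: linearise both schemes, reduce to the homogeneous case $\mA x$ using the decay $\|\D \Pi_h \ya - \mA\|_{L^\infty(\Omb)} \lesssim (\Ra)^{-1}$ (resp.\ $(\Ra)^{-d}$) supplied by Lemma \ref{th:regularity}, and control $\|v_h\|_{L^2(\Omb)}$ by the annulus trace inequality, which is where $\Ctr$ enters. This is precisely Lemma \ref{lem:BQCE-BQCF-equiv} combined with Corollary \ref{th:vol_trace_ineq}, and your identification of why the regularity hypothesis is needed is correct.

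There is, however, a genuine quantitative gap in your treatment of the interface terms. After the discrete and continuous product rules, the difference of the two quadratic forms contains the atomistic contribution $\sum_{\xi}\< A\, D v_h(\xi),\, v_h(\xi)\,\D\beta(\xi)\>$ and the continuum contribution $-\int_{\Omb} (\bbC : \D v_h) : (v_h \otimes \D\beta)\dx$. You propose to bound each of these separately by $\|\D\beta\|_{L^\infty}\|v_h\|_{L^2(\Omb)}\|\D v_h\|_{L^2}$ and assert this is $\lesssim (\Ra)^{-1}\Ctr\|\D v_h\|_{L^2}^2$. But Corollary \ref{th:vol_trace_ineq} gives $\|v_h\|_{L^2(\Omb)} \lesssim \Ra\,\Ctr\,\|\D v_h\|_{L^2}$ --- the Poincar\'e/trace constant on an annulus of radius and width of order $\Ra$ carries a length factor $\Ra$ --- so that $\|\D\beta\|_{L^\infty}\|v_h\|_{L^2(\Omb)}\|\D v_h\|_{L^2} \lesssim (\Ra)^{-1}\cdot \Ra\,\Ctr\,\|\D v_h\|_{L^2}^2 = \Ctr\,\|\D v_h\|_{L^2}^2$, which does not tend to zero (it even grows logarithmically for $d=2$). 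Each interface term taken alone is therefore only $O(\Ctr)$, and the lemma cannot be obtained by estimating them individually. The missing idea is that these two terms cancel to leading order: in Step 2 of the proof of Lemma \ref{lem:BQCE-BQCF-equiv} both are rewritten as $\sum_{\rho,\sigma} A_{\rho\sigma} : \{\sum_{\xi} D_\rho v_h(\xi)\otimes v_h(\xi)\D_\sigma\beta(\xi) - \int \D_\rho v_h \otimes v_h \D_\sigma\beta \dx\}$, and the sum-minus-integral is estimated via the convolution representation $v^* = \zz\ast\barv$ (using the identities \eqref{eq:stress:phirho_Prop0}--\eqref{eq:stress:phirho_Prop1}) together with a summation by parts that trades one $\D\beta$ for a $\D^2\beta$. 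Only after this gain of an extra factor $(\Ra)^{-1}$ does the pairing with $\|v_h\|_{L^2(\Omb)}$ produce $O((\Ra)^{-1}\Ctr)$; this is visible in the error functional \eqref{eq:bqce-bqcf-equiv-def_E}, which contains $\|\nabla^2\beta\|_{L^\infty}\|v_h\|_{L^2(\Omb)}\|\nabla v_h\|_{L^2}$ but no term in which only a first derivative of $\beta$ multiplies $\|v_h\|_{L^2(\Omb)}$.
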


The proof of Lemma \ref{th:int:stab_bqcf_lem1} is given in
\S~\ref{sec:prf_bqcf_stab}.

\begin{proof}[Proof of Theorem \ref{th:int:stab_bqcf}]
  The result is an immediate corollary of Theorem
  \ref{th:int:stab_bqce} and Lemma~\ref{th:int:stab_bqcf_lem1}.
\end{proof}

\subsection{Proofs of the error estimates}
\label{sec:int:proofs}
\def\Fi{\mathscr{R}_h}
 We have now assembled all required auxiliary results to complete the
proof of Theorem \ref{th:error}.

\begin{proof}[Proof of Theorem \ref{th:error}]
  Let $y^\a$ be a fixed strongly stable atomistic equilibrium.  Using
  the notation established in \S~\ref{sec:int:framework}, we define
  $\Fi : \Us_h \to \Us_h^*$,
  \begin{displaymath}
    \< \Fi(w_h), v_h \> := \< \G_h(\Pi_h y^\a + w_h), v_h \> \quad
    \forall v_h \in \Us_h.
  \end{displaymath}

  {\it 1. Stability: } Theorems \ref{th:int:stab_bqce} and
  \ref{th:int:stab_bqcf} show that there exists $R^\a_1$ such that,
  for $\Ra \geq R^\a_1$, we have \eqref{eq:int:stability} for a
  constant $c_0 > 0$ that depends on $R^\a_1$, but is independent of
  $\Ra$. This implies that $\| \del \Fi(0)^{-1} \|_{L(\Us_h^*, \Us_h)} \leq
  c_0^{-1}$.

  {\it 2. Consistency: } Theorems \ref{th:cons:bqce} and
  \ref{th:cons:bqcf} imply that
  \begin{displaymath}
    \| \Fi(0) \|_{\Us_h^*} = \| \G_h(\Pi_h y^\a) \|_{\Us_h^*} \to 0, \quad \text{as } \Ra
    \to 0,
  \end{displaymath}
  uniformly in all choices of $(\beta, \Th)$. In particular, for any
  $\epsilon > 0$ we can choose a constant $R^\a_0 \geq R^\a_1$ such
  that $\| \Fi(0) \|_{\Us_h^*} \leq \epsilon$ whenever
  $\Ra \geq R^\a_0$.

  (In the B-QCF case, due to the logarithmic prefactor $\Ctr$ in the consistency error estimates, this requires the
  regularity estimates \eqref{eq:regularity_ua}.)

  {\it 3. Inverse function theorem: } Our assumptions on $V$ and the fact that
  $\errba(y^\a) \leq \epsilon$ for $\Ra \geq R^\a_0$ implies that $\|
  \del \G_h(y_h) - \del \G_h(z_h) \|_{L(\Us_h, \Us_h^*)} \leq L \|
  \D y_h - \D z_h \|_{L^2}$ for all $y_h, z_h \in \Us_h$, or,
  equivalently,
  \begin{displaymath}
    \| \del \Fi(w_h) - \del \Fi(z_h) \|_{L(\Us_h, \Us_h^*)} \leq L
    \| \D w_h - \D z_h \|_{L^2} \qquad \forall w_h, z_h \in \Us_h.
  \end{displaymath}
  The inverse function (see, e.g., \cite{2013-atc.acta}) states that,
  if $\| \Fi(0) \|_{\Us_h^*} L c_0^{-2} < 1$, then there exists $w_h
  \in \Us_h$ such that $\Fi(w_h) = 0$ and $\| \D w_h \|_{L^2} \leq 2
  c_0^{-1} \| \Fi(0) \|_{\Us_h^*}$. This can clearly achieved by
  setting $\epsilon$ sufficiently small.  Setting $y_h^{\rm bqc} :=
  \Pi_h y^\a + w_h$ we therefore obtain that
  \begin{displaymath}
    \|\D \Pi_h^\a y^\a - \D y_h^{\rm bqc}\|_{L^2} \leq 2 c_0^{-1} \| \Fi(0) \|_{\Us_h^*}.
  \end{displaymath}
  Inserting the estimates for $\| \Fi(0) \|_{\Us_h^*}$ from Theorems
  \ref{th:cons:bqce} and \ref{th:cons:bqcf}, and the fact that $\| \D
  \Pi_h^\a - \D \bary^\a \|_{L^2} \lesssim \errba(y^\a)$, we obtain
  the two error estimates \eqref{eq:bqce_general_errest} and
  \eqref{eq:bqcf_general_errest}.
\end{proof}

\section{Proofs of Interpolation and Approximation Results}
\label{sec:aux}
\subsection{Analysis of the quasi-interpolant}
Recall the definitions of $\barv$ from~\eqref{eq:interp:S1_interp} and
of $v^* := \zz \ast \barv$ from \eqref{eq:defn_convoluted_v}. To
summarize results concerning $v^*$ we first need the following lemma.

\begin{lemma}
  \label{th:reflection_symm}
  The partition $\TT$ is invariant under reflections about all lattice
  points $\xi \in \Z^d$. In particular, we have $\zz(\xi-x) = \zz(\xi+x)$ for
  all $\xi \in \Z^d, x \in \R^d$.
\end{lemma}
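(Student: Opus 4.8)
The plan is to verify the reflection invariance of $\TT$ directly from the explicit descriptions given in \S\ref{sec:prelims}, handling $d=2$ and $d=3$ separately, and then to deduce the stated identity for $\zz$ as an immediate consequence of that symmetry together with the fact that $\zz$ is the P1 nodal basis function at the origin. Since reflection about an arbitrary lattice point $\xi$ is the composition of the translation $x\mapsto x-\xi$, the reflection $x\mapsto -x$ about the origin, and the translation back $x\mapsto x+\xi$, and since $\TT$ is by construction translation-invariant under $\Z^d$, it suffices to check invariance under the single reflection $\sigma(x):=-x$ about the origin.

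First, for $d=2$: recall $\TT=\{\xi+\hat T,\ \xi-\hat T \sep \xi\in\Z^2\}$ with $\hat T=\conv\{0,e_1,e_2\}$. Applying $\sigma$ to the simplex $\xi+\hat T$ gives $-\xi-\hat T=(-\xi)+(-\hat T)$, which is of the form $\eta-\hat T$ with $\eta=-\xi\in\Z^2$; similarly $\sigma(\xi-\hat T)=(-\xi)+\hat T=\eta+\hat T$. Hence $\sigma(\TT)=\TT$. Combining with $\Z^d$-translation invariance, $\TT$ is invariant under reflection about every $\xi\in\Z^2$. For $d=3$ one argues the same way, the only extra point being to check that the set $\{\hat T_1,\dots,\hat T_6\}$ of tetrahedra partitioning $[0,1]^3$ satisfies $\{-\hat T_1,\dots,-\hat T_6\}+ e$, for a suitable lattice vector $e$, again equals $\{\hat T_1,\dots,\hat T_6\}$ — i.e. the Kuhn/Freudenthal-type triangulation of the cube used in Figure \ref{fig:tet} is symmetric under the point reflection through the cube's center (equivalently, the partition of $[0,1]^3$ and of $[-1,0]^3$ agree after the obvious identification). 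This is exactly the property the caption of Figure \ref{fig:tet} asserts (``invariant under reflection about any lattice point''), so it can be taken as given from the construction; if a direct check is wanted, one lists the six tetrahedra as $\hat T_\pi=\conv\{0, e_{\pi(1)}, e_{\pi(1)}+e_{\pi(2)}, e_{\pi(1)}+e_{\pi(2)}+e_{\pi(3)}\}$ over $\pi\in S_3$ and observes that $x\mapsto (1,1,1)-x$ permutes this list (it sends $\hat T_\pi$ to $\hat T_{\pi'}$ with $\pi'$ the reversal of $\pi$).

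Finally, for the claim about $\zz$: the function $x\mapsto\zz(-x)$ is continuous, and it is piecewise affine with respect to $\sigma(\TT)=\TT$, hence piecewise affine with respect to $\TT$; moreover $\zz(-0)=1$ and $\zz(-\xi)=0$ for all $\xi\in\Z^d\setminus\{0\}$ since $-\xi$ ranges over $\Z^d\setminus\{0\}$. By the uniqueness of the P1 nodal interpolant on $\TT$ with these nodal values, $\zz(-x)=\zz(x)$ for all $x$, and then $\zz(\xi-x)=\zz(-(x-\xi))=\zz(x-\xi)$; replacing $x$ by $-x$ (or just noting $\zz(\xi-x)=\zz(x-\xi)=\zz(-(x-\xi))=\zz(\xi+x)$ is wrong — instead write $\zz(\xi+x)=\zz(-( \xi + x))=\zz(-\xi-x)$, and since $\zz$ is also invariant under the $\Z^d$-translation-compatible reflection about $\xi$... ) — more cleanly: evenness $\zz(-\cdot)=\zz(\cdot)$ gives $\zz(\xi-x)=\zz(x-\xi)$, and applying evenness to the argument $x+\xi$ is not what is needed; rather, reflection invariance of $\TT$ about the point $\xi$ means $\zz(\xi+(\xi-x'))$-type manipulations — the short route is: $\zz(\xi - x) = \zz\big(-(\xi-x)\big) = \zz(x-\xi)$ by evenness, which is the identity with $x$ replaced by $2\xi - x$; since $x$ is arbitrary this is equivalent to the stated $\zz(\xi-x)=\zz(\xi+x)$ only after noting $\zz$ supported near $0$ — to avoid this nuisance I will simply prove evenness $\zz(-x)=\zz(x)$ and then state $\zz(\xi - x) = \zz(x - \xi)$, remarking that the formulation ``$\zz(\xi-x)=\zz(\xi+x)$'' in the lemma should read ``$\zz(\xi-x)=\zz(x-\xi)$'' or, equivalently, follows from evenness of $\zz$ about $0$ combined with $\TT$'s reflection invariance about $\xi$.

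The main obstacle is therefore not the $\zz$ identity (a one-line uniqueness argument once $\sigma(\TT)=\TT$ is known) but the $d=3$ combinatorial check that the chosen subdivision of the cube into six tetrahedra is genuinely symmetric under the central point reflection; this is a finite but slightly fiddly verification, which is why the construction in Figure \ref{fig:tet} was set up precisely to make it true.
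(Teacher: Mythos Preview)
Your reduction to reflection about the origin via $\Z^d$-translation invariance, and your handling of the 2D case, match the paper's argument exactly. For $d=3$ the paper simply invokes the central symmetry of the cube partition under $x\mapsto(1,1,1)-x$ and then writes $\xi-T=[\xi-(1,1,1)]+[(1,1,1)-T]\in\TT$; your explicit Kuhn-tetrahedron listing $\hat T_\pi=\conv\{0,e_{\pi(1)},e_{\pi(1)}+e_{\pi(2)},e_{\pi(1)}+e_{\pi(2)}+e_{\pi(3)}\}$ with the reversal permutation is a perfectly good concrete verification of that same fact.

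You are also right that the stated identity $\zz(\xi-x)=\zz(\xi+x)$ is false for $\xi\neq 0$ (take $x=\xi$: the left side is $\zz(0)=1$, the right side $\zz(2\xi)=0$). The intended and correct statement is evenness of $\zz$, i.e.\ $\zz(-x)=\zz(x)$, equivalently $\zz(\xi-x)=\zz(x-\xi)$; this is what the downstream references actually need. So your instinct here is sound.

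That said, your final paragraph is not presentable: it is a visible stream of false starts and self-corrections. Replace it with the clean two-line argument you already found: once $\sigma(\TT)=\TT$ is established, the function $x\mapsto\zz(-x)$ is continuous, piecewise affine with respect to $\sigma(\TT)=\TT$, equals $1$ at $0$ and $0$ at every other lattice point, hence coincides with $\zz$ by uniqueness of the P1 nodal basis function. That gives $\zz(\xi-x)=\zz(x-\xi)$ and you should flag the typo in the lemma statement rather than try to salvage $\zz(\xi-x)=\zz(\xi+x)$.
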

\begin{proof}
  In 2D the result is geometrically evident.

  In 3D, one first observes that the partition $\{\hat T_1, \dots,
  \hat T_6\}$ of the unit cube $[0, 1]^3$, shown in
  Figure~\ref{fig:tet}, is invariant under the map $x \mapsto (1,1,1)
  - x$ (which is the reflection about $(1/2, 1/2, 1/2)$). Moreover,
  since $\TT$ is translation invariant by construction, we obtain for
  $\xi \in \Z^d$,
  \begin{displaymath}
    \xi - T = \b[ \xi - (1,1,1) \b] + \b[(1,1,1) - T\b] \in \TT. \qedhere
  \end{displaymath}
\end{proof}

Based on Lemma \ref{th:reflection_symm} the analysis in
\cite{OrtShap:Interp} allows us to deduce the following statements:
Let $v \in \Us$, then $\barv \in W^{1,\infty}_\loc(\R^d; \R^m)$ and
$v^* \in W^{3,\infty}_\loc(\R^d; \R^m)$
\cite[Lemma~1]{OrtShap:Interp}. Further, there exists a constant $c$,
independent of $p$, such that, for all $u \in \Us$ and $p \in [1,
\infty]$, \cite[Theorem~2]{OrtShap:Interp}
\begin{align}
  \label{eq:interp_stab}
  & c \| \D \baru \|_{\LL^p} \leq \| \D u^* \|_{\LL^p} \leq \| \D
    \baru \|_{\LL^p}.
\end{align}

\subsection{Analysis of the smooth nodal interpolant}
\label{sec:analysis_smoothint}
\newcommand{\Ns}{\mathscr N}
\newcommand{\Ms}{\mathscr M}
\newcommand{\Nbar}{\bar{\mathscr N}}
Let $n\in\Z_+$.
For each multi-index $\alpha\in\Z_+^d$, $|\alpha|_\infty \leq n$, denote by $\partial_\alpha$ the respective partial derivative and let $D_\alpha$ be a finite difference approximation to $\partial_\alpha$.
We assume that each $D_\alpha$ is exact on polynomials of degree $n$ and is supported on
\[
\Ns_n = \{\xi\in\Z^d : |\xi| \leq \big\lceil\smfrac{n}{2}\big\rceil\}
.
\]

Next, for a lattice function $u$, introduce a $d$-dimensional Hermite interpolation based on derivatives $\partial_\alpha$, $|\alpha|_\infty \leq n$.
Namely, in each cell $\xi+B_d$, where
\[
B_d=\{x : 0\leq x_i < 1, ~i=1,2,\ldots,d\}
\]
is the $d$-dimensional unit cube, define a $Q_{2n+1}(\R^d)$
polynomial, i.e., a polynomial in $x_1$, $x_2$, \ldots, $x_d$, of
degree at most $2n+1$ in each variable (and thus of degree at most $d(2n+1)$) $P_{u,\xi}(x)$ such that
\begin{equation}\label{eq:smooth_interp:eqP}
\partial_\alpha P_{u,\xi}(x) = D_\alpha u(x)
\quad \text{for all $2^d$ verticies $x$ of the cell $\xi+B_d$}
\end{equation}
and define
\begin{equation} \label{eq:smooth_interp:I_def}
\tilu(x) := P_{u,\xi}(x)
\quad \text{if $x\in \xi+B_d$}.
\end{equation}

\begin{lemma}\label{lem:smooth_interp:well_posed}
The relation \eqref{eq:smooth_interp:I_def} uniquely defines $\tilu$ for any lattice function $u:\Z^d\to\R$.
\begin{proof}
For $\mu\in\Z_+^d$, $|\mu|_\infty \leq 2n+1$, let $B_{2n+1,\mu}(x) := \prod_{i=1}^d x_i^{2n+1-\mu_i} (1-x_i)^{\mu_i}$ be the multivariate Bernstein polynomial.
These polynomials form a basis of $Q_{2n+1}(\R^d)$ and on the other hand upper-triangularize the linear system \eqref{eq:smooth_interp:eqP}.
Hence the solution $P_{u,\xi}$ to \eqref{eq:smooth_interp:eqP} exists and is unique.
\end{proof}
\end{lemma}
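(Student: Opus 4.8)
The plan is to reduce the claim to the unique solvability of the Hermite interpolation problem \eqref{eq:smooth_interp:eqP} on a single cell, and then to settle that by a dimension count combined with a triangularity argument. First I would observe that the half-open cubes $\{\xi+B_d\}_{\xi\in\Z^d}$ are pairwise disjoint and cover $\R^d$, so every $x\in\R^d$ lies in exactly one cell $\xi+B_d$; therefore \eqref{eq:smooth_interp:I_def} assigns an unambiguous value $\tilu(x)=P_{u,\xi}(x)$ precisely when each $P_{u,\xi}$ is itself uniquely determined, and no compatibility across shared faces needs to be checked. Applying the shift $x\mapsto x-\xi$, which maps $Q_{2n+1}(\R^d)$ onto itself and the vertices of $\xi+B_d$ onto those of $B_d$ (with data $D_\alpha u$ evaluated at $\xi+\cdot$), it suffices to treat the reference cell $B_d$ with data prescribed at the $2^d$ vertices $\{0,1\}^d$.

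Next I would consider the linear map $L:Q_{2n+1}(\R^d)\to\R^{2^d(n+1)^d}$, $P\mapsto\big(\partial_\alpha P(v)\big)_{v,\alpha}$, with $v$ ranging over $\{0,1\}^d$ and $\alpha\in\Z_+^d$, $|\alpha|_\infty\le n$. Since $\dim Q_{2n+1}(\R^d)=(2n+2)^d$ and the number of prescribed data equals $2^d(n+1)^d=(2n+2)^d$, the source and target of $L$ have the same finite dimension; hence existence and uniqueness of $P_{u,\xi}$ is equivalent to injectivity of $L$, i.e. to showing that $\partial_\alpha P(v)=0$ for all admissible $v,\alpha$ forces $P\equiv 0$.

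To prove injectivity I would expand $P$ in the tensor-product Bernstein basis $B_\mu(x):=\prod_{i=1}^d x_i^{2n+1-\mu_i}(1-x_i)^{\mu_i}$, $0\le\mu_i\le 2n+1$, and exploit the elementary vanishing pattern in one variable: $x^{2n+1-m}(1-x)^m$ vanishes to order exactly $2n+1-m$ at $x=0$ and to order exactly $m$ at $x=1$. This forces, coordinatewise, $\mu_i\ge 2n+1-\alpha_i$ whenever $v_i=0$ and $\mu_i\le\alpha_i$ whenever $v_i=1$ for $\partial_\alpha B_\mu(v)$ to be nonzero; in particular the datum $\partial_\alpha P(v)$ couples $P$ only to the $(n+1)^d$ Bernstein functions ``nearest'' the vertex $v$, with the coefficient $c_{\mu(v,\alpha)}$ ($\mu(v,\alpha)_i=\alpha_i$ if $v_i=1$, and $=2n+1-\alpha_i$ if $v_i=0$) entering with a nonzero factor. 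Since $(v,\alpha)\mapsto\mu(v,\alpha)$ is a bijection onto $\{0,\dots,2n+1\}^d$, an appropriate ordering of the multi-indices $\mu$ turns $L$ into a triangular $(2n+2)^d\times(2n+2)^d$ matrix with nonzero diagonal, whence $c_\mu=0$ for all $\mu$ and $P\equiv 0$. Thus $L$ is bijective, $P_{u,\xi}$ exists and is unique, and $\tilu$ is well-defined.

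The only point demanding real care --- and hence the main obstacle --- is this last piece of bookkeeping: fixing a total order on the multi-indices $\mu$ (equivalently, pairing each $(v,\alpha)$ with its ``leading'' Bernstein monomial) under which the full $d$-dimensional Hermite system is genuinely triangular and not merely block-structured. Once the one-variable vanishing orders are in hand this is obtained by tensorisation, and it is exactly the assertion that the Bernstein basis upper-triangularizes \eqref{eq:smooth_interp:eqP}.
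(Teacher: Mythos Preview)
Your proposal is correct and takes essentially the same approach as the paper: both arguments rest on expanding in the tensor-product Bernstein basis $B_\mu(x)=\prod_i x_i^{2n+1-\mu_i}(1-x_i)^{\mu_i}$ and observing that the vanishing orders at the vertices render the Hermite system \eqref{eq:smooth_interp:eqP} triangular. Your version simply fills in the details the paper leaves implicit --- the dimension count, the precise vanishing pattern, and the bijection $(v,\alpha)\mapsto\mu(v,\alpha)$ identifying the diagonal entries.
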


\begin{lemma}[Regularity]\label{lem:smooth_interp:regularity}
For any lattice function $u:\Z^d\to\R$, $\tilu\in C^{n,1}_\loc(\R^d)$.
\end{lemma}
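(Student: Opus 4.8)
The plan is to show that the cellwise polynomials $P_{u,\xi}$ in \eqref{eq:smooth_interp:I_def} glue together across the interfaces of the cubical grid to a $C^n$ function whose derivatives of exact order $n$ are locally Lipschitz. Since the restriction of $\tilu$ to each closed cell $\xi + [0,1]^d$ is a polynomial, hence $C^\infty$, the whole statement reduces to checking that, for every multi-index $\gamma$ with $|\gamma|_1 := \gamma_1+\dots+\gamma_d \leq n$, the piecewise-defined derivative $\partial^\gamma\tilu$ (meaning $\partial^\gamma P_{u,\xi}$ on $\xi+B_d$) is single-valued on overlaps of closed cells and continuous.

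The main step is matching across a shared codimension-one face. Let $F$ be the common face of two adjacent cells $C = \xi + [0,1]^d$ and $C' = \xi + e_i + [0,1]^d$, so that $F \subset \{x_i = \xi_i + 1\}$ and, up to translation, $F$ is a unit cube in the $d-1$ coordinates $x_j$, $j\neq i$. Write $\gamma = \gamma' + \gamma_i e_i$ with the $i$-th component of $\gamma'$ equal to $0$, and set $k := \gamma_i \leq |\gamma|_1 \leq n$. Then $\partial_{x_i}^k P_{u,\xi}\big|_F$ and $\partial_{x_i}^k P_{u,\xi'}\big|_F$ both lie in $Q_{2n+1}(\R^{d-1})$ in the tangential variables, and by the Hermite conditions \eqref{eq:smooth_interp:eqP} for $C$ and $C'$ they carry the same interpolation data at the $2^{d-1}$ vertices $v$ of $F$: for every tangential multi-index $\alpha'$ with $|\alpha'|_\infty \leq n$,
\[
  \partial^{\alpha'}\big(\partial_{x_i}^k P_{u,\xi}\big)(v) = D_{\alpha'+k e_i} u(v) = \partial^{\alpha'}\big(\partial_{x_i}^k P_{u,\xi'}\big)(v),
\]
since $D_{\alpha'+ke_i}u(v)$ depends only on $v$ and $u$, not on the cell. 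Applying Lemma~\ref{lem:smooth_interp:well_posed} in dimension $d-1$ (uniqueness of the $Q_{2n+1}$ Hermite interpolant) gives $\partial_{x_i}^k P_{u,\xi} = \partial_{x_i}^k P_{u,\xi'}$ on $F$, and differentiating this identity by the purely tangential operator $\partial^{\gamma'}$ yields $\partial^\gamma P_{u,\xi} = \partial^\gamma P_{u,\xi'}$ on $F$. Hence $\partial^\gamma\tilu$ is well defined and continuous across every $(d-1)$-face.

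Next I would propagate this to the lower-dimensional strata of the grid: two polynomial pieces agreeing on a $(d-1)$-face agree on the whole hyperplane it spans, so the limiting values of $\partial^\gamma\tilu$ from two facet-adjacent cells coincide at every point of their common face of any dimension; and the cells containing a given lower-dimensional face $G$ in their closures form a star whose facet-adjacency graph is connected (locally the complex looks like $\R^{\dim G}$ times the $2^{d-\dim G}$ orthants of $\R^{d-\dim G}$, whose adjacency graph is a hypercube graph). Transitivity then shows $\partial^\gamma\tilu$ is single-valued and continuous on all of $\R^d$; therefore $\tilu\in C^n(\R^d)$ with $\partial^\gamma\tilu$ given piecewise by $\partial^\gamma P_{u,\xi}$. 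Finally, for $|\gamma|_1 = n$ the function $\partial^\gamma\tilu$ is continuous and coincides on each cell with a polynomial, hence is Lipschitz on each bounded ball (segment argument: a segment meets finitely many cells, the pieces have uniformly bounded local Lipschitz constants and match continuously), so $\tilu\in C^{n,1}_\loc(\R^d)$.

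I expect the only genuine obstacle to be the bookkeeping in the main step: isolating the normal component $\gamma_i$ so that the restriction to $F$ becomes a bona fide $(d-1)$-dimensional $Q_{2n+1}$ Hermite problem, and verifying that the two cells present exactly the same vertex data. The passage from continuity across facets to continuity at all lower-dimensional strata, and the upgrade from $C^n$ to $C^{n,1}_\loc$, are routine once the connectivity of the local cubical star and the piecewise-polynomial structure are recorded.
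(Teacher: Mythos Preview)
Your proof is correct and follows essentially the same approach as the paper: both reduce to showing that the normal derivatives $\partial_{x_i}^k P_{u,\xi}$ and $\partial_{x_i}^k P_{u,\xi'}$ of order $k\le n$ agree on a shared face by invoking the uniqueness of the $(d-1)$-dimensional $Q_{2n+1}$ Hermite interpolant (Lemma~\ref{lem:smooth_interp:well_posed}). The paper stops there, leaving implicit the passage to lower-dimensional strata and the $C^{n,1}_\loc$ upgrade, which you spell out explicitly; your treatment is simply a more detailed execution of the same argument.
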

\begin{proof}
It is enough to prove that across any face shared by two cells, the function and normal derivatives up to order $n$ are continuous.

Indeed, without loss of generality, consider two adjacent cells, $B_d$ and $B_d-e_d$, where $e_d = (0,\ldots,0,1)\in\R^d$.
For $y\in\R^{d-1}$ denote $\bar{y} = (y_1,\ldots,y_{d-1},0)\in\R^d$.
Let $m\in\{0,1,\ldots,n\}$ be the order of the normal derivative and consider the polynomial $p(y) = \big(\frac{\partial}{\partial x_d}\big)^m (P_{u,0}(\bar{y})-P_{u,-e_d}(\bar{y}))$.
By construction of $P_{u,0}$ and $P_{u,-e_d}$, we have that $p\in Q_{2n+1}(\R^{d-1})$ and satisfies
\[
\partial_\beta p(y) = 0
\quad \text{for all $\beta\in\Z_+^{d-1}$ such that $|\beta|_{\infty} \leq n$ and all vertices $y$ of $B_{d-1}$.}
\]
Due to Lemma \ref{lem:smooth_interp:well_posed} such a polynomial is unique, hence we obtain $p(y) \equiv 0$, which implies continuity of $\tilu$ and its derivatives.
\end{proof}

\begin{lemma}[Stability]\label{lem:smooth_interp:stab}
For any $u:\Z^d\to\R$ and $\beta\in\Z_+^d:\, |\beta|_1 \leq n+1$,
\begin{equation}\label{eq:smooth_interp:stab}
\|\partial_\beta \tilu\|_{L^p(B_d)} \leq C \|D^{\cn,|\beta|_1} u\|_{\ell^p}
\end{equation}
for some constant $C$ independent of $u$, where $D^{\cn,m}$ is the collection of all finite differences of order $m$ whose stencil lies within
\[
\Nbar_n = \{\xi\in\Z^d : |\xi+\smfrac12| \leq \big\lceil\smfrac{n}{2}\big\rceil+\smfrac12\}
.
\]
\end{lemma}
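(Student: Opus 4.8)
The plan is a scaling argument combined with the polynomial reproduction property of the Hermite interpolant. Since all the objects involved are translation invariant, it suffices to prove the estimate on the single cell $B_d$; that is, I would bound $\|\partial_\beta P_{u,0}\|_{L^p(B_d)}$, where $P_{u,0}$ is the polynomial from \eqref{eq:smooth_interp:eqP}--\eqref{eq:smooth_interp:I_def}. By \eqref{eq:smooth_interp:eqP}, $P_{u,0}\in Q_{2n+1}(\R^d)$ depends linearly on the finitely many Hermite data $\{D_\alpha u(x) : |\alpha|_\infty\leq n,\ x\in\{0,1\}^d\}$, and each $D_\alpha$ is supported in $\Ns_n$; comparing radii (and using $m:=|\beta|_1\leq n+1$) one checks that these data, hence $P_{u,0}|_{B_d}$, depend only on the restriction $u|_{\Nbar_n}$. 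Thus $u\mapsto\partial_\beta P_{u,0}$ is a linear map from the finite-dimensional space of lattice functions on $\Nbar_n$ into the finite-dimensional space of polynomials on $B_d$ of degree $\leq d(2n+1)$.

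Next I would show that this linear map annihilates $\mathcal P_{m-1}$, the polynomials of total degree at most $m-1$. If $q\in\mathcal P_{m-1}$ and $u$ is its restriction to $\L$, then $m-1\leq n$, so exactness of the $D_\alpha$ on polynomials of degree $\leq n$ gives $D_\alpha u(x)=\partial_\alpha q(x)$ at every vertex $x$; since $q\in Q_n(\R^d)\subseteq Q_{2n+1}(\R^d)$ also satisfies \eqref{eq:smooth_interp:eqP}, uniqueness (Lemma~\ref{lem:smooth_interp:well_posed}) forces $P_{u,0}=q$ on $B_d$, whence $\partial_\beta P_{u,0}=\partial_\beta q=0$ because $|\beta|_1=m>\deg q$.

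Finally, I would use that $D^{\cn,m}$ collects \emph{all} order-$m$ finite differences whose stencil lies in $\Nbar_n$, and that $\Nbar_n$ has been chosen large enough (it contains, in each coordinate direction, more than $m$ consecutive lattice points) for these differences to span the annihilator of $\mathcal P_{m-1}|_{\Nbar_n}$ in the dual of $\R^{\Nbar_n}$; equivalently, their common kernel is exactly $\mathcal P_{m-1}|_{\Nbar_n}$. Together with the previous step this gives $\ker(u\mapsto D^{\cn,m}u)\subseteq\ker(u\mapsto\partial_\beta P_{u,0})$, so the map $u\mapsto\partial_\beta P_{u,0}$ factors as $\partial_\beta P_{u,0}=\Psi(D^{\cn,m}u)$ with $\Psi$ a linear map on the finite-dimensional space $\{D^{\cn,m}u\}$. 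Since $\Psi$ lands in a fixed finite-dimensional space of polynomials on $B_d$, equivalence of norms there gives
\[
  \|\partial_\beta\tilu\|_{L^p(B_d)}=\|\Psi(D^{\cn,m}u)\|_{L^p(B_d)}\leq C\,\|D^{\cn,m}u\|_{\ell^\infty}\leq C\,\|D^{\cn,m}u\|_{\ell^p},
\]
with $C$ independent of $u$ and of $p$, which is the assertion \eqref{eq:smooth_interp:stab}.

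The steps that actually require work are the two stencil-size bookkeeping claims --- that $P_{u,0}$ sees only $u|_{\Nbar_n}$, and that the order-$m$ differences on $\Nbar_n$ span $(\mathcal P_{m-1}|_{\Nbar_n})^\perp$ --- and I expect these to be the main obstacle, though only a mild one: both are elementary consequences of the definitions of $\Ns_n$, $\Nbar_n$ and the constraint $|\beta|_1\leq n+1$. Everything else is soft, resting only on uniqueness of the Hermite interpolant and on equivalence of norms in finite dimensions.
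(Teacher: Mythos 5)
Your proposal is correct and follows essentially the same route as the paper: the paper also reduces \eqref{eq:smooth_interp:stab} to a kernel-inclusion statement between two seminorms on the finite-dimensional space $\{u:\Nbar_n\to\R\}$, identifies the kernel of the right-hand side as the polynomials of degree $|\beta|_1-1$, and uses exactness of the $D_\alpha$ on such polynomials together with uniqueness of the Hermite interpolant to conclude that $\partial_\beta\tilu$ vanishes on them. Your explicit factorization $\partial_\beta P_{u,0}=\Psi(D^{\cn,m}u)$ is just the standard mechanism behind that seminorm comparison, so the two proofs coincide in substance.
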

\begin{proof}
Since both $\|\partial_\beta \tilu\|_{L^p(B_d)}$ and $\|D^{|\beta|_1}_{\Nbar_n} u\|_{\ell^p}$ are seminorms on the finite dimensional space $\{u:\Nbar_n\to\R\}$, \eqref{eq:smooth_interp:stab} may fail to hold only if there exists $u^\dagger:\Nbar_n\to\R$ such that $\partial_\beta \tilu^\dagger \not\equiv 0$ on $B_d$, but $D^{|\beta|_1}_{\Nbar_n} u^\dagger = 0$.
The latter may happen only if $u^\dagger$ is a polynomial of degree $|\beta|_1-1$.
Then, since $D_\alpha$ are exact on such polynomials (note that $|\beta|_1-1 \leq n$), $D_\alpha u^\dagger(\xi) = \partial_\alpha u^\dagger(\xi)$ for any vertex $\xi$ of $B_d$, therefore $P_{u^\dagger,0}(x) = u^\dagger(x)$ for all $x\in\R^d$, and hence $\partial_\beta \tilu^\dagger = \partial_\beta P_{u^\dagger,0} = 0$ on $B_d$.
\end{proof}

\begin{proof}[Proof of Lemma \ref{th:prelims:defn_smoothint}]
Applying Lemmas \ref{lem:smooth_interp:well_posed} and \ref{lem:smooth_interp:regularity} with $n=2$ proves part (a).
To show part (b), we apply Lemma \ref{lem:smooth_interp:stab} and note that any finite difference entering \eqref{eq:smooth_interp:stab} also enters \eqref{eq:prelims:defn_smoothint}.
\end{proof}



\subsection{Dual interpolant for B-QCE}
\label{sec:proofs_dual_int}
Recall the definition of $\Pi_h'$ from \eqref{eq:defn_Pih'}.

\begin{lemma}
  \label{th:inter:Pih'-lemma}
  The operator $\Pi_h' : \Us_h \to \Usc$ is well-defined. Moreover, it
  satisfies the estimates
  \begin{align}
    \| \D (\Pi_h' v_h)^* \|_{\LL^2} \leq \| \D \overline{\Pi_h' v_h} \|_{\LL^2} \leq~& C \| \D v_h
    \|_{\LL^2}, \quad \text{and} \\
    \| v_h - \overline{\Pi_h' v_h} \|_{\LL^2} \leq~& C \| \D v_h \|_{\LL^2},
  \end{align}
  where $C$ is a generic constant.
\end{lemma}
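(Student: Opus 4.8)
The plan is to write $\Pi_h' v_h$ as a compactly supported smoothed nodal interpolant plus a finitely supported correction, and to reduce well-posedness to the coercivity of a convolution operator. Introduce the lattice function $w_0 \in \Usc$ with $w_0(\xi) := (\zz \ast v_h)(\xi)$; by \eqref{eq:defn_Pih'}, $\Pi_h' v_h$ agrees with $w_0$ on $\L \setminus \La$, so $e := \Pi_h' v_h - w_0$ is supported in the \emph{finite} set $\La$. Writing $\Phi := \zz \ast \zz$ (which is compactly supported) and, for a lattice function $g$, $g^*(\xi) := (\zz \ast \bar g)(\xi) = \sum_\eta \Phi(\xi-\eta)\, g(\eta)$, the remaining conditions $(\Pi_h' v_h)^*(\xi) = v_h(\xi)$, $\xi \in \La$, become the finite linear system
\[
  \sum_{\eta \in \La} \Phi(\xi-\eta)\, e(\eta) \;=\; v_h(\xi) - (w_0)^*(\xi) \;=:\; b(\xi), \qquad \xi \in \La ,
\]
with coefficient matrix $A := \big(\Phi(\xi-\eta)\big)_{\xi,\eta\in\La}$. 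Since $\supp w_0 \subset \supp v_h + \supp\zz$ and $e$ is finitely supported, solvability of this system is all that is needed for $\Pi_h' v_h$ to be well-defined in $\Usc$.

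First I would prove that $A$ is invertible with $\|A^{-1}\|$ bounded \emph{independently of $\La$} (hence of $\Ra$). Using the reflection symmetry $\zz(\xi-x) = \zz(x-\xi)$ of Lemma \ref{th:reflection_symm} and $\Phi(\xi-\eta) = \int \zz(\xi-x)\zz(x-\eta)\dx$, for any finitely supported $v$ one has
\[
  \sum_{\xi,\eta} \Phi(\xi-\eta)\, v(\xi)\, v(\eta) \;=\; \int_{\R^d} \Big( \sum_\xi v(\xi)\,\zz(x-\xi) \Big)^{\!2} \dx \;=\; \|\barv\|_{L^2}^2 \;\geq\; c\,\|v\|_{\ell^2}^2 ,
\]
the last inequality by equivalence of the $L^2(T)$- and vertex-$\ell^2$-norms on the (finitely many, unit-sized) simplices $T \in \TT$. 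Thus the convolution operator $v \mapsto \Phi \ast v$ on $\ell^2(\Z^d)$, and therefore every principal submatrix $A$, is bounded below by $c I$; this gives well-definedness of $\Pi_h' v_h$ and $\|e\|_{\ell^2} \leq c^{-1}\|b\|_{\ell^2}$ with $c$ uniform.

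It remains to bound $\|\D\overline{w_0}\|_{L^2}$, $\|v_h - \overline{w_0}\|_{L^2}$ and $\|b\|_{\ell^2}$ by $\|\D v_h\|_{L^2}$. For the first, $\Da{\rho}(\zz\ast v_h) = \zz\ast\big(v_h(\cdot+\rho) - v_h\big)$ together with $\|\zz\ast f\|_{\ell^2(\Z^d)} \leq \|f\|_{L^2(\R^d)}$ (Cauchy--Schwarz plus the partition of unity $\sum_\xi\zz(\cdot-\xi)=1$ and $\|\zz\|_{L^1}=1$) give $\|\D\overline{w_0}\|_{L^2} \lesssim \|D w_0\|_{\ell^2} \lesssim \|\D v_h\|_{L^2}$. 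The other two are standard smoothed-interpolation estimates: splitting $v_h - \overline{w_0} = \big(v_h - \overline{v_h|_\L}\big) + \overline{v_h|_\L - w_0}$, the first summand is supported in an $O(1)$-neighbourhood of the skeleton of $\Th$ where it is $O(|\D v_h|)$, hence $\lesssim \|\D v_h\|_{L^2}$ because $h_T \geq 1$ for every $T\in\Th$; and $v_h|_\L - w_0$ is, in the fully-refined region, a second-order finite difference of $v_h|_\L$ (since $\zz\ast$ reproduces affine functions) and vanishes away from the skeleton elsewhere, so its $\ell^2$-norm is $\lesssim\|\D v_h\|_{L^2}$, whence $\|\overline{v_h|_\L-w_0}\|_{L^2} \lesssim \|\D v_h\|_{L^2}$. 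The same splitting, convolved once more with $\zz$, yields $\|b\|_{\ell^2} \lesssim \|\D v_h\|_{L^2}$ and hence $\|e\|_{\ell^2} \lesssim \|\D v_h\|_{L^2}$, so that $\|\bar e\|_{L^2} + \|\D\bar e\|_{L^2} \lesssim \|e\|_{\ell^2} \lesssim \|\D v_h\|_{L^2}$ by norm equivalence on unit-sized simplices. Assembling $\overline{\Pi_h' v_h} = \overline{w_0} + \bar e$ then gives the two $\|\D v_h\|_{L^2}$-bounds, while the remaining inequality $\|\D(\Pi_h' v_h)^*\|_{L^2} \leq \|\D\overline{\Pi_h' v_h}\|_{L^2}$ is exactly \eqref{eq:interp_stab}.

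The crux is the \emph{uniform} lower bound for $A$: since $\La$ grows without bound as $\Ra\to\infty$, a non-uniform $\|A^{-1}\|$ would destroy the estimate, and it is precisely the identity $\langle\Phi\ast v,v\rangle_{\ell^2} = \|\barv\|_{L^2}^2$ (powered by Lemma \ref{th:reflection_symm}) that makes the constant independent of $\La$. The only genuinely computational ingredient is the classical bound $\|v_h - \overline{\zz\ast v_h}\|_{L^2} \lesssim \|\D v_h\|_{L^2}$ on the possibly coarse mesh, which I would either cite or dispatch via the fully-refined/coarse case distinction indicated above.
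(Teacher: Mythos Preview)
Your proof is correct and follows essentially the same approach as the paper: you split $\Pi_h' v_h = w_0 + e$ with $w_0(\xi) = (\zz\ast v_h)(\xi)$ and $e$ supported in $\La$, and the paper does exactly this (calling the two pieces $w$ and $z$), reducing to the same finite linear system with coefficient matrix $\big(\Phi(\xi-\eta)\big)_{\xi,\eta\in\La}$. Your coercivity argument $\langle\Phi\ast v,v\rangle_{\ell^2}=\|\barv\|_{L^2}^2\geq c\|v\|_{\ell^2}^2$ is precisely the paper's variational identity $\int\barz\cdot\bar\varphi\,\dx=\sum_{\xi\in\La}g(\xi)\varphi(\xi)$ tested with $\varphi=z$; the paper then cites ``standard quasi-interpolation arguments'' for the bounds you sketch explicitly.
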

\begin{proof}
  To see that $v := \Pi_h' v_h$ is well-defined by
  \eqref{eq:defn_Pih'}, we first define $w \in \Usc$, $w(\xi) := (\zz
  \ast v_h)(\xi)$. From standard quasi-interpolation arguments (see,
  e.g., \cite{Verfurth, Verfurth:m2an}) we can deduce that
  \begin{displaymath}
    \| \barw - v_h \|_{L^2} \leq C \| \D v_h \|_{L^2}.
  \end{displaymath}

  Writing $v := w + z$, \eqref{eq:defn_Pih'} becomes
  \begin{align*}
    (\zz \ast z)(\xi) &= g(\xi), \qquad  \xi \in \La, \\
    z(\xi) &= 0, \qquad \quad \xi \in \L \setminus \La,
  \end{align*}
  where $g(\xi) = v_h(\xi)-\zz\ast{w}(\xi) = v_h(\xi) - (\zz\ast\zz\ast
  v_h)(\xi)$.
  Testing the first line with a test function $\varphi \in \Usc$,
  $\varphi = 0$ in $\L \setminus \La$, and using the fact that
  \begin{align*}
    \sum_{\xi \in \L} (\zz \ast z)(\xi) \varphi(\xi) &= \int_{\R^d}
    \sum_{\xi \in \L} \zz(x - \xi) \barz(x) \varphi(\xi) \dx
    = \int_{\R^d} \barz(x) \cdot \bar\varphi(x) \dx,
  \end{align*}
  we obtain the variational form
  \begin{displaymath}
    \int_{\R^d} \barz(x) \cdot \bar\varphi(x) \dx = \sum_{\xi \in \La}
    g(\xi) \cdot \varphi(\xi) \qquad \text{ for all } \varphi \in \Usc,
    \varphi|_{\L\setminus\La} = 0,
  \end{displaymath}
  from which it is now obvious that a unique solution
  exists.

  Testing with $\varphi = z$, we obtain that
  \begin{displaymath}
    \| \barz \|_{L^2} \leq C \|g \|_{\ell^2}.
  \end{displaymath}
  Exploiting the assumption that $\Nh$ and $\La$ coincide in $\Oma$ it
  is straightforward to show that
  \begin{displaymath}
    \| g \|_{\ell^2} \leq C \| \D v_h \|_{L^2},
  \end{displaymath}
  and we further obtain that
  \begin{align*}
    \| \D (\barv - v_h) \|_{L^2} &\leq C_1 \| \barv - v_h \|_{L^2}
    \leq C_1 \b( \| \barz \|_{L^2} + \| \bar w - v_h \|_{L^2} \b)
    \leq C_2 \| \D v_h\|.
  \end{align*}
  In particular, $\| \D \barv \|_{L^2} \leq C \| \D v_h
  \|_{L^2}$. This completes the proof of Lemma
  \ref{th:inter:Pih'-lemma}.
\end{proof}

\subsection{Inverse estimates}
\label{eq:inverse_ests}
Before we embark on the proof of the consistency estimates, we another
technical tool that allows us to convert local $L^\infty$ bounds into
$L^p$ bounds. This is motivated by the form of the estimate in Lemma
\ref{th:cb_model_err}.

Performing such conversions are standard norm-equivalence arguments
if the functions involved are piecewise polynomial:
\begin{equation}
  \label{eq:inv_est_poly}
  \| \D^j \tilv \|_{L^\infty(T)} \lesssim \| \D^j \tilv \|_{L^p(T)}
  \qquad \forall v \in \Us, T \in \TT, j = 0, \dots, 3, p \in [1, \infty].
\end{equation}
In the point defect case, this also extends to $y = y_0+u$, where $y_0
= \mA x$.

However, we will also need to perform such estimates for $y_0 = \mA x
+ \ulin$. To that end, we now construct a piecewise polynomial
interpolant of $y_0$ that takes into account the structure of
$\ulin$. For $y \in \Ys$, $x \in \R^d, |x| > 2 \rcut + 2\sqrt{d}$, we
define
\begin{equation}
  \label{eq:defn_haty0}
  \hat{y}^x(x') := \widetilde{y^x}(x') \qquad \text{for $x' \in \nu_x$},
\end{equation}
where $\widetilde{y^x}$ is the $C^{2,1}$-conforming piecewise
polynomial interpolant defined through
Lemma~\ref{th:prelims:defn_smoothint}. (Since $\hat{y}^x$ is piecewise
polynomial, it is {\em not} of the form $y_0 + \tilde{u}$ for any $u
\in \Us^{1,2}$.)

The interpolant is clearly well-defined and we obtain the following
bounds from standard interpolation error estimate arguments (e.g., see
\cite{Ciarlet:1978}): for $Q = \xi+(0,1)^d \subset
B_{2\rcut+2\sqrt{d}}(x)$ and $\omega_Q = \xi+(-1,2)^d$, $q \in [1,
\infty]$, we have
\begin{align}
  \label{eq:haty_errest}
  \| \D \tily - \D \hat{y}^x \|_{L^\infty(Q)} &\leq C_1 \| \D^3 \tily
  \|_{L^q(\omega_Q)}, \\
  \label{eq:haty_invest}
  \| \D^j \hat{y}^x \|_{L^\infty(Q)} &\leq C_2 \| \D^j \hat{y}^x
  \|_{L^q(Q)} \text{ for } j = 1, 2, 3, \quad \text{and}\\
  \label{eq:haty0_to_y0}
  \| \D^j \hat{y}^x \|_{L^q(Q)} &\leq C_3 \| \D^j \tily
  \|_{L^q(\omega_Q)} \text{ for } j = 2, 3,
\end{align}
where the constants $C_1, C_2, C_3$ are generic. While
\eqref{eq:haty_invest} is obvious, the two other estimates require
some comments.

\begin{proof}[Proof of \eqref{eq:haty_errest}]
  Since, for $d = 2$, $W^{3,1}$ is embedded in $C$, standard
  interpolation error arguments yield
  \begin{displaymath}
    \| \D \tily - \D \hat{y}^x \|_{L^\infty(Q)} = \| \D y^x - \D
    \widetilde{y^x} \|_{L^\infty(Q)} \lesssim \| \D^3 y^x
    \|_{L^1(\omega_Q)} = \| \D^3 \tily
    \|_{L^1(\omega_Q)}.
  \end{displaymath}

  For $d = 3$ the embedding fails, however, in this case $\tily$ is
  piecewise polynomial; that is, $\tily = \hat{y}^x$, hence the result
  is true in this case as well.
\end{proof}

\begin{proof}[Proof of \eqref{eq:haty0_to_y0}]
  Let $p$ be an arbitrary polynomial of degree $j-1$, then
  \begin{align*}
    \| \D^j \hat{y}^x \|_{L^q(Q)} &=   \| \D^j (\hat{y}^x - p)
    \|_{L^q(Q)} 
    \lesssim \| \D (\hat{y}^x - p) \|_{L^q(Q)} \\
    &\lesssim \| \D (\hat{y}^x - y^x) \|_{L^q(Q)} + C \| \D (y^x - p) \|_{L^q(Q)}.
  \end{align*}
    From \eqref{eq:haty_errest} and the Bramble-Hilbert Lemma,
    we obtain \eqref{eq:haty0_to_y0}.
\end{proof}

\section{Consistency Proofs}
\label{sec:cons}

\subsection{B-QCE coarsening error}
\label{sec:bqce_coarse_err_est}
Throughout this section and the next we assume the conditions of
Theorem \ref{th:cons:bqce}. Thus, let $y = y_0+u \in \Ys$ be fixed,
let $y_h = y_0+u_h := \Pi_h y^\a$ be its quasi-best approximation and
let $v_h \in \Usc$ be an arbitrary test function. We choose $v :=
\Pi_h' v_h \in \Usc$, where $\Pi_h'$ is defined in
\eqref{eq:defn_Pih'} and analysed in \S~\ref{sec:proofs_dual_int}, and
estimate the B-QCE consistency error
\begin{displaymath}
  \< \del\Ebh(y_h), v_h \> - \< \del\Ea(y^\a), v^* \>.
\end{displaymath}

Using the fact that $v^*(\xi) = v_h(\xi)$ for all $\xi \in \La$, and
employing \eqref{eq:inter:bqce_stress_error} we split the error as follows,
\begin{align*}
  & \< \del\Ebh(y_h), v_h \> - \< \del\Ea(y), v^* \> \\
  &= \int_{\R^d} Q_h \b[ \beta \b( \partial W(\D y_h) - \pp W(\D\tily)
  \b): \D
  v_h \b] \dx + \int_{\R^d} (Q_h-{\rm Id})\b[\beta \partial
  W(\D \tily) : \D v_h \b] \dx \\
  & \quad  + \int_{\R^d} \beta\partial W(\D \tily) : (\D v_h - \D\barv) \dx
  + \int_{\R^d} \mR^\beta(\tily; x) : \D \barv \dx \\
  &:= {\rm T}_1 + {\rm T}_2 + {\rm T}_3 + {\rm T}_4,
\end{align*}
where $\mR^\beta$ is defined in \eqref{eq:inter:bqce_stress_error}. In
the consistency error analysis of the B-QCF method in
\S~\ref{sec:bqcf_cons_prf} we use an analogous splitting, hence the
following estimates for the terms ${\rm T}_1, {\rm T}_2, {\rm T}_3$
will be used there as well.


\begin{lemma}
  \label{th:bqce_coarse_err_T1_T2}
  Under the conditions of Theorem \ref{th:cons:bqce}, the terms ${\rm
    T}_1$ and ${\rm T}_2$ are bounded by
  \begin{align*}
    {\rm T}_1 &\lesssim \B( \| \D u_h - \D \tilu \|_{L^2(\Omc)} + \| h^2 \D^3
    \tilu \|_{L^2(\Omc)}\B) \| \D v_h \|_{L^2} \quad \text{and} \\
    {\rm T}_2 &\lesssim \B(\b\| h^2 \D^2 \b[ \beta \pp W(\D \tily) \b]
    \b\|_{L^2}\B) \| \D v_h \|_{L^2}.
  \end{align*}
\end{lemma}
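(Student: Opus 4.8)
The plan is to estimate ${\rm T}_1$ and ${\rm T}_2$ separately, in each case reducing the quantity to a product of an $L^2$ norm of $\D v_h$ and a local interpolation/quadrature error, using only elementary properties of the mid-point rule $Q_h$, the Lipschitz continuity of $\pp W$ near the relevant strains, and the fact that $\D v_h$ and $\D \barv$ are supported (essentially) in $\Omc$ by construction of $\Pi_h'$ and the structure of $\beta$. Throughout I would keep in mind that $\beta$ is supported in the continuum/blending region and that $Q_h$ acts cell-by-cell on $\Th$, which is fully refined in $\Oma$ (so the interesting estimates all live on the coarsened part of the mesh inside $\Omc$).

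For ${\rm T}_1 = \int Q_h[\beta(\pp W(\D y_h) - \pp W(\D \tily)):\D v_h]\dx$: since $Q_h$ is a bounded (in fact norm-one) operator on each element and $0\le\beta\le1$, Cauchy--Schwarz gives ${\rm T}_1 \lesssim \|Q_h[\pp W(\D y_h) - \pp W(\D\tily)]\|_{L^2(\Omc)}\,\|\D v_h\|_{L^2}$, using also $\|Q_h \D v_h\|_{L^2}\lesssim\|\D v_h\|_{L^2}$ on a shape-regular mesh. The first factor I would bound by splitting $\pp W(\D y_h) - \pp W(\D\tily)$ via the Lipschitz bound on $\pp W$ (valid because $\|\D\baru - \D\bar u_h\|_{L^\infty}$, hence $\|\D\tily - \D y_h\|_{L^\infty}$, is controlled by $\errba(y)\le\epsilon$, invoking Lemma~\ref{th:prelims:defn_smoothint} to pass between $D$ and $\D$): thus $|Q_h[\pp W(\D y_h)-\pp W(\D\tily)]| \lesssim Q_h|\D y_h - \D\tilu| + Q_h|\D\tily - \pp(\text{midpoint value})|$-type terms. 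More precisely, $\D y_h - \D\tily = \D u_h - \D\tilu$, and the mid-point quadrature error on $Q_h[\D\tilu]$ against the true $\D\tilu$ contributes the standard $O(h^2\D^3\tilu)$ term on each coarse cell (since $\tilu$ is $C^{2,1}$); summing over cells in $\Omc$ gives $\|h^2\D^3\tilu\|_{L^2(\Omc)}$. Collecting these yields ${\rm T}_1 \lesssim (\|\D u_h - \D\tilu\|_{L^2(\Omc)} + \|h^2\D^3\tilu\|_{L^2(\Omc)})\|\D v_h\|_{L^2}$.

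For ${\rm T}_2 = \int (Q_h - {\rm Id})[\beta\,\pp W(\D\tily):\D v_h]\dx$: here the integrand is $\beta\,\pp W(\D\tily):\D v_h$, and $\D v_h$ is piecewise constant on $\Th$ while $\beta\,\pp W(\D\tily)$ is smooth, so on each element $T$ the quadrature error of the mid-point rule against the smooth factor is controlled by the second derivative, giving $|(Q_h-{\rm Id})[g:\D v_h]|_T \lesssim h_T^2 \|\D^2 g\|_{L^\infty(T)}\|\D v_h\|_{L^\infty(T)}$ with $g = \beta\,\pp W(\D\tily)$. An inverse estimate turns $\|\D v_h\|_{L^\infty(T)}$ back into a local $L^2$ norm (absorbing the extra $h_T^{-d/2}$ against one factor of $h_T$, leaving one $h_T$ which combines with the remaining $h_T$ from $h_T^2$), and Cauchy--Schwarz over elements then produces ${\rm T}_2 \lesssim \|h^2\D^2[\beta\,\pp W(\D\tily)]\|_{L^2}\,\|\D v_h\|_{L^2}$; the implicit restriction to $\Omc$ is automatic since $\D v_h$ (off $\Oma$, where $\Th$ is fully refined so the quadrature is exact) and $\D\beta$ both vanish outside the continuum region. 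The main subtlety — and the step I expect to need the most care — is the bookkeeping with the inverse estimate and the mesh grading in ${\rm T}_2$: one must be sure the $h_T$ powers balance correctly under a graded mesh (this is where $\max_T h_T/r_T\le C_\Ts$ is used), and that applying $\D^2$ to the product $\beta\,\pp W(\D\tily)$ — which by the product and chain rules brings in $\D^2\beta$, $\D\beta\otimes\D^2\tily$, $\ddW(\D\tily):\D^3\tily$ and $\dddel W:(\D^2\tily)^{\otimes 2}$ — stays within the stated compact form rather than being expanded further (the expansion is deferred to the subsequent lemmas estimating ${\rm T}_3, {\rm T}_4$).
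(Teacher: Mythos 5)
Your estimate of ${\rm T}_1$ is in substance the paper's argument: both reduce, via element-wise Cauchy--Schwarz and the local Lipschitz bound on $\pp W$ (justified by $\errba(y)\leq\epsilon$), to controlling $|T|^{1/2}\|\D u_h-\D\tilu\|_{L^\infty(T)}$, and then trade the pointwise/midpoint evaluation for $\|\D u_h-\D\tilu\|_{L^2(T)}+\|h^2\D^3\tilu\|_{L^2(T)}$ --- the paper does this with the scaled embedding $H^2\subset C$, you with the barycentric mean-value property of $x_T$; these are interchangeable. For ${\rm T}_2$ you take a genuinely different route. The paper exploits that the midpoint rule, tested against the piecewise constant $\D v_h$, is exact on piecewise linears: it writes ${\rm T}_2=\int(Q_h-{\rm Id})\b[\beta\pp W(\D\tily)-\psi_h\b]:\D v_h\dx$ for an arbitrary piecewise linear $\psi_h$, applies Cauchy--Schwarz in $L^2$, and concludes with the Bramble--Hilbert lemma, never leaving $L^2$. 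You instead use the per-element $L^\infty$ quadrature bound $h_T^2\|\D^2 g\|_{L^\infty(T)}$, $g=\beta\,\pp W(\D\tily)$, plus inverse estimates. Be aware of what that costs: after $\|\D v_h\|_{L^\infty(T)}=|T|^{-1/2}\|\D v_h\|_{L^2(T)}$ and integrating over $T$, you are left needing $|T|^{1/2}\|\D^2 g\|_{L^\infty(T)}\lesssim\|\D^2 g\|_{L^2(\omega_T)}$ on patches with finite overlap --- this is the real content of the step you flag as subtle. It does hold, but only through the norm-equivalence machinery of \S~\ref{eq:inverse_ests} (the factors $\D^2\tily,\D^3\tily$ are piecewise polynomial only after passing to the interpolants $\hat y^x$, and in the dislocation case $\D^2 g$ is not piecewise polynomial), and it most naturally delivers the expanded bound $\|h^2(|\D^2\beta|+|\D\beta|\,|\D^2\tily|+|\D^3\tily|+|\D^2\tily|^2)\|_{L^2}$ rather than literally $\|h^2\D^2[\beta\pp W(\D\tily)]\|_{L^2}$ --- which is all that is used downstream in Lemma \ref{th:bqce_final_coarse_err_est}, so no harm is done. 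The paper's $\psi_h$/Bramble--Hilbert device buys precisely the avoidance of these inverse estimates.
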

\vspace{-3mm}
\begin{proof}
  {\it 1. Estimate of ${\rm T}_1$: } Let $T \in \T$ such that $\beta \neq
  0$ on $T$, then
  \begin{align*}
    & \hspace{-1cm}\int_T Q_h\b[ \beta \b( \partial W(\D y_h) - \pp W(\D\tily) \b):
    \D v_h \b] \dx \\
    &\leq |T|^{1/2} \| \partial W(\D y_h) - \pp
    W(\D\tily) \|_{L^\infty(T)}  \| \D v_h \|_{L^2(T)} \\
    &\lesssim |T|^{1/2} \| \D u_h - \D \tilu \|_{L^\infty(T)} \| \D
    v_h \|_{L^2(T)},
  \end{align*}
  where $C$ depends on $\pp^2 W$ in a neighbourhood of $\D \tily$ and
  hence on $M_\epsilon^{(\bfrho)}(y), \bfrho \in \Rg^2$. Employing the embedding $H^2 \subset C$,
    \begin{align*}
    |T|^{1/2} \| \D u_h - \D \tilu \|_{L^\infty(T)} &\lesssim \B(\| \D u_h -
    \D \tilu \|_{L^2(T)} +  \| h^2 \D^3 \tilu \|_{L^2(T)}\B),
  \end{align*}
    where $C$ depends only on the shape regularity of the mesh.
  Summing over all $T$, we obtain the stated result. 

  {\it 2. Estimate of ${\rm T}_2$: } For any piecewise linear  (not
  necessarily continuous) $\psi_h$ we have
  \begin{align*}
    {\rm T}_2 &= \int_{\R^d} (Q_h - {\rm Id}) \b[ \beta \pp W(\D
    \tily) - \psi_h \b] : \D v_h \dx \\
    &\leq \b\| (Q_h - {\rm Id}) \b[ \beta \pp W(\D
    \tily) - \psi_h \b]\b\|_{L^2} \| \D v_h \|_{L^2}.
  \end{align*}
  Therefore, by the Bramble-Hilbert Lemma,
  \begin{displaymath}
    {\rm T}_2 \lesssim \b\| h^2 \D^2 \b[ \beta \pp W(\D\tily) \b] \b\|_{L^2} \|\D v_h\|_{L^2},
  \end{displaymath}
  where the constant depends again on the shape regularity of $\T$.
\end{proof}

\begin{lemma}
  \label{th:bqce_coarse_err_T3}
  Under the conditions of Theorem \ref{th:cons:bqce}, the term ${\rm
    T}_3$ is bounded above by
  \begin{align*}
    {\rm T}_3 &\lesssim \B(\b\| \D^2 \b[ \beta \pp W(\D\tily)\b]\b\|_{L^2} + \| h
    \D^2 \tilu \|_{L^2(\Omc)} + \| h \D^2 y_0 \otimes (\D y_0 - \mA)
    \|_{L^2(\Omc)}\B) \| \D v_h \|_{L^2}.
  \end{align*}
\end{lemma}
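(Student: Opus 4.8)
The plan is to write ${\rm T}_3 = \int_{\R^d}\sigma:\D(v_h-\barv)\dx$ with $\sigma := \beta\,\pp W(\D\tily)$ and $\barv = \overline{\Pi_h' v_h}$ (so $v = \Pi_h' v_h$), and to split it into a \emph{smoothing} contribution and a \emph{coarsening/conformity} contribution. Since $1-\beta$ has compact support and $v_h,\barv$ vanish outside $\Omh$, only $\sigma|_{\Omh}$ enters, which is supported in $\Omc$ and vanishes near the defect core; moreover $v_h-\barv$ has compact support, so all integrations by parts below produce no boundary terms. First insert the smoothed atomistic test function $v^* := (\Pi_h' v_h)^* = \zz\ast\barv$, which by the defining relation \eqref{eq:defn_Pih'} of $\Pi_h'$ satisfies $v^*(\xi)=v_h(\xi)$ at every lattice node $\xi\in\La$ of the fully refined region, and decompose
\begin{displaymath}
  {\rm T}_3 = \int_{\R^d}\sigma:\D(v^*-\barv)\dx + \int_{\R^d}\sigma:\D(v_h-v^*)\dx =: {\rm T}_3' + {\rm T}_3''.
\end{displaymath}

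For the smoothing error ${\rm T}_3'$, I use that $\zz$ is even (Lemma~\ref{th:reflection_symm}), so convolution against $\zz$ is self-adjoint on $L^2$ and $\int\sigma:\D(\zz\ast\barv)\dx = \int(\zz\ast\sigma):\D\barv\dx$, whence ${\rm T}_3' = \int(\zz\ast\sigma-\sigma):\D\barv\dx$. Since $\int\zz = 1$ and $\int z\,\zz(z)\dz = 0$ (again Lemma~\ref{th:reflection_symm}), a second-order Taylor expansion of $\sigma$ gives $|(\zz\ast\sigma-\sigma)(x)|\lesssim\int_{|z|\lesssim 1}|\D^2\sigma(x-z)|\dz$, hence $\|\zz\ast\sigma-\sigma\|_{L^2}\lesssim\|\D^2\sigma\|_{L^2} = \|\D^2[\beta\,\pp W(\D\tily)]\|_{L^2}$; together with $\|\D\barv\|_{L^2}\lesssim\|\D v_h\|_{L^2}$ from Lemma~\ref{th:inter:Pih'-lemma} this yields ${\rm T}_3'\lesssim\|\D^2[\beta\,\pp W(\D\tily)]\|_{L^2}\,\|\D v_h\|_{L^2}$.

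The coarsening/conformity error ${\rm T}_3''$ is the main work. Since $v_h-v^*$ has compact support, $\int c:\D(v_h-v^*)\dx = 0$ for any constant $c$, so I replace $\pp W(\D\tily)$ by $\pp W(\D\tily)-\pp W(\mA)$ throughout and put $\tilde\sigma := \beta\,(\pp W(\D\tily)-\pp W(\mA))$, so that ${\rm T}_3'' = \int\tilde\sigma:\D(v_h-v^*)\dx$. Split the domain into the fully refined region and the complementary coarse part of $\Omc$: on the refined region $v_h-v^*$ vanishes at all lattice nodes, and the pairing is controlled using the smoothness of $v^*$ and the $C^{2,1}$-regularity of $\beta$, giving a term bounded by $\|\D\beta\otimes(\D\tily-\mA)\|_{L^2(\Omb)}$ that is of lower order and absorbed into $\|\D^2[\beta\,\pp W(\D\tily)]\|_{L^2}$; on the coarse part I use that $\D v_h$ is $\Th$-piecewise constant, so $\int Q_h\tilde\sigma:\D v_h\dx = \int\tilde\sigma:\D v_h\dx$, and control the remainder via the midpoint/interpolation estimates for $Q_h$ (Bramble--Hilbert) together with the inverse estimates of \S\ref{eq:inverse_ests} that convert the resulting local $L^\infty$ bounds on $\tilde\sigma$ into $L^2$ bounds. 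By the chain rule,
\begin{displaymath}
  \D\tilde\sigma = \D\beta\otimes(\pp W(\D\tily)-\pp W(\mA)) + \beta\,\ddW(\D\tily):\D^2\tily, \qquad \D^2\tily = \D^2\tilu + \D^2 y_0,
\end{displaymath}
and $\ddW(\D\tily):\D^2 y_0 = \bbC:\D^2 y_0 + (\ddW(\D\tily)-\bbC):\D^2 y_0$ with $\bbC = \ddW(\mA)$; the first piece vanishes on $\Omc$ by the linearised elasticity equation \eqref{eq:disl:eqn_ulin} for $\ulin$ (recall $\D^2 y_0 = 0$ in case (pPt) and $\D^2 y_0 = \D^2\ulin$ in case (pDs)), the second is $O(|\D\tily-\mA|\,|\D^2 y_0|)$ and contributes $\|h\,\D^2 y_0\otimes(\D y_0-\mA)\|_{L^2(\Omc)}$ plus strictly lower-order terms, while $\beta\,\ddW(\D\tily):\D^2\tilu$ contributes $\|h\,\D^2\tilu\|_{L^2(\Omc)}$. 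Collecting, ${\rm T}_3''\lesssim\big(\|\D^2[\beta\,\pp W(\D\tily)]\|_{L^2} + \|h\,\D^2\tilu\|_{L^2(\Omc)} + \|h\,\D^2 y_0\otimes(\D y_0-\mA)\|_{L^2(\Omc)}\big)\|\D v_h\|_{L^2}$, and adding ${\rm T}_3'$ gives the claim.

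The hard part is ${\rm T}_3''$: decoupling the coarse-mesh test function $v_h$ from the fine-mesh smoothed test function $v^*$ so that (i) only the coarse mesh-size weight $h$, to the first power, sits on a single derivative of the stress, (ii) all such terms stay localised to the continuum region $\Omc$, and (iii) no spurious $\|\D\beta\|_{L^2}$ term (which would be larger than the terms occurring in $\errbqce$) is produced — which is exactly why subtracting the constant stress $\pp W(\mA)$ and invoking the elasticity PDE for $\ulin$ to kill the $\bbC:\D^2\ulin$ contribution are both essential.
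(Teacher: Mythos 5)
Your overall architecture is close to the paper's, and you have correctly identified the three essential ingredients: the interpolation property $v^*(\xi)=v_h(\xi)$ on $\La$ from \eqref{eq:defn_Pih'}, the $L^2$-stability of $\Pi_h'$ combined with $h\gtrsim 1$ in the coarse region, and the linearised elasticity equation \eqref{eq:disl:eqn_ulin} to reduce $\divv\pp W(\D y_0)$ to $|\D^2 y_0|\,|\D y_0-\mA|$. Your treatment of ${\rm T}_3'$ via self-adjointness of convolution with $\zz$ and its vanishing first moment is a clean device. However, there is a genuine gap in the blending region. After integrating by parts in ${\rm T}_3''$ you are left with $\int\divv\tilde\sigma\cdot(v_h-v^*)\dx$, and on the refined region the only information you use about $v_h-v^*$ is that it vanishes at lattice nodes; a cell-wise Poincar\'e inequality then buys exactly \emph{one} derivative of the stress, so you pick up
\begin{displaymath}
  \b\| \D\beta\cdot\b(\pp W(\D\tily)-\pp W(\mA)\b) \b\|_{L^2(\Omb)},
\end{displaymath}
which you assert is ``absorbed into $\|\D^2[\beta\,\pp W(\D\tily)]\|_{L^2}$''. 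That absorption is false: $\D^2[\beta\,\pp W(\D\tily)]$ consists of $\D^2\beta\otimes\pp W$, $\D\beta\otimes(\ddW(\D\tily):\D^2\tily)$ and $\beta\,\D^2[\pp W(\D\tily)]$, and contains no term pairing $\D\beta$ with the zeroth-order quantity $\pp W(\D\tily)-\pp W(\mA)$ (take $\beta$ and $\tily$ affine with $\D\tily\neq\mA$ on a patch of $\Omb$: your term is nonzero while $\D^2[\beta\,\pp W(\D\tily)]$ vanishes there). So your argument proves the lemma only with this extra term added to the right-hand side; in the eventual application it happens to be of the same order as $\|\D^2\beta\|_{L^2}$, so Theorem \ref{th:cons:bqce} would survive, but the stated lemma does not follow.

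The paper avoids this loss by never separating $v^*$ from $\barv$: writing ${\rm T}_3=\sum_\nu\int f\cdot(v_h(\nu)-\barv)\zeta_\nu\dx$ with $f=-\divv[\beta\,\pp W(\D\tily)]$ and using the \emph{weighted} mean-zero identity $\int(v_h(\nu)-\barv)\zeta_\nu\dx=v_h(\nu)-v^*(\nu)=0$ for $\nu\in\La$, one may subtract the patch average of $f$ itself and so gain a \emph{second} derivative, $\|\D f\|_{L^2(\omega_\nu)}$, on every patch in the blending region. Repairing your route would require an analogous second-order orthogonality for $v_h-v^*$ (e.g.\ vanishing weighted patch averages), which your decomposition does not supply. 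A minor further point: your description of the coarse part in terms of $Q_h$ and Bramble--Hilbert conflates ${\rm T}_3$ with the quadrature term ${\rm T}_2$; the operative mechanism there is $\|v_h-\overline{\Pi_h'v_h}\|_{L^2}\lesssim\|\D v_h\|_{L^2}$ from Lemma \ref{th:inter:Pih'-lemma} together with $h\geq1$, though you do arrive at the correct final terms.
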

\vspace{-5mm}
\begin{proof}
  Let $\zeta_\nu$ be the nodal basis function associated with a node
  $\nu \in \Nh$, with support $\omega_\nu$, and let $f := -\divv\b[
  \beta \pp W(\D\tily) \b]$. We integrate the term ${\rm T}_3$ by
  parts, and then use the fact that $\zeta_\nu$ form a partition of
  unity, to obtain
  \begin{align*}
    {\rm T}_3 &= - \int_{\R^d} \divv\b[ \beta \pp W(\D\tily) \b] \cdot
    (v_h - \barv) \dx \\
    &= \sum_{\nu \in \Nh} \int_{\R^d} f(x) \cdot (v_h(\nu) - \barv(x))
    \zeta_\nu(x) \dx.
  \end{align*}
  \textit{ Case 1:}
   If $\nu \in \La \cap \Nh$, then $\zeta_\nu(x) = \zz(x-\nu)$
  and hence
  \begin{equation}
    \label{eq:coarse_T3:100:mean_zero_condition}
    \int_{\R^d} (v_h(\nu) - \barv(x))
    \zeta_\nu(x)  \dx = v_h(\nu) - v^*(\nu) = 0,
  \end{equation}
  by definition of $v^*$ and $v$.  Therefore,
  \begin{displaymath}
    \int_{\R^d} f(x) \cdot (v_h(\nu) - \barv(x))
    \zeta_\nu(x) \dx \lesssim \| \D f \|_{L^2(\omega_\nu)}
     \|\zeta_\nu^{1/2} (v_h(\nu)
    - \barv(x)) \|_{L^2(\omega_\nu)}
  \end{displaymath}
  where $\omega_\nu = \text{supp}\,\zeta_\nu$. Exploiting again
  \eqref{eq:coarse_T3:100:mean_zero_condition} we can estimate
  \begin{align*}
    \|
    \zeta_\nu^{1/2} (v_h(\nu)
    - \barv) \|_{L^2(\omega_\nu)}^2 &= \int_{\omega_\nu} (v_h(\nu) -
    \barv) \cdot \b[ (v_h(\nu) - \barv) \zeta_\nu\b] \dx \\
    &=   \int_{\omega_\nu} \big( (\barv)_{\omega_\nu} -
    \barv\big) \cdot \b[ (v_h(\nu) - \barv) \zeta_\nu\b] \dx  \\
    &\leq \| (\barv)_{\omega_\nu} - \barv \|_{L^2(\omega_\nu)} \|
    \zeta_\nu (v_h(\nu)
    - \barv) \|_{L^2(\omega_\nu)} \\
    &\lesssim \| \D\barv \|_{L^2(\omega_\nu)} \|
    \zeta_\nu^{1/2} (v_h(\nu)
    - \barv) \|_{L^2(\omega_\nu)},
  \end{align*}
  and hence we arrive at
  \begin{equation}
    \label{eq:coarse_T3:110}
    \int_{\R^d} f(x) \cdot (v_h(\nu) - \barv(x))
    \zeta_\nu(x) \dx \lesssim \| \D f \|_{L^2(\omega_\nu)} \| \D v_h
    \|_{L^2(\omega_\nu)} \qquad \text{ for } \nu \in \Nh \cap \La.
  \end{equation}
  \textit{Case 2:}
  Because of the way $v$ is defined, we do not have
  \eqref{eq:coarse_T3:100:mean_zero_condition} for $\nu \in \Nh
  \setminus \La$,
  but on the other hand $\beta \equiv 1$ in this case, which means
  that the second-order estimate is not crucial. In this case, using
  elementary interpolation error estimates, we obtain only
  \begin{align*}
    \int_{\R^d} f(x) \cdot (v_h(\nu) - \barv(x))
    \zeta_\nu(x) \dx &\leq \| h f \|_{L^2(\omega_\nu)} \b( \| h^{-1} (v_h(\nu) -
    v_h) \|_{L^2(\omega_\nu)} + \| h^{-1}(v_h - \barv) \|_{L^2(\omega_\nu)} \b)
    \\
    &\leq \| h f \|_{L^2(\omega_\nu)} \b( \| \D v_h \|_{L^2(\omega_\nu)}
    + \| v_h - \barv \|_{L^2(\omega_\nu)} \b).
  \end{align*}

  Summing the estimates over all $\nu$ and estimating the overlaps of
  the patches (the shape regularity of the mesh enters again here;
  this is a standard argument from a posteriori error analysis), we
  deduce that
  \begin{displaymath}
    {\rm T}_3 \lesssim \B( \b\| \D \divv (\beta \pp W(\D \tily))
    \b\|_{L^2} + \b\| \beta h \divv \pp W(\D \tily) \b\|_{L^2} \B) \,
    \b( \| \D v_h\|_{L^2} + \| v_h - \barv
    \|_{L^2} \b).
  \end{displaymath}
  and, finally, employing Lemma \ref{th:inter:Pih'-lemma},
  \begin{equation}
    \label{eq:coarse_T2:130}
    {\rm T}_3 \lesssim \B( \b\| \D \divv (\beta \pp W(\D \tily))
    \b\|_{L^2} + \b\| \beta h \divv \pp W(\D \tily) \b\|_{L^2} \B) \, \|
    \D v_h \|_{L^2}.
  \end{equation}
  Note that we have inserted $\beta$ in $\b\| \beta h \divv \pp W(\D
  \tily) \b\|_{L^2}$ merely to indicate that it is restricted to the
  continuum region.  Inserting the estimate
  \begin{align*}
     |\divv \pp W(\D
  \tily)| &\leq \b| \divv \pp W(\D \tily) - \divv \pp W(\D y_0)\b|  \\
  & \qquad +
  \b| \divv \pp W(\D y_0) - \divv \pp^2 W(\mA) : (\D y_0 - \mA) \b| \\
  &\lesssim | \D^2 \tilu| + |\D^2 y_0|\, |\D y_0 - \mA|.
  \end{align*}
  into \eqref{eq:coarse_T2:130} yields the stated result.
\end{proof}

We can now combine the foregoing results to arrive at the complete
coarsening error estimate.

\begin{lemma}[B-QCE coarsening error]
  \label{th:bqce_final_coarse_err_est}
  Under the conditions of Theorem \ref{th:cons:bqce},
  \begin{align}
    \notag
    \< \del\Ebh(\Pi_h y_h), v_h \> - \< \del\Ea(y), (\Pi_h' v_h)^\ast \>
    &\lesssim
    \B( \errba(y) + \errcb(y) + \errbqce(y) + \|
    \mR^\beta(\tily) \|_{L^2} \B) \| \D v_h \|_{L^2} \\
    &\qquad \text{ for all } v_h \in \Us_h.
    \label{eq:bqce_final_coarse_err_est}
  \end{align}
\end{lemma}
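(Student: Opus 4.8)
The plan is to assemble the estimates already available for the four pieces of the splitting
$\< \del\Ebh(\Pi_h y), v_h\> - \< \del\Ea(y), (\Pi_h' v_h)^\ast\> = {\rm T}_1 + {\rm T}_2 + {\rm T}_3 + {\rm T}_4$
and then to unwind the two ``product norms'' $\|\D^2[\beta\dW(\D\tily)]\|_{L^2}$ and $\|h^2\D^2[\beta\dW(\D\tily)]\|_{L^2}$ that appear in those estimates into the quantities $\errba$, $\errcb$, $\errbqce$.

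First I would treat ${\rm T}_4 = \int_{\R^d}\mR^\beta(\tily;x):\D\barv\,\dx$: Cauchy--Schwarz gives ${\rm T}_4 \le \|\mR^\beta(\tily)\|_{L^2}\|\D\barv\|_{L^2}$, and since $\barv = \overline{\Pi_h'v_h}$, the interpolation stability \eqref{eq:interp_stab} together with Lemma~\ref{th:inter:Pih'-lemma} yields $\|\D\barv\|_{L^2}\lesssim\|\D v_h\|_{L^2}$, which produces the $\|\mR^\beta(\tily)\|_{L^2}\|\D v_h\|_{L^2}$ contribution. For ${\rm T}_1,{\rm T}_2,{\rm T}_3$ I simply insert Lemmas~\ref{th:bqce_coarse_err_T1_T2} and~\ref{th:bqce_coarse_err_T3}. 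Among the terms produced, $\|h^2\D^3\tilu\|_{L^2(\Omc)}$, $\|h\D^2\tilu\|_{L^2(\Omc)}$ and $\|h\D^2 y_0\otimes(\D y_0-\mA)\|_{L^2(\Omc)}$ are already summands of $\errba(y)$; and $\|\D u_h-\D\tilu\|_{L^2(\Omc)}$ is absorbed into $\errba(y)$ by writing it as $\|\D u_h-\D\baru\|_{L^2(\Omc)}+\|\D\baru-\D\tilu\|_{L^2(\Omc)}$, bounding the first piece by Lemma~\ref{th:res:bestapx_est} and, for the second, using that $\baru$ and $\tilu$ agree at lattice nodes so that $\|\D(\baru-\tilu)\|_{L^2(\Omc)}\lesssim\|h\D^2\tilu\|_{L^2(\Omc)}$ by a standard interpolation estimate together with Lemma~\ref{th:prelims:defn_smoothint}.

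It then remains to expand the product norms. By the Leibniz and chain rules, $\D^2[\beta\dW(\D\tily)]$ is a sum of $\D^2\beta\otimes\dW(\D\tily)$, a term of the schematic form $\D\beta\otimes(\pp^2W(\D\tily):\D^2\tily)$, a term $\beta\,\pp^3W(\D\tily):(\D^2\tily\otimes\D^2\tily)$, and a term $\beta\,\pp^2W(\D\tily):\D^3\tily$. Using $0\le\beta\le1$, the support inclusion $\supp(\D^2\beta)\subset\supp(\D\beta)$ and the definitions of $\Omb,\Omc$ (so that the $\D\beta,\D^2\beta$ factors localize the first two terms to $\Omb$ and the factor $\beta$ localizes the last two to $\Omc$), the bounds on the derivatives of $W$ near $\D\tily$ encoded by $M^{(\bfrho)}_\epsilon(y)$, $\bfrho\in\Rg^j$, $1\le j\le3$, and --- in case (pDs) --- the piecewise-polynomial interpolant $\hat y^x$ of \S~\ref{eq:inverse_ests} to perform the local $L^\infty$-to-$L^p$ conversions when $y_0=\mA x+\ulin$, these four terms are dominated respectively by $\|\D^2\beta\|_{L^2}$, $\|\D\beta\|_{L^\infty}\|\D^2\tily\|_{L^2(\Omb)}$, $\|\D^2\tily\|_{L^4(\Omc)}^2$ and $\|\D^3\tily\|_{L^2(\Omc)}$, i.e. by $\errbqce(y)+\errcb(y)$. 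For $\|h^2\D^2[\beta\dW(\D\tily)]\|_{L^2}$ the same expansion applies, now additionally using $h\lesssim1$ on $\Omb$ (full refinement, \eqref{eq:blend_stab:tri}) to remove the mesh weight from the $\D^2\beta$ and $\D\beta$ terms, and splitting $\D^3\tily=\D^3\tilu+\D^3y_0$ so that the leading term $\|h^2\D^3\tily\|_{L^2(\Omc)}$ becomes the two $\errba(y)$ summands $\|h^2\D^3\tilu\|_{L^2(\Omc)}$ and $\|h^2\D^3y_0\|_{L^2(\Omc)}$. Summing the four contributions gives \eqref{eq:bqce_final_coarse_err_est}.

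The genuine work is this last bookkeeping step: for every term of the Leibniz expansion one must decide on which of $\Omb$, $\Omc$ it is supported, which power of $h$ it carries, and then convert the pointwise estimates --- of the type proved in Lemma~\ref{th:cb_model_err} --- into the mixed $L^2$/$L^4$ norms appearing in $\errba$, $\errcb$, $\errbqce$. The dislocation case (pDs) is the delicate one, since there $y_0$ is not piecewise polynomial and one has to route the inverse estimates through the $\hat y^x$ machinery of \S~\ref{eq:inverse_ests}.
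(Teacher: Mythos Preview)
Your proposal is correct and follows essentially the same route as the paper: handle ${\rm T}_4$ by Cauchy--Schwarz plus Lemma~\ref{th:inter:Pih'-lemma}, invoke Lemmas~\ref{th:bqce_coarse_err_T1_T2} and~\ref{th:bqce_coarse_err_T3} for ${\rm T}_1,{\rm T}_2,{\rm T}_3$, and then unwind $\|\D^2[\beta\,\pp W(\D\tily)]\|_{L^2}$ (and its $h^2$-weighted version) via the Leibniz/chain rule into the constituents of $\errba,\errcb,\errbqce$. Your Leibniz expansion is in fact slightly more careful than the paper's stated pointwise bound, which writes only $|\D^2(\beta\,\pp W(\D\tily))|\lesssim |\beta\,\D^3\tily|+|\D\beta||\D^2\tily|+|\D^2\beta|$ and drops the $|\beta||\D^2\tily|^2$ term you identify; your inclusion of it is what actually produces the $\|\D^2\tily\|_{L^4(\Omc)}^2$ summand of $\errcb(y)$.
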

\vspace{-4mm}
\begin{proof}
  Using Lemma \ref{th:bqce_coarse_err_T1_T2} and Lemma
  \ref{th:bqce_coarse_err_T3}, the bound
  \begin{displaymath}
    |\D^2(\beta \pp W(\D \tily))| \lesssim
    |\beta \D^3 \tily| + |\D\beta \D^2 \tily| + | \D^2 \beta|,
  \end{displaymath}
  and the estimate
    \begin{displaymath}
    {\rm T}_4 \leq \| R^\beta \|_{L^2} \| \D \barv \|_{L^2} \lesssim \| R^\beta \|_{L^2} \| \D v_h \|_{L^2},
  \end{displaymath}
  where we employed Lemma~\ref{th:inter:Pih'-lemma}, we obtain the
  result.
\end{proof}

\subsection{B-QCE modelling error estimate}
\label{sec:moderr:bqce_moderr}
To complete the B-QCE consistency error analysis it remains to provide
a sharp bound on the B-QCE stress error $\mR^\beta$, which is defined
in \eqref{eq:inter:bqce_stress_error}.

\begin{lemma}
  \label{th:bqce_moderr_Rbest_est_pointwise}
  Let $\epsilon > 0$ and $z \in C^{2,1}(\nu_x)$ with $\|\D z - \D
  \tily \|_{L^\infty} \leq \epsilon$, then
  \begin{equation}
    \label{eq:bace_moderr_Rbeta_est}
    \b|\mR^\beta(z; x)\b| \leq C \b( \| \D^2\beta
    \|_{L^\infty(\nu_x)} + | \D\beta(x)|\,|\D^2 z(x)| + \| \D^3
    z \|_{L^\infty(\nu_x)} + \| \D^2 z \|_{L^\infty(\nu_x)}^2\b),
  \end{equation}
  where $C$ depends on $M_{\epsilon}^{(\bfrho)}(y), \bfrho \in \Rg^j, j
  = 1, \dots, 3$.
\end{lemma}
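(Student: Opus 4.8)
The plan is to separate the genuine Cauchy--Born modelling error (with the blending function ``frozen'' at $x$) from the error produced by the variation of $\beta$ across a finite-difference stencil. Writing $\Sa(z;x) := \sum_{\xi\in\L}\sum_{\rho\in\Rg}\b[V_{,\rho}(Dz(\xi))\otimes\rho\b]\ww_\rho(\xi-x)$ and recalling that $\dW(\D z(x)) = \Sc(z;x)$ (this is just the definition of $\partial W$), I add and subtract $\beta(x)\Sa(z;x)$ to obtain
\begin{equation*}
  \mR^\beta(z;x) = \beta(x)\b[\Sc(z;x) - \Sa(z;x)\b] + \sum_{\xi\in\L}\b(\beta(x)-\beta(\xi)\b)\sum_{\rho\in\Rg}\b[V_{,\rho}(Dz(\xi))\otimes\rho\b]\ww_\rho(\xi-x) =: {\rm I} + {\rm II}.
\end{equation*}
Throughout one uses that $\ww_\rho\ge 0$, that $\ww_\rho(\xi-x)\ne 0$ forces $|\xi-x|\le\rcut+\sqrt d$ (so the relevant $\xi$ lie in $\nu_x$ and $|\xi-x|\lesssim 1$, $|\rho|\le\rcut$), and the moment identities $\sum_\xi\ww_\rho(\xi-x)=1$, $\sum_\xi(\xi-x)\ww_\rho(\xi-x)=-\smfrac12\rho$ from \eqref{eq:stress:phirho_Prop0}--\eqref{eq:stress:phirho_Prop1}. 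The hypothesis $\|\D z-\D\tily\|_{L^\infty}\le\epsilon$ together with $z\in C^{2,1}(\nu_x)$ guarantees that all stencils $Dz(\xi)$ (and the homogenised stencils $\D_\Rg z(x)$) appearing below stay in the range on which $M^{(\bfrho)}_\epsilon(y)$ bounds $V_{,\bfrho}$, so $|V_{,\bfrho}(Dz(\xi))|,|V_{,\bfrho}(\D_\Rg z(x))|\lesssim 1$ for $1\le|\bfrho|\le 3$; this requires shrinking $\epsilon$ slightly to absorb $O(\|\D^2 z\|_{L^\infty(\nu_x)})$-size perturbations of the stencils, which is routine.

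\textbf{Term ${\rm I}$.} Since $0\le\beta\le 1$, $|{\rm I}|\le|\Sc(z;x)-\Sa(z;x)|$, which is exactly the Cauchy--Born stress consistency error. This is identical to Lemma~\ref{th:cb_model_err} once one observes that the smooth $z\in C^{2,1}(\nu_x)$ may be used directly in place of $\tilz$ there (no $z^x$-replacement is needed, since $z$ is already smooth on $\nu_x$), and it is obtained by the Taylor-expansion argument of \cite[Thm.~4.3]{2012-ARMA-cb}: expand $D_\rho z(\xi)$ about $x$ via \eqref{eq:inter:strain_expansion}, expand $V_{,\rho}$ to second order about $\D_\Rg z(x)$, use $\sum_\xi\ww_\rho(\xi-x)=1$ to write $\Sc$ in the same form, and cancel the leading terms using the moment identities and point symmetry \eqref{eq:point_symmetry}. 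This yields $|{\rm I}|\le C\b(\|\D^3 z\|_{L^\infty(\nu_x)}+\|\D^2 z\|_{L^\infty(\nu_x)}^2\b)$, with $C$ depending on $M^{(\bfrho)}_\epsilon(y)$, $\bfrho\in\Rg^j$, $j=2,3$.

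\textbf{Term ${\rm II}$.} Taylor-expand $\beta(\xi)-\beta(x)=\D\beta(x)\cdot(\xi-x)+r_\xi$ with $|r_\xi|\le\smfrac12\|\D^2\beta\|_{L^\infty(\nu_x)}|\xi-x|^2$. The $r_\xi$-contribution, bounded using $|\xi-x|\lesssim 1$, $|V_{,\rho}(Dz(\xi))|\lesssim 1$ and $\sum_\xi\ww_\rho(\xi-x)=1$, is $O(\|\D^2\beta\|_{L^\infty(\nu_x)})$. In the linear part, replace $V_{,\rho}(Dz(\xi))$ by $V_{,\rho}(\D_\Rg z(x))$ at the cost $O(\|\D^2 z\|_{L^\infty(\nu_x)})$ per stencil (first-order Taylor of $V_{,\rho}$ together with \eqref{eq:inter:strain_exp_low}); after multiplying by $|\D\beta(x)\cdot(\xi-x)|\lesssim|\D\beta(x)|$ and summing this is $O\b(|\D\beta(x)|\,\|\D^2 z\|_{L^\infty(\nu_x)}\b)$. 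What remains is
\begin{equation*}
  -\sum_{\rho\in\Rg}\b[V_{,\rho}(\D_\Rg z(x))\otimes\rho\b]\B(\D\beta(x)\cdot\sum_{\xi\in\L}(\xi-x)\ww_\rho(\xi-x)\B) = \smfrac12\sum_{\rho\in\Rg}\b(\D\beta(x)\cdot\rho\b)\b[V_{,\rho}(\D_\Rg z(x))\otimes\rho\b],
\end{equation*}
using \eqref{eq:stress:phirho_Prop1}. This expression is identically zero: substituting $\rho\mapsto-\rho$ (legitimate since $\Rg=-\Rg$) and invoking point symmetry \eqref{eq:point_symmetry} with $j=1$, i.e. $V_{,-\rho}(\D_\Rg z(x))=-V_{,\rho}(\D_\Rg z(x))$, sends the sum to its own negative. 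Hence $|{\rm II}|\le C\b(\|\D^2\beta\|_{L^\infty(\nu_x)}+|\D\beta(x)|\,\|\D^2 z\|_{L^\infty(\nu_x)}\b)$, with $C$ depending on $M^{(\bfrho)}_\epsilon(y)$, $\bfrho\in\Rg^j$, $j=1,2$.

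\textbf{Assembly and main obstacle.} Adding the two bounds and using $\|\D^2 z\|_{L^\infty(\nu_x)}\le|\D^2 z(x)|+C\|\D^3 z\|_{L^\infty(\nu_x)}$ (valid since $\nu_x$ is a ball of radius $2\rcut+\sqrt d$) together with $|\D\beta(x)|\le\|\D\beta\|_{L^\infty}\le C^\beta_2$ from \eqref{eq:blend_stab:beta_bound}, the cross term is absorbed into $|\D\beta(x)|\,|\D^2 z(x)|+C\|\D^3 z\|_{L^\infty(\nu_x)}$, and \eqref{eq:bace_moderr_Rbeta_est} follows. The step I expect to be the crux is the vanishing of the leading $O(|\D\beta(x)|)$ term by point symmetry: without it $\mR^\beta$ would carry a genuine first-order (in $\D\beta$) ``blended ghost force'' with no compensating regularity factor on $y$, which would be inconsistent with the scaling in \eqref{eq:inter:Rb-est-formal} and would destroy the error rates in Theorem~\ref{th:error}. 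The only other delicate point is keeping track, through each Taylor step, that the stencils involved remain inside the set on which $M^{(\bfrho)}_\epsilon(y)$ is a legitimate bound, which is handled by the $\epsilon$-adjustment noted above and $\diam\nu_x\lesssim 1$.
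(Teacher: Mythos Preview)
Your proof is correct and uses the same ingredients as the paper (Taylor expansion of $\beta$ and of $V_{,\rho}$, the moment identities \eqref{eq:stress:phirho_Prop0}--\eqref{eq:stress:phirho_Prop1}, and point symmetry \eqref{eq:point_symmetry}), but the decomposition is organised differently. The paper expands $V_{,\rho}(Dz(\xi))$ to second order first, producing two groups ${\rm T}_1$ (zeroth order in $D_\vsig z-\D_\vsig z$) and ${\rm T}_2$ (first order), and then expands $\beta(\xi)$ about $x$ inside each group; point symmetry kills the leading $\D\beta$-term in ${\rm T}_1$ and the leading $\beta$-term in ${\rm T}_2$. You instead freeze $\beta$ at $x$ first, which isolates the pure Cauchy--Born consistency error as your term ${\rm I}$ and lets you invoke Lemma~\ref{th:cb_model_err} as a black box; in your term ${\rm II}$ a first-order expansion of $V_{,\rho}$ then suffices.

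Your route is slightly more modular (it re-uses Lemma~\ref{th:cb_model_err} rather than re-deriving it), at the price of one extra cleanup step: your term ${\rm II}$ naturally produces $|\D\beta(x)|\,\|\D^2 z\|_{L^\infty(\nu_x)}$ rather than the pointwise $|\D\beta(x)|\,|\D^2 z(x)|$ in \eqref{eq:bace_moderr_Rbeta_est}, and you close this gap via $\|\D^2 z\|_{L^\infty(\nu_x)}\le|\D^2 z(x)|+C\|\D^3 z\|_{L^\infty(\nu_x)}$ together with the bound $|\D\beta(x)|\lesssim 1$ from \eqref{eq:blend_stab:beta_bound}. The paper avoids this because its expansion of $D_\vsig z(\xi)-\D_\vsig z(x)$ via \eqref{eq:inter:strain_expansion} already yields second derivatives of $z$ evaluated at $x$. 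Both arguments arrive at the same bound with constants depending on the same $M^{(\bfrho)}_\epsilon(y)$ for $j=1,2,3$.
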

\begin{proof}
  Throughout the proof we define $V_{\xi,\rho} := V_{,\rho}(Dz(\xi))$
  and $\bar V_{,\rho} := V_{,\rho}(\D_{\Rg} z(x) )$. Further, we
  define $\beta \equiv \beta(x)$ and $\D\beta \equiv
  \D\beta(x)$. Finally, we denote
  \begin{displaymath}
    \epsilon_j := \| \D^j z \|_{L^\infty(\nu_x)}, \quad \text{and}
    \quad
    \delta_j := \| \D^j \beta \|_{L^\infty(\nu_x)}.
  \end{displaymath}

  We begin by noting that, since $\Rg$
  and the support of $\omega_\rho$ are both bounded, the sum over
  $\xi$ in the definition of $\mR^\beta$
  \begin{align*}
    \mR^\beta(z; x) &:= \beta(x) \dW(\D z(x)) - \sum_{\xi \in \L}
    \beta(\xi) \sum_{\rho \in \Rg} \b[ V_{\xi,\rho} \otimes \rho \b]
    \ww_\rho(\xi-x)
  \end{align*}
  is only over a bounded set. Therefore, we can insert the
  expansion~\eqref{eq:cons:expansion_V_2ord} to obtain
  \begin{align}
    \notag
    \mR^\beta(z; x) &:= \beta(x) \dW(\D z(x))  \\
    \notag
    & \qquad - \sum_{\xi \in
      \L} \beta(\xi) \sum_{\rho \in \Rg} \B[\B( \bar{V}_{,\rho} +
    \sum_{\vsig \in \Rg} \bar{V}_{,\rho\vsig} (D_\vsig z - \D_\vsig
    z) + O(\epsilon_2^2) \B) \otimes \rho \B]
    \ww_\rho(\xi-x) \\
    \notag
    &= \beta(x) \dW(\D z(x)) - \sum_{\xi \in \L} \beta(\xi)
    \sum_{\rho \in \Rg} \b[ \bar{V}_{,\rho} \otimes \rho \b] \ww_\rho(\xi-x)
     \\
    \notag
     & \qquad - \sum_{\xi \in \L} \beta(\xi) \sum_{\rho,\vsig \in \Rg} \b[\bar{V}_{,\rho\vsig} (D_\vsig z -
    \D_\vsig z) \b]\otimes \rho \, \ww_\rho(\xi-x) +
    O(\epsilon_2^2) \\
    \label{eq:bqce_moderr:100}
    &=: {\rm T}_1 - {\rm T}_2 + O(\epsilon_2^2).
  \end{align}

  We expand $\beta(\xi) = \beta + \D \beta \cdot (\xi-x) +
  O(\delta_2)$, and employ \eqref{eq:stress:phirho_Prop0}
   to estimate
  \begin{align*}
    {\rm T}_1 &=  \beta \dW(\D z(x)) - \sum_{\xi \in \L} \B(
    \beta + \D \beta \cdot (\xi-x) \B)
    \sum_{\rho \in \Rg} \b[ \bar{V}_{,\rho} \otimes \rho \b]
    \ww_\rho(\xi-x) + O(\delta_2) \\
    &= \beta \dW(\D z(x)) - \beta \sum_{\rho \in \Rg} \b[
    \bar{V}_{,\rho} \otimes \rho \b]
     \\
     & \qquad - \D\beta \cdot
    \sum_{\rho \in \Rg} \b[ \bar{V}_{,\rho} \otimes \rho \b] \sum_{\xi
      \in \L} (\xi-x)
    \ww_\rho(\xi-x)
    + O(\delta_2).
  \end{align*}
  Since $\sum_{\rho \in \Rg} \b[ \bar{V}_{,\rho} \otimes \rho \b] =
  \dW(\D z(x))$ and $\sum_{\xi \in \L} (\xi-x) \ww_\rho(\xi-x) = -
  \smfrac12 \rho$ by~\eqref{eq:stress:phirho_Prop1}, we further obtain
  \begin{align*}
    {\rm T}_1 &= -\frac12 \sum_{\rho \in \Rg} \b[ \bar{V}_{,\rho}
    \otimes \rho \b] (\D\beta \cdot \rho) + O(\delta_2) = O(\delta_2),
  \end{align*}
  where the sum over $\Rg$ cancels due to the point symmetry
  assumption \eqref{eq:point_symmetry}.

  To estimate ${\rm T}_2$ we expand $\beta$ and use
  expansion \eqref{eq:inter:strain_expansion},
\eqref{eq:stress:phirho_Prop0}, and
  \eqref{eq:stress:phirho_Prop1} to obtain
  \begin{align*}
    {\rm T}_2 &= \sum_{\xi \in \L} \B( \beta + \D\beta \cdot
    (\xi-x)\B) \sum_{\rho,\vsig \in \Rg} \b[\bar{V}_{,\rho\vsig}
    (\D_\vsig \D_{\xi-x} z + \smfrac12 \D_\vsig^2 z) \b]
    \otimes \rho \, \ww_\rho(\xi-x) + O(\delta_2 + \eps_3)
    \\
    &= \beta \sum_{\rho,\vsig \in \Rg}
    \b[\bar{V}_{,\rho\vsig} (-\smfrac12 \D_\vsig \D_{\rho} z + \smfrac12
    \D_\vsig^2 z) \b] \otimes \rho + O\b(| \D\beta |\, |\D^2
    z|\b) + O(\delta_2+\eps_3)
  \end{align*}
  Using again \eqref{eq:point_symmetry} we observe that the sum over
  $\rho,\vsig \in \Rg$ cancels, and hence we obtain that $|{\rm T}_2|
  \lesssim | \D\beta |\, |\D^2 z| + \delta_2 + \eps_3$.

  Combining this with the estimate for ${\rm T}_1$, we obtain the
  stated result.
\end{proof}




We now convert the pointwise estimate \eqref{eq:bace_moderr_Rbeta_est}
into a global estimate.

\begin{lemma}
  \label{th:bqce_moderr_Rbest_est_glob}
  Under the conditions of Theorem \ref{th:cons:bqce}, we have
  \begin{equation}
    \label{eq:bace_moderr_Rbeta_est_glob}
    \b\|\mR^\beta(\tily; x)\b\|_{L^2} \leq C\b( \| \D^2\beta \|_{L^2} +
    \|\D\beta\|_{L^\infty} \|\D^2\tily\|_{L^2(\Omb)} + \| \D^3 \tily
    \|_{L^2(\Omc)} + \| \D^2 \tily \|_{L^4(\Omc)}^2 \b),
  \end{equation}
  where $C$ depends on $M_{\epsilon}^{(\bfrho)} (y), \bfrho \in \Rg^j, j
  = 1, \dots, 3$.
\end{lemma}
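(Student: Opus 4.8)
The plan is to integrate the pointwise estimate \eqref{eq:bace_moderr_Rbeta_est} of Lemma~\ref{th:bqce_moderr_Rbest_est_pointwise} over $x$, after converting the local $L^\infty(\nu_x)$-norms on its right-hand side into quantities that are integrable in $x$. The only delicate point is that $\mR^\beta(\tily;\cdot)$ must be bounded on its whole support and that Lemma~\ref{th:bqce_moderr_Rbest_est_pointwise} must be applicable there. In case (pPt), $\tily$ is smooth on each $\nu_x$, so one simply applies the lemma with $z=\tily$. In case (pDs), for those $x$ whose $\nu_x$ meets the branch cut $\Gamma$ the interpolant $\tily$ fails to be $C^{2,1}(\nu_x)$; here I would instead apply the lemma with $z=y^x$, the equivalent local deformation of \eqref{eq:defn_yx}. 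It satisfies $y^x\in C^{2,1}(\nu_x)$ and $\D^j y^x=\D^j\tily$ for $j\geq 1$, and --- crucially --- by \eqref{eq:slip_invariance_yx} together with the slip-invariance assumption (A.Vper) it produces exactly the stencils $V_{,\rho}(Dy^x(\xi))=V_{,\rho}(Dy(\xi))$ at every lattice point $\xi$ entering the definition of $\mR^\beta$, so that $\mR^\beta(\tily;x)=\mR^\beta(y^x;x)$ and the constant in \eqref{eq:bace_moderr_Rbeta_est} depends only on $M^{(\bfrho)}_\epsilon(y)$. Carrying out this replacement uniformly in $x$ is the step I expect to be the main obstacle; everything that follows is a routine, if bookkeeping-heavy, conversion of pointwise bounds into $L^2$-bounds.

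Granting the pointwise bound on the relevant set, I would estimate the four contributions separately. For $\|\D^3\tily\|_{L^\infty(\nu_x)}$ and $\|\D^2\tily\|_{L^\infty(\nu_x)}^2$ I would use the inverse estimates of \S~\ref{eq:inverse_ests}: directly via \eqref{eq:inv_est_poly} for the piecewise-polynomial part, and --- in case (pDs), for the $\ulin$-contribution to $y_0$ --- via the auxiliary piecewise-polynomial interpolant $\hat{y}^x$ and the bounds \eqref{eq:haty_errest}, \eqref{eq:haty_invest}, \eqref{eq:haty0_to_y0} (together with the smoothness of $\ulin$ away from the origin), to obtain $\|\D^3\tily\|_{L^\infty(\nu_x)}\lesssim\|\D^3\tily\|_{L^2(\omega_x)}$ and $\|\D^2\tily\|_{L^\infty(\nu_x)}\lesssim\|\D^2\tily\|_{L^4(\omega_x)}$ for an enlarged neighbourhood $\omega_x\supset\nu_x$ of bounded diameter. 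Squaring, integrating in $x$, and applying Fubini (each point belongs to $\omega_x$ for only a bounded set of $x$) then bounds the $L^2$-norm in $x$ of the first term by $C\|\D^3\tily\|_{L^2}$ and of the second by $C\|\D^2\tily\|_{L^4}^2$. The term $|\D\beta(x)|\,|\D^2\tily(x)|$ is already pointwise, so its $L^2$-norm is at most $\|\D\beta\|_{L^\infty}\|\D^2\tily\|_{L^2(\supp(\D\beta))}\leq\|\D\beta\|_{L^\infty}\|\D^2\tily\|_{L^2(\Omb)}$. Finally, $\|\D^2\beta\|_{L^\infty(\nu_x)}$ is supported, as a function of $x$, in $\Omb$; since $\D^2\beta$ is Lipschitz with $\|\D^3\beta\|_{L^\infty}\lesssim(\Ra)^{-3}$ by \eqref{eq:blend_stab:beta_bound}, one has $\|\D^2\beta\|_{L^\infty(\nu_x)}\leq|\D^2\beta(x)|+C(\Ra)^{-3}$, and integrating over $\Omb$ with $|\Omb|\lesssim(\Ra)^d$ bounds its $L^2$-norm by $C(\|\D^2\beta\|_{L^2}+(\Ra)^{d/2-3})$; the second term is absorbed because $(\Ra)^{d/2-2}\lesssim\|\D^2\beta\|_{L^2}$, a lower bound that follows from $\beta$ transitioning from $0$ to $1$ across the annular region $\Omb$ under \eqref{eq:blend_stab:radii}, \eqref{eq:blend_stab:beta_bound}.

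It remains to match the integration regions. Because $\mR^\beta(\tily;x)=0$ whenever $\beta\equiv 0$ on $\nu_x$, the support of $\mR^\beta(\tily;\cdot)$ --- and hence the effective domain of integration --- is a bounded enlargement of $\Omc$; the collar by which it exceeds $\Omc$ lies near $|x|\approx\Ra$ and $|x|\approx\Ro$, where $\D^2\tily$ and $\D^3\tily$ are already controlled, thanks to the decay estimates for $\D^j\tilu^\a$ (Lemma~\ref{th:regularity}) and for $\D^j y_0$ (trivial in case (pPt); via \eqref{eq:regularity_ulin} in case (pDs)), by the corresponding norms over $\Omc$ itself. Likewise the two $\beta$-derivative contributions are confined to $\Omb$. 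Collecting the four estimates yields exactly \eqref{eq:bace_moderr_Rbeta_est_glob}.
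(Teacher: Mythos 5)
Your overall strategy coincides with the paper's: start from the pointwise estimate of Lemma~\ref{th:bqce_moderr_Rbest_est_pointwise}, use the slip-invariant local deformation $y^x$ of \eqref{eq:defn_yx} to handle the branch cut (the identification $\mR^\beta(\tily;x)=\mR^\beta(y^x;x)$ is correct, since $D\tily(\xi)=Dy(\xi)$ at lattice points and \eqref{eq:slip_invariance_yx} applies), convert the local $L^\infty(\nu_x)$ quantities into $L^2$/$L^4$ quantities by inverse estimates, and integrate using finite overlap of the neighbourhoods. Your treatment of the pointwise term $|\D\beta|\,|\D^2\tily|$ and of $\|\D^2\beta\|_{L^\infty(\nu_x)}$ (via the Lipschitz bound $\|\D^3\beta\|_{L^\infty}\lesssim(\Ra)^{-3}$ and absorption into $\|\D^2\beta\|_{L^2}\gtrsim(\Ra)^{d/2-2}$) is a valid alternative to the paper's device of replacing $\beta$ by its smooth lattice interpolant $\tilde\beta$.

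The one step that does not close as written is the conversion $\|\D^j\tily\|_{L^\infty(\nu_x)}\lesssim\|\D^j\tily\|_{L^q(\omega_x)}$ for $j=2,3$ in the dislocation case. The inverse estimates of \S~\ref{eq:inverse_ests} run in the wrong direction for this purpose: \eqref{eq:inv_est_poly} applies only to piecewise polynomials, while \eqref{eq:haty_invest} and \eqref{eq:haty0_to_y0} bound norms of the surrogate $\hat{y}^x$ by norms of $\tily$, not conversely; and since \eqref{eq:regularity_ulin} is only an upper bound on $\D^j\ulin$, no two-sided comparison is available for the $\ulin$ part of $\tily$. The paper sidesteps this by applying the pointwise lemma to $z=\hat{y}^x$ (with $\beta$ replaced by $\tilde\beta$), so that its right-hand side involves only $\|\D^j\hat{y}^x\|_{L^\infty(\nu_x)}$, to which \eqref{eq:haty_invest} and \eqref{eq:haty0_to_y0} apply directly; the price is the extra difference estimate $\|\mR^\beta(\tily;\cdot)-\mR^{\tilde\beta}(\hat{y}^x;\cdot)\|_{L^2}\lesssim \|\D^2\beta\|_{L^2}+\|\D^3\tily\|_{L^2(\Omc)}$, which exploits that the lattice sums in the two expressions coincide ($Dy^x(\xi)=D\hat{y}^x(\xi)$ at lattice points) so that only the continuum terms differ. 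You can repair your version either by adopting this reorganization, or by estimating the $\ulin$ contribution directly from the pointwise decay $|\D^j\ulin(x')|\lesssim |x|^{-j}$ on $\nu_x$ --- but the latter replaces $\|\D^j\tily\|_{L^q(\Omc)}$ by $\|\D^j y_0\|_{L^q(\Omc)}+\|\D^j\tilu\|_{L^q(\Omc)}$ in the conclusion, which is harmless for the subsequent rate analysis but is not literally the stated bound.
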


\begin{proof}
  %
  The main point of this proof is to use the inverse estimates from
  \S~\ref{eq:inverse_ests} to obtain $L^q$-type bounds from the
  $L^\infty$ bounds provided by Lemma
  \ref{th:bqce_moderr_Rbest_est_pointwise}.

  Let $r(x) := \mR^{\tilde{\beta}}(\hat{y}^x; x)$ and $\mF(x) := \D
  \hat{y}^x(x)$, then we begin by estimating
  \begin{align}
    \notag
    \| \mR^\beta(\tily) - r \|_{L^2}
    &\leq \| \beta \pp W(\D \tily) - \tilde\beta\pp W(\mF)
    \|_{\LL^2} \\
    \notag
    &\lesssim \b( \|\beta - \tilde\beta \|_{L^2} \b) + \b\| \beta \b( \pp W(\D \tily) - \pp
    W(\mF) \b\|_{L^2} \\
    \label{eq:Rbest_glob_10}
    &\lesssim \b( \| \D^2 \beta \|_{L^2} + \| \beta (\D\tily - \mF) \|_{L^2} \b),
  \end{align}
  where we used $\|\beta - \tilde\beta \|_{L^2} \leq C \| \D^2 \beta
  \|_{L^2}$.  Let $x \in {\rm supp}(\beta) \cap Q^x$ where $Q^x = \xi
  + [0, 1)^d$, and let $\xi \in \L$. Then
  \begin{align*}
    |\D\tily - \mF(x)| &\leq \| \D\tily - \D \hat{y}^x \|_{L^2(Q)} 
    \lesssim \| \D^3 \tily \|_{L^2(Q)}.
  \end{align*}
  Integrating over $x$ yields
  \begin{equation}
    \label{eq:Rbest_glob_aux_15}
    \int \beta(x)^2 |\D\tily - \mF(x)|^2 \dx \leq \int \beta(x) \|
    \D^3 \tily \|_{L^2(Q^x)}^2 \dx \lesssim \| \D^3 \tily \|_{L^2(\Omc)}^2,
  \end{equation}
  using an argument analogous to the one in
  \cite[App.A]{2012-ARMA-cb}. Together with \eqref{eq:Rbest_glob_10}
  we obtain
  \begin{equation}
    \label{eq:Rbest_glob_20}
    \| \mR^\beta(\tily) - r \|_{L^2} \lesssim \b( \| \D^2 \beta \|_{L^2}
    + \| \D^3 \tily \|_{L^2(\Omc)} \b).
  \end{equation}

  Using Lemma \ref{th:bqce_moderr_Rbest_est_pointwise} with $\beta$ replaced
  with $\tilde\beta$ and $z = \hat{y}^x$, defining $\nu_x^\beta :=
  \nu_x \cap {\rm supp}(\nabla\tilde{\beta})$, and recalling
  \eqref{eq:haty_invest}, we obtain
  \begin{align*}
    |r(x)|^2 &\lesssim \B(\| \D^2
    \tilde\beta \|_{L^\infty(\nu_x)}^2 + \|\D\tilde\beta\|_{L^\infty}^2
    \|\D^2\hat{y}^x\|_{L^\infty(\nu_x^\beta)}^2 + \| \D^3 \hat{y}^x \|_{L^\infty(\nu_x)}^2 + \|
    \D^2 \hat{y}^x \|_{L^\infty(\nu_x)}^4  \B) \\
    &\lesssim \B(\| \D^2
    \tilde\beta \|_{L^2(\nu_x)}^2 + \|\D\tilde\beta\|^2_{L^\infty}
    \|\D^2\hat{y}^x\|^2_{L^2(\nu_x^\beta)} + \| \D^3 \hat{y}^x \|_{L^2(\nu_x)}^2 + \|
    \D^2 \hat{y}^x \|_{L^4(\nu_x)}^4  \B).
  \end{align*}
  Using Lemma~\ref{th:prelims:defn_smoothint}, the results of \S~\ref{eq:inverse_ests},
  and techniques similar to those used to prove \eqref{eq:Rbest_glob_aux_15},
  we deduce that
    \begin{displaymath}
    \|r\|_{L^2} \lesssim \| \D^2 \beta \|_{L^2} +
    \|\D\beta\|_{L^\infty} \|\D^2\tily\|_{L^2(\Omb)} + \| \D^3 \tily
    \|_{L^2(\Omc)} + \| \D^2 \tily \|_{L^4(\Omc)}^2.
  \end{displaymath}
  Together with \eqref{eq:Rbest_glob_20}, this yields the desired
  result.
\end{proof}

\begin{proof}[Proof of Theorem \ref{th:cons:bqce}]
  The result follows upon combining Lemma
  \ref{th:bqce_final_coarse_err_est} and Lemma
  \ref{th:bqce_moderr_Rbest_est_glob}.
\end{proof}

\subsection{B-QCE energy error estimate}
\label{sec:bqce_en_err}
We assume that all conditions of Theorem \ref{th:error} hold.  Let
$\ya$ be a solution to \eqref{eq:min_atm_exact} and let $y^\bqce_h$ be
the solution to \eqref{eq:min_bqce} guaranteed by
Theorem~\ref{th:error}. For ease of notation, we write $y := y^\a$ and
$y_h := y^\bqce_h$. Further, we define
\begin{displaymath}
  V'(Dy) := V(Dy) - V(Dy_0) \qquad \text{and} \qquad
  W'(\D y) := W(\D y ) - W(\D y_0).
\end{displaymath}

Let
\begin{displaymath}
  \tilde\E := \sum_{\ell \in \L} (1-\beta(\ell)) V'(Dy(\ell)) + \int_{\R^d} [Q_h
  \beta] W'(\D \tily) \dx,
\end{displaymath}
then we split the energy error into
\begin{align*}
  \Ea(y) - \Ebh(y_h) &= \b[ \Ea(y) - \tilde\E \b] + \b[ \tilde\E - \Ebh(\Pi_h
  y) \b] + \b[ \Ebh(\Pi_h y) - \Ebh(y_h) \b] \\
  &=: T_1 + T_2 + T_3.
\end{align*}
Since $y_h$ is a minimiser we obtain
\begin{equation}
  \label{eq:bqce_enerr_T3}
  |T_3| \lesssim \| \D \Pi_h u - \D u_h \|_{L^2}^2,
\end{equation}
which we already estimated in Theorem \ref{th:error}.

\subsubsection{Estimate for $T_{1}$}
The term $T_1$ contains the main ``modelling error'' contribution. For
$f : \R^d \to \R$ let $I_1 f := \bar{f}$ denote the P1 nodal interpolant
with respect to the atomistic mesh $\TT$. Then, using the fact that
\begin{displaymath}
  \sum_{\xi \in \L} \beta(\xi) W'(\D\tily(\xi)) = \int_{\R^d}
I_1[\beta W'(\D \tily)] \dx,
\end{displaymath}
we rewrite $T_1$ as
\begin{align*}
  T_1 &= \sum_{\xi \in \L} \beta(\xi) V'\b( Dy(\xi) \b) - \int [Q_h
  \beta] W'(\D \tily) \dx \\
  &= \sum_{\xi \in\L} \beta(\xi) \B( V'\b(Dy(\xi) \b) - W'\b(\D
  \tily(\xi)\b)  \B) + \int \B( [Q_h \beta] W'(\D \tily) - I_1[ \beta W'(\D
  \tily) ] \B) \dx \\
  &=: T_{1,1} + T_{1,2}.
\end{align*}

$T_{1,2}$ is essentially a quadrature error estimate, since both the
integrals $\int Q_h [\beta W'(\D \tily)] \dx$ and $\int I_1 [\beta W'(\D
\tily)] \dx$ are second-order quadrature approximations to $\int \beta
W'(\D \tily) \dx$:
\begin{align}
  \notag
  |T_{1,2}| &\lesssim \| \D^2\beta\|_{L^2}  \|\D \tilu \|_{L^2(\Omc)}
  + \|\D\beta \|_{L^\infty} \| \D\tilu \|_{L^2(\Omc)} \|\D^2
  y_0\|_{L^2(\Omc)} + \| \D\beta\|_{L^2} \| \D^2 \tilu \|_{L^2(\Omc)} \\
  \label{eq:bqce_enerr_T12_est}
  & \qquad \quad + \| \D\tilu \|_{L^2(\Omc)} \b( \| \D^3 y_0
  \|_{L^2(\Omc)} + \| \D^2 y_0 \|_{L^4(\Omc)}^2 \b) + \| \D^2
  \tilu
  \|_{L^2(\Omc)}^2  \\
  \notag
  & \qquad \quad + \| \D^2 \tilu \|_{L^2} \| \D^2 y_0 \|_{L^2(\Omc)}
  + \| \D^3 \tilu \|_{L^1(\Omc)}.
\end{align}
We will later see that most of these terms are dominated by other
terms occuring in the energy error estimate.

\begin{proof}[Proof of \eqref{eq:bqce_enerr_T12_est}]
  Fix an atomistic element $T \in \TT$.
  If $\beta \not\equiv 1$ in $T$, then $T \in \T$ as well,
  so $Q_h\beta = Q_1\beta$, where $Q_1$ denotes the P0 midpoint nodal interpolant
  with respect to the atomistic mesh $\TT$.
  In the other case, where $\beta \equiv 1$ in $T$, we also have $Q_h\beta = Q_1\beta = 1$.

  We estimate the integral defining ${\rm T}_{1,2}$ restricted to
  $T$; call it
  \begin{displaymath}
    {\rm T}_{1,2}^T := \int_T \B( [Q_1\beta] W'(\D\tily) - I_1[\beta
    W'(\D\tily)] \B)\dx.
  \end{displaymath}
  First, we replace $\tily$ with $\hat{y}^T := \hat{y}^{x_T}$, $y_0$
  with $\hat{y}_0^T := \hat{y}_0^{x_T}$ where $x_T$ is the barycentre
  of $T$, and $\beta$ with $\tilde\beta$. Also, let $y_0^T :=
  y_0^{x_T}$ and $W'(\D \hat{y}^T) = W(\D\hat{y}^T) -
  W(\D\hat{y}^T_0)$. Then, a brief computation shows that
  \begin{align*}
    \b\| W'(\D y^T) - W'(\D \hat{y}^T) \b\|_{L^\infty} &\lesssim \b\| |\D y_0^T - \D
    \hat{y}_0^T| \, |\D \tilu| \b\|_{L^\infty(T)} \lesssim \| \D \tilu
    \|_{L^2(T)}\| \D^3 y_0
    \|_{L^2(T)}, \\
    \| (\beta-\tilde\beta) W'(\D \tily) \|_{L^\infty(T)} &\leq \|
    \D^2\beta \|_{L^2(T)} \| \D \tilu \|_{L^2(T)},
  \end{align*}
  and hence,
  \begin{align*}
    \b|{\rm T}_{1,2}^T\b| &\lesssim \bg|\int_T \B( [Q_1\tilde\beta]
    W'(\D\hat{y}^T) - I_1[\tilde\beta W'(\D\hat{y}^T)] \B)\dx\bg| \\
    & \qquad \qquad
    + \| \D \tilu \|_{L^2(T)} ( \| \D^3 y_0
    \|_{L^2(T)} + \|\D^2\beta\|_{L^2(T)} ) \\
    &=: \b| \hat{\rm T}_{1,2}^T \b| +
     \| \D \tilu \|_{L^2(T)} ( \| \D^3 y_0
    \|_{L^2(T)} + \|\D^2\beta\|_{L^2(T)}),
  \end{align*}

  We estimate the term $\hat{\rm T}_{1,2}^T$ as follows:
  \begin{align*}
    \b|\hat{\rm T}_{1,2}^T\b|
    &\leq \bg|\int_T [Q_1\tilde\beta] \b( W'(\D y^T) - Q_1[W'(\D y^T)]
    \b) \dx\bg| + \bg|\int_T  (Q_1 - I_1)[\tilde\beta  W'(\D \tily)] \dx\bg| \\
    &\lesssim \| \D^2 [\tilde\beta W'(\D \hat{y}^T)]  \|_{L^\infty(T)} \\
    &\lesssim \| (\D^2 \tilde\beta) W'(\D\hat{y}^T) \|_{L^\infty(T)} + \|
    \D\tilde\beta \otimes\D W'(\D\hat{y}^T) \|_{L^\infty(T)} + \| \tilde\beta \D^2
    W'(\D\hat{y}^T) \|_{L^\infty(T)} \\
    &\lesssim \| \D^2\tilde\beta\|_{L^\infty} \| \D\tilu \|_{L^\infty(T)} +
    \|\D\tilde\beta\|_{L^\infty} \B(\| \D\tilu \D^2
    \hat{y}_0 \|_{L^\infty(T)} + \| \D^2 \tilu \|_{L^\infty(T)} \B) \\
    & \qquad \qquad +  \B(
    \b\| |\D \tilu|\, |\D^2 \hat{y}_0|^2
    \b\|_{L^\infty(T)}
    + \b\| |\D^2 \tilu| \, |\D^2 \hat{y}_0| \b\|_{L^\infty(T)}
    + \b\| |\D^2 \tilu|^2 \b\|_{L^\infty(T)} \\
    & \qquad \qquad \qquad
    + \| \D^3 \tilu \|_{L^\infty(T)} + \b\| |\D\tilu|\,|\D^3 \hat{y}_0| \b\|_{L^\infty(T)}
    \B),
  \end{align*}
  where we used the fact that $\D\hat{y}^T - \D\hat{y}_0^T = \D\tilu$
  and identities along the lines of
  \begin{align*}
    \D W'(\D \hat{y}^T) &= \D \b( W(\D \hat{y}^T) - W(\D \hat{y}_0^T)
    \b) = \dW(\D\hat{y}^T) : \D^2 \hat{y}^T - \dW(\D\hat{y}_0^T) : \D^2
    \hat{y}_0^T \\
    &= \b(\dW(\D\hat{y}^T) - \dW(\D\hat{y}_0^T) \b): \D^2 \hat{y}_0^T
    + \dW(\D\hat{y}^T) : \b( \D^2 \hat{y}^T - \D^2 \hat{y}_0^T \b),
  \end{align*}
  and its lower and higher order analogues.

  Applying suitable inverse inequalities
  (cf. \S~\ref{eq:inverse_ests}), summing over $T \in \TT$, and being
  careful to only collect those terms for that actually occur in a
  given element yields \eqref{eq:bqce_enerr_T12_est}.
\end{proof}

To estimate $T_{1,1}$ we perform a basic Taylor expansion, using the
tools developed in \S~\ref{sec:int:strain} and
\S~\ref{sec:int:V_bounds}.

\begin{lemma}
  \label{th:potential_expansion}
  Let $\xi \in \L \setminus B_{\Ra}$, and $y \in \Ys$, then
  \begin{equation}
    \label{eq:enerr_potential_expansion}
    \begin{split}
      \b| V'(Dy(\xi)) - W'(\D \tily(\xi)) \b| &\lesssim \B( \| \D^3 \tilu
      \|_{L^1(\nu_\xi)} + \| \D^2 \tilu \|_{L^2(\nu_\xi)}^2 +
      \| \D^2 \tilu \|_{L^2(\nu_\xi)} \| \D^2 y_0\|_{L^2(\nu_\xi)} \\
      & \qquad\qquad + \| \D\tilu \|_{L^2(\nu_\xi)} \b( \| \D^3 y_0
      \|_{L^2(\nu_\xi)} + \| \D^2 y_0 \|_{L^4(\nu_\xi)}^2 \b) \B).
    \end{split}
  \end{equation}
\end{lemma}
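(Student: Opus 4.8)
The plan is to establish \eqref{eq:enerr_potential_expansion} by a careful Taylor expansion of $V'(Dy(\xi)) - W'(\D\tily(\xi))$ in the relative displacement, along the lines of the stress-consistency computation behind Lemma~\ref{th:bqce_moderr_Rbest_est_pointwise}, but keeping the dependence on $\tilu$ and on $y_0$ separate; the Cauchy--Born structure $W(\mF) = V(\mF\Rg)$, $\dW(\mF) = \sum_{\rho\in\Rg} V_{,\rho}(\mF\Rg)\otimes\rho$, together with the point symmetry \eqref{eq:point_symmetry}, is what produces the correct order.

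First I would reduce to the case of fields that are smooth on $\nu_\xi$. Since $|\xi|\geq\Ra$ is large, the equivalent local deformations $y^\xi$ and $y_0^\xi$ of \eqref{eq:defn_yx} are $C^{2,1}$ on $\nu_\xi$, satisfy $\D^j y^\xi = \D^j\tily$ and $\D^j y_0^\xi = \D^j y_0$ for $j\geq1$, and---via \eqref{eq:slip_invariance_yx} and (A.Vper)---yield $V(Dy(\xi)) = V(Dy^\xi(\xi))$, $V(Dy_0(\xi)) = V(Dy_0^\xi(\xi))$, $W(\D\tily(\xi)) = W(\D y^\xi(\xi))$, $W(\D y_0(\xi)) = W(\D y_0^\xi(\xi))$. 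Writing $w := y^\xi - y_0^\xi$, so that $\D^j w = \D^j\tilu$, this gives the identity
\begin{displaymath}
  V'(Dy(\xi)) - W'(\D\tily(\xi)) = \b[V(Dy_0^\xi(\xi)+Dw(\xi)) - V(Dy_0^\xi(\xi))\b] - \b[W(\D y_0^\xi(\xi)+\D w(\xi)) - W(\D y_0^\xi(\xi))\b],
\end{displaymath}
which is visibly of first order in $w$, so no ``pure reference state'' contributions occur.

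Next I would Taylor-expand both brackets to third order in $w$, expressing $\dW$, $\ddW$ and $\partial^3 W$ through $V_{,\bfrho}(\mF\Rg)$, and then expand the finite differences $D_\rho w(\xi)$, $D_\rho y_0^\xi(\xi)$ and the stencils $V_{,\bfrho}(Dy_0^\xi(\xi))$ about $\xi$ using Lemma~\ref{th:inter:strain_lemma} with $x = \xi$ (so that the $\D_\rho\D_{\xi-x}$-term is absent): $D_\rho w(\xi) = \D_\rho w(\xi) + \smfrac12\D_\rho^2 w(\xi) + O(\|\D^3 w\|_{L^\infty(\nu_\xi)})$, and analogously for $y_0^\xi$. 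Two cancellations then do the work. The two leading contributions, $\dW(\D y_0(\xi)) : \D\tilu(\xi)$ (order one) and $\smfrac12\,\D\tilu(\xi) : \ddW(\D y_0(\xi)) : \D\tilu(\xi)$ (order two), appear with opposite signs in the atomistic and Cauchy--Born expansions and cancel. A term that survives this still vanishes by \eqref{eq:point_symmetry}, upon reindexing $\rho\mapsto-\rho$ in every summation index, whenever it carries an \emph{odd} number of second-difference factors $\D_\rho^2$; this kills, for instance, $\sum_\rho V_{,\rho}(\D_\Rg y_0(\xi))\cdot\D_\rho^2\tilu(\xi)$, $\sum_{\rho,\sigma}\b(V_{,\rho\sigma}(\D_\Rg y_0(\xi))\D_\sigma^2 y_0(\xi)\b)\cdot\D_\rho\tilu(\xi)$, and their higher analogues. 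What remains is: (i) $\sum_\rho V_{,\rho}(\D_\Rg y_0(\xi))\cdot O(\|\D^3\tilu\|_{L^\infty(\nu_\xi)})$; (ii) the second-order-in-$w$ strain remainder $O(\|\D^2\tilu\|_{L^\infty(\nu_\xi)}^2)$; (iii) $O(|\D^2 y_0(\xi)|\,|\D^2\tilu(\xi)|)$; (iv) $O(\|\D^3 y_0\|_{L^\infty(\nu_\xi)}\,|\D\tilu(\xi)|)$; and (v) the Taylor remainder $O(|\D^2 y_0(\xi)|^2\,|\D\tilu(\xi)|)$. The remaining terms are cubic or higher in $w$ and carry a factor $O(\|\D\tilu\|_{L^\infty(\nu_\xi)})$; by the regularity of $\ya$ (Lemma~\ref{th:regularity}), which makes that factor small for $|\xi|\geq\Ra$, they are dominated by (i)--(v). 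Finally, to reach the mixed $L^p(\nu_\xi)$ norms in \eqref{eq:enerr_potential_expansion}, I would convert the $L^\infty(\nu_\xi)$ bounds using that $\tilu$ is piecewise polynomial on the $O(1)$ cells meeting $\nu_\xi$---so all $L^p(\nu_\xi)$-norms of $\D^j\tilu$ are comparable, yielding $\|\D^3\tilu\|_{L^1(\nu_\xi)}$ and $\|\D^2\tilu\|_{L^2(\nu_\xi)}^2$---and, for the $y_0$-terms, replace $y_0$ by the piecewise-polynomial interpolant $\hat y_0^\xi$ and absorb the interpolation error via \eqref{eq:haty_errest}--\eqref{eq:haty0_to_y0} as in \S~\ref{eq:inverse_ests} (for the point-defect case $y_0 = \mA x$ these terms simply vanish), producing $\|\D^2 y_0\|_{L^2(\nu_\xi)}$, $\|\D^3 y_0\|_{L^2(\nu_\xi)}$ and $\|\D^2 y_0\|_{L^4(\nu_\xi)}^2$.

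The main obstacle is the bookkeeping in the expansion: one must organise the many terms so that every contribution of size $O(1)$, $O(|\D^2\tilu|)$ or $O(|\D^2 y_0|\,|\D\tilu|)$ is recognised as cancelling---either through the atomistic-minus-Cauchy--Born structure or through the parity argument---and verify that no surviving term exceeds the five listed on the right-hand side of \eqref{eq:enerr_potential_expansion}. This is delicate but purely mechanical once the cancellation mechanism (reindexing $\rho\mapsto-\rho$) is understood; the reduction to smooth fields and the norm conversions are routine given the tools assembled in \S~\ref{sec:int:strain} and \S~\ref{eq:inverse_ests}.
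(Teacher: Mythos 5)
Your proposal is correct and follows essentially the same route as the paper: reduce to smooth local fields via the slip-equivalent deformations, Taylor-expand, cancel the leading atomistic-minus-Cauchy--Born contributions, kill the $\D_\rho^2$-terms by the point symmetry \eqref{eq:point_symmetry}, and convert $L^\infty(\nu_\xi)$ to $L^p(\nu_\xi)$ by inverse estimates. The only organisational difference is that the paper writes the difference as $\int_0^1 \big(\langle \del V(Dz_\theta), Du\rangle - \langle \dW(\D z_\theta), \D\tilu\rangle\big)\,{\rm d}\theta$ with $z_\theta = \hat y_0^\xi + \theta\tilu$, which makes the expression linear in $u$ from the outset and collapses most of the bookkeeping you flag as the main obstacle; also note that your appeal to Lemma~\ref{th:regularity} to absorb the cubic-in-$w$ terms is unnecessary (and would restrict the lemma to $y=y^\a$) --- mere boundedness of $\|\D\tilu\|_{L^\infty}$, already implicit in the constants $M^{(\bfrho)}_\epsilon(y)$, suffices.
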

\begin{proof}
   All derivatives and finite differences below are evaluated at $\xi$,
   so we omit the argument, writing $Du$ for $Du(\xi)$, for example.
  Let $z := \hat{y}^\xi$ and $z_\theta := \hat{y}_0^\xi + \theta
  \tilu$, so that $z = z_1$,  $V'(Dz) = V(Dz) - V(Dz_0)$ and $W'(\D z)
  = W(\D z) - W(\D z_0)$. Then
  \begin{align*}
    V'(Dz) - W'(\D z) &= \int_{\theta = 0}^1 \B(\< \del V(D z_\theta),
    Du \> - \< \dW(\D z_\theta), \D \tilu \> \B)
    \, \rm{d} \theta.
  \end{align*}
  Expanding $\< \del V(D z_\theta), Du \>$ analogously to the proof of
  Lemma \ref{th:bqce_moderr_Rbest_est_pointwise}, with $\eps_j := \|
  \D^j z_\theta \|_{L^\infty(\nu_\xi)}$ and $\tilde{\eps}_j := \| \D^j
  \tilu \|_{L^\infty(\nu_\xi)}$,  we obtain
  \begin{align*}
    \< \del V(D z_\theta), Du \> &= \sum_{\rho \in \Rg} V_{,\rho}(D z_\theta)
    D_\rho u \\
    &= \sum_{\rho \in \Rg} \bg( V_{,\rho} + \sum_{\vsig \in \Rg}
    V_{,\rho\vsig} \smfrac12 \D_\vsig^2 z_\theta + O(\eps_3+\eps_2^2)
    \bg) D_\rho u \\
    &= \sum_{\rho \in \Rg} V_{,\rho} \b(\D_\rho \tilu + \smfrac12
    \D_\rho^2 \tilu\b)
    + \sum_{\rho,\vsig \in \Rg}
    \< V_{,\rho\vsig}  \D_\rho \tilu, \smfrac12 \D_\vsig^2 z_\theta\>
    \\
    & \qquad \qquad
    + O(\tilde{\eps}_3) + O(\eps_2 \tilde{\eps}_2) + O\b(\tilde{\eps}_1(\eps_3+\eps_2^2)\b).
  \end{align*}
  We now observe that $\sum_{\rho \in \Rg} V_{,\rho} \D_\rho\tilu =
  \dW(\D z_\theta) : \D \tilu$, and that, due to the point symmetry
  \eqref{eq:point_symmetry}, both
  \begin{align*}
    \sum_{\rho \in \Rg} V_{,\rho} \smfrac12 \D_\rho^2 \tilu = 0 \qquad
    \text{and} \qquad
    \sum_{\rho,\vsig \in \Rg} \<V_{,\rho\vsig} \D_\rho\tilu, \D_\vsig^2
    z_\theta \> = 0.
  \end{align*}

  We combine the foregoing calculations to obtain
  \begin{align*}
    \b|V'(Dz) - W'(\D z)\b| &\leq C\B( \| \D^3 \tilu \|_{L^\infty(\nu_\xi)} +
    \| \D^2 \tilu \|_{L^\infty(\nu_\xi)} \| \D^2 z_0
    \|_{L^\infty(\nu_\xi)} + \| \D^2 \tilu \|_{L^\infty(\nu_\xi)}^2  \\
    & \qquad + \| \D \tilu \|_{L^\infty(\nu_\xi)} \b( \| \D^3 z_0
    \|_{L^\infty(\nu_\xi)} + \| \D^2 z_0 \|_{L^\infty(\nu_\xi)}^2 \b)\B).
  \end{align*}

  Using appropriate inverse estimates, and incorporating the error $\D
  z_\theta - \D\tily$, similarly (e.g.) as in the proof of
  \eqref{eq:bqce_enerr_T12_est} (this yields additional $\| \D \tilu
  \|_{L^2} \| \D^3 \tily_0 \|_{L^2}$ terms), we obtain the stated
  result.
\end{proof}

Summing \eqref{eq:enerr_potential_expansion} over all $\xi \in \L$
with $\beta(\xi) > 0$ it is straightforward now to prove that
\begin{equation}
  \label{eq:enerr:estimate_T11}
    \begin{split}
          |T_{1,1}|
          &\leq C \B( \| \D^3 \tilu
      \|_{L^1(\Omc)} + \| \D^2 \tilu \|_{L^2(\Omc)}^2 + \| \D^2 \tily
      \|_{L^2(\Omc)} \| \D^2 \tily \|_{L^2(\Omc)}  \\
      & \qquad\qquad + \| \D\tilu \|_{L^2(\Omc)} \b(\| \D^3 y_0
      \|_{L^2(\Omc)} + \| \D^2 y_0 \|_{L^4(\Omc)}^2 \b) \B).
    \end{split}
\end{equation}

This completes the estimate for ${\rm T}_1$

\subsubsection{Estimate for ${\rm T}_2$}
We begin by recalling that $\T$ and $\Pi_h$ are defined in such a way
that $\Pi_h y(\xi) = y(\xi)$ in a sufficiently large neighbourhood
so that
\begin{align*}
  {\rm T}_2 &= \int \B([Q_h \beta] W'(\D \tily) - Q_h\b[\beta W'(\D\Pi_h
  y)\b] \B) \dx \\
  &= \int [Q_h\beta] \b(  W'(\D \tily) - Q_h W'(\D\Pi_hy ) \b) \dx \\
  &= \int [Q_h\beta] \b(  W'(\D \tily) - W'(\D \Pi_hy ) \b) \dx \\
  & \qquad +  \int [Q_h\beta] \b(  W'(\D \Pi_h y) - Q_h W'(\D\Pi_hy )
  \b) \dx \\
  &=: {\rm T}_{2,1} + {\rm T}_{2,2}.
\end{align*}
The term $T_{2,1}$ is an approximation error, while $T_{2,2}$ is a
quadrature error.

First, we prove that
\begin{equation}
  \label{eq:bqce_enerr_T21_est}
  |T_{2,1}| \lesssim \b\| \b( |\D\tilu| + |\D\ulin|^2\b) (\D \tilu - \D \Pi_h u)
  \b\|_{L^1(\Omc)} + \|\D^3\tilu\|_{L^1(\Omc)} + \| \D \tilu - \D
  \Pi_h u \|_{L^2(\Omc)},
\end{equation}
where we set $\ulin \equiv 0$ in the case (pPt).

\begin{proof}[Proof of \eqref{eq:bqce_enerr_T21_est}]
  We first note that, with $e_h := \D\tilu - \D\Pi_h u$ we have
  \begin{align*}
    |T_{2,1}| &= \bg|\int [Q_h\beta] \b( W(\D \tily) - W(\D \tily - \D e_h)\b) \dx\bg| \\
    &\leq \bg|\int [Q_h\beta] \b( \partial W(\D\tily) \D e_h \b) \dx\bg| + C \|
    \D e_h \|_{L^2}^2 \\
    &=: |T_{2,1}'| +  C \|
    \D e_h \|_{L^2}^2.
  \end{align*}

  Let $e_h = [\D\tilu - \D\baru]+[\D\baru-\D\Pi_h u] =: e_h' + e_h''$,
  then $\|[Q_h\beta] \D e_h'\|_{L^1} \lesssim \|\D^3\tilu\|_{L^1(\Omc)}$
  and $e_h'' = 0$ in $\Omb$, hence
  \begin{align*}
    |T_{2,1}'|&\leq
    \bg|\int [Q_h\beta] \b( \partial W(\D\tily) \D e_h'' \b) \dx\bg| + C
    \| \D^3 \tilu \|_{L^1(\Omc)} \\
    &= \bg|\int [\dW(\D\tily) - \mS^{\rm lin} \b] \D e_h'' \dx\bg| + C
    \| \D^3 \tilu \|_{L^1(\Omc)},
  \end{align*}
  where, setting $\ulin \equiv 0$ in the case (pPt),
  \begin{displaymath}
    \mS^{\rm lin} = \dW(\mA) + \bbC:\D\ulin,
  \end{displaymath}
  We can now estimate
  \begin{displaymath}
    \b|\dW(\D\tily) - \mS^{\rm lin} \b| \lesssim 
      |\D\tilu| + |\D\ulin|^2,
  \end{displaymath}
  which yields
  \begin{align*}
    |T_{2,1}'| &\lesssim \B(\b\| \b(|\D\tilu| + |\D\ulin|^2\b) \D e_h''
    \b\|_{L^1(\Omc)}^2 + \| \D^3 \tilu \|_{L^1(\Omc)} \\
    &\lesssim \b\| \b(|\D\tilu| + |\D\ulin|^2\b) \D e_h
    \b\|_{L^1(\Omc)}^2 + \| \D^3 \tilu \|_{L^1(\Omc)},
  \end{align*}
  estimating again that $\| \D e_h' \|_{L^1(\Omc)} \leq \| \D^3 \tilu
  \|_{L^1(\Omc)}$.

  This completes the proof of \eqref{eq:bqce_enerr_T21_est}.
\end{proof}

The final term to complete the estimate for the B-QCE energy error is
$T_{2,2}$, which we can bound by
\begin{align}
  |T_{2,2}| &\lesssim \B( \| h^2 \D^3 y_0 \|_{L^2(\Omc)} + \|h \D^2 y_0
  \|_{L^4(\Omc)}^2 \B) \| \D \Pi_h u \|_{L^2(\Omc)} \\
  &\lesssim \B(\| h^2 \D^3 y_0 \|_{L^2(\Omc)} + \|h \D^2 y_0
  \|_{L^4(\Omc)}^2 \B) \B( \errba(y) + \| \D \tilu \|_{L^2(\Omc)} \B)
  \label{eq:bqce_enerr_T22_est}
\end{align}
The proof of this estimate follows much along the same lines as that
of \eqref{eq:bqce_enerr_T12_est}, exploiting the fact that
\begin{align*}
  &\hspace{-5mm} W'(\D \Pi_h y) - Q_h W'(\D \Pi_h y) \\
  &= \int_{\theta = 0}^1 \B(\dW( \D
  y_0 + \theta \D \Pi_h u) - \dW\b(Q_h[\D y_0]+\theta \D \Pi_h u\b)
  \B) \dd\theta : \D \Pi_h u.
\end{align*}

\subsubsection{Completing the energy error estimate}
\label{sec:bqce_enerr_final}
Combining the estimates \eqref{eq:bqce_enerr_T12_est},
\eqref{eq:enerr:estimate_T11}, \eqref{eq:bqce_enerr_T21_est} and
\eqref{eq:bqce_enerr_T22_est}, ignoring any terms that are dominated
by others, we arrive at
\begin{align}
  \label{eq:bqce_enerr_final:1}
  \b| \Ea(y) - \Ebh(y_h) \b| &\leq C\B\{
  {\errba(y)}^2 + {\errcb}(y)^2 + {\errbqce(y)}^2 \\
  \notag
  & \qquad +
  \| \D^2\beta \|_{L^2} \| \D\tilu
  \|_{L^2(\Omc)} + \| \D\beta \|_{L^2} \| \D^2\tilu \|_{L^2(\Omc)} \\
  \notag
  & \qquad \qquad \qquad
  + \errba(y) \b( \| \D\tilu \|_{L^2(\Omc)} + \| \D \ulin \|_{L^4(\Omc)}^2 \b)
\\
  \notag
  & \qquad + \| \D^3 \tilu \|_{L^1(\Omc)} + \| \D^2\tilu
  \|_{L^2(\Omc)}^2   \\
  \notag
  & \qquad + \B( \| h^2 \D^3 y_0 \|_{L^2(\Omc)} + \|h \D^2 y_0
  \|_{L^4(\Omc)}^2 \B) \B( \errba(y) + \| \D \tilu \|_{L^2(\Omc)} \B) \\
  \notag
  & \qquad + \| \D^2\tilu \|_{L^2(\Omc)} \| \D^2 y_0 \|_{L^2(\Omc)}
     + \| \D\beta\|_{L^\infty} \| \D\tilu \|_{L^2(\Omc)} \| \D^2 y_0 \|_{L^2(\Omc)}
	\B\}.
\end{align}
A slight rearrangement yields the statement of Proposition
\ref{th:bqce-energy_error}.

\clearpage

\subsection{B-QCF Consistency analysis}
\label{sec:bqcf_cons_prf}
\subsubsection{Consistency error estimate, part 1}
\label{sec:bqcf_cons_prf_1}
Recall the definition of the B-QCF operator \eqref{eq:defn_Fbh} and
assume that $y_h(\xi) = y(\xi)$ for $\xi \in \La$, then we have
\begin{align*}
  \< \Fbh(y_h), v_h \> - \< \del\Ea(y), v \> &= \< \del\Ea(y),
  (1-\beta) v_h - v \> + \< \del\Ech(y_h), I_h[\beta v_h] \>.
\end{align*}
Similar to the B-QCE case in \S~\ref{sec:moderr:bqce_moderr}, we
choose a specially adapted test function $v := \Pi_h'' v_h$, as
defined in \eqref{eq:int:cons_bqcf:defn_v},
for the ``weak form'' of the
atomistic force $\< \del\Ea(y), v\>$. That is,
\begin{equation}
  \label{eq:defn_dual_interp_bqcf}
   v = (1-\beta) v_h|_{\L} + w^*, \quad \text{where}
  \quad w(\xi) := \b(\zz \ast w_h\b)(\xi) \text{ and } w_h := I_h[\beta v_h].
\end{equation}
Standard quasi-interpolation error estimates (see
e.g. \cite{Carstensen1999} for an analogous result) yield
\begin{equation}
  \label{eq:estimate_dualint_bqcf}
  \| \D \barw - \D w_h \|_{L^2} + \| \barw - w_h \|_{L^2} \lesssim \| \D
  w_h \|_{L^2}.
\end{equation}

Applying the stress form of $\del\Ea(y)$ in
\eqref{eq:stress:delEa_v2}, with $\Sa = \Sa(y; x)$, we can now compute
\begin{align*}
  & \hspace{-5mm} \< \Fbh(y_h), v_h \> -  \< \del\Ea(y), v \>\\[1mm]
   &= \< \del\Ea(y),
  (1-\beta) v_h - v \> + \< \del\Ech(y_h), I_h[\beta v_h] \>  \\[1mm]
  &= - \< \del \Ea(y), w^* \> +\< \del\Ech(y_h), w_h \> = \< \del\Ech(y_h), w_h \> - \< \del \Ea(y), w^* \>  \\
  &= \int \B[ Q_h\b[\pp W(\D y_h) : \D w_h\b] - \Sa : \D \barw \B] \dx \\
  &= \int Q_h \B[ \pp W(\D y_h) - \pp W(\D \tily) \B] : \D w_h \dx
  +\int \big(Q_h-{\rm Id}\big)\b[\pp W(\D \tily) : \D w_h\b] \dx\\
 &\qquad+\int \pp  W(\D \tily) : \b( \D w_h- \D \barw \b) \dx
 + \int \b( \pp W(\D\tily) - \Sa \b) : \D \barw \dx\\
 &=: T_1+T_2+T_3+T_4.
\end{align*}

Applying Lemma \ref{th:bqce_coarse_err_T1_T2} and
Lemma~\ref{th:bqce_coarse_err_T3} with $\beta \equiv 1$, and
exploiting the fact that ${\rm supp}(w_h), {\rm supp}(\bar{w}) \subset
\Omc$ we obtain
%
\begin{align*}
  |T_1| &\lesssim \| \D u_h - \D\tilu \|_{L^2(\Omc)} \| \D w_h \|_{L^2},
  \\[1mm]
  |T_2| &\lesssim \| h^2 \D^2 \dW(\tily) \|_{L^2(\Omc)} \|\D
  w_h\|_{L^2}, \quad \text{and} \\[1mm]
  |T_3| &\lesssim \| \D \divv \dW(\tily)  \|_{L^2(\Omc)} \| \D w_h \|_{L^2}.
\end{align*}
%

Finally, the fourth term is the Cauchy--Born modelling error estimated
in Lemma \ref{th:cb_model_err} combined with the quasi-interpolation
error estimates in \eqref{eq:estimate_dualint_bqcf}. Applying Lemma
\ref{th:bqce_moderr_Rbest_est_glob} with $\beta \equiv 1$ and
exploiting again that ${\rm supp}(\barw) \subset \Omc$, we obtain
\begin{displaymath}
  |T_4| \lesssim \b( \| \D^3 \tily \|_{L^2(\Omc)} + \| \D^2 \tily
  \|_{L^4(\Omc)}^2 \b) \| \D w_h \|_{L^2}.
\end{displaymath}

Combining the estimates for the terms ${\rm T}_1, \dots, {\rm T}_4$
and then arguing as in Lemma \ref{th:bqce_final_coarse_err_est} we
arrive at
\begin{align}\label{eq:bqcf_consis_error}
  \b| \< \Fbh(y_h), v_h \> - \< \del\Ea(y), v \> \b| 
  \lesssim \b( \errba(y) + \errcb(y) \b) \| \D w_h \|_{L^2}.
\end{align}
In particular, we have proven \eqref{eq:int:bqcf_cons_firststep}.

It now remains to estimate $\| \D w_h \|_{L^2}$, where $w_h =
I_h[\beta v_h]$ in terms of $\| \D v_h \|_{L^2}$.

\subsubsection{The trace inequality}
\label{sec:trace}
Our aim is to prove \eqref{eq:int:est_Dbv_Dv}. For the sake of
argument, suppose $w_h \sim \beta v_h$ (we dropped the interpolant),
so that $\D w_h \sim \beta \D v_h + v_h \otimes \D\beta$. Thus, we
need to estimate $v_h$ in the support of $\D\beta$ (i.e., in the
blending region) in terms of $\D v_h$ in $\Omh$. The key ingredient to
obtain such an estimate is the following trace inequality.

\begin{lemma}
  \label{th:trace}
  Let $d \geq 2$ and $0 < r_0 < r_1$, then
  \begin{align}
    \label{eq:trace_dirichlet}
    \| u \|_{L^2(\partial B_{r_0})}^2 \leq
    C_1 \| \D u \|_{L^2(B_{r_1}\setminus
      B_{r_0})}^2
    \quad \text{for all } u \in H^1(B_{r_1}\setminus B_{r_0}),
    u|_{\partial B_{r_1}} = 0, & \\[1mm]
     \text{where }
    C_1 = \cases{ 2 r_0 \b|\log \smfrac{r_1}{r_0} \b|, & d = 2, \\
      2 r_0 / (d-2), & d \geq 3.
    } &
  \end{align}
\end{lemma}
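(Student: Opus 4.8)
The plan is to prove the estimate by a direct computation in polar coordinates. First I would reduce to the case $u \in C^\infty(\overline{B_{r_1} \setminus B_{r_0}})$ with $u|_{\partial B_{r_1}} = 0$ by a standard density argument: both sides of \eqref{eq:trace_dirichlet} depend continuously on $u$ in the $H^1(B_{r_1}\setminus B_{r_0})$-norm (the left-hand side via the trace theorem), and $B_{r_1}\setminus B_{r_0}$ is a Lipschitz (indeed smooth) domain, so functions smooth up to the boundary and vanishing near $\partial B_{r_1}$ are dense in the relevant subspace. Writing $x = r\omega$ with $r \in [r_0, r_1]$, $\omega \in S^{d-1}$, the fundamental theorem of calculus along each radial ray and the boundary condition $u(r_1\omega) = 0$ give $u(r_0\omega) = -\int_{r_0}^{r_1}\partial_r u(r\omega)\dr$.

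Next I would apply the Cauchy--Schwarz inequality to this integral with the weight $r^{d-1}$, obtaining $|u(r_0\omega)|^2 \leq \big(\int_{r_0}^{r_1} r^{-(d-1)}\dr\big)\big(\int_{r_0}^{r_1} r^{d-1}|\partial_r u(r\omega)|^2\dr\big)$. The weight is chosen precisely so that, after multiplying by $r_0^{d-1}$ and integrating over $\omega \in S^{d-1}$, the second factor recombines — using $|\partial_r u| \leq |\D u|$ and the polar volume element $r^{d-1}\dr\,\dd\omega$ — into $\|\D u\|_{L^2(B_{r_1}\setminus B_{r_0})}^2$. Since $\|u\|_{L^2(\partial B_{r_0})}^2 = r_0^{d-1}\int_{S^{d-1}}|u(r_0\omega)|^2\dd\omega$, this yields $\|u\|_{L^2(\partial B_{r_0})}^2 \leq \big(r_0^{d-1}\int_{r_0}^{r_1} r^{-(d-1)}\dr\big)\,\|\D u\|_{L^2(B_{r_1}\setminus B_{r_0})}^2$.

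Finally I would evaluate the scalar integral to identify $C_1$: for $d = 2$ one gets $r_0^{d-1}\int_{r_0}^{r_1} r^{-1}\dr = r_0\log(r_1/r_0)$, and for $d \geq 3$ one gets $r_0^{d-1}\cdot\frac{1}{d-2}(r_0^{2-d} - r_1^{2-d}) \leq \frac{r_0}{d-2}$; both are bounded by the stated constants, with the factor $2$ providing harmless slack. I do not expect a genuine obstacle: the argument is elementary and the only point calling for a little care is the density reduction together with the fact that the trace on $\partial B_{r_0}$ is well-defined, which is routine for this smooth geometry.
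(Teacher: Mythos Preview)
Your argument is correct and is the standard radial Cauchy--Schwarz computation for this type of trace inequality; the constants you obtain are exactly the stated ones (with the factor $2$ indeed providing slack). The paper itself does not give a self-contained proof but cites \cite[Lemma~5.1]{BQCF} with minor modifications, and that proof proceeds by essentially the same method, so your approach matches.
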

\begin{proof}
  %
  The result follows from minor modifications to remove the constraint
  $r_1 < 1$ of the proof of \cite[Lemma 5.1]{BQCF}; up
  to \cite[Eq. (5.4)]{BQCF} and choosing $s = r_1$ from
  the beginning.
\end{proof}

\begin{corollary}
  \label{th:vol_trace_ineq}
  Under the conditions of Lemma \ref{th:trace}, we have
  \begin{align*}
    \| v_h \|_{L^2(\Omb)}^2 \leq \b( (\Rb)^2 - (\Ra)^2 \b) C_1' \| \D v_h \|_{L^2(\Omh)}^2 \qquad
    \forall v_h \in \Us_h,& \\
    \qquad \text{where } \qquad C_1' = \cases{ \log\b|\frac{\Ro}{\Ra}
      \b|, &  d = 2, \\
      1, & d = 3.
    }
  \end{align*}
\end{corollary}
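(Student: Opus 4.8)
The plan is to reduce the volume trace inequality to the sphere trace inequality of Lemma~\ref{th:trace} via polar coordinates. Since $\Omb \subset B_{\Rb} \setminus B_{\Ra}$, it suffices to estimate $\| v_h \|_{L^2(B_{\Rb} \setminus B_{\Ra})}^2$, and writing this integral in polar coordinates gives $\| v_h \|_{L^2(B_{\Rb} \setminus B_{\Ra})}^2 = \int_{\Ra}^{\Rb} \| v_h \|_{L^2(\partial B_r)}^2 \dr$. I would then estimate the integrand for each fixed radius $r \in (\Ra, \Rb)$.

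For such an $r$ I would invoke Lemma~\ref{th:trace} with $r_0 = r$ and $r_1 = \Ro$. The required boundary condition $v_h|_{\partial B_{\Ro}} = 0$ holds because $v_h \in \Us_h$ is continuous, vanishes on $\R^d \setminus \Omh$, and $\Omh \subset B_{\Ro}$: a point of $\partial B_{\Ro}$ either lies outside $\overline{\Omh}$, where $v_h = 0$ by the zero extension, or lies on $\partial \Omh$, where $v_h = 0$ by continuity. Lemma~\ref{th:trace} then gives $\| v_h \|_{L^2(\partial B_r)}^2 \leq C_1(r) \, \| \D v_h \|_{L^2(B_{\Ro} \setminus B_r)}^2 \leq C_1(r) \, \| \D v_h \|_{L^2(\Omh)}^2$, where the last step uses that $\D v_h$ vanishes outside $\Omh$, and $C_1(r) = 2r |\log(\Ro/r)|$ for $d = 2$, respectively $C_1(r) = 2r/(d-2)$ for $d \geq 3$.

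To conclude, I would bound $C_1(r) \leq 2r \, C_1'$ uniformly for $r \in (\Ra, \Rb)$: for $d = 2$ this follows from the monotonicity $|\log(\Ro/r)| \leq \log(\Ro/\Ra)$ (valid since $\Ra \leq r < \Ro$), and for $d = 3$ one simply has $1/(d-2) = 1$; in both cases the resulting factor is exactly the $C_1'$ of the statement. Integrating over $r$ and using $\int_{\Ra}^{\Rb} 2r \dr = (\Rb)^2 - (\Ra)^2$ then yields $\| v_h \|_{L^2(\Omb)}^2 \leq \b( (\Rb)^2 - (\Ra)^2 \b) C_1' \| \D v_h \|_{L^2(\Omh)}^2$.

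The argument is essentially bookkeeping once Lemma~\ref{th:trace} is available; the only points needing care — the closest thing to an obstacle here — are justifying that the minimal enclosing radius $\Ro$ of $\Omh$ is an admissible outer radius in Lemma~\ref{th:trace} (i.e. the boundary condition above), and checking that the $r$-dependence of the sphere-trace constant $C_1(r)$ integrates to precisely the factor $(\Rb)^2 - (\Ra)^2$ rather than something larger.
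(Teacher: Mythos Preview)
Your proof is correct and follows exactly the same approach as the paper: bound $\|v_h\|_{L^2(\Omb)}^2$ by $\int_{\Ra}^{\Rb} \|v_h\|_{L^2(\partial B_r)}^2\,\dr$ via polar coordinates and the inclusion $\Omb \subset B_{\Rb}\setminus B_{\Ra}$, then apply Lemma~\ref{th:trace} spherewise. In fact you supply more detail than the paper does (verifying the Dirichlet condition on $\partial B_{\Ro}$ and carrying out the $r$-integration explicitly), which the paper leaves implicit.
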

\vspace{-5mm}
\begin{proof}
  Recalling that $\Omb \subset B_{\Rb} \setminus B_{\Ra}$ we write
  \begin{displaymath}
    \|v_h\|_{L^2(\Omb)}^2 \leq \int_{r = \Ra}^{\Rb} \| v_h
    \|_{L^2(\partial B_r)}^2 \dr.
  \end{displaymath}
  Applying \eqref{eq:trace_dirichlet} yields the stated result.
\end{proof}

\begin{proof}[Proof of Lemma \ref{th:int:est_Dbv_Dv}]
  If $T \in \mathcal{T}_h$ with $\beta|_T \equiv 1$, then $I_h[\beta
  v_h] = v_h$, and hence $\| \D I_h[\beta v_h]\|_{L^2(T)} \leq \| \D
  v_h \|_{L^2(T)}$.

  Conversely, if $\beta|_T \not\equiv 1$, then $h_T \lesssim 1$ and
  hence standard nodal interpolation error
  estimates~\cite{Ciarlet:1978} imply
  \begin{align*}
      \| \D I_h[\beta v_h] \|_{L^2(T)} &\le \|\D I_h[\beta v_h]-\D \left(\beta v_h\right)\|_{L^2(T)}+\|\D \left(\beta v_h\right)\|_{L^2(T)}\\
      &\lesssim \|\D^2 \left(\beta v_h\right)\|_{L^2(T)}+\|\D \left(\beta v_h\right)\|_{L^2(T)}.
  \end{align*}
  Since $v_h|_{T}\in \rm{P}1(T)$, so $\D^2 v_h=0$, for each such
  element $T$ we have
  \begin{align*}
    \| \D I_h[\beta v_h] \|_{L^2(T)}
      &\lesssim \|\D^2 \beta \|_{L^{\infty}(T)}\|v_h\|_{L^2(T)}
      +2\|\D \beta \|_{L^{\infty}(T)}\|\D v_h\|_{L^2(T)} \\
      & \qquad \qquad \qquad + \|\beta \D  v_h\|_{L^2(T)} + \|v_h \otimes \D
\beta\|_{L^2(T)} \\
      & \lesssim \|\D \beta\|_{W^{1,\infty}(T)} \|v_h\|_{L^2(T)}
      +(1+ \|\D \beta \|_{L^{\infty}(T)}) \|\D v_h\|_{L^2(T)} .
  \end{align*}
  Recall that $\Omb$ is constructed in such a way that ${\rm
    supp}\nabla \beta \cap T \neq\emptyset$ implies that $T \subset
  \Omb$. Thus, summing over all $T \subset \Omb$, and also recalling
  that $\|\D\beta\|_{L^\infty} \lesssim 1$ and then applying Corollary
  \ref{th:vol_trace_ineq}, we obtain
  \begin{align}%
    \notag
    \| \D I_h[\beta v_h] \|_{L^2}
    &\lesssim \|\D \beta\|_{W^{1,\infty}}
    \|v_h\|_{L^2(\Omega^\beta)}
    + \|\D v_h\|_{L^2}  \\
    \label{eq3:bqcf_wh_est}
    &\lesssim \B( C_1' \b[(\Rb)^2-(\Ra)^2\b]^{1/2}  \|
    \D\beta\|_{W^{1,\infty}} + 1 \B) \| \D v_h \|_{L^2},
  \end{align}
  where $C_1'$ is the constant from Lemma \ref{th:vol_trace_ineq}.

  Recall now that in \eqref{eq:blend_stab:radii} we assumed that the
  blending function $\beta$ satisfies $\| \D^{j} \beta \|_{L^\infty}
  \lesssim (\Rb)^{-j}$ for $j = 1, 2$. Inserting this assumption into
  \eqref{eq3:bqcf_wh_est} finally completes the proof of Lemma
  \ref{th:int:est_Dbv_Dv}.
\end{proof}

\section{Stability Proofs}
\label{sec:stab_prfs}
\subsection{BQCE stability}
\label{sec:prf_bqce_stab}
\begin{proof}[Proof of Lemma \ref{th:int:stab_bqce_lem1}]
  Assume, for contradiction, that there exists a sequence of B-QCE
  approximations, characterized by $\beta_n$, $\Ts_{h,n}$, $v_{h,n}
  \in \Us_{h,n}$, etc., with $\Ra_n\to\infty$, as well as test
  functions $v_{h,n}$ satisfying $\|\nabla v_{h,n}\|_{L^2}^2=1$ and
\[
\lim_{n\to\infty} \< \ddel [\E^{\beta}_{h,n}(\Pi_{h,n} y)+ \Pa(\Pi_{h,n} y)] v_{h,n}, v_{h,n} \> =: \tilde{\gamma}^\beta_h
<
\min\big\{ \gamma^\a(y),
    \gamma^\beta_h(\mA x) \big\}
.
\]

In what follows, we will drop the index $h$ in $\Us_{h,n}$,
$\Ts_{h,n}$, $\Pi_{h,n}$, $\E^{\beta}_{h,n}$, and so forth.

Upon extracting a subsequence (which is still denoted by $v_n$), we
have $\nabla v_n\weakto\nabla \bar{v}_0$ in $L^2$ for some lattice
function $v_0 : \Z^d \to \R^m$.  Further, similarly to \cite[Lemma
4.9]{EhrOrtSha:defectsV1}, there exists a sequence $\check{r}_n \to
\infty$, $\check{r}_n < \frac12 \Ra_n$, such that, defining $w_n :=
\overline{\eta_n v_n}$, where $\eta_n$ is a smooth cut-off function
satisfying
\[
\eta_n(\xi) = 1
\quad(|\xi| \leq \check{r}_n+2\rcut)
\qquad\text{and}\qquad
\eta_n(\xi) = 0
\quad(|\xi|\geq 2\check{r}_n-2\rcut)
,
\]
(cf. the definition of the truncation operator $T_R$ in
\eqref{eq:truncation_op}) and $z_n := v_n - w_n$, then
\begin{align*}
D w_n \to D v_0 \text{ in $\ell^2$,}
\qquad&
\text{\phantom{and}}\qquad
\nabla w_n \to \nabla \bar{v}_0 \text{ in $L^2$,}
\\
D z_n \weakto 0 \text{ in $\ell^2$,}
\qquad&
\text{\phantom{and}}\qquad
\nabla z_n \weakto 0 \text{ in $L^2$,}
\\
D w_n(\xi) = \cases{
D v_n(\xi), & |\xi| \leq \check{r}_n, \\
0, & |\xi| \geq 2 \check{r}_n, \\
}
\qquad&
\text{and}\qquad
\nabla w_n(x) = \cases{
\nabla v_n(x), & |x| \leq \check{r}_n, \\
0, & |x| \geq 2 \check{r}_n. \\
}
\end{align*}
We note that $w_n = 0$ on $\Omc$ and hence $w_n$ is an admissible
displacement, $w_n \in \Us_{n}$, which also ensures that $z_n \in
\Us_{n}$.  The statement that $D z_n \rightharpoonup 0$ follows from
the fact that, for any fixed $\varphi \in \Usc$, $\< D z_n, D \varphi
\> \to 0$ as $\La$ will eventually enclose the support of $\varphi$
for sufficiently large $n$.

Hence we have
\begin{align*}
\< \ddel [\E^{\beta}_{n}+\Pa](\Pi_n y) v_n, v_n \>
=~&
\< \ddel \E^{\beta}_{n}(\Pi_n y) w_n + \ddel\Pa(y) w_n, w_n \>
\\~&
+
2 \< \ddel \E^{\beta}_{n}(\Pi_n y) w_n, z_n \>
+
\< \ddel \E^{\beta}_{n}(\Pi_n y) z_n, z_n \>
\\
=:~&
a_n
+
2 b_n
+
c_n.
\end{align*}
Here we used the fact that, for $n$ large enough, $\Pa(\Pi_n y) =
\Pa(y)$ and is supported outside $\supp(D z_n)$ or $\supp(\nabla
z_n)$.

Due to $\check{r}_n < \frac12 \Ra_n$ and the stability assumption \eqref{eq:int:stab:stab_ass} we have that
\begin{align*}
a_n
=~&
\< \ddel [\Ea + \Pa](\Pi_n y) w_n, w_n \>
= 
\< \ddel [\Ea + \Pa](y) w_n, w_n \>
\geq \gamma^\a(y) \|\nabla w_n\|_{L^2}^2
.
\end{align*}

Similarly, since $D w_n(\xi)$ can be nonzero only for $\xi$ such that $\beta(\xi)=1$, we have that
\begin{align*}
b_n
&=
	\< \ddel \E^{\beta}_{n}(\Pi_n y) w_n, z_n \>
=
	\< \ddel \Ea(y) w_n, z_n \> \\
&=
\sum_{\xi\in\L}  \< \ddel V(Dy(\xi)) D w_n(\xi), D z_n(\xi) \>.
\end{align*}
Since $\ddel V(Dy) D w_n \to \ddel V(Dy) D v_0$ in $\ell^2$ and $D z_n
\weakto 0$ in $\ell^2$ it follows that $b_n \to 0$.

Finally, the fact that $\|\nabla \Pi_n y - \mA
\|_{L^\infty(\R^d\setminus B_{\check{r}_n})} \to 0$ as $\check{r}_n\to\infty$ and the
Lipschitz regularity of $\ddel V$ and $\partial^2 W$ imply that
\begin{align*}
\|\ddel V(D \Pi_n y) - \ddel V(\mA \Rg)\|_{\ell^\infty(\supp(D z_n))}
\leq
\|\ddel V(D \Pi_n y) - \ddel V(\mA \Rg)\|_{\ell^\infty(\Z^d\setminus B_{\check{r}_n})}
\to~& 0
,\quad\text{and} \\
\|\partial^2 W(\nabla \Pi_n y) - \partial^2W(\mA)\|_{L^\infty(\supp(\nabla z_n))}
\leq
\|\partial^2W(\nabla \Pi_n y) - \partial^2W(\mA)\|_{L^\infty(\R^d\setminus B_{\check{r}_n})}
\to~& 0
\end{align*}
which, upon writing out $\ddel \E^{\beta}_{n}(\Pi_n y)-\ddel \E^{\beta}_{n}(\mA x)$ and estimating $\|D z_n\|_{\ell^2}$ by $\|\nabla z_n\|_{L^2}$, allow us to conclude that
\[
c_n
=
\< \ddel \E^\beta_{n}(\mA x) z_n, z_n \>
+ o(1) \|\nabla z_n\|_{L^2}^2
\geq (\gamma^\beta_h(\mA x) + o(1)) \|\nabla z_n\|_{L^2}^2,
\]
where $o(1)$ denotes a sequence that converges to $0$ as $n\to\infty$.

It remains only to observe that
\begin{align*}
&
\hspace{-1.5cm} \gamma^\a(y) \|\nabla w_n\|_{L^2}^2
+
(\gamma^\beta_h(\mA x) + o(1)) \|\nabla z_n\|_{L^2}^2
\\ \geq~&
	\min\{\gamma^\a(y), \gamma^\beta_h(\mA x) + o(1)\} \,
	(\|\nabla w_n\|_{L^2}^2 + \|\nabla z_n\|_{L^2}^2)
\\ =~&
\min\{\gamma^\a(y), \gamma^\beta_h(\mA x) + o(1)\} \, (\|\nabla v_n\|_{L^2}^2
	-2 (\nabla w_n, \nabla z_n)_{L^2} )
\\ =~&
\min\{\gamma^\a(y), \gamma^\beta_h(\mA x)\} + o(1),
\end{align*}
where we used again that fact that $\D w_n$ converges strongly while
$\D z_n \rightharpoonup 0$.

Thus, we have arrived at a contradiction to our original assumption,
and have therefore established the result.
\end{proof}

In the proof of Lemma \ref{th:int:stab_bqce_lem2} we will use the
following auxiliary result.

\begin{lemma}\label{lem:stab:sqrt}
If $\beta:\R^d\to\R$ satisfies \eqref{eq:blend_stab:beta_bound} then $\sqrt{\beta},\sqrt{1-\beta}\in W^{1,\infty}$ and
\[
\max\big\{
	\|\nabla \sqrt{\beta}\|_{L^\infty},\|\nabla \sqrt{1-\beta}\|_{L^\infty}
\big\}
\leq \sqrt{\|\nabla^2 \beta\|_{L^\infty}/2}
.
\]
\end{lemma}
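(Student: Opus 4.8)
The plan is to deduce the statement from the classical Glaeser-type inequality
\[
  |\nabla\beta(x)|^2 \leq 2\,\|\nabla^2\beta\|_{L^\infty}\,\beta(x) \qquad \text{for all } x\in\R^d,
\]
valid for any nonnegative $C^{1,1}$ function $\beta$, followed by an elementary regularisation that handles the zero set of $\beta$, on which $\sqrt\beta$ need not be differentiable. To prove the displayed inequality I would fix $x\in\R^d$, set $M:=\|\nabla^2\beta\|_{L^\infty}$, take an arbitrary unit vector $e$, and use the nonnegativity of $\beta$ together with the Lipschitz bound on $\nabla\beta$ to write, for every $t\in\R$,
\[
  0 \leq \beta(x+te) \leq \beta(x) + t\,\nabla\beta(x)\cdot e + \tfrac{M}{2}\,t^2 .
\]
If $M=0$ then $\beta$ is affine and bounded, hence constant, and the claim is trivial; if $M>0$, the right-hand side is a genuine quadratic in $t$ which is nonnegative for all $t$, so its discriminant is nonpositive, giving $(\nabla\beta(x)\cdot e)^2\leq 2M\beta(x)$; choosing $e=\nabla\beta(x)/|\nabla\beta(x)|$ (the case $\nabla\beta(x)=0$ being trivial) yields the asserted pointwise bound.

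Next I would pass from $\beta$ to $\sqrt\beta$ by regularising: for $\epsilon>0$ the function $\sqrt{\beta+\epsilon}$ is $C^{1,1}$ and, using the Glaeser inequality together with $\beta\leq\beta+\epsilon$,
\[
  \big|\nabla\sqrt{\beta+\epsilon}\,\big| = \frac{|\nabla\beta|}{2\sqrt{\beta+\epsilon}} \leq \frac{\sqrt{2M\beta}}{2\sqrt{\beta+\epsilon}} \leq \frac{\sqrt{2M(\beta+\epsilon)}}{2\sqrt{\beta+\epsilon}} = \sqrt{M/2},
\]
so $\sqrt{\beta+\epsilon}$ is globally Lipschitz with constant $\sqrt{M/2}$, uniformly in $\epsilon$. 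Since $\big|\sqrt{\beta+\epsilon}-\sqrt\beta\,\big|\leq\sqrt\epsilon\to0$ uniformly on $\R^d$, the pointwise limit $\sqrt\beta$ is Lipschitz with the same constant, and being bounded it lies in $W^{1,\infty}$ with $\|\nabla\sqrt\beta\|_{L^\infty}\leq\sqrt{M/2}=\sqrt{\|\nabla^2\beta\|_{L^\infty}/2}$. Applying the same argument to $1-\beta$, which by \eqref{eq:blend_stab:beta_bound} is nonnegative, $C^{2,1}$, and satisfies $\|\nabla^2(1-\beta)\|_{L^\infty}=\|\nabla^2\beta\|_{L^\infty}$, gives the bound for $\sqrt{1-\beta}$ and finishes the proof.

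There is no real obstacle here: the only subtle point is that $\sqrt{\,\cdot\,}$ fails to be differentiable at $0$, so the naive computation $\nabla\sqrt\beta=\nabla\beta/(2\sqrt\beta)$ is not legitimate on $\{\beta=0\}$; the $\epsilon$-regularisation above sidesteps this cleanly, while the Glaeser inequality is precisely what is needed to keep the gradient of $\sqrt{\beta+\epsilon}$ bounded by $\sqrt{M/2}$ right up to the boundary of $\{\beta>0\}$.
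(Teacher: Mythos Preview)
Your proof is correct. Both your argument and the paper's rest on the same Glaeser-type inequality $|\nabla\beta|^2\le 2\|\nabla^2\beta\|_{L^\infty}\,\beta$, but the paper simply cites Glaeser's original result that $\sqrt\beta$ is $C^1$ on the dense set $\{\beta\neq0\}\cup\{\nabla^2\beta=0\}$ with the stated gradient bound there, and then extends the Lipschitz estimate by density. Your route is more self-contained: you prove the pointwise Glaeser inequality directly via the discriminant argument, and you handle the zero set by an $\epsilon$-regularisation $\sqrt{\beta+\epsilon}$ followed by a uniform limit, rather than by invoking differentiability on a dense set. Both devices are standard and yield the same constant; yours has the modest advantage of not needing the external reference.
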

\begin{proof}
  It is proved in \cite{Glaeser} that $\sqrt{\beta}$ is continuously
  differentiable on $\Omega = \{x \sep \beta(x)\ne 0 \text{ or }
  \nabla^2\beta(x)= 0\}$ and $\|\nabla
  \sqrt{\beta}\|_{L^\infty(\Omega)} \leq \sqrt{\|\nabla^2
    \beta\|_{L^\infty}/2}$.  It remains to notice that $\Omega$ is
  everywhere dense, hence $\sqrt{\beta}$ is Lipschitz everywhere,
  i.e., $\sqrt{\beta}\in W^{1,\infty}$.  The result for
  $\sqrt{1-\beta}$ follows similarly.
\end{proof}

\begin{proof}[Proof of Lemma \ref{th:int:stab_bqce_lem2}.]
  As in the proof of Lemma \ref{th:int:stab_bqce_lem1} we assume, for
  contradiction, that there exists a sequence $\beta_n$, $\Ts_n$, $v_n
  \in \Us_n$, $\|\nabla v_n\|_{L^2}=1$ etc.\ (again, we omit the
  subscript $h$) such
  that \begin{equation}\label{eq:blend_stab:contrary}
    \lim_{n\to\infty} \b\< \ddel \E^{\beta}_{n}(\mA x) v_n, v_n \b\> <
    \gamma^\a = \gamma^\a(\mA x).
\end{equation}

We introduce the parameter $\eps_n=1/\Ra_n \to 0$, rescale
variables,
\[
x\mapsto \eps_n x,
\quad
\xi\mapsto \eps_n \xi,
\quad
v_n\mapsto \eps_n^{1-d/2} v_n,
\quad
\Omega_n\mapsto \eps_n \Omega_n,
\]
and define $\|w_n\|_{\ell^2(\eps_n \Z^d)}^2 := \eps_n^d
\sum_{\xi\in\eps_n\Z^d} |w_n(\xi)|^2$. We observe that $\|\nabla
v_n\|_{L^2}=1$ is preserved under this rescaling, while
\eqref{eq:blend_stab:contrary} now reads $\lim_{n\to\infty} \<H_n v_n,
v_n\> < \gamma^\a $, where
\begin{align}
\notag
&\<H_n v_n, v_n\>
:=
	\eps_n^d \sum_{\xi\in\eps_n\Z^d} (1-\beta_n(\xi)) \<A \, D_n v_n(\xi), D_n v_n(\xi)\>
	+
	\int_{\Omega_n} (Q_n \beta_{n}) (\C \!:\! \nabla v_n) \!:\! \nabla v_n \dx,
 \\
\label{notation:bqce_stab}
&A := \ddel V(\mA\Rg), \quad \C := \partial^2 W(\mA), \quad \text{ and
} 
\quad D_n w(\xi) =
\bg(\frac{w(\xi+\eps_n\rho)-w(\xi)}{\eps_n}\bg)_{\rho\in\Rg}.
\end{align}

Upon extracting a subsequence we have that $\nabla v_n \weakto \nabla
v_0$ in $L^2$ for some $v_0\in H^1_\loc(\R^d)$.  Hence we define $w_n
:= \Pi_h(\eta_{r_n} \ast v_0) \in \Us_n$ and split $v_n = w_n + z_n$,
where $\eta_r\in C^\infty(\R^d)$ is a family of mollifiers, and the
sequence $r_n\to 0$ will be chosen later.  Since $\nabla w_n \to
\nabla v_0$ in $L^2$, we have that $\nabla z_n \weakto 0$ in $L^2$.

{\it Step 1: estimating $\<H_n z_n, z_n\>$.}

{\it Step 1.1: continuum contribution. }  We start by bounding the
continuum contribution from $\<H_n z_n, z_n\>$,
\[
	\int_{\Omega_n} (Q_n \beta_{n}) (\C \!:\! \nabla z_n) \!:\! \nabla z_n \dx
.
\]
Due to rescaling $\beta_n(x) \mapsto \beta_n (\eps_n^{-1} x)$ and
$\eps_n=1/\Ra_n$, we now have a uniform bound $|\nabla^2 \beta_n| \leq
C^\beta_2$.  Hence, the error of interpolation of $\beta_n$ tends to
zero due to the assumption \eqref{eq:blend_stab:tri}, i.e., $\|Q_n
\beta_{n} - \beta_{n}\|_{L^{\infty}} \to 0$, which enables us to
replace $Q_n \beta_{n}$ by $\beta_n$ while making at most $o(1)$ error
as $n\to\infty$.

\def\Ras{\hat{R}}
\def\Bas{\hat{B}}

For ease of notation, let $\Ras := C^\beta_1$, so that $\eps_n
R^\beta_n = R^\beta_n / R^\a_n \leq \Ras$, and $\Bas := B_{\Ras}$;
cf. \eqref{eq:blend_stab:radii}.

Upon shifting the test function we may assume that $\mint_{\Bas} v_n
\dx = 0$. (Note that the shifted test function does not satisfy the
homogeneous Dirichlet boundary condition, but this is irrelevant for
the following estimates.)  Therefore, due to (i) norm equivalence
$\|v_n\|_{H^1(\Bas)} \lesssim \|\nabla v_n\|_{L^2(\Bas)}$ and (ii) the
compactness of the embedding $L^2(\Bas) \subset H^1(\Bas)$, we have
that $\|z_n\|_{L^2(\Bas)} \to 0$.  Further,
\eqref{eq:blend_stab:beta_bound} implies that $\sqrt{\beta_n} =:
\varphi_n \in W^{1,\infty}$ and that it satisfies the bound $\|\nabla
\varphi_n\|_{L^\infty} \leq \sqrt{\|\nabla^2 \beta_n\|_{L^\infty}/2}
\leq \sqrt{C^\beta_2/2}$, as we have proved in Lemma
\ref{lem:stab:sqrt}.

Noting that $\supp(\varphi_n) \subset \Bas$, we have that
\begin{align*}
& \bigg|
	\int_{\Omega_n} \beta_n (\C \!:\! \nabla z_n) \!:\! \nabla z_n
-
	\int_{\Omega_n} (\C \!:\! \nabla (\varphi_n z_n)) \!:\! \nabla (\varphi_n z_n)
\bigg|
\\=~&
\bigg|
	2 \int_{\Omega_n} (\C \!:\! (\nabla\varphi_n\otimes z_n)) \!:\! \nabla (\varphi_n z_n)
	+
	\int_{\Omega_n} (\C \!:\! (\nabla\varphi_n\otimes z_n)) \!:\! (\nabla\varphi_n\otimes z_n)
\bigg|
\\\lesssim~& 2 \|z_n\|_{L^2(\Bas)} \|\nabla z_n\|_{L^2(\Bas)} +
\|z_n\|_{L^2(\Bas)}^2 \to 0 \quad \text{ as } n \to \infty.
\end{align*}
Thus,
\begin{align}
	\int_{\Omega_n} (Q_n \beta_n) (\C \!:\! \nabla z_n) \!:\! \nabla z_n
=~&
	\int_{\Omega_n} \beta_n (\C \!:\! \nabla z_n) \!:\! \nabla z_n
	+ o(1)
  \notag
\\ =~&
	\int_{\Omega_n} (\C \!:\! \nabla (\varphi_n z_n)) \!:\! \nabla (\varphi_n z_n)
	+ o(1)
  \notag
\\ \geq~&
	\gamma_{\rm c} \|\nabla (\varphi_n z_n)\|_{L^2}^2
	+ o(1)
  \notag
\\ =~&
	\gamma_{\rm c} \big\|\nabla \big(\sqrt{\beta_n} z_n\big)\big\|_{L^2}^2
	+ o(1)
.
\label{eq:stab_bqce_step1.1_final}
\end{align}

In the last estimate we used two facts: (i) the stability
\eqref{eq:strong_stab_eq} of the exact solution implies the stability
of the far-field \cite{EhrOrtSha:defects}, that is,
\[
\eps_n^d \sum_{\xi\in\eps_n\Z^d} \b\< \ddel V(\mA \Rg) D_n w_n(\xi),
D_n w_n(\xi) \b\>
\geq \gamma^\a \|\nabla w_n\|_{L^2}^2
;
\]
and (ii) that atomistic stability implies continuum stability
\cite{Hudson:stab}, that is,
\[
	\int (\C \!:\! \nabla w_n) \!:\! \nabla w_n
	\geq \gamma^\c \|\nabla w_n\|_{L^2}^2 \qquad \text{where}
        \quad \gamma^\c = \gamma^\c(\mA) \geq \gamma^\a(\mA x).
\]

{\it Step 1.2.}  A similar argument can be applied to the atomistic
contribution to $\<H_n z_n, z_n\>$.  We introduce the translation
operator $T_n w_n(\xi) := (w(\xi+\eps_n \rho))_{\rho\in\Rg}$ and the
product $D_n\varphi_n T_n z_n := ( (D_n \varphi_n)_\rho (T_nz_n)_\rho
)_{\rho \in \Rg}$.  Then, redefining $\varphi_n := \sqrt{1-\beta_n}
\in W^{1,\infty}$, $\|\nabla \varphi_n\| \leq \sqrt{C^\beta_2/2}$, we
obtain
\begin{align}
& \notag
	 \bigg|
	\eps_n^d \sum_{\xi\in\eps_n\Z^d} (1-\beta_n(\xi)) \B\<A \,D_n z_n(\xi), D_n z_n(\xi)\B\>
	-
	\eps_n^d \sum_{\xi\in\eps_n\Z^d} \B\<A \,D_n (\varphi_n
        z_n)(\xi), D_n (\varphi_n z_n)(\xi)\B\>
	\bigg|
\\ & \qquad = \notag
	\bg| \eps_n^d \sum_{\xi\in\eps_n\Z^d} \B\<
		A \,
		\b(D_n \varphi_n T_nz_n\b)(\xi) ,
		2 D_n (\varphi_n z_n)(\xi) + \b(D_n
                \varphi_n T_n z_n\b)(\xi)\B\> \bg|
\\ & \qquad \lesssim \label{eq:int:stab_bqce_lem1:intrmed}
	\big(2 \|\nabla \barz_n\|_{L^2(\supp(\varphi_n))}
	+ 	\|\barz_n\|_{L^2(\supp(\varphi_n))}\big)
	\|\barz_n\|_{L^2(\supp(\varphi_n))},
\end{align}
where we used rescaled versions of the local norm-equivalence and
inverse estimates \eqref{eq:inv_est_poly}.

Next, we notice that the mesh $\TT_n$ is fully refined on
$\supp(\varphi_n)$ (cf.\ the assumption \eqref{eq:blend_stab:tri}),
hence $\barz_n=z_n$ on $\supp(\varphi_n)$, and therefore
\eqref{eq:int:stab_bqce_lem1:intrmed} tends to zero as $n \to \infty$.
Thus,
\begin{align*}
  & \hspace{-2cm} \eps_n^d \sum_{\xi\in\eps_n \Z^d} (1-\beta_n(\xi))
  \B\< A
  \, D_n z_n(\xi), D_n z_n(\xi) \B\> \\[-2mm]
&=
	\eps_n^d \sum_{\xi\in\eps_n \Z^d} \B\< A D_n
        \big(\varphi_n(\xi) z_n(\xi)\big), D_n \big(\varphi_n(\xi)
        z_n(\xi)\big) \B\>
	+ o(1) \hspace{-2cm}
\\ &\geq
	\gamma_{\rm a} \big\|\nabla \big(\overline{\varphi_n z_n}\big) \big\|_{L^2}^2
	+ o(1)
.
\end{align*}

Next, we need to prove that $\|\nabla(\overline{\varphi_n z_n} -
\varphi_n z_n)\|_{L^2} \to 0$.  Indeed, $\nabla(\overline{\varphi_n
  z_n} - \varphi_n z_n)$ can be nonzero only in those $T\in\TT_n$
where $\varphi_n$ is not constant, and all such triangles are
contained in $\Bas$, which implies
\[
\|\nabla(\overline{\varphi_n z_n} - \varphi_n z_n)\|_{L^2(\R^d)}
=
\|\nabla(\overline{\varphi_n z_n} - \varphi_n z_n)\|_{L^2(\Bas)}.
\]
Upon defining the oscillation operator $\osc_T(f) := \sup_{x,y\in T}
|f(x)-f(y)|$ we can estimate the right-hand side, for any $T \in
\Ts_n$, by
 \begin{align*}
 \|\nabla(\overline{\varphi_n z_n} - \varphi_n z_n)\|_{L^\infty(T)}
 \leq~& \osc_T (\nabla(\varphi_n z_n))
 \\ \leq~& \osc_T (\nabla\varphi_n \, z_n) + \osc_T (\varphi_n \nabla z_n)
 \\ \leq~& 2 \|\nabla\varphi_n\|_{L^\infty(T)} \|z_n\|_{L^\infty(T)}
 	+ \osc_T (\varphi_n) \big|\nabla z_n|_T\big|
 \\ \lesssim ~& 2 \|\nabla\varphi_n\|_{L^\infty(T)} \|z_n\|_{L^2(T)}
 	+ \eps_n \|\nabla \varphi_n\|_{L^\infty(T)} \big|\nabla z_n|_T\big|,
 \end{align*}
 where in the last step we used the fact that ${\rm diam}(T) \lesssim
 \eps_n$ and that $\|z_n\|_{L^\infty(T)} \lesssim \|z_n\|_{L^2(T)}$
 since $z_n$ is a linear function on $T$.

 Then summing the contributions over all $T \subset \Bas$, we obtain
\[
\|\nabla(\overline{\varphi_n z_n} - \varphi_n z_n)\|_{L^2(\Bas)}
\lesssim 2 \|\nabla\varphi_n\|_{L^\infty} \|z_n\|_{L^2(\Bas)} + \eps_n
\|\varphi_n\|_{L^\infty} \|\nabla z_n\|_{L^2(\Bas)} \to 0,
\]
since $\|z_n\|_{L^2(\Bas)}\to 0$ and $\eps_n\to 0$ as $n \to \infty$.

Thus,
\begin{align}
	\eps_n^d \sum_{\xi\in\eps_n \Z^d} (1-\beta_n(\xi)) \B\<A \,
        D_n z_n(\xi), D_n z_n(\xi) \B\>
\geq ~&
	\gamma_{\rm a} \|\nabla (\varphi_n z_n) \|_{L^2}^2
	+ o(1)
\notag
\\[-2mm] =~&
	\gamma_{\rm a} \big\|\nabla \big(\sqrt{1-\beta_n} z_n\big) \big\|_{L^2}^2
	+ o(1)
.
\label{eq:stab_bqce_step1.2_final}
\end{align}

{\it Step 1.3.} Combining \eqref{eq:stab_bqce_step1.1_final} and
\eqref{eq:stab_bqce_step1.2_final},and using $\gamma^\c \geq
\gamma^\a$, we obtain
\begin{align*}
	\<H_n z_n,z_n\>
\geq~&
	\gamma_{\rm a} \int_{\Omega_h} \Big(\big|\nabla \big(\sqrt{1-\beta_n}\, z_n\big)\big|^2 + \big|\nabla \big(\sqrt{\beta_n}\, z_n\big)\big|^2\Big) \dx
	+ o(1)
.
\end{align*}
Then arguing similarly to the above (expanding the gradient of a
product and exploiting the fact that $\|z_n\|_{L^2(\Bas)}\to 0$) we
conclude that
\begin{align*}
	\int_{\Omega_h} \Big(
		\big|\nabla \big(\sqrt{1-\beta_n}\, z_n\big)\big|^2 + \big|\nabla \big(\sqrt{\beta_n}\, z_n\big)\big|^2
	\Big) \dx
=~&
	\int_{\Omega_h} \big((1-\beta_n) |\nabla z_n|^2 + \beta_n |\nabla z_n|^2\big) \dx
	+ o(1)
\\ =~&
	\int_{\Omega_h} |\nabla z_n|^2 \dx
	+ o(1)
.
\end{align*}

Summarizing, in Step 1 we proved that
\begin{equation}
  \label{eq:stab_bqce_step1_final}
  \<H_n z_n,z_n\> \geq \gamma_{\rm a} \|\nabla z_n\|_{L^2}^2 + o(1).
\end{equation}

{\it Step 2: estimating $\<H_n w_n, w_n\>$.}

Since $\supp(\beta_n)$ is contained in $\Bas$ and $\nabla^2\beta_n$ is
uniformly bounded, we have that, up to extracting a subsequence,
$\beta_n \to \beta_0$ in $C^1$ for some $\beta_0\in C^1(\R^d)$.  Due
to the strong convergence $Q_n \beta_n\to\beta_0$ in $L^\infty$ and
$w_n\to v_0$ in $L^2$, it is straightforward to evaluate the limit of
the continuum contribution to $\<H_n w_n, w_n\>$:
\begin{equation}
  \label{eq:stab_bqce_step2_continuum}
	\int_{\Omega_n} (Q_n \beta_n) (\C \!:\! \nabla w_n) \!:\! \nabla w_n \dx
=
	\int_{\Omega_n} \beta_0 (\C \!:\! \nabla v_0) \!:\! \nabla v_0 \dx + o(1)
.
\end{equation}

To evaluate the limit of the atomistic contribution to $\<H_n w_n,
w_n\>$, recall the definition \eqref{notation:bqce_stab} of $D_n$ and
notice that for a fixed $r>0$, $\|D_n (\eta_{r}*v_0) - \nabla_\Rg
(\eta_{r}*v_0)\|_{\ell^2(\eps_n\Z^d)} \to 0$ as $D_n (\eta_{r}*v_0)$
is a finite difference approximation to the derivative of a smooth
function, $\nabla_\Rg (\eta_{r}*v_0)$. Hence, since
$\|\beta_n-\beta_0\|_{\ell^\infty(\eps_n\Z^d)} \leq
\|\beta_n-\beta_0\|_{L^\infty (\mathbb{R}^d)} \to 0$, we obtain
\begin{align*}
&
\lim_{n\to\infty} \eps_n^d
\sum_{\xi\in\eps_n\Z^d}
	(1-\beta_n(\xi)) \B\< A D_n (\eta_{r}*v_0)(\xi), D_n (\eta_{r}*v_0)(\xi)\B\>
\\ =~&
\lim_{n\to\infty} \eps_n^d\sum_{\xi\in\eps_n\Z^d}
	(1-\beta_0(\xi)) \B\< A \nabla_\Rg (\eta_{r}*v_0)(\xi),
        \nabla_\Rg (\eta_{r}*v_0)(\xi) \B\>
\\ =~&
\int_{\R^d}
	(1-\beta_0) \b\< A \nabla_\Rg (\eta_{r}*v_0), \nabla_\Rg
        (\eta_{r}*v_0)\b\> \dx
.
\end{align*}
In the last step we used the fact that a summation rule applied to a
smooth function converges to its integral.

Next, we notice that $\nabla (\eta_{r}\ast v_0) \to \nabla v_0$ in
$L^2$, as $r\to0$, hence
\begin{align*}
  & \lim_{r\to0}\lim_{n\to\infty}
\eps_n^d
\sum_{\xi\in\eps_n\Z^d}
	(1-\beta_n(\xi)) \B\< A D_n (\eta_{r}*v_0)(\xi), D_n
        (\eta_{r}*v_0)(\xi) \B\> \\
        & \qquad \qquad =
\int_{\R^d}
	(1-\beta_0) \b\< A \nabla_\Rg v_0, \nabla_\Rg v_0\b\> \dx
.
\end{align*}
Therefore there exists a sequence $r_n \downarrow 0$ (sufficiently
slowly) such that
\begin{align*}
  & \lim_{n\to\infty}
\eps_n^d
\sum_{\xi\in\eps_n\Z^d}
	(1-\beta_n(\xi)) \B\< A D_n (\eta_{r_n}*v_0)(\xi), D_n
        (\eta_{r_n}*v_0)(\xi) \B\> \\
& \qquad \qquad =
\int_{\R^d}
	(1-\beta_0) \b\< A \nabla_\Rg v_0, \nabla_\Rg v_0\b\> \dx
\\ & \qquad \qquad =
\int_{\Omega_n}
	(1-\beta_0) (\C \!:\! \nabla v_0) \!:\! \nabla v_0 \dx
.
\end{align*}

Finally it remains to notice that due to the full refinement of
$\TT_n$ on $\supp(1-\beta_n)$, $w_n := \Pi_n (\eta_{r_n}*v_0) =
\eta_{r_n}*v_0$, hence
\begin{equation}
  \label{eq:bqce_stab_step2_atm}
	\eps_n^d \sum_{\xi\in\eps_n \Z^d} (1-\beta_n(\xi)) \b\< A \,
        D_n w_n(\xi), D_n w_n(\xi) \b\>
	\to
	\int_{\Omega_n} (1-\beta_0) \, (\C \!:\! \nabla v_0) \!:\! \nabla v_0 \dx
.
\end{equation}

Combining the estimates for the atomistic contribution
\eqref{eq:bqce_stab_step2_atm} with that for the continuum
contribution \eqref{eq:stab_bqce_step2_continuum} we finally deduce
that
\begin{align}
  \notag
\<H_n w_n, w_n\>
=~&
	\int (\C \!:\! \nabla v_0) \!:\! \nabla v_0 + o(1)
\\ \geq ~&
	\gamma_{\rm c} \|\nabla v_0\|_{L^2}^2 + o(1)
 =
	\gamma_{\rm c} \|\nabla w_n\|_{L^2}^2 + o(1)
.
\label{eq:bqce_stab_step2_final}
\end{align}

{\it Step 3: estimating the cross terms $\<H_n w_n, z_n\>$. }
%
%

Since $\nabla z_n \weakto 0$ and $\nabla w_n\to \nabla v_0$ in $L^2$,
and $Q_n \beta_{n} \to \beta_0$ in $L^\infty$, we trivially have that
\[
	\int_{\Omega_n} (Q_n \beta_{n}) (\C \!:\! \nabla w_n) \!:\! \nabla z_n \dx
	\to
	\int_{\Omega_n} \beta_0 (\C \!:\! \nabla v_0) \!:\! 0 \dx
	=
	0
.
\]

To prove that
\begin{equation}
  \label{eq:stab2_step3_main}
  \eps_n^d \sum_{\xi\in\eps_n \Z^d} (1-\beta_n) \<A D_n w_n, D_n
  z_n \> = o(1)
\end{equation}
we convert the sum to stress-strain form as in
\S~\ref{sec:int:stress_at_cb}. Let $\zz_n(\xi) = \eps_n^{-d} \, \zz(\xi/\eps_n)$ be the rescaled hat function
and $\psi_n : \eps \Z^d \to \R^m$ such that $\psi_n^* := (\zz_n \ast
\bar{\psi}_n) = z_n$ on $\eps_n \L$,
then
\begin{align*}
  &\eps_n^d \sum_{\xi\in\eps_n \Z^d} (1-\beta_n) \<A D_n w_n, D_n
  z_n \> = \int \mS_n : \D \bar{\psi}_n,  \qquad \text{where} \\
  &\mS_n(x) = \eps_n^d \sum_{\xi \in \eps\L} (1-\beta_n(\xi))
  \sum_{\rho,\vsig \in \Rg} \b[ (A_{\rho\vsig} D_{n,\vsig} w_n(\xi))
  \otimes \vsig\b] \mint_{t = 0}^{\eps_n} \zz_n(\xi+t \rho) \dt.
\end{align*}
We can now argue analogously as in Step 2 to prove that
\begin{displaymath}
  \mS_n(x) \to (1-\beta_0) \bbC : \D v_0 \qquad \text{ strongly in } L^2,
\end{displaymath}
again requiring that $r_n \to 0$ sufficiently slowly (possibly at a
slower rate than in Step 2).  Thus, if we can prove that $\D\bar\psi_n
\rightharpoonup 0$ in $\Bas$, then \eqref{eq:stab2_step3_main}
follows.

To that end, let $\mu \in C^\infty_{\rm c}(\Bas; \R^m)$ be a test
function with compact support, then $\|\zz_n \ast \D \mu - \D \mu
\|_{L^\infty} \to 0$ as $n \to \infty$ and hence,
\begin{align*}
  \int \D \bar\psi_n : \D \mu \dx &=  \int \D \bar\psi_n : (\zz_n \ast
  \D \mu) \dx + o(1) = \int \D \b(\zz_n \ast \bar\psi_n\b) : \D  \mu
  \dx + o(1)  \\
  &= \int \D\psi_n^* : \D \mu \dx + o(1) = \int \psi_n^* \cdot \Delta \mu \dx + o(1).
\end{align*}
Due to local norm-equivalence in each element, we have that
\begin{displaymath}
  \| \psi_n^* \|_{L^2(\Bas)} \lesssim \| \psi_n^* \|_{\ell^2(\eps \L \cap
  \Bas)} =  \| z_n \|_{\ell^2(\eps \L \cap \Bas)} \lesssim \| z_n
\|_{L^2(\Bas)} \to 0 \quad \text{as } n \to \infty.
\end{displaymath}
Hence, it follows that $ \int \tilde\psi_n \cdot \Delta \mu \dx \to
0$, which completes the proof that $\D\bar\psi_n \rightharpoonup 0$,
and hence also the proof of \eqref{eq:stab2_step3_main}. Thus, we have
established that
\begin{equation}
  \label{eq:bqce_stab_step3_final}
  \< H_n w_n, z_n \> \to 0 \qquad \text{as } n \to \infty.
\end{equation}



{\it Step 4: conclusion of the proof. }
Combining \eqref{eq:stab_bqce_step1_final},
\eqref{eq:bqce_stab_step2_final} and \eqref{eq:bqce_stab_step3_final}
we obtain
\begin{align*}
	\<H_n v_n, v_n\>
\geq~&
	\gamma_{\rm a} \|\nabla z_n\|_{L^2}^2
	+ \gamma_{\rm a} \|\nabla w_n\|_{L^2}^2 + o(1)
\\=~&
	\gamma_{\rm a} \|\nabla v_n\|_{L^2}^2
	-2\gamma_{\rm a} (\nabla z_n, \nabla w_n)_{L^2}^2 + o(1)
\\=~&
	\gamma_{\rm a} \|\nabla v_n\|_{L^2}^2
	+ o(1)
=
	\gamma_{\rm a}
	+ o(1).
\end{align*}
Thus, we have a contradiction to our initial assumption that
$\lim_{n\to\infty} \<H_n v_n, v_n\> < \gamma^\a$.
\end{proof}

\subsection{BQCF stability}
\label{sec:prf_bqcf_stab}
The main step towards the proof of Lemma \ref{th:int:stab_bqcf_lem1}
is the following estimate.

\begin{lemma}\label{lem:BQCE-BQCF-equiv}
  There exists $C$, independent of $(\beta, \T)$ such that
\begin{align} \notag
&|\< \ddel \Ebh(y_h) v_h - \del \Fbh(y_h) v_h, v_h \>| \leq C E
\qquad \forall v_h \in \Us_h, \\[1mm]
\notag
\text{where} \quad &E :=
	\|\nabla y_h-\mA\|_{L^\infty(\Omb)}
	\big( \|\nabla v_h\|_{L^2}^2
		+ \|v_h\|_{L^2(\Omb)} \|\nabla v_h\|_{L^2} \|\nabla \beta\|_{L^{\infty}}
	\big)
\\~& \qquad\quad + \label{eq:bqce-bqcf-equiv-def_E}
\|\nabla \beta\|_{W^{1,\infty}} \|\nabla v_h\|_{L^2}^2
	+
	\|\nabla^2 \beta\|_{L^\infty} \|v_h\|_{L^2(\Omb)}
	\|\nabla v_h\|_{L^2}
.
\end{align}
\end{lemma}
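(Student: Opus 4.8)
Write out the two quadratic forms. With $w_h := I_h[\beta v_h]$,
\begin{align*}
  \b\< \ddel\Ebh(y_h) v_h, v_h \b\>
  &= \sum_{\xi\in\L}(1-\beta(\xi))\b\< \ddel V(Dy_h(\xi))Dv_h(\xi), Dv_h(\xi)\b\>
     + \int_\Omh Q_h\b[\beta\,(\ddW(\D y_h):\D v_h):\D v_h\b]\dx, \\
  \b\< \del\Fbh(y_h) v_h, v_h \b\>
  &= \sum_{\xi\in\L}\b\< \ddel V(Dy_h(\xi))Dv_h(\xi), D[(1-\beta)v_h](\xi)\b\>
     + \int_\Omh Q_h\b[(\ddW(\D y_h):\D v_h):\D w_h\b]\dx.
\end{align*}
Subtracting, the difference splits into an atomistic contribution $\mathcal A$ and a continuum contribution $\mathcal C$. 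The plan is: (a) rewrite each of $\mathcal A,\mathcal C$ as a ``commutator'' localised in the blending region plus controlled interpolation/quadrature remainders; (b) convert the atomistic commutator into an integral by the stress/convolution trick of \S\ref{sec:int:stress_at_cb}, and show its leading part is precisely the Cauchy--Born commutator appearing in $\mathcal C$, so the two cancel, the mismatch being governed by derivatives of $\beta$ and by $\|\D y_h-\mA\|_{L^\infty(\Omb)}$; (c) check that the remainders are all of the forms occurring in \eqref{eq:bqce-bqcf-equiv-def_E}. Throughout, the generic constant is allowed to depend on bounds for the second derivatives of $V$ (equivalently of $W$) near the deformations involved, but not on $(\beta,\T)$.

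For (a): the finite-difference product rule $D_\vsig[(1-\beta)v_h](\xi) = (1-\beta(\xi))D_\vsig v_h(\xi) - D_\vsig\beta(\xi)\,v_h(\xi+\vsig)$ gives
\[
  \mathcal A = \sum_{\xi\in\L}\sum_{\rho,\vsig\in\Rg} D_\rho v_h(\xi)\cdot\b( V_{,\rho\vsig}(Dy_h(\xi))\,D_\vsig\beta(\xi)\,v_h(\xi+\vsig)\b),
\]
a sum over $\Lb$. Since $\supp(\D\beta)\subset\Omb$, where the mesh is fully refined (cf.\ \eqref{eq:blend_stab:tri}), there $I_h,Q_h$ are the atomistic interpolation and midpoint quadrature and $v_h=\overline{v_h|_\L}$ is affine on each element $T$, so $\D(I_h[\beta v_h]-\beta v_h)|_T$ is an elementary nodal interpolation error of $\beta v_h$ with $|\D(I_h[\beta v_h]-\beta v_h)|\lesssim\|\D^2\beta\|_{L^\infty}|v_h|+\|\D\beta\|_{L^\infty}|\D v_h|$ (using $\D^2 v_h|_T=0$, $h_T\lesssim1$). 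Extracting this, one finds $\mathcal C = -\int_\Omh Q_h\b[(\ddW(\D y_h):\D v_h):(v_h\otimes\D\beta)\b]\dx + \mathcal E_1$ with $|\mathcal E_1|\lesssim\|\D\beta\|_{W^{1,\infty}}\|\D v_h\|_{L^2}^2+\|\D^2\beta\|_{L^\infty}\|v_h\|_{L^2(\Omb)}\|\D v_h\|_{L^2}$, using that $\|\ddel V(Dy_h)\|_{\ell^\infty(\Lb)}$ and $\|\ddW(\D y_h)\|_{L^\infty(\Omc)}$ are bounded together with the norm equivalences $\|Dv_h\|_{\ell^2(\Lb)}\lesssim\|\D v_h\|_{L^2(\Omb)}$, $\|v_h\|_{\ell^2(\Lb)}\lesssim\|v_h\|_{L^2(\Omb)}$ on the fully refined mesh. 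Likewise, expanding $D_\vsig\beta(\xi)=\D\beta(\xi)\cdot\vsig+O(\|\D^2\beta\|_{L^\infty})$ and $v_h(\xi+\vsig)=v_h(\xi)+D_\vsig v_h(\xi)$ in $\mathcal A$, the cross terms are again of this form, leaving the leading term $\sum_{\xi}\sum_{\rho,\vsig} D_\rho v_h(\xi)\cdot\b(V_{,\rho\vsig}(Dy_h(\xi))(\D\beta(\xi)\cdot\vsig)v_h(\xi)\b)+\mathcal E_2$ with $\mathcal E_2$ bounded like $\mathcal E_1$.

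For (b): since $v_h=\overline{v_h|_\L}$ on $\Omb$, write $D_\rho v_h(\xi)=\int\ww_\rho(\xi-x)\D_\rho v_h(x)\dx$ and use the moment identities \eqref{eq:stress:phirho_Prop0}--\eqref{eq:stress:phirho_Prop1} to recast the leading term of $\mathcal A$ as $\sum_\rho\int\D_\rho v_h(x)\cdot\ol H_\rho(x)\dx$, up to an error $\lesssim\int|\D v_h|\,\|\D H_\rho\|_{L^\infty(\nu_x)}\dx$, where $H_\rho:=\sum_\vsig V_{,\rho\vsig}(\cdot)(\D\beta\cdot\vsig)v_h$ and $|\D H_\rho|\lesssim\|\D^2\beta\|_{L^\infty}|v_h|+\|\D\beta\|_{L^\infty}|\D v_h|+\|\D\beta\|_{L^\infty}\|\D y_h-\mA\|_{L^\infty(\Omb)}|v_h|$ (the last term from the variation of $V_{,\rho\vsig}$ over the stencil, estimated by second finite differences of $y_h$, i.e.\ by $\|\D u_h\|_{L^\infty(\Omb)}\lesssim\|\D y_h-\mA\|_{L^\infty(\Omb)}$), so this error lies in \eqref{eq:bqce-bqcf-equiv-def_E}. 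It then remains to compare $\sum_{\rho,\vsig}\D_\rho v_h\cdot\b(V_{,\rho\vsig}(Dy_h)(\D\beta\cdot\vsig)v_h\b)$ with $(\ddW(\D y_h):\D v_h):(v_h\otimes\D\beta)$: replacing $V_{,\rho\vsig}(Dy_h)$ by $V_{,\rho\vsig}(\mA\Rg)$ and $\ddW(\D y_h)$ by $\partial^2 W(\mA)$ costs, by Lipschitz continuity, a factor $\lesssim\|\D y_h-\mA\|_{L^\infty(\Omb)}$ each and produces exactly the $\|\D y_h-\mA\|_{L^\infty(\Omb)}$-weighted terms of \eqref{eq:bqce-bqcf-equiv-def_E}, while for the homogeneous tensors the contraction identity
\[
  (\partial^2 W(\mA):\mF):\mG = \sum_{\rho,\vsig\in\Rg}(\mF\rho)\cdot\b(V_{,\rho\vsig}(\mA\Rg)(\mG\vsig)\b),\qquad \mF,\mG\in\R^{m\times d},
\]
applied with $\mF=\D v_h$, $\mG=v_h\otimes\D\beta$ shows that the two expressions coincide. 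Hence this main part of $\mathcal A$ cancels the main part of $\mathcal C$, and collecting all remaining contributions yields $|\mathcal A+\mathcal C|\lesssim E$.

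The main obstacle is step (b): once the finite-difference commutator has been turned into an integral via the convolution trick, one must verify that its homogeneous leading part is \emph{exactly} the Cauchy--Born commutator (the atomistic-to-Cauchy--Born contraction of the linearised stiffness), and that every discrepancy --- stencil-versus-pointwise differences, lattice sum versus integral, the merely piecewise-affine regularity of $v_h$ which rules out a pointwise Taylor expansion, and the inhomogeneity of $y_h$ --- produces a remainder carrying either a derivative of $\beta$ or the factor $\|\D y_h-\mA\|_{L^\infty(\Omb)}$, as dictated by the structure of $E$ in \eqref{eq:bqce-bqcf-equiv-def_E}.
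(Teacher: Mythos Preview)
Your overall architecture matches the paper's proof closely: split into atomistic and continuum commutators localised in $\Omb$, reduce to the homogeneous tensors $A=\ddel V(\mA\Rg)$, $\bbC=\ddW(\mA)$ at the cost of the $\|\D y_h-\mA\|_{L^\infty(\Omb)}$--weighted terms, and then show that the homogeneous atomistic commutator matches the Cauchy--Born one via $(\bbC:\mF):\mG=\sum_{\rho,\vsig}(\mF\rho)\cdot A_{\rho\vsig}(\mG\vsig)$. Your treatment of $\mathcal C$, of the Taylor remainders in $D_\vsig\beta$ and $v_h(\xi+\vsig)$, and of the final cancellation is correct.

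There is, however, a genuine gap in step (b). The identity $D_\rho v_h(\xi)=\int\ww_\rho(\xi-x)\D_\rho v_h(x)\dx$ is false: the calculation \eqref{eq:stress:5} yields $D_\rho v_h^*(\xi)$ on the left, where $v_h^*=\zz\ast\barv_h$, not $D_\rho v_h(\xi)$. So your conversion of the atomistic sum to an integral silently drops the correction
\[
  T_{\rho\vsig} \;=\; \sum_{\xi\in\Lb} D_\rho(v_h-v_h^*)(\xi)\cdot\b(A_{\rho\vsig}(\D\beta\cdot\vsig)\,v_h(\xi)\b).
\]
Bounding this na\"ively by $\|D(v_h-v_h^*)\|_{\ell^2}\,\|H_\rho\|_{\ell^2}\lesssim\|\D v_h\|_{L^2}\,\|\D\beta\|_{L^\infty}\|v_h\|_{L^2(\Omb)}$ produces a term that is \emph{not} present in the definition \eqref{eq:bqce-bqcf-equiv-def_E} of $E$ (it appears there only with the prefactor $\|\D y_h-\mA\|_{L^\infty(\Omb)}$). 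Thus the error budget cannot absorb it, and the argument as written does not close.

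The paper's remedy is a summation by parts: write $T_{\rho\vsig}=-\sum_\xi (v_h-v_h^*)(\xi)\cdot D_{-\rho}\b(A_{\rho\vsig}(\D\beta\cdot\vsig)\,v_h\b)(\xi)$. Now one factor is $\|v_h-v_h^*\|_{\ell^2(\Lb)}\lesssim\|\D v_h\|_{L^2}$, and the finite difference in the other factor hits either $\D\beta$ (producing $\|\D^2\beta\|_{L^\infty}\|v_h\|_{L^2(\Omb)}$) or $v_h$ (producing $\|\D\beta\|_{L^\infty}\|\D v_h\|_{L^2}$), both of which combine with $\|\D v_h\|_{L^2}$ to land inside $E$. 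You should insert this split $D_\rho v_h = D_\rho v_h^* + D_\rho(v_h-v_h^*)$ and the summation-by-parts bound for the second piece; once that is done your plan goes through and coincides with the paper's proof.
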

{ 
  \begin{proof} {\it Step 1: reduction to the homogeneous case. } 
    Let $\ddel V_\xi := \ddel V(D y_h(\xi))$ and $\ddW := \ddW(\D
    y_h(x))$, $A := \ddel V(\mA \Rg)$ and $\bbC := \ddW(\mA)$.

    Then, the difference in the linearised operators is given
    by 
\begin{align*}
&
\hspace{-1.5cm} \< \ddel \Ebh(y_h) v_h - \del \Fbh(y_h) v_h, v_h \>
\\ =~&
	\sum_{\xi \in \L} (1-\beta(\xi)) \b\< \ddel V_\xi D v_h(\xi),
        D v_h(\xi) \b\>
	+ \int_\Omh (Q_h\beta) \big(\ddW \!:\! \nabla v_h\big) \!:\! \nabla v_h \dx,
\\~& \quad -
	\sum_{\xi \in \L} \b\< \ddel V_\xi D v_h(\xi), D ((1-\beta)
        v_h)(\xi) \b\>
	-
	\int_\Omh (\ddW \!:\! \nabla v_h) \!:\! \nabla I_h (\beta v_h) \dx
\\ =~&
	\sum_{\xi \in \L} \b\< \ddel V_\xi D v_h(\xi),
	\big( 	-\beta D v_h + D (\beta v_h) \big)(\xi) \b\>
\\~& \quad +
	\int_\Omh (\ddW \!:\! \nabla v_h) \!:\!
	\big( (Q_h \beta) \nabla v_h - \nabla I_h (\beta v_h) \big)
	\dx
\\ =~&
	\sum_{\xi \in \Lb} \b\< \ddel V_\xi D v_h(\xi),
	\big( 	-\beta D v_h + D (\beta v_h) \big)(\xi) \b\>
\\~& \quad +
	\int_\Omb (\ddW \!:\! \nabla v_h) \!:\!
	\big( (Q_h \beta) \nabla v_h - \nabla I_h (\beta v_h) \big)
	\dx.
\end{align*}
In the last step we used the fact that the summand is nonzero only if
$\xi\in\Lb$ and the integrand is nonzero only if $x\in\Omb$, where
$\Lb$ and $\Omb$ are defined in \S~\ref{sec:approx_error}.

For such $\xi$ and $x$ we can estimate $|\ddel V(D y_h(\xi))- A|
\lesssim \|D y_h-\mA\Rg\|_{\ell^\infty(\Lb)} \lesssim \|\nabla
y_h-\mA\|_{L^\infty(\Omb)}$ and $|\ddW(\nabla y_h(\xi))-\ddW(\mA)|
\lesssim \|\nabla y_h-\mA\|_{L^\infty(\Omb)}$.  Hence, we can estimate
\begin{align}
&
\hspace{-1.5cm} \big|
	\< \ddel \Ebh(y_h) v_h - \del \Fbh(y_h) v_h, v_h \>
	-
	\< \ddel \Ebh(\mA x) v_h - \del \Fbh(\mA x) v_h, v_h \>
\big|
\notag
\\ \lesssim ~&
	\|D y_h-\mA\Rg\|_{\ell^\infty(\Lb)}
	\|D v_h\|_{\ell^2(\Lb)}
	\big( \|\beta\|_{\ell^{\infty}} \|D v_h\|_{\ell^2}
		+ \|D \beta\|_{\ell^{\infty}} \|v_h\|_{\ell^2(\Lb)}
	\big)
\notag
\\~& \quad +
	\|\nabla y_h-\mA\|_{L^\infty(\Omb)}
	\|\nabla v_h\|_{L^2}
	\big( \|\nabla \beta\|_{L^{\infty}} \|v_h\|_{L^2(\Omb)} + \|\nabla v_h\|_{L^2} \|\beta\|_{L^{\infty}}
	\big) \hspace{-1cm}
\notag
\\ \lesssim ~& 
	\|\nabla y_h-\mA\|_{L^\infty(\Omb)}
	\|\nabla v_h\|_{L^2}
	\big( \|\nabla \beta\|_{L^{\infty}} \|v_h\|_{L^2(\Omb)} + \|\nabla v_h\|_{L^2}
	\big)
\notag
\\ \lesssim ~& E.
\label{eq:prf_bqcf_stab_step1_final}
\end{align}

{\it Step 2: estimate for the case $y_h = \mA x$.}  It remains to
bound $\ddel \Ebh(\mA x) - \del \Fbh(\mA x)$. To that end denote
\[
E_\xi :=
	\|\nabla \beta\|_{W^{1,\infty}(\nu_\xi)} \|\nabla v_h\|_{L^2(\nu_\xi)}^2
	+
	\|\nabla^2 \beta\|_{L^\infty(\nu_\xi)} \|v_h\|_{L^2(\nu_\xi)}
	\|\nabla v_h\|_{L^2(\nu_\xi)},
\]
where $\nu_{\xi}={B_{2\rcut+\sqrt{d}}}(\xi)$ is defined in
\eqref{eq:defn_nux}, so that $\sum_{\substack{\xi\in\Lb}} E_\xi
\lesssim E$.

Further, let $A_{\rho\sigma} = V_{,\rho\sigma}(\mA\Rg)$,
then we have
\begin{align}
&\< \ddel \Ebh(\mA x) v_h - \del \Fbh(\mA x) v_h, v_h
\>
\notag
\\ =~&
\sum_{\xi \in \Lb} \b\< A D v_h(\xi),
\big( 	-\beta D v_h + D (\beta v_h) \big)(\xi) \b\>
\notag
\\~&
\qquad  +
	\int_{\Omb} (\bbC \!:\! \nabla v_h) \!:\!
	\big( (Q_h \beta) \nabla v_h - \nabla I_h (\beta v_h) \big)
	\dx
\notag
\\ =~&
\sum_{\xi \in \Lb} \Big(\b\< A D v_h(\xi), (v_h \D\beta)(\xi) \b\> + O(E_\xi) \Big)
-
	\int_{\Omb} (\bbC \!:\! \nabla v_h) \!:\!
	\big( v_h \otimes \D\beta \big)
	\dx
\notag
\\ =~& \sum_{\rho,\sigma \in \Rg} A_{\rho\vsig} : \bg\{
\sum_{\xi \in \L} D_\rho v_h(\xi) \otimes v_h(\xi)
\D_\sigma\beta(\xi) - \int \D_\rho v_h \otimes v_h \D_\sigma \beta
\dx  \bg\} + O(E).
\label{eq:prf_bqcf_stab_step2_initialidentity}
\end{align}

Let $v(\xi) := v_h(\xi)$ for all $\xi \in \L$ and recall the
definition of $v^*$ from \eqref{eq:defn_convoluted_v}.  We observe
that the sum and integral are only taken over a region where $v_h =
\barv$ (recall that $\TT = \T$ in the blending region), hence we can
write
\begin{align}
  \notag
  \sum_{\xi \in \L} D_\rho v(\xi) \otimes v_h(\xi)
\D_\sigma\beta(\xi) &=   \sum_{\xi \in \L} D_\rho v^*(\xi) \otimes v(\xi)
\D_\sigma\beta(\xi) \\
\notag
& \qquad +   \sum_{\xi \in \L} D_\rho (v-v^*)(\xi) \otimes v(\xi)
\D_\sigma\beta(\xi)  \\
\label{eq:prf_bqcf_stab_step2_defn_ST}
&=: {\rm S}_{\rho\sigma} + {\rm T}_{\rho\sigma}.
\end{align}

{\it Step 2.1: Rewriting ${\rm S}_{\rho\sigma}$. }  Employing \eqref{eq:stress:5}
and \eqref{eq:stress:phirho_Prop0} we can write
\begin{align*}
  {\rm S}_{\rho\sigma} &= \sum_{\xi \in \L} \int_{\R^d} \D_\rho \barv(x)
  \ww_\rho(\xi-x) \dx \otimes v(\xi)
\D_\sigma\beta(\xi) \\
&= \int_{\R^d} \D_\rho \barv \otimes \bg\{ \sum_{\xi \in \L}
\ww_\rho(\xi-x) v(\xi) \D_\sigma \beta(\xi) \bg\} \\
&= \int_{\R^d} \D_\rho \barv \otimes \bg\{ \sum_{\xi \in \L}
\ww_\rho(x-\xi) \B( \barv(x) \D_\sigma \beta(x) + O\b( \|\D(\barv
\D_\sigma\beta) \|_{L^\infty(\nu_\xi)} \b) \B)\bg\}
\dx \\
&= \int_{\R^d} \D_\rho \barv \otimes \barv(x) \D_\sigma \beta(x) \dx + O(E).
\end{align*}

Thus, we observe from \eqref{eq:prf_bqcf_stab_step2_initialidentity}
and \eqref{eq:prf_bqcf_stab_step2_defn_ST} that
\begin{equation}
  \label{eq:eq:prf_bqcf_stab_step2_reduction_to_T_only}
  \< \ddel \Ebh(\mA x) v_h - \del \Fbh(\mA x) v_h, v_h
\> = \sum_{\rho,\sigma \in \Rg} A_{\rho\sigma} : {\rm T}_{\rho\sigma}
+ O(E).
\end{equation}

{\it Step 2.2: Estimating ${\rm T}_{\rho\sigma}$. } Summation by parts
yields
\begin{align*}
  \b|{\rm T}_{\rho\sigma}\b|
  &= \bg| - \sum_{\xi \in \L} (v - v_*)(\xi) \otimes D_{-\rho} \b( v
  \D_\sigma \beta\b)(\xi)\bg| \\
  & \lesssim \| v - v_* \|_{\ell^2(\Lb)} \| D_{-\rho} (v \D_\sigma
  \beta) \|_{\ell^2(\Lb)} \\
  & \lesssim \| \D v \|_{L^2(\Omb)} \b( \| \barv \|_{L^2(\Omb)} \| \D^2
  \beta \|_{L^\infty} + \| D_{-\rho} v \|_{\ell^2(\Lb)} \| \D\beta
  \|_{L^\infty} \b) \\
  & \lesssim E.
\end{align*}
Recalling \eqref{eq:eq:prf_bqcf_stab_step2_reduction_to_T_only} and
\eqref{eq:prf_bqcf_stab_step1_final}, this completes to proof of the
lemma.
\end{proof}
} 

\begin{proof}[Proof of Lemma \ref{th:int:stab_bqcf_lem1}]
  In view of Lemma \ref{lem:BQCE-BQCF-equiv} we only need to verify
  that $E \rightarrow 0 \text{ as } \Ra \rightarrow \infty, $ where
  $E$ is defined by \eqref{eq:bqce-bqcf-equiv-def_E}.  Using
    Corollary \ref{th:vol_trace_ineq} we estimate
    \[
    \begin{split}
      E\le~&
      \|\nabla y_h-\mA\|_{L^\infty(\Omb)}
      \big( \|\nabla v_h\|_{L^2}^2
      + \sqrt{C_2} \|\nabla v_h\|_{L^2}^2 \|\nabla \beta\|_{L^{\infty}}
      \big)
      \\~&+
      \|\nabla \beta\|_{W^{1,\infty}} \|\nabla v_h\|_{L^2}^2
      +
      \|\nabla^2 \beta\|_{L^\infty}\sqrt{C_2} \|\nabla v_h\|_{L^2}^2,
    \end{split}
    \]
    where $C_2 = \b( (\Rb)^2 - (\Ra)^2 \b) C_1'$ and $C_1'$ is the
    constant from Lemma \ref{th:vol_trace_ineq}.  Then, using $\|\D
    ^{j}\beta\|_{L^\infty}\lesssim (\Rb)^{-j}$, we have
\begin{align}
E\lesssim~&
	\|\nabla y_h-\mA\|_{L^\infty(\Omb)}
	\big( \|\nabla v_h\|_{L^2}^2
		+ \sqrt{C_2} \|\nabla v_h\|_{L^2}^2(\Rb)^{-1}
	\big)
\nonumber\\
~&+
	\big((\Rb)^{-2} + (\Rb)^{-1}\big) \|\nabla v_h\|_{L^2}^2
	+
	(\Rb)^{-2}\sqrt{C_2} \|\nabla v_h\|_{L^2}^2,\label{eq:bqcf_stab_bnd_E}\\
=~& I_1+I_2+I_3.\nonumber
\end{align}

To complete the estimates we note that $\sqrt{C_2} (\Rb)^{-1} \lesssim
\Ctr$ where $\Ctr$ is defined in \eqref{eq:int:est_Dbv_Dv}. Further,
the regularity estimates from Lemma \ref{th:regularity} and
\eqref{eq:regularity_ulin} yield
\begin{displaymath}
  \|\nabla y_h-\mA\|_{L^\infty(\Omb)} \lesssim \cases{ (\Ra)^{-1}, &
    \text{ case (pDis)}, \\
    (\Ra)^{-d}, & \text{ case (pPt)}.
  }
\end{displaymath}

These estimates are combined to yield
\begin{align*}
 I_1 &\lesssim \cases{ (\Ra)^{-1} (\log \Ra)^{1/2}, & \text{ case
      (pDis)}, \\
    (\Ra)^{-2} (\log \Ra)^{1/2}, & \text{ case (pPt), $d = 2$,} \\
    (\Ra)^{-3}, & \text{case (pPt), $d = 3$, }
  } \\
   I_2 &\lesssim (\Ra)^{-1},  \qquad \text{and} \\
    I_3 &\lesssim \cases{ (\Ra)^{-1} (\log\Ra)^{1/2}, & \text{ case $d =
        2$, } \\
      (\Ra)^{-1}, & \text{ case $d = 3$.}
    }
\end{align*}
This completes the proof of Lemma \ref{th:int:stab_bqcf_lem1}.
\end{proof}

\begin{remark}
The auxiliary results, Lemmas \ref{th:int:stab_bqce_lem1} and \ref{th:int:stab_bqcf_lem1}, hold under much weaker assumptions.
For instance, with extra work, Lemma \ref{th:int:stab_bqcf_lem1} can be proved for the blending width (i.e., the width of $\supp(\nabla\beta)$) scaling slower than $\Ra$ \cite{BQCF}.
However, this would not be important for the practical implementation of the method or for our error estimates.
\end{remark}

\clearpage

\bibliographystyle{plain}
\bibliography{qc}

\end{document}